\documentclass[reqno, 12pt]{amsart}
\pdfoutput=1  
\makeatletter
\let\origsection=\section \def\section{\@ifstar{\origsection*}{\mysection}}
\def\mysection{\@startsection{section}{1}\z@{.7\linespacing\@plus\linespacing}{.5\linespacing}{\normalfont\scshape\centering\S}}
\makeatother  

\usepackage{amsmath,amssymb,amsthm}
\usepackage{mathrsfs}
\usepackage{mathabx}\changenotsign
\usepackage{dsfont}
\usepackage[dvipsnames]{xcolor}
\usepackage{wrapfig}
\usepackage{caption}
\usepackage{floatflt}
\usepackage{xcolor}
\usepackage[backref]{hyperref}
\hypersetup{
    colorlinks,
    linkcolor={red!60!black},
    citecolor={green!60!black},
    urlcolor={blue!60!black}
}
\usepackage[dvipsnames]{xcolor}
\usepackage[open,openlevel=2,atend]{bookmark}
\usepackage{multirow}
\usepackage[abbrev,msc-links,backrefs]{amsrefs}
\usepackage{doi}

\renewcommand{\PrintDOI}[1]{\doi{#1}}

\usepackage[T1]{fontenc}
\usepackage{lmodern}
\usepackage[babel]{microtype}
\usepackage[english]{babel}
\linespread{1.3}
\usepackage{geometry}
\geometry{left=27.5mm,right=27.5mm, top=25mm, bottom=22mm}
\numberwithin{equation}{section}
\numberwithin{figure}{section}
\usepackage{enumitem}
\def\rmlabel{\upshape({\itshape \roman*\,})}

\def\alabel{\upshape({\itshape \alph*\,})}

\def\nlabel{\upshape({\itshape \arabic*\,})}
\def\Ext{\mathrm{Ext}}
\usepackage{booktabs}

\def\ag#1{	\tikz{\def\nn{#1};
	\pgfmathsetmacro\wie{3*\nn-1};
	\pgfmathsetmacro\mm{5*\nn};
	\pgfmathsetmacro\kk{\nn-1};
	\foreach \i in {0,...,\mm}{
		\coordinate (x\i) at (\i*360/\wie:1.3cm);}
		
	\foreach \i in {0,...,\wie}{	
		\foreach \j [evaluate=\j as \k using \i+\j+\nn] in {0,...,\kk}{
			\draw[green!75!black] (x\i)--(x\k);}
	}		
	
	\foreach \i in {0,...,\wie}{	
			\draw[black, fill=black] (x\i) circle (1pt);
	}		}}

\newcommand{\mref}[1]{\ifmmode\textrm{\ref{#1}}\else\ref{#1}\fi}
\usepackage{float}
\let\polishlcross=\l
\def\l{\ifmmode\ell\else\polishlcross\fi}

\def\qand{\quad\text{and}\quad}

\let\setminus=\smallsetminus

\makeatletter
\def\moverlay{\mathpalette\mov@rlay}
\def\mov@rlay#1#2{\leavevmode\vtop{   \baselineskip\z@skip \lineskiplimit-\maxdimen
   \ialign{\hfil$\m@th#1##$\hfil\cr#2\crcr}}}
\newcommand{\charfusion}[3][\mathord]{
    #1{\ifx#1\mathop\vphantom{#2}\fi
        \mathpalette\mov@rlay{#2\cr#3}
      }
    \ifx#1\mathop\expandafter\displaylimits\fi}
\makeatother

\newcommand{\dcup}{\charfusion[\mathbin]{\cup}{\cdot}}
\newcommand{\bigdcup}{\charfusion[\mathop]{\bigcup}{\cdot}}

\DeclareFontFamily{U}  {MnSymbolC}{}
\DeclareSymbolFont{MnSyC}         {U}  {MnSymbolC}{m}{n}
\DeclareFontShape{U}{MnSymbolC}{m}{n}{
    <-6>  MnSymbolC5
   <6-7>  MnSymbolC6
   <7-8>  MnSymbolC7
   <8-9>  MnSymbolC8
   <9-10> MnSymbolC9
  <10-12> MnSymbolC10
  <12->   MnSymbolC12}{}
\DeclareMathSymbol{\powerset}{\mathord}{MnSyC}{180}

\usepackage{tikz}
\usetikzlibrary{calc,decorations.pathmorphing}
\usetikzlibrary{math}
\pgfdeclarelayer{background}
\pgfdeclarelayer{foreground}
\pgfdeclarelayer{front}
\pgfsetlayers{background,main,foreground,front}

\usepackage{subcaption}
\captionsetup[subfigure]{labelfont=rm}

\let\epsilon=\varepsilon
\let\eps=\epsilon
\let\rho=\varrho
\let\theta=\vartheta

\def\NN{{\mathds N}}

\def\ZZ{{\mathds Z}}

\def\CC{{\mathds C}}
\def\DD{{\mathds D}}

\newcommand{\cC}{\mathcal{C}}
\newcommand{\cD}{\mathcal{D}}

\newcommand{\cQ}{\mathcal{Q}}

\newcommand{\fD}{\mathfrak{D}}

\def\fA{\mathfrak{A}}

\newcommand{\ex}{\mathrm{ex}}

\newcommand{\Nn}{\mathrm{N}}

\newcommand{\ccF}{\mathscr{F}}

\newtheoremstyle{note}  {4pt}  {4pt}  {\sl}  {}  {\bfseries}  {.}  {.5em}          {}
\newtheoremstyle{introthms}  {3pt}  {3pt}  {\itshape}  {}  {\bfseries}  {.}  {.5em}          {\thmnote{#3}}
\newtheoremstyle{remark}  {2pt}  {2pt}  {\rm}  {}  {\bfseries}  {.}  {.3em}          {}

\theoremstyle{plain}
\newtheorem{thm}{Theorem}[section]
\newtheorem{lemma}[thm]{Lemma}
\newtheorem{prop}[thm]{Proposition}

\newtheorem{conj}[thm]{Conjecture}
\newtheorem{quest}[thm]{Question}
\newtheorem{cor}[thm]{Corollary}

\newtheorem{claim}[thm]{Claim}

\theoremstyle{note}
\newtheorem{dfn}[thm]{Definition}
\newtheorem*{dfn*}{Definition}

\theoremstyle{remark}

\usepackage{accents}

\newcommand{\re}[1]{\textcolor{red!75!black}{#1}}
\newcommand{\bl}[1]{\textcolor{blue!75!black}{#1}}
\newcommand{\gr}[1]{\textcolor{green!65!black}{#1}}

\newcommand{\Gr}{\re{\Gamma_{\mathrm{red}}}}
\newcommand{\Gg}{\gr{\Gamma_{\mathrm{green}}}}
\newcommand{\Gb}{\bl{\Gamma_{\mathrm{blue}}}}
\def\grot{\Upsilon}
\def\lra{\longleftrightarrow}
\let\phi=\varphi
\usepackage{mathtools}
\usepackage{lineno}
\newcommand*\patchAmsMathEnvironmentForLineno[1]{\expandafter\let\csname old#1\expandafter\endcsname\csname #1\endcsname
\expandafter\let\csname oldend#1\expandafter\endcsname\csname end#1\endcsname
\renewenvironment{#1}{\linenomath\csname old#1\endcsname}{\csname oldend#1\endcsname\endlinenomath}}\newcommand*\patchBothAmsMathEnvironmentsForLineno[1]{\patchAmsMathEnvironmentForLineno{#1}\patchAmsMathEnvironmentForLineno{#1*}}\AtBeginDocument{\patchBothAmsMathEnvironmentsForLineno{equation}\patchBothAmsMathEnvironmentsForLineno{align}\patchBothAmsMathEnvironmentsForLineno{flalign}\patchBothAmsMathEnvironmentsForLineno{alignat}\patchBothAmsMathEnvironmentsForLineno{gather}\patchBothAmsMathEnvironmentsForLineno{multline}}
\usepackage{multicol}
\usepackage{todonotes}

\newcommand{\expl}[1]\relax

\usepackage{marginnote}
 
  \newcommand{\ch}[1]{\marginnote{{\bf TL}: #1}}

\newcommand{\G}{\Gamma}
\def\Aut{\mathrm{Aut}}
\def\dd{\mathrm{d}}
\let\vn=\varnothing

\hyphenation{as-sump-tions}

\newcommand{\Dp}[1]{$\cD_{#1}$} 
\newcommand{\Qp}[1]{$\cQ_{#1}$}  

\begin{document}
\title[Strong Brandt-Thomass\'e Theorems]{Strong Brandt-Thomass\'e Theorems}
\author[T.~\L uczak]{Tomasz \L uczak}
\address{Adam Mickiewicz University, Faculty of Mathematics and Computer Science, Pozna\'n, Poland}
\email{tomasz@amu.edu.pl}
\author[J.~Polcyn]{Joanna Polcyn}
\address{Adam Mickiewicz University, Faculty of Mathematics and Computer Science, Pozna\'n, Poland}
\email{joaska@amu.edu.pl}
\thanks{The first author is partially supported by National Science Centre, Poland, grant 2022/47/B/ST1/01517.}
\author[Chr.~Reiher]{Christian Reiher}
\address{Fachbereich Mathematik, Universit\"at Hamburg, Hamburg, Germany}
\email{Christian.Reiher@uni-hamburg.de}
\subjclass[2010]{Primary: 05C75,  Secondary: 05C35, 05C07, 05C15. }
\keywords{Extremal graph theory, triangle-free, Ramsey-Tur\'an theory.}

\begin{abstract}
	Solving a long standing conjecture of Erd\H{o}s and Simonovits, Brandt 
	and Thomass\'e proved that the chromatic number of each triangle-free 
	graph $G$ such that $\delta(G)>|V(G)|/3$ is at most four. 
	In fact, they showed the much stronger result that every maximal triangle-free 
	graph $G$ satisfying this minimum degree condition is a blow-up of either an 
	Andr\'asfai or a Vega graph.
	
	Here we establish the same structural conclusion on $G$ under the weaker 
	assumption that for $m\in\{2, 3, 4\}$ every sequence of $3m$ vertices has 
	a subsequence of length $m+1$ with a common neighbour. In forthcoming work 
	this will be used to solve an old problem of Andr\'asfai in Ramsey-Tur\'an 
	theory.   	
\end{abstract}

\maketitle

\section{Introduction}\label{sec:intro}
\subsection{Minimum degree conditions}
One of the earliest results of modern graph theory is Mantel's theorem~\cite{M} 
from~1907 on the maximum number of edges in a triangle-free graph. 
The interest in the structure of dense triangle-free graphs has 
been revived in the early seventies. Andr\'asfai, Erd\H{o}s, and S\'os~\cite{AES} 
observed that all triangle-free graphs~$G$ with~$n$ vertices and minimum 
degree $\delta(G)>2n/5$ are bipartite. 
Moreover, Hajnal constructed a family of triangle-free graphs $G$ with minimum 
degree $(1/3-o(1))|V(G)|$ and arbitrarily large chromatic number. 
Inspired by this example Erd\H{o}s and Simonovits~\cite{ES73} conjectured in~1973  
that every triangle-free graph $G$ with $n$ vertices and 
minimum degree $\delta(G)>n/3$ is three-colourable.
In fact, at this time the only known maximal triangle-free graphs~$G$ 
on~$n$ vertices with minimum degree $\delta(G)>n/3$ were blow-ups 
of Andr\'asfai graphs, 
which were introduced by Andr\'asfai~\cite{A} a few years earlier. 
For every positive integer~$k$ there is an Andr\'asfai graph $\G_k$ with vertex 
set $\ZZ/(3k-1)\ZZ$ and all edges $ij$ such that $i-j\in \{k, k+1, \dots, 2k-1\}$. 
Hence, we have $\G_1=K_2$, $\G_2 = C_5$, and Figure~\ref{fig:ag} shows some further 
Andr\'asfai graphs. By a {\it blow-up} of a given graph $G$ we mean another graph 
obtained by replacing each vertex of $G$ by a non-empty independent set of vertices 
and each edge of $G$ by the complete bipartite graph between 
the vertex classes corresponding to its end vertices.

\begin{figure}[h]
	\centering
	\begin{multicols}{4}
		\ag{3} \\ \ag{4} \\ \ag{5} \\ \ag{6}
	\end{multicols}
	\caption{The Andr\'asfai graphs $\G_3$, $\G_4$,  $\G_5$, and $\G_6$.}
	\label{fig:ag}
\end{figure}
  
H\"aggkvist~\cite{H} refuted the Erd\H{o}s-Simonovits conjecture in the 
early eighties. His counterexample is a blow-up of a certain triangle-free 
graph $\grot$ on eleven vertices with chromatic number four, often called 
the {\it Mycielski graph}~\cite{Myc} by Polish authors (see Figure~\ref{fig:graphsB}) 
or the {\it Gr\"otzsch graph} by German authors (see Figure~\ref{fig:graphsC}).  
An appropriate choice of `weights' indicated in Figure~\ref{fig:graphsD}
leads to blow-ups $\hat\grot$ on $n$ vertices 
with $\delta(\hat\grot)\ge 10n/29>n/3$.  

Later work of Chen, Jin, and Koh~\cite{CJK} showed that containing $\grot$ 
is the only possible obstruction to satisfying the Erd\H{o}s-Simonovits 
conjecture. 
More precisely, all $\{K_3, \grot\}$-free graphs~$G$ on~$n$ vertices 
with $\delta(G)>n/3$ are contained in blow-ups of Andr\'asfai graphs and, 
therefore, three-colourable.

\begin{figure}[h!]
\begin{subfigure}[b]{.18\textwidth}
\centering
\begin{tikzpicture}[scale=.7]
\coordinate (c) at (0,0);
\foreach \i in {1,...,5} {
	\coordinate (a\i) at (18+\i*72:1);
	\coordinate (b\i) at (18+\i*72:2);	
}
\fill (c) circle (2pt);
\foreach \i in {1,...,5} {
	\draw (c)--(a\i);
	\fill (a\i) circle (2pt);		
	\fill (b\i) circle (2pt);
}
\draw (b1)--(b2)--(b3)--(b4)--(b5)--(b1);
\draw (b1)--(a2)--(b3)--(a4)--(b5)--(a1)--(b2)--(a3)--(b4)--(a5)--(b1);
\phantom{\draw (0,-1.8) circle (1pt);}
\end{tikzpicture}
\caption{Mycielski}\label{fig:graphsB}
\end{subfigure}
\hfill
\begin{subfigure}[b]{.2\textwidth}
\centering
\begin{tikzpicture}[scale=.7]
\coordinate (c) at (0,0);
\foreach \i in {1,...,5} {
	\coordinate (a\i) at (-18+\i*72:1);
	\coordinate (b\i) at (18+\i*72:1.5);	
	\coordinate (d\i) at (18+\i*72:2);
}
\fill (c) circle (2pt);
\foreach \i in {1,...,5} {
	\draw (c)--(a\i);
	\fill (a\i) circle (2pt);		
	\fill (b\i) circle (2pt);
}
\draw (a1)--(b1)--(a2)--(b2)--(a3)--(b3)--(a4)--(b4)--(a5)--(b5)--(a1);
\draw (b1) [out=162, in=72] to (d2) [out=-108, in=162] to (b3);
\draw (b2) [out=234, in=144] to (d3) [out=-36, in=234] to (b4);
\draw (b3) [out=-54, in=216] to (d4) [out=36, in=-54] to (b5);
\draw (b4) [out=18, in=-72] to (d5) [out=108, in=18] to (b1);
\draw (b5) [out=90, in=0] to (d1) [out=180, in=90] to (b2);
\end{tikzpicture}
\caption{Gr\"otzsch}
\label{fig:graphsC}
\end{subfigure}
\hfill
\begin{subfigure}[b]{.2\textwidth}
\centering
\begin{tikzpicture}[scale=.5]			
\coordinate (c) at (0,0);
\foreach \i in {1,...,5} {
	\coordinate (a\i) at (-18+\i*72:1.6);
	\coordinate (b\i) at (18+\i*72:2.3);	
	\coordinate (d\i) at (18+\i*72:3);
}
\draw [line width=6pt, black!20] (a1)--(b1)--(a2)--(b2)--(a3)--(b3)--(a4)--(b4)--(a5)--(b5)--(a1);
\draw (b1) [line width=6pt, black!20,out=162, in=72] to (d2) [out=-108, in=162] to (b3);
\draw (b2) [line width=6pt, black!20,out=234, in=144] to (d3) [out=-36, in=234] to (b4);
\draw (b3) [line width=6pt, black!20,out=-54, in=216] to (d4) [out=36, in=-54] to (b5);
\draw (b4) [line width=6pt, black!20,out=18, in=-72] to (d5) [out=108, in=18] to (b1);
\draw (b5) [line width=6pt, black!20,out=90, in=0] to (d1) [out=180, in=90] to (b2);
\foreach \i in {1,...,5} {
	\draw[line width=6pt, black!20] (c)--(a\i);
	\draw [fill=white](a\i) circle (7pt);		
	\draw [fill=white](b\i) circle (7pt);
	\fill [blue]($(b\i) +(.1,-.06)$) circle (2pt);
	\fill [blue]($(b\i) +(-.1,-.06)$) circle (2pt);
	\fill [blue]($(b\i) +(0,.1)$) circle (2pt);
	\fill [blue]($(a\i) +(.1,0)$) circle (2pt);
	\fill[blue] ($(a\i) +(-.1,0)$) circle (2pt);
}
\draw [fill=white](c) circle (7pt);
\fill [blue](-.1,.1) circle (2pt);
\fill [blue](-.1,-.1) circle (2pt);
\fill [blue](.1,.1) circle (2pt);
\fill [blue](.1,-.1) circle (2pt);
\end{tikzpicture}
\caption{H\"aggkvist}\label{fig:graphsD}
\end{subfigure}\hfill
\begin{subfigure}[b]{.24\textwidth}
\centering
\begin{tikzpicture}[scale=1]			
\def\pro{1.3cm};
\def\pu{1pt};
\coordinate (v) at (30:\pro);
\coordinate (a) at (90:\pro);
\coordinate (w) at (150:\pro);
\coordinate (b) at (210:\pro);
\coordinate (u) at (270:\pro);
\coordinate (c) at (330:\pro);				
\coordinate (x) at (\pro*1.4,\pro*.9);
\coordinate (y) at (\pro*1.4,-\pro*.9);			
\foreach \i in {1,...,8} \coordinate (v\i) at (135-\i*45:\pro/2);
\draw [red!80!black, ,domain=5:89, dashed, thick] plot ({\pro*.6*cos(\x)},{\pro*.6*sin(\x)}); 
\draw [blue, dashed, domain=142:178, thick] plot ({\pro*.6*cos(\x)},{\pro*.6*sin(\x)}); 
\draw [green!80!black, dashed,domain=230:313, thick] plot ({\pro*.6*cos(\x)},{\pro*.6*sin(\x)}); 
\draw [thick] (x)--(y);
\draw (c)--(x)--(a);
\draw (v)--(y)--(u);
\draw [rounded corners=15] (x) -- (-.2*\pro,\pro*1.2)--(-\pro*1.2,\pro*.5)--(b);
\draw [rounded corners=15] (y) -- (-.2*\pro,-\pro*1.2)--(-\pro*1.2,-\pro*.5)--(w);		
\foreach \i in {4,...,6} \draw (v\i)--(v1);
\foreach \i in {5,...,7} \draw (v\i)--(v2);
\foreach \i in {6,...,8} \draw (v\i)--(v3);
\foreach \i in {3,...,5} \draw  (v\i)--(v8);
\draw (v4)--(v7);
\draw  (a)--(v)--(c)--(u)--(b)--(w)--(a);
\foreach \i in {a,b,c, v, u, w, x, y} \draw [thick] (\i) circle (\pu);
\foreach \i in {1,...,8}\draw [thick] (v\i) circle (\pu);
\fill (x) circle (\pu);
\fill (y) circle (\pu);
\foreach \i in {c, w, v7, v8}\fill[blue!75!white]  (\i) circle (\pu);
\foreach \i in {a,u, v1, v2, v3}\fill[red!75!white]  (\i) circle (\pu);
\foreach \i in {v,b, v4, v5, v6}\fill[green!75!white]  (\i) circle (\pu);
\node at ($(a)+(0,-.15)$) {\tiny{$\re{a}$}};
\node at ($(b)+(.15,.1)$) {\tiny{$\gr{b}$}};
\node at ($(c)+(-.15,.1)$) {\tiny{$\bl{c}$}};
\node at ($(v)+(-.15,-.1)$) {\tiny{$\gr{v}$}};
\node at ($(w)+(.15,-.1)$) {\tiny{$\bl{w}$}};
\node at ($(u)+(0,.15)$) {\tiny{$\re{u}$}};
\node at ($(x)+(0,.15)$) {\tiny{$x$}};
\node at ($(y)+(0,-.15)$) {\tiny{$y$}};
\node  [scale=.7] at ($(v1)+(-.1,.1)$) {\tiny{$\re{0}$}};
\node [scale=.7] at ($(v3)+(.1,-.13)$) {\tiny{$\re{i-1}$}};
\node [scale=.7] at ($(v4)+(.1,.05)$) {\tiny{$\gr{i}$}};
\node [scale=.7] at ($(v6)+(-.05,-.1)$) {\tiny{$\gr{2i-1}$}};
\node [scale=.7] at ($(v7)+(-.1,-.15)$) {\tiny{$\bl{2i}$}};
\node [scale=.7] at ($(v8)+(0,.1)$) {\tiny{$\bl{3i-2}$}};
\end{tikzpicture}
\caption{Vega}
\label{fig:graphsE}
\end{subfigure}
\caption{Some relevant graphs.}
\label{fig:graphs}
\end{figure}

Brandt and Pisanski~\cite{BP}, on the other hand, discovered that the 
Mycielski-Gr\"otzsch graph starts a new sequence of four-chromatic triangle-free 
graphs, which they called Vega graphs; they have chromatic number four and 
admit (regular) blow-ups violating the Erd\H{o}s-Simonovits conjecture. 
Following some further work on triangle-free graphs of large minimum 
degree (see, e.g.,~\cites{L, T}) Brandt and Thomass\'e then proved in 
an unpublished manuscript~\cite{BT} that every maximal triangle-free graph~$G$ 
on~$n$ vertices with $\delta(G)>n/3$ is a blow-up of either an Andr\'asfai graph 
or a Vega graph. It follows that all such graphs are four-colourable, which 
establishes a relaxed version of the Erd\H{o}s-Simonovits conjecture. 

Given their importance, we would briefly like to describe {\it Vega graphs} here. 
For every integer $i\ge 2$ the graph $\grot_i^{00}$, shown in 
Figure~\ref{fig:graphsE}, consists of an inner Andr\'asfai graph $\G_i$, 
an external hexagon $\cC_6=\re{a}\gr{v}\bl{c}\re{u}\gr{b}\bl{w}$, and two outer 
vertices~$x$,~$y$ joined to each other and to~$\cC_6$ as in the picture. 
Moreover, the vertices of~$\cC_6$ are connected to the vertices of~$\G_i$ of 
the same colour (\re{red}, \gr{green}, or \bl{blue}). There are further Vega graphs 
$\grot_i^{10}$, $\grot_i^{01}$, and $\grot_i^{11}$ obtained from $\grot_i^{00}$
by deleting one or both of $y$ and $\gr{2i-1}$. For instance, 
$\grot_2^{11}=\grot_2^{00}-\{i, \gr{3}\}$ is isomorphic to the Mycielski-Gr\"otzsch
graph $\grot$ (see Figure~\ref{fig:43}). A more detailed definition of Vega graphs 
will be given at the beginning of Section~\ref{sec:vega}. 

\subsection{Existence of common neighbours}
Our main result is similar to the Brandt-Thomass\'e theorem, but instead of 
a minimum degree hypothesis we shall use an assumption on the existence of 
common neighbours. The motivation for studying such problems is another 
conjecture on dense triangle-free graphs due to Andr\'asfai. In his already 
referenced article~\cite{A} he proposes to investigate the largest number 
$\ex(n, s)$ of edges that a triangle-free graph on~$n$ vertices can have if its 
independence number is at most $s$. So for $s\ge n/2$ we have $\ex(n, s)=\lfloor n^2/4\rfloor$
by Mantel's theorem. After proving that for~$s\in (2n/5, n/2]$ certain 
blow-ups of~$\G_2$ are optimal, Andr\'asfai conjectured that 
for every~$s>n/3$ the maximum~$\ex(n, s)$ is achieved by an appropriate blow-up 
of some Andr\'asfai graph. His work is the first contribution to a branch of extremal 
graph theory nowadays called {\it Ramsey-Tur\'an theory}.  
For some recent partial results on Andr\'asfai's conjecture we refer 
to~\cites{Vega, LPR2, LPR3}. An excellent survey by S\'os and 
Simonovits~\cite{SS} provides further background on Ramsey-Tur\'an theory.

In a forthcoming article we plan to resolve Andr\'asfai's conjecture in the 
sense of establishing 
\begin{equation}\label{eq:LPR}
	\ex(n, s)
	=
	\frac12k(k-1)n^2-k(3k-4)ns+\frac12(3k-4)(3k-1)s^2
\end{equation}
whenever $s\in (n/3, n/2]$ and $k=\lceil s/(3s-n)\rceil$. As explained 
in~\cite{Vega} there is always a blow-up of~$\G_k$ achieving equality 
and for some values of~$k$ there are (perhaps unexpected) blow-ups of 
Vega graphs for which equality holds as well. 
There will be one step in the proof of~\eqref{eq:LPR}, where we want to infer that 
some auxiliary graph~$\ccF$ admits a homomorphism into some `well-behaved' graph, 
such as an Andr\'asfai or Vega graph. This graph~$\ccF$ is always triangle-free,
but it can have vertices of small degree. Thus we need to prove a version 
of the Brandt-Thomass\'e theorem under an assumption which will turn out to hold
in our intended application, and this is what we shall do here. The alternative hypothesis is of the following form.  

\begin{dfn}\label{df:hDk}
	A graph $G$ has property \Dp{k} for some $k\in \NN$ if for every $m\in [k]$ 
	and every sequence $x_1, \dots, x_{3m}\in V(G)$ of (not necessarily distinct) 
	vertices of $G$ there  is a vertex $y\in V(G)$ such that 
		\[
		|\{i \in [3m]\colon x_iy\in E(G)\}|\ge m+1\,.
	\]
	\end{dfn}

A simple counting argument discloses that every graph $G$ on $n$ vertices 
with $\delta(G)> n/3$ has the property \Dp{k} for every $k\ge 1$. One advantage 
of these properties, however, is that they are preserved under taking   
blow-ups. That is, a graph $G$ satisfies \Dp{k} if and only if all its  
blow-ups do. Another feature of \Dp{k} is that---in contrast to the minimum 
degree condition---it is a sensible property of infinite graphs. 
We offer some further remarks on this topic in the last section, 
but throughout the main body of this article we shall tacitly assume that our 
graphs are finite.  

\begin{thm}\label{thm:D}
	A maximal triangle-free graph satisfies \Dp{4} if and only if 
	it is a blow-up of either an Andr\'asfai or a Vega graph. 
\end{thm}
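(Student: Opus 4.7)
The plan is to prove both directions. The backward implication is relatively short and proceeds via the observation from the introduction that \Dp{4} is preserved by blow-ups and by their inverses. For Andr\'asfai graphs, $\delta(\G_k)/|V(\G_k)| = k/(3k-1) > 1/3$ and the counting argument recalled after Definition~\ref{df:hDk} produces \Dp{4} directly. For Vega graphs the graph $\grot_i^{ab}$ itself may fail $\delta > n/3$ (for instance the Mycielski-Gr\"otzsch graph $\grot = \grot_2^{11}$ has $\delta = 3$ on $11$ vertices), but each Vega graph admits a weighted blow-up satisfying $\delta > n/3$ in the style of H\"aggkvist; since a common neighbour in that blow-up projects to a common neighbour in $\grot_i^{ab}$, property \Dp{4} descends to the original Vega graph, and then lifts up to any blow-up of it.

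For the forward direction, let $G$ be a maximal triangle-free graph with property \Dp{4}. The plan is to identify $G$ as a blow-up of a small, highly structured kernel and to show that this kernel is an Andr\'asfai or Vega graph. First I would pass to the twin-reduced quotient $G^*$, obtained by declaring two vertices equivalent when they have identical neighbourhoods; then $G$ is a blow-up of $G^*$, and $G^*$ is a twin-free, maximal triangle-free graph. Since \Dp{4} transfers along this reduction, we may assume throughout that $G$ itself is twin-free and aim to prove $G \cong \G_k$ for some $k \ge 1$, or $G \cong \grot_i^{ab}$ for some $i \ge 2$ and $a, b \in \{0, 1\}$.

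The next step is to recover, by successive applications of \Dp{4}, the structural consequences that Brandt and Thomass\'e derived from the minimum degree hypothesis $\delta(G) > |V(G)|/3$. The bipartite case collapses quickly: a maximal twin-free triangle-free bipartite graph with \Dp{4} must be $K_2 = \G_1$. When $G$ is non-bipartite, I would locate a $5$-cycle $C$ (since longer odd girth is ruled out by \Dp{4}) and analyse how the remaining vertices distribute relative to $C$, using \Dp{4} to enforce a proper $3$-colouring of $V(G)$ compatible with a chosen $3$-colouring of $C$. A dichotomy then emerges: either the rotational pattern of neighbourhoods around $C$ generalises and $G$ turns out to be an Andr\'asfai graph $\G_k$, obtained inductively by peeling off concentric layers; or else $C$ extends to the hexagon-plus-$\{x, y\}$ configuration characteristic of Vega graphs, after which the inner Andr\'asfai component $\G_i$ is uniquely determined.

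The main obstacle lies in replacing the global averaging condition $\delta > n/3$ by the local statement \Dp{4}. Minimum-degree arguments typically proceed by summing degrees over carefully chosen vertex sets, whereas \Dp{4} only guarantees common neighbours of sequences of length at most $12$. For each use of the minimum degree condition in the Brandt-Thomass\'e proof one has to exhibit a concrete sequence of at most $12$ vertices, possibly with repetitions, whose common neighbour supplied by \Dp{4} carries the same information. I expect this local-to-global conversion to constitute the bulk of the work, particularly in the Vega case, where the precise attachment of the hexagon, the vertices $x, y$, and the inner Andr\'asfai graph $\G_i$ must be forced from finitely many invocations of \Dp{4}.
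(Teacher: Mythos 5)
Your backward implication is sound: \Dp{4} is closed under blow-up and its inverse, Andr\'asfai graphs satisfy $\delta > n/3$ directly, and each Vega graph admits a regular blow-up with minimum degree exceeding one third of its order, from which \Dp{4} descends to the Vega graph and then re-ascends to any of its blow-ups. The forward implication, which carries essentially all the content of the theorem, contains a genuine gap. You anchor the non-bipartite analysis to a $5$-cycle $C$ and claim that \Dp{4} enforces ``a proper $3$-colouring of $V(G)$ compatible with a chosen $3$-colouring of $C$.'' This cannot be the mechanism: the Vega graphs, including the Mycielski-Gr\"otzsch graph, have chromatic number four, so no global proper $3$-colouring of $V(G)$ exists in precisely the branch you need to recognise, and a step that produced one would prove something false there. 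The dichotomy you name is the right conclusion, but you have not supplied a criterion that separates its two alternatives, nor an argument that survives the Vega case.

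The paper's separating criterion is whether $G$ contains $\grot$, and the enabling observation (Lemma~\ref{l:21}) is that for maximal triangle-free graphs with \Dp{3}, containing $\grot$ is equivalent to containing an induced hexagon. In the $\grot$-free branch Proposition~\ref{prop:210} shows, with no further appeal to \Dp{k}, that a maximal triangle-free graph without an induced hexagon is a blow-up of an Andr\'asfai graph; in the other branch a much longer analysis is carried out around a maximal Vega subgraph. Both branches fix a maximal copy $\Omega$ of $\G_k$ (resp.\ $\grot_i^{\mu\nu}$) in $G$ and prove a twin lemma (Lemma~\ref{l:twin}, Lemma~\ref{lem:vtwin}) together with an attachment lemma (Lemma~\ref{l:attachment}, Lemma~\ref{l:329}); Lemma~\ref{lem:2407} then delivers that $G$ is a blow-up of $\Omega$. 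This replaces your up-front passage to the twin-free quotient by the more flexible $\Omega$-twin notion, and makes concrete the local-to-global conversion you correctly identify as the bulk of the work: every application of \Dp{4} in the paper exhibits an explicit short sequence of vertices with small independence number. Without the hexagon/$\grot$ dichotomy and the twin/attachment framework, your outline lacks an operative starting point for the forward direction.
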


Therefore the class of blow-ups of Andr\'asfai and Vega graphs is definable 
by a single first order property of graphs, which seems somewhat surprising 
to us. Andr\'asfai and Vega graphs themselves are then definable as twin-free
graphs in this class, which is another first-order property. Next, the 
difference between Andr\'asfai and Vega graphs is that the former 
are $\grot$-free, while the latter contain $\grot$. We thus arrive at the 
astonishing conclusion that both the class of Andr\'asfai graphs and the class 
of Vega graphs are definable by a first-order sentence in the language of graph
theory. 

The proof of Theorem~\ref{thm:D} begins with a case distinction 
whether the given graph $G$ is $\grot$-free or not. If it is, we look at 
a maximal Andr\'asfai subgraph~$\G_k$ of~$G$ and show that~$G$ is a blow-up 
of~$\G_k$. Similarly, if $\grot\subseteq G$ we take a maximal Vega subgraph 
of~$G$ and argue that~$G$ is a blow-up thereof. It turns out that in the former 
case the property~\Dp{3} rather than~\Dp{4} suffices. As it reflects the  
historical progress made by Chen, Jin, Koh~\cite{CJK} and 
Brandt, Thomass\'e~\cite{BT}, we would like to state this fact separately.

\begin{thm}\label{thm:1.3}
	Let $G$ be a maximal triangle-free graph.	\begin{enumerate}[label=\alabel]
		\item\label{it:1.3a} If $\grot\nsubseteq G$ and $G$ satisfies \Dp{3}, 
			then $G$ is a blow-up of some Andr\'asfai graph. 
		\item\label{it:1.3b} If $\grot\subseteq G$ and $G$ satisfies \Dp{4}, 
			then $G$ is a blow-up of some Vega graph. 
	\end{enumerate}
\end{thm}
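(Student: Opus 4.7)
The plan is to prove the two parts of Theorem~\ref{thm:1.3} by fixing, in each case, a maximal ``template'' subgraph $H \subseteq G$---an Andr\'asfai graph $\G_k$ in case~\ref{it:1.3a} or a Vega graph $\grot_i^{\ell\ell'}$ in case~\ref{it:1.3b}---and proving that $G$ must be a blow-up of $H$. To this end, for each vertex $u \in V(H)$ I introduce the class $V_u \subseteq V(G)$ of vertices $v$ whose neighbourhood in $H$ equals $N_H(u)$. Since $G$ is triangle-free, the classes $V_u$ associated to vertices with distinct $N_H(u)$ are pairwise disjoint independent sets, and the subgraph of $G$ induced on $\bigcup_{u \in V(H)} V_u$ is by construction a blow-up of $H$; moreover, the maximality of $G$ as a triangle-free graph forces $G$ itself to equal this blow-up once we know that $V(G) = \bigcup_{u \in V(H)} V_u$. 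The whole problem thus reduces to showing that every vertex of $G$ is a twin (in the above sense) of some vertex of the template.

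For part~\ref{it:1.3a}, I would choose $k$ maximal with $\G_k \subseteq G$. If some $v \in V(G)$ failed to be a twin of any vertex of $\G_k$, then the pattern $N_G(v) \cap V(\G_k)$ would be one of a restricted list of ``non-template'' possibilities. The maximality of $k$ forbids those patterns that would allow extending $\G_k$ to a $\G_{k+1}$, and the hypothesis $\grot \nsubseteq G$ forbids those that would give rise to a Mycielski--Gr\"otzsch subgraph. The property~\Dp{3} is then invoked on suitably chosen triples drawn from $V(\G_k) \cup \{v\}$ to produce a common neighbour $y$ whose interaction with the template realises one of these forbidden configurations, which is the desired contradiction.

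Part~\ref{it:1.3b} follows the same blueprint but with a Vega subgraph $\grot_i^{\ell\ell'}$ chosen so that $i$ (and then $\ell + \ell'$) are extremal among Vega subgraphs of $G$. Vega graphs carry more orbits of vertices---the inner $\G_i$, the hexagon $\cC_6$, and the two outer vertices $x,y$---and correspondingly more ``non-template'' neighbourhood patterns can occur; the stronger hypothesis~\Dp{4} (rather than~\Dp{3}) is needed because one now has to apply the common-neighbour property to quadruples of template vertices in order to cover the additional cases. Again the goal is to show that every vertex of $G$ is a twin of some template vertex, with any hypothetical exception $v$ leading either to a triangle, to a larger Vega subgraph~$\grot_{i+1}^{\ell\ell'}$, or to a Vega subgraph of a more symmetric type (such as $\grot_i^{00}$ extending $\grot_i^{11}$), each of which contradicts the maximal choice of template.

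The main obstacle I anticipate is the case analysis in part~\ref{it:1.3b}, since the Vega graph is highly asymmetric: a putative vertex $v$ outside the template can interact with the hexagon, with the outer pair $\{x,y\}$, and with the inner Andr\'asfai part in many essentially different ways, and each combination must be ruled out separately. A natural way to organise this analysis is to first appeal to part~\ref{it:1.3a} (applied internally to $\G_i$) to pin down $N_G(v) \cap V(\G_i)$ as the neighbourhood of a vertex of $\G_i$, and then to analyse how the eight remaining template vertices constrain $N_G(v)$, invoking~\Dp{4} whenever additional common neighbours are needed. Carrying out this bookkeeping cleanly, together with the interplay between the four Vega types $\grot_i^{00}, \grot_i^{10}, \grot_i^{01}, \grot_i^{11}$, is the technical heart of the argument.
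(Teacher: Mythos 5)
Your high-level framework---fix a maximal template subgraph $H$ (an Andr\'asfai graph in case~\ref{it:1.3a}, a Vega graph in case~\ref{it:1.3b}), partition the vertices of $G$ according to their neighbourhood pattern inside $H$, and conclude that $G$ is the corresponding blow-up---is indeed the paper's strategy. Your observation that, for maximal triangle-free $G$, the blow-up structure follows once every vertex is an $H$-twin of a template vertex is also correct, though it is glossed over in your write-up and is not quite ``by construction'': one needs a short argument using the maximality of both $G$ and $H$ as triangle-free graphs and the twin-freeness of $H$ to see that $A_u$ and $A_{u'}$ carry all edges when $uu'\in E(H)$ and none otherwise.

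The genuine gap is that you have not identified, let alone proved, the key technical ingredient that makes the attachment step go through: the \emph{twin property} (the paper's Definition~\ref{df:twin}), i.e.\ the statement that $H$-twins of the endpoints of any edge of a copy of $H$ are themselves adjacent. The paper proves dedicated ``twin lemmata'' (Lemma~\ref{l:twin} for Andr\'asfai graphs, Lemma~\ref{lem:vtwin} for Vega graphs) \emph{before} the attachment lemmata, because the attachment proofs repeatedly swap one vertex of the template for one of its twins and then argue about the new copy; this replacement technique is exactly Lemma~\ref{f:24}, which requires the twin property. Your sketch of the attachment argument---find a non-template pattern, apply \Dp{3} or \Dp{4} to produce a common neighbour, derive a forbidden configuration---is headed in the right direction, but without the twin lemma as a tool you cannot carry out the case analysis that the paper does in Lemma~\ref{l:attachment} and especially in Lemma~\ref{l:329}. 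Moreover, for case~\ref{it:1.3a} the paper first replaces the hypothesis ``$\grot\nsubseteq G$ and \Dp{3}'' by ``$G$ contains no induced hexagon'' (Lemma~\ref{l:21}), after which \Dp{3} is never invoked directly again; this reformulation is essential and is absent from your plan.

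For case~\ref{it:1.3b} your sketch is far from adequate. The paper devotes the bulk of its length to this case, with a long hierarchy of auxiliary results: the cube lemma, the lemma on the graph $N$, the beautiful lemma and the whole analysis of Gr\"otzsch subgraphs in \S\ref{subsec:43}, the classification of independent sets in Vega graphs, the automorphisms of Vega graphs (needed to cut down the case analysis), and only then the twin and attachment lemmata for Vega graphs. Your proposed shortcut---applying part~\ref{it:1.3a} ``internally to $\Gamma_i$'' to pin down $\Nn(v)\cap V(\Gamma_i)$---does not work, since part~\ref{it:1.3a} assumes $\grot\nsubseteq G$, which fails here; the paper instead proves a bespoke adaptation (Claim~\ref{c:332} inside the proof of Lemma~\ref{l:329}). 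A minor slip: when choosing the maximal Vega subgraph you want to maximise the number of vertices, which means maximising $i$ and then \emph{minimising} $\mu+\nu$, not maximising it.
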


Finally it should be pointed out that many, but presumably not all, steps 
in the earlier works~\cites{CJK, BT} use only~\Dp{4} rather than the 
full force of $\delta(G) > n/3$. So there is some overlap between our 
proof and the arguments employed by Chen, Jin, and Koh~\cite{CJK}, and 
by Brandt and Thomass\'e~\cite{BT}.   

\subsection*{Organisation}
In the next section we introduce the central concepts of our approach and 
provide a brief description of important intermediate steps. 
The proofs of the parts~\ref{it:1.3a} and~\ref{it:1.3b} of 
Theorem~\ref{thm:1.3} will then be completed in Section~\ref{sec:A} and
Section~\ref{sec:vega}, respectively. We conclude by mentioning an even 
stronger version of Theorem~\ref{thm:1.3} and discussing some problems 
for further research in Section~\ref{sec:final}. 

\section{Preliminaries}\label{sec:prelim}
We follow standard graph theoretic notation. Given a graph $G$ we denote its sets 
of vertices and edges by $V(G)$ and $E(G)$, respectively. 
For brevity we often write $xy\in E(G)$ instead of $\{x,y\}\in E(G)$. 
By $\Nn(v)$ we mean the neighbourhood of $v\in V(G)$. 
The graph obtained from $G$ by removing a vertex $v$ together with all incident 
edges is denoted by~${G-v}$. If $H$ is a subgraph of $G$ 
and $v\in V(G)\setminus V(H)$, then $H+v$ refers to the graph
\[
	\bigl(V(H)\cup \{v\}, E(H)\cup \{vx\in E(G)\colon x\in V(H)\}\bigr)\,.
\]

Two vertices $v$, $w$ are called {\it twins} if they  have the same 
neighbourhood, i.e., $\Nn(v)=\Nn(w)$. Since a maximal triangle-free graph~$G$ 
has property~\Dp{k} if and only if all its blow-ups have this 
property, it would suffice to prove our main result for twin-free graphs~$G$. 
But in order to detect twins in the graphs $G$ under consideration we shall work 
with the following slightly more general concept.   
	
\begin{dfn}\label{df:Otwin}
	Let $H$ be a subgraph of $G$. If $q\in V(H)$ and $q'\in V(G)$ satisfy 
		\[
		\Nn(q)\cap V(H) = \Nn(q')\cap V(H)\,,
	\]
		then $q'$ is called an {\it $H$-twin} of $q$ (see Figure~\ref{fig:new_23A}).
\end{dfn}

Notice that every $q\in V(H)$ is an $H$-twin of itself. Moreover, 
if $q'\not\in V(H)$, then the graph $H-q+q'$, which will be denoted by $H(q')$ 
in the sequel, is isomorphic to $H$. Now we are ready to define a central concept 
of our approach.

\begin{dfn}\label{df:twin}
	Let $F$ and $G$ be two graphs. 
	\begin{enumerate}[label=\alabel]
		\item\label{it:twina} For an edge $e\in E(F)$ we say that $G$ has 
			the {\it $(F,e)$-twin property} if the following holds: 
			\begin{quotation}
				If $H$ is a subgraph of $G$ isomorphic to $F$, the edge $qz\in E(H)$ 
				corresponds to~$e$, and $q',z'\in V(G)$ are $H$-twins of $q$, $z$, 
				then $q'z'$ is an edge of $G$ (see Figure \ref{fig:new_23B}).
			\end{quotation}
		\item\label{it:twinb} If $G$ has the $(F,e)$-twin property for 
			every $e\in E(F)$, we say that $G$ has the {\it $F$-twin property}. 
	\end{enumerate}
\end{dfn}

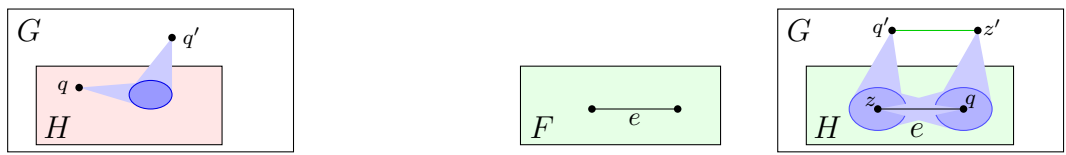
\begin{figure}[ht]
\centering
\begin{subfigure}[b]{0.47\textwidth}
\centering
\begin{tikzpicture}[scale=.95]
	\draw (-1,1) rectangle (3,-1);
	\node at (-.7,.7) {$G$};
	\draw [fill=red!10] (-.6,.2) rectangle (2,-.9);
	\node at (-.3,-.65) {$H$};
	\coordinate (q1) at (0,-.1);
	\coordinate (q2) at (1.3,.6);
	\fill [blue!20] (1,-.02)--(q1)--(1,-.4);
	\fill [blue!20] (.66,-.2) -- (q2)--(1.3, -.2);
	\draw [blue!90!black,fill = blue!40](1,-.2) ellipse (.3cm and .2cm);
	\foreach \i in {1,2} \fill (q\i) circle (1.5pt);
	\node [left] at (q1) {\tiny $q$};
	\node [right] at (q2) {\tiny $q'$};
\end{tikzpicture}
\caption{The vertex $q'$ is an $H$-twin of $q$}
\label{fig:new_23A} 
\end{subfigure}
\hfill
\begin{subfigure}[b]{0.47\textwidth}
	\centering
	\begin{tikzpicture}[scale=.95]
			\coordinate (q2) at (.6,.7);
		\coordinate (z2) at (1.8,.7);
	\draw [fill=green!10] (-4.6,.2) rectangle (-1.8,-.9);
	\node at (-4.3,-.65) {$F$};
	\fill (-3.6,-.4) circle (1.5pt);
	\fill (-2.4,-.4) circle (1.5pt);
	\draw (-3.6,-.4)--(-2.4,-.4);
	\node at (-3, -.55) {\footnotesize $e$};
	\draw (-1,1) rectangle (3,-1);
	\node at (-.7,.7) {$G$};
	\draw [fill=green!10] (-.6,.2) rectangle (2.3,-.9);
	\node at (-.3,-.65) {$H$};
	\coordinate (q1) at (.4,-.4);
	\coordinate (z1) at (1.6,-.4);
		\fill [ blue!20] (1.5,-.1)--(q1)--(1.5,-.7);
	\fill [blue!20] (.4,-.1)--(z1)--(.4,-.7);
	\fill [blue!20] (.01,-.31) -- (q2)--(.75, -.31);
	\fill [blue!20] (1.2,-.35) -- (z2)--(2, -.35);
	\draw [blue!70,fill = blue!30](.4,-.4) ellipse (.4cm and .3cm);
	\draw [blue!70,fill = blue!30](1.6,-.4) ellipse (.4cm and .3cm);
		\fill [blue!20] (1.2,-.25)--(q1)--(1.2,-.6);
	\fill [blue!20] (.9,-.3)--(z1)--(.9,-.55);
		\node at (.95, -.7) {$e$};
	\node at ($(q1)+(-.1,.1)$) {\tiny $z$};
	\node at ($(z1)+(.1,.1)$) {\tiny $q$};
	\node at ($(q2)+(-.15,.05)$) {\tiny $q'$};
	\node at ($(z2)+(.2,.05)$) {\tiny $z'$};
	\draw [green!80!black] (q2)--(z2);
		\draw (q1)--(z1);
	\foreach \i in {q1,z1,q2, z2} \fill (\i) circle (1.5pt);
\end{tikzpicture}
\caption{The graph $G$ has the $(F,e)$-twin property}
\label{fig:new_23B} 
\end{subfigure}
\caption{The concepts in Definitions~\ref{df:Otwin} and~\ref{df:twin}.}
\label{fig:new_23}
\end{figure}

Under some mild assumptions that will often be satisfied in what follows, 
the next lemma asserts that if we want to prove 
a vertex $q'$ to be an $H$-twin of another vertex $q\in V(H)$, then we 
can temporarily replace some vertex $z\in V(H-q)$ by any of its $H$-twins $z'$. 

\begin{lemma}\label{f:24}
	Suppose that $F$ is a maximal triangle-free graph and $G$ is a triangle-free 
	graph possessing the $F$-twin property. Let $H$ be a copy of $F$ in $G$, let 
	$z'\in V(G)\setminus V(H)$ be an $H$-twin of $z\in V(H)$, and set $H'=H(z')$.
	
	If a vertex $q'$ is an $H'$-twin of $q\in V(H)\setminus \{z\}$, then it is 
	an $H$-twin of $q$ as well. 
\end{lemma}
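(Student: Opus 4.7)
The plan is to reduce the conclusion that $q'$ is an $H$-twin of $q$ to the single question of whether the potential edges $qz$ and $q'z$ have the same status in $G$. This reduction is immediate: for every $v\in V(H)\setminus\{z\}\subseteq V(H')$ the hypothesis that $q'$ is an $H'$-twin of $q$ already yields $qv\in E(G)\iff q'v\in E(G)$, so $q$ and $q'$ agree on their neighbours in $V(H)\setminus\{z\}$ for free.

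A preparatory observation is that $z$ is an $H'$-twin of $z'$. For $v\in V(H)\setminus\{z\}$ the equivalence $vz\in E(G)\iff vz'\in E(G)$ holds because $z'$ is an $H$-twin of $z$; moreover $zz'\notin E(G)$, for otherwise $z\in\Nn(z')\cap V(H)=\Nn(z)\cap V(H)$ would force a loop at $z$. Hence $\Nn(z)\cap V(H')=\Nn(z')\cap V(H')$, confirming the claim.

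Now assume first that $qz\in E(G)$. The $H$-twin relation between $z$ and $z'$, applied at the vertex $q\in V(H)$, gives $qz'\in E(G)$, and since $q\in V(H)\setminus\{z\}$, the definition of $H'=H(z')$ upgrades this to $qz'\in E(H')$. Letting $e\in E(F)$ correspond to $qz'$ under the isomorphism $H'\cong F$, the $(F,e)$-twin property applied to the copy $H'$ with the $H'$-twins $q'$ of $q$ and $z$ of $z'$ delivers the edge $q'z\in E(G)$, as required.

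Conversely, assume $qz\notin E(G)$ and suppose for contradiction that $q'z\in E(G)$. Since $qz\notin E(G)\supseteq E(H)$ and $H\cong F$ is maximal triangle-free, the non-adjacent vertices $q$ and $z$ share a common neighbour $w\in V(H)$; in particular $w\neq z$, so $w\in V(H')$, and the fact that $q'$ is an $H'$-twin of $q$ yields $q'w\in E(G)$. Combined with the supposed edge $q'z$ and the known edge $wz\in E(H)$, this produces a triangle $\{q',w,z\}$ in $G$, contradicting triangle-freeness. The main obstacle in this argument is the forward direction: one must notice that the productive way to exploit the $F$-twin property of $G$ is to apply it not to $H$ but to $H'$, with $z$ playing the complementary role of an $H'$-twin of $z'$; once this twist is spotted, the rest is routine.
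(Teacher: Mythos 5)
Your proof is correct and takes essentially the same route as the paper: in both, the only task is to settle whether $q$ and $q'$ agree at $z$, the case $qz\in E(G)$ is handled by switching to the copy $H'$ (where $z$ plays the role of an $H'$-twin of $z'$) and invoking the $F$-twin property, and the case $qz\notin E(G)$ uses maximal triangle-freeness of $H\cong F$ to find a common $H$-neighbour of $q$ and $z$ and then forbid the edge $q'z$ via triangle-freeness. Your explicit preparatory observation that $z$ is an $H'$-twin of $z'$ (and in particular that $zz'\notin E(G)$) is left implicit in the paper but is exactly what its phrase ``the $F$-twin property applies to the $H'$-twins $q'$, $z$ of $q$, $z'$'' relies on; making it explicit is a small improvement in rigour rather than a different argument.
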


\begin{proof}
	Knowing $\Nn(q')\cap V(H') = \Nn(q)\cap V(H')$ we want to show 
	\[
		\Nn(q')\cap V(H) = \Nn(q)\cap V(H)\,.
	\]
		So we only need to establish that $z$ is adjacent to either both or none 
	of $q$, $q'$. 
	
	Assume first that $qz\notin E(G)$. Since $H$ is maximal triangle-free, there 
	is a vertex $y\in V(H)$ with $yq, yz \in E(G)$ (see Figure~\ref{fig:new_24A}). 
	Because $q'$ is an $H'$-twin of $q$ we obtain $yq'\in E(G)$ (see 
	Figure~\ref{fig:new_24B}) and the hypothesis $K_3\not\subseteq G$ 
	yields indeed $q'z\not\in E(G)$.
	
	Suppose next that $qz \in E(G)$, which entails $qz'\in E(G)$, since $z'$ 
	is an $H$-twin of $z$ (see Figure~\ref{fig:new_24C}). 
	Due to $H'\cong H\cong F$  the $F$-twin property applies to the 
	$H'$-twins~$q'$,~$z$ of~$q$,~$z'$. This shows that $q'z$ is an edge of $G$,
	thereby completing the proof (see Figure~\ref{fig:new_24D}).
 \end{proof}
	
\begin{figure}[ht]
\centering
\begin{subfigure}[b]{0.22\textwidth}
\centering
\begin{tikzpicture}[scale=.85]
	\draw (-1,1) rectangle (3,-1.1);
	\node at (-.7,.7) {$G$};
	\draw [fill=green!10] (-.6,.2) rectangle (2.5,-1);
	\node at (-.3,-.65) {$H'$};
	\coordinate (q1) at (.5,-.8);
	\coordinate (z1) at (1.5,-.8);
	\coordinate (y) at (.8,-.2);
	\node [right] at (q1) {\tiny $z'$};
	\node [right] at (z1) {\tiny $q$};
	\coordinate (q2) at (.5,.7);
	\coordinate (z2) at (1.5,.7);
	\node [left] at (q2) {\tiny $z$};
	\node [right] at (z2) {\tiny $q'$};
	\fill [blue!20] (0,-.2)--(q1)--(1.1,-.2);
	\fill [blue!20] (0,-.2) -- (q2)--(1.2, -.2);
	\fill [blue!20] (.7,-.2)--(z1)--(1.8,-.2);
	\fill [blue!20] (.7,-.2) -- (z2)--(1.8, -.2);
	\fill [white](.6,-.2) ellipse (.6cm and .2cm);
	\fill [white](1.2,-.2) ellipse (.6cm and .2cm);
	\draw [opacity = .5, blue!90!black,fill = blue!40](.6,-.2) ellipse (.6cm and .2cm);
	\draw [opacity = .5,blue!90!black,fill = blue!40](1.2,-.2) ellipse (.6cm and .2cm);
	\foreach \i in {q1,z1,q2, z2, y} \fill (\i) circle (1.5pt);
	\node [right] at (y) {\tiny $y$};
	\draw (q2)--(y)--(z1);
\end{tikzpicture}
\vskip -.1cm
\caption{}
\label{fig:new_24A} 
\end{subfigure}
\hfill
\begin{subfigure}[b]{0.22\textwidth}
\centering
\begin{tikzpicture}[scale=.85]
	\draw (-1,1) rectangle (3,-1.1);
	\node at (-.7,.7) {$G$};
	\draw [fill=green!10] (-.6,.2) rectangle (2.5,-1);
	\node at (-.3,-.65) {$H'$};
	\coordinate (q1) at (.5,-.8);
	\coordinate (z1) at (1.5,-.8);
	\coordinate (y) at (.8,-.2);
	\node [right] at (q1) {\tiny $z'$};
	\node [right] at (z1) {\tiny $q$};
	\coordinate (q2) at (.5,.7);
	\coordinate (z2) at (1.5,.7);
	\node [left] at (q2) {\tiny $z$};
	\node [right] at (z2) {\tiny $q'$};
	\fill [blue!20] (0,-.2)--(q1)--(1.1,-.2);
	\fill [blue!20] (0,-.2) -- (q2)--(1.2, -.2);
	\fill [blue!20] (.7,-.2)--(z1)--(1.8,-.2);
	\fill [blue!20] (.7,-.2) -- (z2)--(1.8, -.2);
	\fill [white](.6,-.2) ellipse (.6cm and .2cm);
	\fill [white](1.2,-.2) ellipse (.6cm and .2cm);
	\draw [opacity = .5, blue!90!black,fill = blue!40](.6,-.2) ellipse (.6cm and .2cm);
	\draw [opacity = .5,blue!90!black,fill = blue!40](1.2,-.2) ellipse (.6cm and .2cm);
	\foreach \i in {q1,z1,q2, z2, y} \fill (\i) circle (1.5pt);
	\node [right] at (y) {\tiny $y$};
	\draw (q2)--(y)--(z1);
	\draw (y)--(z2);
\end{tikzpicture}
\vskip -.1cm
\caption{}
\label{fig:new_24B} 
\end{subfigure}
\hfill
\begin{subfigure}[b]{0.22\textwidth}
\centering
\begin{tikzpicture}[scale=.85]
	\draw (-1,1) rectangle (3,-1.1);
	\node at (-.7,.7) {$G$};
	\draw [fill=green!10] (-.6,.2) rectangle (2.5,-1);
	\node at (-.3,-.65) {$H'$};
	\coordinate (q1) at (1.5,-.3);
	\coordinate (z1) at (.4,-.3);
	\coordinate (q2) at (.5,.7);
	\coordinate (z2) at (1.5,.7);
	\node [left] at (q2) {\tiny $z$};
	\node [right] at (z2) {\tiny $q'$};
	\fill [blue!20] (1.5,0)--(z1)--(1.5,-.6);
	\fill [blue!20] (.4,0)--(q1)--(.4,-.6);
	\fill [blue!20] (0,-.2) -- (q2)--(.75, -.2);
	\fill [blue!20] (1.2,-.2) -- (z2)--(2, -.2);
	\draw [blue!70,fill = blue!30](.4,-.3) ellipse (.4cm and .3cm);
	\draw [blue!70,fill = blue!30](1.6,-.3) ellipse (.4cm and .3cm);
	\fill [blue!20] (1.2,-.1)--(z1)--(1.2,-.5);
	\fill [blue!20] (.9,-.15)--(q1)--(.9,-.45);
	\draw (q2)--(z1)--(q1);
	\foreach \i in {q1,z1,q2, z2} \fill (\i) circle (1.5pt);
	\node at ($(z1)+(-.15,0)$) {\tiny $q$};
	\node [right] at (q1) {\tiny $z'$};
\end{tikzpicture}
\vskip -.1cm
\caption{}
\label{fig:new_24C} 
\end{subfigure}
\hfill
\begin{subfigure}[b]{0.22\textwidth}
\centering
\begin{tikzpicture}[scale=.85]
	\draw (-1,1) rectangle (3,-1.1);
	\node at (-.7,.7) {$G$};
	\draw [fill=green!10] (-.6,.2) rectangle (2.5,-1);
	\node at (-.3,-.65) {$H'$};
	\coordinate (q2) at (.5,.7);
	\coordinate (z2) at (1.5,.7);
	\node [left] at (q2) {\tiny $z$};
	\node [right] at (z2) {\tiny $q'$};
	\fill [ blue!20] (1.5,0)--(z1)--(1.5,-.6);
	\fill [blue!20] (.4,0)--(q1)--(.4,-.6);
	\fill [blue!20] (0,-.2) -- (q2)--(.75, -.2);
	\fill [blue!20] (1.2,-.2) -- (z2)--(2, -.2);
	\draw [blue!70,fill = blue!30](.4,-.3) ellipse (.4cm and .3cm);
	\draw [blue!70,fill = blue!30](1.6,-.3) ellipse (.4cm and .3cm);
	\fill [blue!20] (1.2,-.1)--(z1)--(1.2,-.5);
	\fill [blue!20] (.9,-.15)--(q1)--(.9,-.45);
	\draw (z2)--(q2)--(z1)--(q1);
	\foreach \i in {q1,z1,q2, z2} \fill (\i) circle (1.5pt);
	\node at ($(z1)+(-.15,0)$) {\tiny $q$};
	\node [right] at (q1) {\tiny $z'$};
\end{tikzpicture}
\vskip -.1cm
\caption{}
\label{fig:new_24D} 
\end{subfigure}
\vskip -.3cm
\caption{The proof of Lemma~\ref{f:24}.}
\label{fig:new_24}
\end{figure}
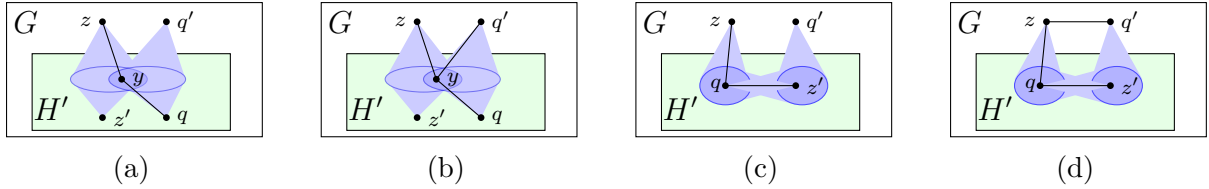
 
Let us now explain our strategy for proving that a given graph is a 
blow-up of one of its subgraphs. 

\begin{lemma}\label{lem:2407}
	Let $G$ be a triangle-free graph and let $\Omega$ be a twin-free subgraph 
	of $G$ which is maximal triangle-free. If 
		\begin{enumerate}[label=\rmlabel]
		\item\label{it:twin} $G$ has the $\Omega$-twin property
		\item\label{it:attach} and every vertex of $G$ is an $\Omega$-twin 
			of some vertex of $\Omega$, 
	\end{enumerate}
		then $G$ is a blow-up of $\Omega$.
\end{lemma}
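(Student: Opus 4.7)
The plan is to define, for each $x\in V(\Omega)$, the class
\[
	V_x=\{v\in V(G):v\text{ is an }\Omega\text{-twin of }x\}\,,
\]
and to verify that $\{V_x\}_{x\in V(\Omega)}$ realises $G$ as a blow-up of $\Omega$. Note that $x\in V_x$ always, so each $V_x$ is non-empty, and hypothesis~\ref{it:attach} guarantees $\bigcup_x V_x=V(G)$. For disjointness, observe first that since $\Omega$ is maximal triangle-free and $G$ is triangle-free, any edge of $G$ with both ends in $V(\Omega)$ must already belong to $\Omega$ (otherwise maximality of $\Omega$ supplies a common neighbour in $V(\Omega)$, creating a triangle in $G$); thus $\Omega$ sits in $G$ as an \emph{induced} subgraph. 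Consequently, if $v\in V_x\cap V_{x'}$, then $\Nn_\Omega(x)=\Nn(v)\cap V(\Omega)=\Nn_\Omega(x')$ says that $x$ and $x'$ are twins in $\Omega$, and the twin-freeness of $\Omega$ forces $x=x'$.

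Next I would verify the two adjacency requirements for a blow-up. For any $x,x'\in V(\Omega)$ with $xx'\notin E(\Omega)$---allowing $x=x'$, and assuming the harmless $|V(\Omega)|\ge 2$ so that every vertex of $\Omega$ has a neighbour---the maximal triangle-freeness of $\Omega$ furnishes a common neighbour $y\in V(\Omega)$ of $x$ and $x'$. Any $v\in V_x$ and $v'\in V_{x'}$ then satisfy $vy,\,v'y\in E(G)$ by the definition of an $\Omega$-twin, and the triangle-freeness of $G$ forbids the edge $vv'$. This handles simultaneously the independence of each $V_x$ and the absence of edges between $V_x$ and $V_{x'}$ whenever $xx'\notin E(\Omega)$.

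The remaining and principal case is $xx'\in E(\Omega)$, and this is precisely where hypothesis~\ref{it:twin} does its work. Taking $H=\Omega$ itself as a subgraph of $G$ isomorphic to $\Omega$, any $v\in V_x$ and $v'\in V_{x'}$ are by construction $H$-twins of $x$ and $x'$, so the $(\Omega,xx')$-twin property of $G$ yields $vv'\in E(G)$. This completes the blow-up structure. I do not anticipate a real obstacle: the argument is an unfolding of the definitions, with the one subtlety being the preliminary observation that $\Omega$ is induced in $G$, which makes the step from $\Omega$-twin equivalence to twin equivalence inside $\Omega$ legitimate.
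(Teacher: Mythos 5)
Your proof is correct and follows essentially the same route as the paper: partition $V(G)$ into twin-classes $V_x$, use twin-freeness of $\Omega$ for disjointness, hypothesis~\ref{it:attach} for covering, the $\Omega$-twin property for the edges corresponding to $E(\Omega)$, and maximal triangle-freeness of $\Omega$ together with triangle-freeness of $G$ to rule out all other edges. The one point you spell out that the paper leaves implicit is that $\Omega$ sits in $G$ as an induced subgraph (needed so that ``$\Omega$-twin of two vertices'' really contradicts twin-freeness of $\Omega$), and that is a genuine, if small, clarification.
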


In practice, assertions verifying assumption~\ref{it:twin} will be called 
{\it twin lemmata} and statements confirming~\ref{it:attach} will be referred to
as {\it attachment lemmata}. Each of the two subsequent sections has its own 
twin lemma (cf.\ Lemma~\ref{l:twin} and Lemma~\ref{lem:vtwin}) and its own 
attachment lemma (cf.\ Lemma~\ref{l:attachment} and Lemma~\ref{l:329}).

\begin{proof}[Proof of Lemma~\ref{lem:2407}]
	Set $A_q=\{q'\in V(G)\colon q' \text{ is an $\Omega$-twin of $q$}\}$
	for every $q\in V(\Omega)$. These sets are mutually disjoint, because 
	we assumed $\Omega$ to be twin-free. So due to~\ref{it:attach} we have 
	a partition	
		\[
		V(G) = \bigdcup_{q\in V(\Omega)} A_q
	\]
		and~\ref{it:twin} informs us that whenever $qz\in E(\Omega)$ 
	all $A_q$-$A_z$-edges are in $E(G)$. Since $\Omega$  is maximal 
	triangle-free,~$G$ can have no further edges. 
\end{proof}

\section{Andr\'asfai graphs}\label{sec:A}
This entire section is devoted to the proof of part \ref{it:1.3a} of 
Theorem~\ref{thm:1.3}. Our first step simplifies the assumption 
$\grot\not\subseteq G$. Since the Mycielski-Gr\"otzsch graph $\grot$ is maximal 
triangle-free, all its occurrences in triangle-free graphs must be induced.  
Together with the fact that $\grot$ contains an induced hexagon 
this shows that triangle-free graphs containing $\grot$ contain an induced 
hexagon as well. It turns out that this implication can be reversed for 
maximal triangle-free graphs with property~\Dp{3}.

\begin{lemma}\label{l:21}
	Let $G$ be a maximal triangle-free graph satisfying \Dp{3}.
	If $G$ contains an induced hexagon, then it contains the 
	Mycielski-Gr\"otzsch graph as well. 
\end{lemma}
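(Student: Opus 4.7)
The strategy is to build a copy of $\grot$ inside $G$ by extending the given induced hexagon $H = h_1 h_2 h_3 h_4 h_5 h_6$. First, I apply the property $\cD_2$ (a consequence of $\cD_3$) to the sequence of six hexagon vertices: this delivers a vertex $z$ adjacent to three of them. Triangle-freeness prevents $z$ from being adjacent to two consecutive hexagon vertices, so the chosen triple must be independent in $H$. The only independent triples in $C_6$ are $\{h_1, h_3, h_5\}$ and $\{h_2, h_4, h_6\}$, so after relabeling we may assume $z \sim h_1, h_3, h_5$ and $z \not\sim h_2, h_4, h_6$; in particular $z \notin V(H)$.

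Next, maximality of $G$ supplies common neighbors for the three long diagonals: vertices $p \sim h_1, h_4$; $q \sim h_2, h_5$; and $r \sim h_3, h_6$ lying outside $H$. Triangle-freeness pins down their hexagon-adjacencies to precisely those listed, distinguishes them from $z$ and from each other, and rules out the edges $zp$, $zq$, $zr$. The ten vertices $V(H) \cup \{z, p, q, r\}$ account for ten of the eleven vertices of $\grot$, playing the roles of the six hexagon vertices, a Mycielski-central-adjacent vertex, and three of the four ``spokes.'' The eleventh vertex is a fifth spoke $p'$ that serves as a second common neighbor of one of the three diagonal pairs---which diagonal depends on where the Mycielski-central vertex of $\grot$ lies among $h_1, h_3, h_5$. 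To extract $p'$, I apply $\cD_3$ with $m = 3$ to a carefully engineered $9$-sequence built from members of $\{h_1, h_4, p, q, r, z\}$ (with appropriate multiplicities), combined with a short case analysis based on which of the pairs $pq$, $qr$, $rp$ are edges of $G$ (at most two, since all three would yield a triangle). Because $\grot$ is itself maximal triangle-free, once these eleven vertices and the $\grot$-edges are in place, the induced subgraph on them must be precisely $\grot$, with no extraneous edge surviving the triangle-free hypothesis on $G$.

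The chief difficulty lies in producing $p'$. Many natural applications of $\cD_3$ are trivially satisfied by hexagon vertices or by $z$, each of which tends to be adjacent to large portions of any $9$-sequence; the sequence must therefore be chosen so that every already-located vertex is adjacent to at most three of its positions, thereby forcing a genuinely new vertex. Matching this new vertex's adjacencies to the Mycielski--Gr\"otzsch blueprint is then a matter of a further short triangle-freeness argument, completing the proof.
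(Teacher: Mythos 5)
Your opening steps match the paper's proof: apply $\cD_2$ to get a vertex $z$ adjacent to one alternating triple of the hexagon, use maximality to find $p, q, r$, the common neighbours of the three antipodal pairs, and observe via triangle-freeness that $z, p, q, r$ are distinct, lie outside $H$, and that $zp, zq, zr \notin E(G)$. These are precisely the paper's $x, c_1, c_2, c_3$.

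The gap lies in what you ask $\cD_3$ to do. You aim it at producing a new eleventh vertex $p'$ from a nine-term sequence with repetitions drawn from $\{h_1, h_4, p, q, r, z\}$. But the real difficulty is elsewhere: the Mycielski--Gr\"otzsch graph requires edges \emph{among} the spokes --- one of $p, q, r$ must be adjacent to the other two --- and there is no a priori reason for any of $pq, qr, rp$ to be an edge of $G$. If all three are absent, each of $p, q, r$ has exactly two neighbours among your ten starting vertices, and a degree-sequence comparison shows this ten-vertex configuration does not appear as an induced subgraph of $\grot$; hence no eleventh vertex can complete the embedding. Your remark that ``at most two of $pq, qr, rp$ can be edges'' is correct, but the zero-edge subcase is exactly the one your proposal leaves unaddressed, and it is where all the work lies.

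The paper's resolution is a replacement trick, and that is the missing idea. Apply $\cD_3$ to the nine vertices $h_1, \dots, h_6, p, q, r$ (no multiplicities needed). The resulting common neighbour $t$ governs an independent $4$-set $T$, and a short constraint argument forces $T$ to consist of one antipodal hexagon pair together with the common neighbours of the other two pairs. Hence $t$ may replace the common neighbour of that pair, and the replacement is, by construction, adjacent to the other two spokes: the missing edges materialise through a change of witness rather than by being forced in $G$. Only after this does a simple $\cD_2$ application (to the induced hexagon alternating the remaining two antipodal pairs with their common neighbours) supply the genuine eleventh vertex --- a second common neighbour of one antipodal pair that is also adjacent to a spoke. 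Your approach omits the replacement entirely and, in effect, misassigns the roles of the two applications of $\cD$.
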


\begin{proof}
	Let $a_1-b_3-a_2-b_1-a_3-b_2-a_1$ be an induced hexagon. Its vertex set 
	has only two independent subsets of size three, namely $\{a_1, a_2, a_3\}$ 
	and $\{b_1, b_2, b_3\}$. So by \Dp{2} we may assume that there exists a 
	common neighbour $x$ of $b_1, b_2$, and $b_3$. 	Because $G$ is a maximal 
	triangle-free graph 
	and $a_1b_1, a_2b_2, a_3b_3\not\in E(G)$, there are 
	common neighbours $c_i$ of~$a_i$,~$b_i$ for $i=1, 2, 3$ 
	(see Figure~\ref{fig:21a}). Since $G$ is triangle-free, the 
	vertices $x$, $c_1$, $c_2$, $c_3$ are distinct.
	
	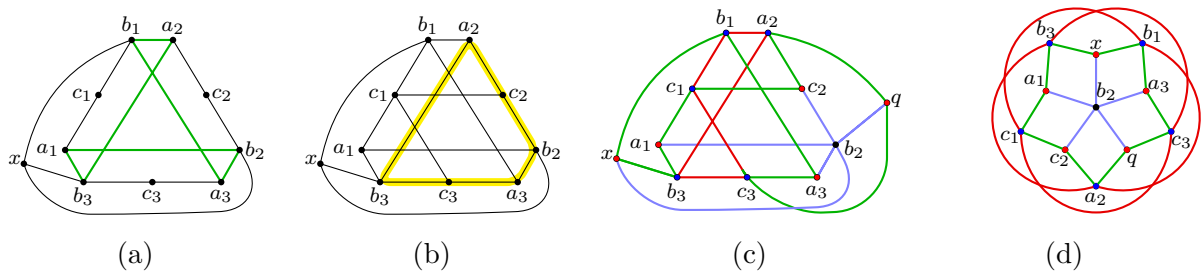
\begin{figure}[ht]
	\centering
	\begin{subfigure}[t]{0.235\textwidth}
	\centering
	\begin{tikzpicture}[scale=.61]
		\coordinate (c3) at (0,-.1);
		\coordinate (b3) at (-1.5,-.1);
		\coordinate (a3) at (1.5,-.1);
		\coordinate (b1) at (-.45,3);
		\coordinate (a2) at (.45,3);
		\coordinate (a1) at (-1.9,.6);
		\coordinate (b2) at (1.9,.6);
		\coordinate (c1) at ($(b1)!.5!(a1)$);
		\coordinate (c2) at ($(b2)!.5!(a2)$);
		\coordinate (x) at (-2.8,.3);
		\draw (b3)--(x) edge [bend left = 30] (b1);
		\draw [rounded corners=20] (x)--(-2.5,-.8)--(0,-.8)--(2.6,-.7)--(b2); 
		\draw [thick, green!70!black] (a1)--(b2)--(a3)--(b1)--(a2)--(b3)--cycle;
		\fill (x) circle (2pt);
		\foreach \i in {1,...,3} {
			\draw (b\i)--(a\i);
			\fill (a\i) circle (2pt);		
			\fill (b\i) circle (2pt);
			\fill (c\i) circle (2pt);
		}
		\node [above=-2pt] at (b1) {\tiny $b_1$};
		\node [left=-2pt] at (a1) {\tiny $a_1$};
		\node [below=-2pt] at (b3) {\tiny $b_3$};
		\node [below=-1pt] at (a3) {\tiny $a_3$};
		\node [right=-2pt] at (b2) {\tiny $b_2$};
		\node [above=-2pt] at (a2) {\tiny $a_2$};
		\node [left=-2pt] at (c1) {\tiny $c_1$};
		\node [right=-2pt] at (c2) {\tiny $c_2$};
		\node [below=-1pt] at (c3) {\tiny $c_3$};
		\node at ($(x)+(-.2,.1)$) {\tiny $x$};
	\end{tikzpicture}
	\caption{}
	\label{fig:21a}
	\end{subfigure}
	\hfill
	\begin{subfigure}[t]{0.235\textwidth}
	\centering
	\begin{tikzpicture}[scale=.61]
		\draw [line width = 3pt, yellow, rounded corners] (a2)--(b2)--(a3)--(b3)--cycle;
		\fill (x) circle (2pt);
		\foreach \i in {1,...,3} {
			\fill (a\i) circle (2pt);		
			\fill (b\i) circle (2pt);
			\fill (c\i) circle (2pt);
			\draw (b\i)--(a\i);
		}
		\draw (b3)--(x) edge [bend left = 30] (b1);
		\draw [rounded corners=20] (x)--(-2.5,-.8)--(0,-.8)--(2.6,-.7)--(b2); 
		\draw (a1)--(b2)--(a3);
		\draw (b1) -- (a3);
		\draw (b1)--(a2)--(b3) -- (a1);
		\draw (c2)--(c1)--(c3);
		\node [above=-2pt] at (b1) {\tiny $b_1$};
		\node [left=-2pt] at (a1) {\tiny $a_1$};
		\node [below=-2pt] at (b3) {\tiny $b_3$};
		\node [below=-1pt] at (a3) {\tiny $a_3$};
		\node [right=-2pt] at (b2) {\tiny $b_2$};
		\node [above=-2pt] at (a2) {\tiny $a_2$};
		\node [left=-2pt] at (c1) {\tiny $c_1$};
		\node [right=-2pt] at (c2) {\tiny $c_2$};
		\node [below=-1pt] at (c3) {\tiny $c_3$};
		\node at ($(x)+(-.2,.1)$) {\tiny $x$};
	\end{tikzpicture}
	\caption{}
	\label{fig:21b}
	\end{subfigure}
	\hfill
	\begin{subfigure}[t]{.27\textwidth}
	\centering
	\begin{tikzpicture}[scale=.62]
		\coordinate (c3) at (0,-.1);
		\coordinate (b3) at (-1.5,-.1);
		\coordinate (a3) at (1.5,-.1);
		\coordinate (b1) at (-.45,3);
		\coordinate(a2) at (.45,3);
		\coordinate (a1) at (-1.9,.6);
		\coordinate (b2) at (1.9,.6);
		\coordinate (c1) at ($(b1)!.5!(a1)$);
		\coordinate (c2) at ($(b2)!.5!(a2)$);
		\coordinate (x) at (-2.8,.3);
		\coordinate (q) at (3, 1.5);
		\draw [thick, green!70!black](a2)edge [bend left = 20](q);
		\draw [thick, blue!50] (c2)--(b2)--(a3)--(b2)--(q)--(b2)--(a1);
		\draw [thick, red!90!black] (c1)--(c3)--(b3)--(a2)--(b1)--(c1);
		\draw [thick, green!70!black] (x)--(b3)--(a1)--(c1)--(c2)--(a2);
		\draw [thick, green!70!black] (b1)--(a3)--(c3);
		\draw [thick, green!70!black] (b3)--(x) edge [bend left = 30] (b1);
		\draw [thick, green!70!black, rounded corners=20] (c3)--(.5,-.9)--(3,-.8)--(q);
		\draw [thick, blue!50,rounded corners=20] (x)--(-2.5,-.8)--(0,-.8)--(2.6,-.7)--(b2); 	
		\foreach \i in {1,...,3} {
			\fill (a\i) circle (2pt);		
			\fill (b\i) circle (2pt);
			\fill (c\i) circle (2pt);
		}
		\fill (x) circle (2pt);
		\fill (q) circle (2pt);
		\foreach \i in {x, a3, q, c2, a1} 	\fill [red](\i) circle (1.6pt);		
		\foreach \i in {b3, b1, c3, a2, c1}	\fill [blue](\i) circle (1.6pt);
		\node [above=-2pt] at (b1) {\tiny $b_1$};
		\node [left=-2pt] at (a1) {\tiny $a_1$};
		\node [below=-2pt] at (b3) {\tiny $b_3$};
		\node [below=-1pt] at (a3) {\tiny $a_3$};
		\node at ($(b2)+(.4,-.1)$) {\tiny $b_2$};
		\node [above=-2pt] at (a2) {\tiny $a_2$};
		\node [left=-2pt] at (c1) {\tiny $c_1$};
		\node [right=-2pt] at (c2) {\tiny $c_2$};
		\node [below] at (c3) {\tiny $c_3$};
		\node at ($(x)+(-.2,.1)$) {\tiny $x$};
		\node at ($(q)+(.2,.05)$) {\tiny $q$};
	\end{tikzpicture}
	\caption{}
	\label{fig:21c}
	\end{subfigure}
	\hfill
	\begin{subfigure}[t]{.23\textwidth}
	\centering
	\begin{tikzpicture}[scale=.7]
		\hskip .4cm
		\coordinate (c) at (0,0);
		\foreach \i in {1,...,5} {
			\coordinate (a\i) at (18+\i*72:1);
			\coordinate (b\i) at (54+\i*72:1.5);	
			\coordinate (d\i) at (54+\i*72:2);
		}
		\foreach \i in {1,...,5} {
			\draw [blue!50, thick] (c)--(a\i);
			\fill (a\i) circle (1.8pt);		
			\fill (b\i) circle (1.8pt);
		}
		\fill (c) circle (1.8pt);
		\draw [green!70!black, thick](a1)--(b1)--(a2)--(b2)--(a3)--(b3)--(a4)--(b4)--(a5)--(b5)--(a1);
		\draw (b1) [out=198, in=108, red!90!black, thick] to (d2) [out=-72, in=198] to (b3);
		\draw (b2) [out=270, in=180, red!90!black, thick] to (d3) [out=0, in=270] to (b4);
		\draw (b3) [out=-18, in=252, red!90!black, thick] to (d4) [out=72, in=-18] to (b5);
		\draw (b4) [out=54, in=-36, red!90!black, thick] to (d5) [out=144, in=54] to (b1);
		\draw (b5) [out=126, in=36, red!90!black, thick] to (d1) [out=216, in=126] to (b2);
		\node at ($(c)+(.2,.3)$) {\tiny $b_2$};
		\node at ($(a1)+(0,.2)$) {\tiny $x$};
		\node at ($(a2)+(-.2,.2)$) {\tiny $a_1$};
		\node at ($(a3)+(-.1,-.2)$) {\tiny $c_2$};
		\node at ($(a4)+(.1,-.2)$) {\tiny $q$};
		\node at ($(a5)+(.25,.1)$) {\tiny $a_3$};
		\node at ($(b1)+(-.06,.27)$) {\tiny $b_3$};
		\node at ($(b2)+(-.2,-.1)$) {\tiny $c_1$};
		\node at ($(b3)+(0,-.25)$) {\tiny $a_2$};
		\node at ($(b4)+(.2,-.2)$) {\tiny $c_3$};
		\node at ($(b5)+(.2,.2)$) {\tiny $b_1$};
		\foreach \i in {1,...,5} {
			\fill [red](a\i) circle (1.6pt);		
			\fill [blue](b\i) circle (1.6pt);
		}
	\end{tikzpicture}	
	\caption{}
	\label{fig:21d}
	\end{subfigure}
	\caption{The proof of Lemma~\ref{l:21}.}
	\label{fig:21}
	\end{figure}
 
	By~\Dp{3} there is a four-element subset $T$  
	of $\{a_1, a_2, a_3, b_1, b_2, b_3, c_1, c_2, c_3\}$ possessing a common
	neighbour $t$. As $T$ contains at most one vertex 
	from each of the edges $b_1c_1$,~$b_2c_2$,~$b_3c_3$, we may 
	assume $a_1\in T$. Similarly at least one of $b_1$, $b_2$, $b_3$ is in~$T$. 
	Together with the independence of $T$ this yields 
	$b_1\in T$, whence $T=\{a_1, b_1, c_2, c_3\}$. 
	Thus we can replace $c_1$ by $t$ and this argument allows us to assume 
	$c_1c_2, c_1c_3\in E(G)$ (see Figure~\ref{fig:21b}). 
	
	Next we apply~\Dp{2} to the hexagon $a_2-c_2-b_2-a_3-c_3-b_3-a_2$. 
	Due to the symmetry between the indices $2$ and $3$ we can suppose,
	without loss of generality, that there exists a common neighbour $q$ 
	of $a_2$, $b_2$, $c_3$ (see Figure \ref{fig:21c}). 
	Altogether we have now found a copy of the Mycielski-Gr\"otzsch 
	graph~$\grot$ in~$G$ (see Figure~\ref{fig:21d}). 
\end{proof}

In the remainder of this section we do not need to appeal to~\Dp{3} directly 
anymore. In other words, we shall obtain an explicit description of the class~$\fA$
of maximal triangle-free graphs on at least two vertices not containing an 
induced hexagon. As it will turn out, $\fA$ is simply the class of blow-ups of 
Andr\'asfai graphs. Let us recall at this moment that for every positive 
integer~$k$ the Andr\'asfai graph $\G_k$ has vertex set $\ZZ/(3k-1)\ZZ$ and 
all edges~$ij$ such that $i-j\in \{k, k+1, \dots, 2k-1\}$.
As promised in Section~\ref{sec:prelim} we shall establish a twin lemma and 
an attachment lemma.

An edge $ij$ of the Andr\'asfai graph $\Gamma_k$ is called {\it short} 
if $i-j=\pm k$ and {\it long} otherwise. So all edges of~$\Gamma_1$ 
and~$\Gamma_2$ are short and, up to symmetry,~$04$ is the only long edge 
of~$\Gamma_3$. For long edges the twin property requires no further assumptions.

\begin{lemma}
	If $e$ denotes a long edge of an Andr\'asfai graph $\G_k$, then every $G\in\fA$
	has the $(\G_k, e)$-twin property.
\end{lemma}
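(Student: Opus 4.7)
My plan is to argue by contradiction: under the assumption $q'z'\notin E(G)$, I will exhibit an induced hexagon in $G$, thereby contradicting $G\in\fA$.

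I begin with two reductions. First, since $\Gamma_k$ is twin-free, an $H$-twin of~$q$ that lies inside $V(H)$ can only be~$q$ itself, and analogously for~$z$; in those trivial cases the $H$-twin property immediately yields $q'z'\in E(G)$ from $qz\in E(H)$. Second, $H$ must be an \emph{induced} subgraph of~$G$, since any $G$-edge between two non-$H$-adjacent vertices of $V(H)$ would close into a triangle via a common neighbour furnished by the maximal triangle-freeness of $\Gamma_k$. So I may assume $q',z'\in V(G)\setminus V(H)$ and treat $H$ as induced. Now, if $q'z'\notin E(G)$, then maximal triangle-freeness of~$G$ provides a common neighbour $y\in V(G)$ of $q',z'$; since $q,z$ have no common neighbour in the triangle-free graph~$H$, necessarily $y\notin V(H)$.

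The combinatorial heart of the plan is to locate two length-four paths in $\Gamma_k$ from $q$ to $z$ whose midpoints are themselves adjacent. Identifying $H$ with $\Gamma_k$ via vertex-transitivity so that $q=0$ and $z=d=k+m$ for some $m\in\{1,\dots,k-2\}$, I take (modulo $3k-1$)
\[
  P_1\colon 0-k-2k-m-d
  \qquad\text{and}\qquad
  P_2\colon 0-(2k-1)-(m+1)-(2k+m)-d,
\]
writing $P_i$ as $q-a_i-c_i-b_i-z$. The step I expect to require the most care is the direct residue check confirming that the five pairs $qc_i,qb_i,za_i,zc_i,a_ib_i$ are non-edges of~$\Gamma_k$ for each~$i$, and that $c_1-c_2=2k-m-1\in\{k+1,\dots,2k-2\}$, so that $c_1c_2$ is itself a (long) edge of~$\Gamma_k$.

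With these paths in hand, triangle-freeness of $G$ finishes the job. The $H$-twin property supplies $q'\sim a_i$ and $z'\sim b_i$, so triangles $yq'a_i$ or $yz'b_i$ would arise if $y$ were adjacent to any of $a_1,b_1,a_2,b_2$. Since $c_1c_2\in E(G)$, the vertex~$y$ also cannot be adjacent to both $c_1$ and $c_2$; after possibly swapping $P_1$ and $P_2$ I may assume $y\not\sim c_1$. The cycle
\[
  C\colon q'-y-z'-b_1-c_1-a_1-q'
\]
has all six edges in $E(G)$ (using the choice of~$y$, the $H$-twin property applied to $qa_1,zb_1\in E(H)$, and $E(H)\subseteq E(G)$), and each of its nine potential chords fails to be an edge: $q'z'$ by the standing assumption; $ya_1,yb_1,yc_1$ by triangle-freeness through~$y$; $q'b_1,q'c_1,z'a_1,z'c_1$ by the $H$-twin property applied to the established non-edges of~$\Gamma_k$; and $a_1b_1$ by the inducedness of~$H$. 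This induced hexagon in~$G$ contradicts $G\in\fA$.
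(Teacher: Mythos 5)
Your proof is correct and is essentially the paper's argument: both use the same pair of $q$--$z$ walks in $\Gamma_k$ with adjacent midpoints (for $k=3$, $m=1$ your $P_1,P_2$ are exactly the paper's two hexagons through the common neighbour of the putative non-adjacent twins) to force an induced hexagon. The one organizational difference is that the paper first settles the $\Gamma_3$ case and then handles general $k$ by exhibiting a suitable copy of $\Gamma_3$ inside $\Gamma_k$, whereas you carry out the residue bookkeeping directly in $\Gamma_k$, a valid and only modestly longer variant of the same idea.
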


\begin{proof}
	We start with the special case $k=3$, i.e., we show that every $G\in\fA$ has 
	the $(\G_3, 04)$-twin property.   
	Assume contrariwise that $\Gamma_3\subseteq G$ and that $0',4'\in V(G)$ 
	are non-adjacent $\Gamma_3$-twins of $0,4\in V(\G_3)$. Let $r$ be a 
	common neighbour of $0'$, $4'$ (see Figure~\ref{fig:new_gamma3A}).
		
	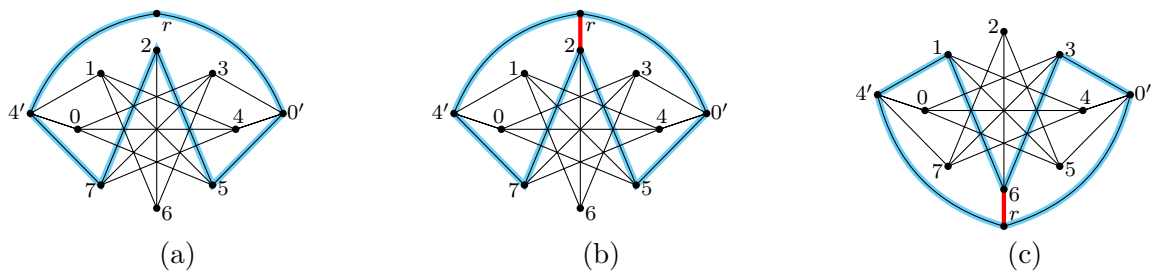
\begin{figure}[ht]
	\centering
	\begin{subfigure}[b]{.3\textwidth}
	\begin{tikzpicture}[scale=.7]
		\coordinate (b) at (-2.4, .3);
		\coordinate (c) at (2.4, .3);
		\coordinate (r) at (0,2.2);
		\foreach \i in {1,...,8} \coordinate (a\i) at (\i*45:1.5);
		\draw (c) edge [cyan!50, line width=2.5, bend right = 30](r);
		\draw (b) edge [cyan!50, line width = 2.5, bend left= 30](r);
		\draw [cyan!50, line width=2.5] (b)--(a5)--(a2)--(a7)--(c);
		\draw (c) edge [bend right = 30](r);
		\draw (b) edge [bend left= 30](r);		
		\draw (a1) -- (a4) --(a7) --(a2) -- (a5) -- (a8) --(a3) --(a6) --(a1);
		\draw (a1) -- (a5);
		\draw (a2) -- (a6);
		\draw (a3) -- (a7);
		\draw (a4) -- (a8);	
		\draw (a3)--(b)--(a4)--(b)--(a5);
		\draw (a7)--(c)--(a8)--(c)--(a1);
		\fill (c) circle (2pt);
		\fill (b) circle (2pt);
		\fill (r) circle (2pt);
		\foreach \i in {1,...,8} \fill (a\i) circle (2pt);		
		\node at ($(r)+(.2,-.25)$) {\tiny $r$};
		\node at ($(a1)+(.2,.1)$) {\tiny $3$};
		\node at ($(a2)+(-.2,.1)$) {\tiny $2$};
		\node at ($(a3)+(-.2,.1)$) {\tiny $1$};
		\node at ($(a4)+(-.05,.25)$) {\tiny $0$};
		\node at ($(a5)+(-.2,-.1)$) {\tiny $7$};
		\node at ($(a6)+(.2,-.1)$) {\tiny $6$};
		\node at ($(a7)+(.2,-.05)$) {\tiny $5$};
		\node at ($(a8)+(.05,.25)$) {\tiny $4$};
		\node at ($(b)+(-.25,.05)$) {\tiny $4'$};
		\node at ($(c)+(.25,.05)$) {\tiny $0'$};
	\end{tikzpicture}
	\vskip -.2cm
	\caption{}
	\label{fig:new_gamma3A}
	\end{subfigure}
	\hfill
	\begin{subfigure}[b]{.3\textwidth}
	\begin{tikzpicture}[scale=.7]
		\draw (c) edge [cyan!50, line width=2.5, bend right = 30](r);
		\draw (b) edge [cyan!50, line width = 2.5, bend left= 30](r);
		\draw [cyan!50, line width=2.5] (b)--(a5)--(a2)--(a7)--(c);
		\draw [ultra thick, red] (r)--(a2);
		\draw (c) edge [bend right = 30](r);
		\draw (b) edge [bend left= 30](r);		
		\draw (a1) -- (a4) --(a7) --(a2) -- (a5) -- (a8) --(a3) --(a6) --(a1);
		\draw (a1) -- (a5);
		\draw (a2) -- (a6);
		\draw (a3) -- (a7);
		\draw (a4) -- (a8);	
		\draw (a3)--(b)--(a4)--(b)--(a5);
		\draw (a7)--(c)--(a8)--(c)--(a1);
		\fill (c) circle (2pt);
		\fill (b) circle (2pt);
		\fill (r) circle (2pt);
		\foreach \i in {1,...,8} \fill (a\i) circle (2pt);		
		\node at ($(r)+(.2,-.25)$) {\tiny $r$};
		\node at ($(a1)+(.2,.1)$) {\tiny $3$};
		\node at ($(a2)+(-.2,.1)$) {\tiny $2$};
		\node at ($(a3)+(-.2,.1)$) {\tiny $1$};
		\node at ($(a4)+(-.05,.25)$) {\tiny $0$};
		\node at ($(a5)+(-.2,-.1)$) {\tiny $7$};
		\node at ($(a6)+(.2,-.1)$) {\tiny $6$};
		\node at ($(a7)+(.2,-.05)$) {\tiny $5$};
		\node at ($(a8)+(.05,.25)$) {\tiny $4$};
		\node at ($(b)+(-.25,.05)$) {\tiny $4'$};
		\node at ($(c)+(.25,.05)$) {\tiny $0'$};
	\end{tikzpicture}
	\vskip -.2cm
	\caption{}
	\label{fig:new_gamma3B}
	\end{subfigure}
	\hfill
	\begin{subfigure}[b]{.3\textwidth}
	\begin{tikzpicture}[scale=.7]
		\coordinate (r) at (0,-2.2);
		\draw (c) edge [cyan!50, line width=2.5, bend left = 30](r);
		\draw (b) edge [cyan!50, line width = 2.5, bend right= 30](r);
		\draw [cyan!50, line width=2.5] (b)--(a3)--(a6)--(a1)--(c);
		\draw [ultra thick, red] (r)--(a6);
		\draw (c) edge [bend left = 30](r);
		\draw (b) edge [bend right= 30](r);		
		\draw (a1) -- (a4) --(a7) --(a2) -- (a5) -- (a8) --(a3) --(a6) --(a1);
		\draw (a1) -- (a5);
		\draw (a2) -- (a6);
		\draw (a3) -- (a7);
		\draw (a4) -- (a8);	
		\draw (a3)--(b)--(a4)--(b)--(a5);
		\draw (a7)--(c)--(a8)--(c)--(a1);
		\fill (c) circle (2pt);
		\fill (b) circle (2pt);
		\fill (r) circle (2pt);
		\foreach \i in {1,...,8} \fill (a\i) circle (2pt);		
		\node at ($(r)+(.2,.2)$) {\tiny $r$};
		\node at ($(a1)+(.2,.1)$) {\tiny $3$};
		\node at ($(a2)+(-.2,.1)$) {\tiny $2$};
		\node at ($(a3)+(-.2,.15)$) {\tiny $1$};
		\node at ($(a4)+(-.05,.25)$) {\tiny $0$};
		\node at ($(a5)+(-.2,-.1)$) {\tiny $7$};
		\node at ($(a6)+(.2,-.1)$) {\tiny $6$};
		\node at ($(a7)+(.2,-.05)$) {\tiny $5$};
		\node at ($(a8)+(.05,.25)$) {\tiny $4$};
		\node at ($(b)+(-.25,.05)$) {\tiny $4'$};
		\node at ($(c)+(.25,.05)$) {\tiny $0'$};
	\end{tikzpicture}
	\vskip -.2cm
	\caption{}
	\label{fig:new_gamma3C}
	\end{subfigure}
	\vskip -.3cm
	\caption{The $(\G_3, e)$-twin property.}
	\label{fig:new_gamma3}
	\end{figure}
 		
	As the hexagon $0'-r-4'-7-2-5-0'$ cannot be induced, we have $2r\in E(G)$ 
	(see Figure \ref{fig:new_gamma3B}). 
	Similarly, the hexagon $0'-r-4'-1-6-3-0'$ discloses $6r\in E(G)$
	(see Figure~\ref{fig:new_gamma3C}).
	But now $26r$ is a triangle, which is absurd. 
	
	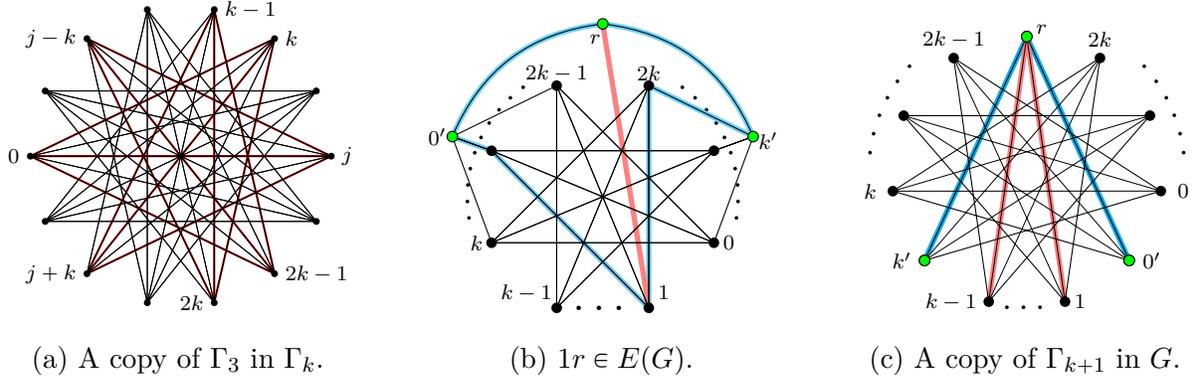
\begin{figure}[h!]
	\centering
	\begin{subfigure}[b]{.3\textwidth}
	\begin{tikzpicture}[scale=1]
		\coordinate (c) at (0,0);
		\foreach \i in {0,...,30} {
			\coordinate (a\i) at (180+\i*25.714:2);
		}
		\draw [red,  thick] (a0)--(a7);
		\draw [red,  thick] (a4)--(a10);
		\draw [red,  thick]  (a2)--(a9);
		\draw [red,  thick] (a5)--(a12);
		\draw [red,  thick] (a0)--(a9)--(a4)--(a12)--(a7)--(a2)--(a10)--(a5)--(a0);
		\foreach \i in {0,...,13} {		
			\fill (a\i) circle (1.3pt);
			\foreach \j [evaluate=\j as \k using \i+\j] in {5,...,9}{\draw [line width=.05](a\i)--(a\k);}		
		}
		\node [left] at (a0) {\tiny $0$};
		\node [right] at (a7) {\tiny $j$};
		\node [right] at (a9) {\tiny $k$};
		\node [right] at (a10) {\tiny $k-1$};
		\node [left] at (a12) {\tiny $j-k$};
		\node [left] at (a2) {\tiny $j+k$};
		\node [left] at (a4) {\tiny $2k$};
		\node [right] at (a5) { \tiny $2k-1$};
	\end{tikzpicture}
	\caption{A copy of $\G_3$ in $\G_k$.}\label{fig:gamma3_2}
	\end{subfigure}
	\hfill
	\begin{subfigure}[b]{0.36\textwidth}
	\centering
	\begin{tikzpicture}[scale=1]
		\foreach \j in {1,...,10}{
			\fill (119+\j*7:1.8) circle (.7pt);
			\fill (-16+\j*7:1.8) circle (.7pt);
		}
		\foreach \i in {0,...,12} \coordinate (a\i) at (-22.5+\i*45:1.6);
		\coordinate (b) at (-2,.8);
		\coordinate (c) at (2,.8);
		\coordinate (r) at (0,2.3);
		\draw [cyan!50, line width=2pt] (c)edge [bend left=-30](r);
		\draw [cyan!50, line width=2pt] (r) edge [bend left=-30] (b);
		\draw [cyan!50, line width=2pt] (b)--(a4)--(a7)--(a2)--(c);
		\draw [red!50, line width=2pt] (a7)--(r);
		\foreach \i in {0,...,7} {
			\fill (a\i) circle (2pt);
			\foreach \j [evaluate=\j as \k using \i+\j] in {3,4,5}{
			\draw (a\i)--(a\k);}
		}
		\draw (a0)--(c)--(a1)--(c)--(a2);
		\draw (a3)--(b)--(a4)--(b)--(a5);
		\draw (c) edge [bend left = -30](r);
		\draw (b) edge [bend right= -30](r);
		\draw[fill=green] (c) circle (2pt);
		\draw[fill=green] (b) circle (2pt);
		\draw[fill=green] (r) circle (2pt);
		\node at ($(r)+(-.1,-.2)$) {\tiny $r$};
		\node at ($(b)+(-.2,0)$) {\tiny $0'$};
		\node at ($(c)+(.2,-.05)$) {\tiny $k'$};
		\node at ($(a2)+(0,.17)$) {\tiny $2k$};
		\node at ($(a3)+(0,.17)$) {\tiny $2k-1$};
		\node at ($(a5)+(-.2,0)$) {\tiny $k$};
		\node at ($(a6)+(-.4,.2)$) {\tiny $k-1$};
		\node at ($(a7)+(.2,.2)$) {\tiny $1$};
		\node at ($(a0)+(.2,0)$) {\tiny $0$};
		\node at($(a6)!.5!(a7)$) {\Large $\dots$};
	\end{tikzpicture}
	\caption{$1r\in E(G)$.}
	\label{fig:twina} 
	\end{subfigure}
	\hfill    
	\begin{subfigure}[b]{0.3\textwidth}
	\centering
	\begin{tikzpicture}[scale=1]
		\foreach \i in {1,...,11} {
			\coordinate (a\i) at (90+\i*32.73:1.8);
		}
		\foreach \j in {1,...,6}{
			\fill (131+\j*7:2.1) circle (.7pt);
			\fill (0+\j*7:2.1) circle (.7pt);
		}
		\draw [line width = 2pt, cyan] (a7) -- (a11);
		\draw [line width = 2pt, cyan] (a11) -- (a4);
		\draw [line width = 2pt, red!50] (a6) -- (a11) ;
		\draw [line width = 2pt, red!50] (a11) -- (a5);
		\foreach \i in {1,...,11} {
			\fill (a\i) circle (2pt);		
		}
		\foreach \i in {4,7,11}{
			\draw[fill=green] (a\i) circle (2pt);	
		}
		\draw (a11)--(a4)--(a8)--(a1)--(a5)--(a9)--(a2)--(a6)--(a10)--(a3)--(a7)--(a11);
		\draw [ultra thin] (a1)--(a6)--(a11)--(a5)--(a10)--(a4)--(a9)--(a3)--(a8)--(a2)--(a7)--(a1);
		\foreach \i in {4,7,11}{
			\draw[fill=green] (a\i) circle (2pt);	
		}
		\node at ($(a11)+(.2,.1)$) { \tiny $r$};
		\node at ($(a1)+(0,.25)$) { \tiny $2k-1$};
		\node at ($(a3)+(-.3,0)$) { \tiny $k$};
		\node at ($(a4)+(-.3,0)$) { \tiny $k'$};
		\node at ($(a5)+(-.5,0)$) { \tiny $k-1$};
		\node at ($(a6)+(.2,0)$) { \tiny $1$};
		\node at ($(a7)+(.3,0)$) { \tiny $0'$};
		\node at ($(a8)+(.3,0)$) { \tiny $0$};
		\node at ($(a10)+(0,.25)$) { \tiny $2k$};
		\node at (0,-1.8) {\large $\dots$};
	\end{tikzpicture}
	\caption{A copy of $\G_{k+1}$ in $G$.}
	\label{fig:twinb} 		
	\end{subfigure}   
	\caption{The $\G_k$-twin property for long and short edges.}
	\label{fig:twin}
	\end{figure}
 
	Next we generalise this to all long edges.
	By symmetry we may assume that the given long edge of $\G_k$ is of the 
	form $e=0j$, where $k<j<2k-1$. Figure~\ref{fig:gamma3_2} 
	shows a copy of~$\Gamma_3$ in~$\Gamma_k$ one of whose long edges 
	corresponds to~$0j$. Thus the assertion follows from the special case 
	treated earlier. 
\end{proof}

\begin{lemma}[Twin lemma]\label{l:twin}
	Every $\G_{k+1}$-free graph $G\in \fA$ has the $\Gamma_k$-twin property.
\end{lemma}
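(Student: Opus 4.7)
The plan is to verify the $(\G_k,e)$-twin property only for short edges $e$, since the previous lemma settles the long ones. By the cyclic automorphism of $\G_k$ any two short edges are equivalent, so it suffices to treat $e=0k$. Thus let $H\cong \G_k$ be a subgraph of $G$ with $V(H)$ identified with $\ZZ/(3k-1)\ZZ$, and let $0', k'\in V(G)$ be $H$-twins of $0$ and $k$. Aiming for a contradiction I assume $0'k'\notin E(G)$; maximality of $G$ then produces a common neighbour $r$ of $0'$ and $k'$. Short preliminary checks show $0', k', r\notin V(H)$, $0'\ne k'$, and $r\ne 0', k'$: for instance $r\in V(H)$ would force $r\in N(0)\cap N(k)=\vn$, since the short edge $0k$ sits inside the triangle-free graph $\G_k$, while no two distinct vertices of $\G_k$ share a neighbourhood.

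The strategy is to build a copy of $\G_{k+1}$ inside $G$ on the vertex set $V(H)\cup\{0', k', r\}$, which has exactly $3k+2=|V(\G_{k+1})|$ vertices; any such copy contradicts the $\G_{k+1}$-freeness of $G$. I arrange these vertices on a cycle by inserting $0'$, $k'$, $r$ into the natural cyclic order on $V(H)$ between the pairs $(0,1)$, $(k-1,k)$, and $(2k-1,2k)$ respectively. A bookkeeping check shows that every required $\G_{k+1}$-edge is present in $G$, except possibly in one family: the edges within $V(H)$ are the $\G_k$-edges after the shift of cyclic distances caused by the insertions, the edges incident to $0'$ or $k'$ come from the twin property, and $r0', rk'$ are given. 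What remains is to establish $rj\in E(G)$ for every $j\in\{1,2,\dots,k-1\}$.

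For a fixed such $j$ I would look at the hexagon $C_j\colon 0'-r-k'-2k-j-(j+k)-0'$ in $G$. Its six edges are readily verified: $0'r$ and $rk'$ are given, $k'(2k)$ and $(j+k)0'$ follow from the twin property, and $(2k)j$ and $j(j+k)$ are $\G_k$-edges. Among the nine non-consecutive pairs of $C_j$, six are non-edges of $G$ by the twin property, the $\G_k$-structure, or the hypothesis $0'k'\notin E(G)$; the two remaining candidates $r(2k)$ and $r(j+k)$ are excluded by triangle-freeness, since the former would close the triangle $r,k',2k$ and the latter the triangle $r,0',j+k$, with the auxiliary edges $k'(2k)$ and $0'(j+k)$ once more supplied by the twin property. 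As $G\in\fA$ contains no induced hexagon, $C_j$ must carry a chord, and by elimination that chord is $rj$.

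The main obstacle I expect is isolating this particular hexagon $C_j$: it has to involve $r$ only through its two known neighbours $0'$ and $k'$, pass through the target vertex $j$, and be tightly enough structured that triangle-freeness kills every other candidate chord. Once $C_j$ is in hand the remainder is routine, and combining the resulting edges with the straightforward bookkeeping produces $\G_{k+1}\subseteq G$, contradicting the hypothesis. The case $k=1$ is handled directly, since then $\{1,\dots,k-1\}=\vn$ and the $\G_2=C_5$-subgraph on $\{0, 1, 0', r, k'\}$ appears at once.
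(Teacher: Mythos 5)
Your proof is correct and follows essentially the same strategy as the paper's: reduce to the short edge $0k$, take non-adjacent twins $0', k'$ with common neighbour $r$, use a hexagon-chord argument to force $rj\in E(G)$ for all $j\in\{1,\dots,k-1\}$, and then exhibit a $\G_{k+1}$ on $V(H)\cup\{0',k',r\}$. The only (cosmetic) difference is the hexagon: you use $0'-r-k'-2k-j-(j+k)-0'$ with the fixed vertex $2k$, whereas the paper uses $0'-r-k'-(j+2k-1)-j-(j+k)-0'$; both are valid, and in fact they coincide for $j=1$.
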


\begin{proof}	It remains to consider short edges. In fact, for reasons of symmetry,
	it suffices to show that $G$ has the $(\G_k, 0k)$-twin 
	property. Assume for the sake of contradiction that $\Gamma_k\subseteq G$ 
	and that $0'$, $k'$ are non-adjacent $\Gamma_k$-twins of $0$, $k$, respectively. 
	Let $r$ be a common neighbour of $0'$, $k'$.
	Whenever $0<j<k$ the hexagon 
	\[
	0'-r-k'-(j+2k-1) - j - (j+k)-0'
	\]
	shows $rj\in E(G)$ (for $j=1$ this is illustrated in Figure~\ref{fig:twina}). 
	So $\{0',1,\dots,k-1,k'\}\subseteq \Nn(r)$ and $V(\Gamma_k)\cup \{0',k',r\}$ 
	induces a copy of $\Gamma_{k+1}$ in $G$, which is absurd 
	(see Figure~\ref{fig:twinb}).
\end{proof}

\begin{lemma}[Attachment lemma]\label{l:attachment}
	If $\Gamma_k\subseteq G\in \fA$ and $G$ is $\G_{k+1}$-free, 
	then every $q\in V(G)$ is a $\Gamma_k$-twin of some vertex of $\Gamma_k$. 
\end{lemma}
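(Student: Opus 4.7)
The plan is to show that for every $q\in V(G)$, the set $S:=\Nn(q)\cap V(\G_k)$ equals $\Nn(v)\cap V(\G_k)$ for some $v\in V(\G_k)$. If $q\in V(\G_k)$, then $q$ is its own $\G_k$-twin, so I assume $q\notin V(\G_k)$. Since $G$ is triangle-free, $S$ is an independent set of $\G_k$; a direct analysis reveals that two vertices $x,y\in V(\G_k)$ are non-adjacent iff $x-y\in\{-(k-1),\ldots,k-1\}$ modulo $3k-1$, so every independent set of $\G_k$ is contained in some arc $A_j:=\{j,j+1,\ldots,j+k-1\}$ of length $k$. Moreover $\Nn(v)\cap V(\G_k)=A_{v+k}$ for every $v\in V(\G_k)$, so as soon as $|S|=k$ we have $S=A_j$ for the unique arc $A_j\supseteq S$, and $q$ is a $\G_k$-twin of $j-k$. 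The rest of the proof is devoted to excluding $|S|\le k-1$.

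Suppose for contradiction that $|S|\le k-1$; fix an arc $A_j\supseteq S$ and a vertex $v\in A_j\setminus S$. Since $qv\notin E(G)$, the maximal triangle-freeness of $G$ provides a common neighbour $z\in V(G)$ of $q$ and $v$. First I would verify $z\notin V(\G_k)$: otherwise $z$ would lie in both $\Nn(v)\cap V(\G_k)=A_{v+k}$ and $S\subseteq A_j$, but the arcs $A_j$ and $A_{v+k}$ are disjoint in $\ZZ/(3k-1)\ZZ$ whenever $v\in A_j$, as a short modular calculation confirms.

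With $z\notin V(\G_k)$ in hand, I would aim for a contradiction via either an induced hexagon in $G$ (contradicting $G\in\fA$) or a copy of $\G_{k+1}$ inside $G$ (contradicting the hypothesis). For the hexagon approach, when $S\ne\varnothing$ one picks $s\in S$ and a length-three path $s-u_1-u_2-v$ inside $\G_k$ with $u_1\in \Nn(s)\cap V(\G_k)\setminus(S\cup \Nn(v))$ and $u_2\in \Nn(v)\cap V(\G_k)\setminus(S\cup \Nn(s))$; such paths exist by a direct combinatorial check inside $\G_k$. The triangle-freeness of $G$ rules out every chord of the resulting closed walk $q-s-u_1-u_2-v-z-q$ except possibly $u_1z$, so if some choice of path has $u_1z\notin E(G)$ we obtain the desired induced hexagon. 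The edge case $S=\varnothing$ is treated analogously, with the edge $qs$ replaced by a path through an auxiliary common neighbour of $q$ with a chosen vertex of $\G_k$.

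The main obstacle is the opposite scenario, in which every admissible $u_1$ is adjacent to $z$. Varying the path and $s$, this pins down enough of $\Nn(z)\cap V(\G_k)$ to show, via the observation in the first paragraph applied to $z$ in place of $q$, that $z$ itself is a $\G_k$-twin of some vertex; combining this with the twin lemma (Lemma~\ref{l:twin}), one then assembles from $\G_k$ together with $\{q,z\}$ and one further carefully chosen vertex a copy of $\G_{k+1}$ inside $G$, giving the required contradiction with $\G_{k+1}$-freeness. Orchestrating this final $\G_{k+1}$-assembly so that all adjacency and cyclic conditions are simultaneously verified is the principal technical obstacle; the rest is essentially routine bookkeeping of arcs in $\ZZ/(3k-1)\ZZ$.
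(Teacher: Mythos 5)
Your opening observation---that the independent sets of $\G_k$ are exactly the subsets of arcs of $k$ consecutive vertices, so that $q$ is a $\G_k$-twin as soon as $\Nn(q)\cap V(\G_k)$ is shown to be a full arc of length $k$---is correct and is implicitly what the paper establishes. Your hexagon $q-s-u_1-u_2-v-z-q$ is also sound: the path $s-u_1-u_2-v$ with the required avoidances does exist (with $\Nn(s)\setminus\Nn(v)$ and $\Nn(v)\setminus\Nn(s)$ both arcs of length $|s-v|$ and suitable endpoints adjacent), and $u_1z$ is indeed the only chord not excluded by triangle-freeness or by the independence of $S\cup\{v\}$.

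The genuine gap is in your final paragraph, and you flag it yourself: when every admissible $u_1$ is adjacent to $z$, you assert that the information ``pins down enough of $\Nn(z)\cap V(\G_k)$'' to identify $z$ as a twin and then to ``assemble'' a $\G_{k+1}$ from $\G_k\cup\{q,z\}$ plus one more vertex. Neither half of this is established. Varying $s$ and the path yields only a handful of forced edges of the form $z(s+k)$ for $s\in S$, not a full length-$k$ arc of $\Nn(z)$; and even granting that $z$ is a twin, you cannot conclude anything about $\Nn(q)$, since $q$'s only known neighbour in $\G_k(z)$ is $z$ itself, so the sought copy of $\G_{k+1}$ is not forthcoming without redoing the whole argument for $q$ against $\G_k(z)$. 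This is exactly the recursion the paper handles explicitly, and your proposal omits it.

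The paper's route is different and avoids this obstacle. It uses hexagons that pass through $q$ alone and lie otherwise inside $\G_k$: the key \emph{Claim} shows that if $q$ is adjacent to both endpoints $j\pm k$ of an arc $\Nn(j)$, then the hexagons $q-0-(m-k)-m-(m+k)-2k-q$ successively force the interior of that arc into $\Nn(q)$, so $q$ is a twin of $j$. The general case is then reached by a two-step bootstrap: given any neighbour $k$ of $q$ and a common neighbour $z$ of $q$ and $k-1$, the hexagon $z-q-k-0-(2k-1)-(k-1)-z$ forces either $q(2k-1)\in E(G)$ (so the Claim applies directly) or $z0\in E(G)$ (so $z$ is a twin of $2k-1$; passing to $\G_k(z)$ and invoking Lemma~\ref{f:24}---which needs the already-proved Twin Lemma---brings $q$ back into the scope of the Claim). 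The remaining case ($q$ with no neighbour in $\G_k$) is a further application of the same substitution. This is a bounded, explicit recursion of depth two, whereas your sketch leaves the corresponding step implicit and unbounded. You would need to reproduce essentially that same two-stage mechanism (or find a genuine $\G_{k+1}$ directly, which the information from your hexagons does not yet supply) for the proposal to close.
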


\begin{proof}	
	We begin with the following very special case. 
	
	\begin{claim}\label{l:28}
		If $j\in V(\Gamma_k)$, and $j+k, j-k\in \Nn(q)$, 
		then $q$ is a $\Gamma_k$-twin of $j$. 
	\end{claim}
	
	\begin{proof}
		By symmetry we can assume $j=k$, so that $0, 2k \in \Nn(q)$. 
		For every vertex $m\in [2k+1,3k-2]$ the hexagon 
		\[
			q-0-(m-k)-m-(m+k)-2k-q
		\]
		shows $qm\in E(G)$ (see Figure \ref{fig:AtLemA}). 
		So $\Nn(k)\cap V(\G_k)=\{2k,\dots, 3k-2,0\}\subseteq \Nn(q)\cap V(\G_k)$ 
		and, since $G$ is triangle-free, this holds with equality.
	\end{proof}
	
	\begin{figure}[ht]
	\centering
	\vskip -.3cm
	\begin{subfigure}[b]{.49\textwidth}
	\centering
\hskip 1cm	\begin{tikzpicture}[scale=1]
		\coordinate (c) at (0,0);
		\foreach \i in {0,...,30} {
			\coordinate (a\i) at (90+\i*25.714:1.7);
		}
		\coordinate (q) at (-.7, 1.9);
		\foreach \i in {0,...,13} {		
			\foreach \j [evaluate=\j as \k using \i+\j] in {5,...,9}{\draw [line width=.05] (a\i)--(a\k);}		
		}
		\def\cOne{red!70!black}
		\draw (a12) edge [\cOne, thick, bend right = 40](q);
		\draw (q) edge [\cOne, thick, bend right =20] (a2);
		\draw [\cOne, thick] (a12) -- (a6) -- (a1)--(a10);
		\draw [\cOne, thick] (a10)--(a2) ;
		\draw [green!70!black, thick] (q)--(a1);
		\foreach \i in {0,...,13} {		
			\fill (a\i) circle (1.3pt);
		}
		\fill (q) circle (1.3pt);
		\node [left] at (a7) {\tiny $k$};
		\node [left] at (a6) {\tiny $m-k$};
		\node [left] at (a2) {\tiny $2k$};
		\node at ($(a1)+(.2,.05)$) {\tiny $m$};
		\node [right] at (a10) {\tiny $m+k$};
		\node [left] at (a3) {\tiny $2k-1$};
		\node [right] at (a11) {\tiny $1$};
		\node  at ($(a12)+(.75,.1)$) { \tiny $0=3k-1$};
		\node [above] at (q) {\tiny $q$};
	\end{tikzpicture}
	\vskip -.2cm
	\caption{}\label{fig:AtLemA}
	\end{subfigure}
	\hfill
	\begin{subfigure}[b]{0.49\textwidth}
	\centering
\hskip -1cm	\begin{tikzpicture}[scale=1]
		\coordinate (c) at (0,0);
		\foreach \i in {0,...,30} {
			\coordinate (a\i) at (90+\i*25.714:1.7);
		}
		\coordinate (q) at (.3, -1.8);
		\coordinate (z) at (1,-1.7);
		\foreach \i in {0,...,13} {		
			\foreach \j [evaluate=\j as \k using \i+\j] in {5,...,9}{\draw [line width=.05] (a\i)--(a\k);}		
		}
		\def\cOne{green!70!black}
		\def\cTwo{green!70!black}
		\draw [\cOne, thick] (q)--(a7);
		\draw [\cTwo, thick] (a8)--(z)--(q);
		\draw [\cOne, thick] (a7) -- (a12) -- (a3)--(a8);
		\foreach \i in {0,...,13} {		
			\fill (a\i) circle (1.3pt);
		}
		\fill (q) circle (1.3pt);
		\fill (z) circle (1.3pt);
		\node [left] at (a7) {\tiny $k$};
		\node [left] at (a6) {\tiny $k+1$};
		\node [left] at (a2) {\tiny $2k$};
		\node at ($(a8)+(.4,.05)$) {\tiny $k-1$};
		\node [left] at (a3) {\tiny $2k-1$};
		\node [right] at (a11) {\tiny $1$};
		\node at ($(a12)+(.8,0)$) { \tiny $0=3k-1$};
		\node at ($(a13)+(.45,-.02)$) {\tiny $3k-2$};
		\node at ($(q)+(.07,.15)$) {\tiny $q$};
		\node at ($(z)+(.1,-.1)$) {\tiny $z$};
	\end{tikzpicture}
	\vskip -.2cm
	\caption{}
	\label{fig:AtLemB} 
	\end{subfigure}
	\caption{The proof of the attachment lemma.}
	\label{fig:AtLem}
	\end{figure}
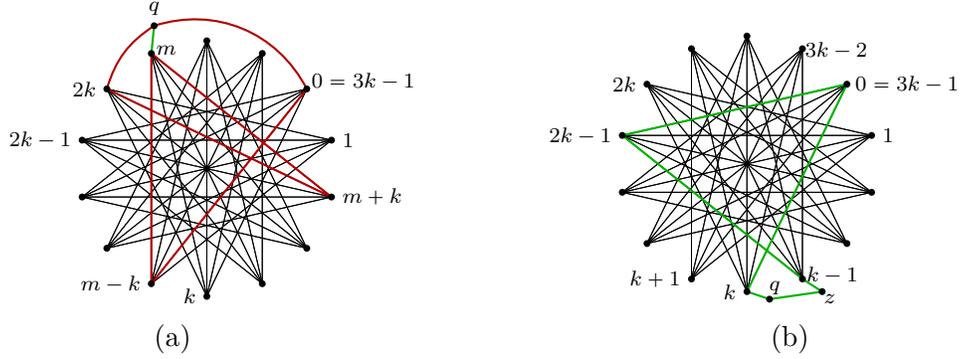
 
	Let us proceed with a less special case. 
	
	\begin{claim}\label{c:special}
		If $q$ has a neighbour in $V(\Gamma_k)$, then it is a $\G_k$-twin of 
		some vertex of $\Gamma_k$.
	\end{claim}
		
	\begin{proof}
		By symmetry we can suppose that $kq\in E(G)$ and $(k-1)q\notin E(G)$. 
		Let $z$ be a common neighbour of $k-1$ and $q$ (see Figure~\ref{fig:AtLemB}). 
		The hexagon 
		\[
			z-q-k-0-(2k-1)-(k-1)-z
		\]
		shows that either $q(2k-1)$ or $z0$ is an edge. In the first case 
		Claim~\ref{l:28} entails that~$q$ is a~$\Gamma_k$-twin of $0$. 
		In the second case Claim~\ref{l:28} implies that $z$ is a $\Gamma_k$-twin 
		of $2k-1$ and we can form $\Gamma_k'=\Gamma_k(z)$. 
		Another application of Claim~\ref{l:28} reveals that~$q$ is 
		a $\Gamma_k'$-twin of $0$. 
		Since $G$ has the $\G_k$-twin property, Lemma~\ref{f:24} tells us that~$q$ 
		is also a $\Gamma_k$-twin of $0$. 
	\end{proof}

	Proceeding with the general case we consider an arbitrary vertex $q\in V(G)$. 
	Assuming $0q\notin E(G)$ we take a common neighbour $z$ of $q$, $0$. We already know 
	that $z$ is a $\Gamma_k$-twin of some $j\in V(\Gamma_k)$. Now $q$ has a neighbour 
	belonging to $\Gamma_k'=\Gamma_k(z)$ and, therefore $q$ is a $\Gamma_k'$-twin of 
	some $i\in V(\Gamma_k)\setminus \{j\}$. By Lemma~\ref{f:24} $q$ is also 
	a $\Gamma_k$-twin of $i$. 
\end{proof}

The main result of this section reads as follows.

\begin{prop}\label{prop:210}
	A maximal triangle-free graph on at least two vertices contains no 
	induced hexagon if and only if it is a blow-up of some Andr\'asfai graph.
\end{prop}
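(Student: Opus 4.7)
The plan is to derive both directions from the tools just set up; the forward direction carries the main content, and the converse is a softer verification.

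For the forward direction I would take a maximal triangle-free graph $G$ on at least two vertices containing no induced hexagon, so $G\in\fA$. Since $G$ contains an edge, $\Gamma_1=K_2\subseteq G$, and by finiteness there is a largest $k\ge 1$ with $\Gamma_k\subseteq G$; in particular $G$ is $\Gamma_{k+1}$-free. The Andr\'asfai graph $\Gamma_k$ is maximal triangle-free and twin-free, since each neighbourhood in $\Gamma_k$ is a translate of the fixed set $\{\pm k,\ldots,\pm(2k-1)\}$ and so distinct elements of $\ZZ/(3k-1)\ZZ$ have distinct neighbourhoods. Hence $\Omega=\Gamma_k$ satisfies the hypotheses of Lemma~\ref{lem:2407}: the Twin Lemma~\ref{l:twin} supplies condition~\ref{it:twin}, and the Attachment Lemma~\ref{l:attachment} supplies condition~\ref{it:attach}. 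Invoking Lemma~\ref{lem:2407} then concludes that $G$ is a blow-up of $\Gamma_k$, as desired.

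For the converse I would verify that every blow-up $G$ of some $\Gamma_k$ lies in $\fA$. Maximal triangle-freeness of $G$ is routine: triangles in $G$ project to triangles in $\Gamma_k$, while any non-edge $u'v'$ in $G$ with $u'\in A_u$ and $v'\in A_v$ acquires a common neighbour in $G$ either from a common $\Gamma_k$-neighbour of $u,v$ (when $uv\notin E(\Gamma_k)$) or from any vertex in $A_w$ with $w\in\Nn(u)$ (when $u=v$; here $\Nn(u)\neq\varnothing$ since $\Gamma_k$ has no isolated vertex). To rule out induced hexagons in $G$, I would suppose that $v_1',\ldots,v_6'$ induce a $C_6$ with $v_i'\in A_{v_i}$, observe that $v_iv_{i+1}\in E(\Gamma_k)$ for each $i$ modulo $6$, and then use that vertices in a common class share their neighbourhood in $G$. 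For instance $v_1=v_3$ would force $v_3'v_6'\in E(G)$, and $v_1=v_4$ would force $\{v_2',v_6'\}=\{v_3',v_5'\}$; the remaining possible coincidences are excluded by the same arguments up to cyclic symmetry. Thus $v_1,\ldots,v_6$ are distinct and induce a hexagon in $\Gamma_k$.

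The main obstacle is then the residual claim that $\Gamma_k$ itself contains no induced hexagon. This is a self-contained statement in $\ZZ/(3k-1)\ZZ$: no sequence of six differences from $S=\{k,k+1,\ldots,2k-1\}$ can sum to $0$ modulo $3k-1$ while all cyclic partial sums of length two or three avoid $S$ and remain non-zero. Bounding the total sum between $6k$ and $12k-6$ pins it down to $9k-3$, and an elementary case analysis on the possible multisets of six elements of $S$ with this total, combined with the partial-sum restrictions and the symmetries provided by translation and negation on $\ZZ/(3k-1)\ZZ$, should rule out all cases. I expect this verification, though elementary, to be the most delicate part of the argument.
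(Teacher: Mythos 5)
Your forward direction matches the paper's argument: take the maximal $k$ with $\Gamma_k\subseteq G$ (finiteness), verify that $\Gamma_k$ is twin-free and maximal triangle-free, and combine the Twin Lemma~\ref{l:twin} and the Attachment Lemma~\ref{l:attachment} via Lemma~\ref{lem:2407} to conclude $G$ is a blow-up of $\Gamma_k$.

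The paper declares the converse ``almost obvious'' and leaves it to the reader, so what you add there is a bonus rather than a matching of the written proof. Your reduction steps are sound (blow-ups of $\Gamma_k$ are maximal triangle-free; an induced hexagon in a blow-up descends to one in $\Gamma_k$ because twins would immediately kill the induced property). But the residual claim you correctly isolate -- that $\Gamma_k$ itself has no induced hexagon -- is where all the content sits, and you leave it as an incomplete arithmetic case analysis over difference sequences in $\{k,\dots,2k-1\}$ summing to $9k-3$. You do not need that circulant bookkeeping at all: $\Gamma_k$ is maximal triangle-free, at most $3$-chromatic (colour vertex $j$ by $\lfloor j/k\rfloor$), and satisfies~\Dp{3} because $\delta(\Gamma_k)=k>(3k-1)/3$. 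If $\Gamma_k$ contained an induced hexagon, Lemma~\ref{l:21} would force $\grot\subseteq\Gamma_k$, contradicting $\chi(\grot)=4>\chi(\Gamma_k)$. Moreover, the same argument applies directly to any blow-up of $\Gamma_k$, since blow-ups preserve \Dp{3}, maximal triangle-freeness, and the chromatic number, so even your preliminary descent from the blow-up to $\Gamma_k$ can be skipped. As written, your converse has a genuine gap at exactly the step you flag as ``the most delicate part''; closing it is cleanest via Lemma~\ref{l:21}.
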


\begin{proof}
	We will only require and prove the forward implication in the sequel, 
	leaving the (almost obvious) reverse direction to the reader. Since the 
	given graph~$G$ is maximal triangle-free and has at least two vertices, 
	it needs to contain a copy of $\Gamma_1$. Let $k\in\NN$ be maximal such 
	that $G$ has a subgraph isomorphic to $\G_k$. By Lemma~\ref{l:twin}
	and Lemma~\ref{l:attachment} the assumptions of Lemma~\ref{lem:2407}
	are satisfied for some $\Omega\cong\G_k$. Thus $G$ is a blow-up of $\G_k$.
\end{proof}

It should be clear that Lemma~\ref{l:21} combined with 
Proposition~\ref{prop:210} yields Theorem~\ref{thm:1.3}\ref{it:1.3a}.
			
\section{Vega graphs}\label{sec:vega}
The goal of this section is to establish Theorem~\ref{thm:1.3}\ref{it:1.3b}.
We start by defining Vega graphs. For every $i\ge 2$ there 
is a {\it Vega graph} $\grot_i^{00}$ shown in Figure~\ref{fig:Vega1}. 
In the middle we see an Andr\'asfai graph $\G_i$ together with a three-colouring 
$V(\G_i)=\Gr\dcup\Gg\dcup\Gb$
of its vertex set, where 
$\Gr=\{\re{0}, \dots, \re{i-1}\}$ is a set of \re{red} vertices, 
the set $\Gg=\{\gr{i}, \dots, \gr{2i-1}\}$ is \gr{green}, and 
$\Gb=\{\bl{2i}, \dots, \bl{3i-2}\}$ is \bl{blue}.
The vertices of the {\it external hexagon} 
$\cC_6=\re{a}\gr{v}\bl{c}\re{u}\gr{b}\bl{w}$ are connected to the vertices of 
the same colour of the inner Andr\'asfai graph, so that 
\[
	\Gr=\Nn(\re{a})\cap \Nn( \re{u})\,,
	\qquad 
	\Gg=\Nn(\gr{b})\cap \Nn(\gr{v})\,,
	\quad \text{ and } \quad
	\Gb=\Nn(\bl{c})\cap \Nn(\bl{w})\,.
\]
Finally, there is an edge $xy$ joined to the hexagon so that 
\[
	\{\re{a},\gr{b},\bl{c}\}\subseteq \Nn(x) 
	\quad \text{ and } \quad
	\{\re{u},\gr{v},\bl{w}\}\subseteq \Nn(y)\,.
\]
This completes the description of $\grot_i^{00}$.

\begin{figure}
\centering	
\begin{tikzpicture}[scale=.7]	
	\def\pro{2.5cm};
	\def\pu{1.5pt};
	\coordinate (v) at (30:\pro);
	\coordinate (a) at (90:\pro);
	\coordinate (w) at (150:\pro);
	\coordinate (b) at (210:\pro);
	\coordinate (u) at (270:\pro);
	\coordinate (c) at (330:\pro);				
	\coordinate (x) at (\pro*1.4,\pro*.9);
	\coordinate (y) at (\pro*1.4,-\pro*.9);			
	\foreach \i in {1,...,8} \coordinate (v\i) at (135-\i*45:\pro/2);
	\draw [red!80!black, ,domain=5:89, dashed, thick] plot ({\pro*.6*cos(\x)},{\pro*.6*sin(\x)}); 
	\draw [blue, dashed, domain=142:178, thick] plot ({\pro*.6*cos(\x)},{\pro*.6*sin(\x)}); 
	\draw [green!80!black, dashed,domain=230:313, thick] plot ({\pro*.6*cos(\x)},{\pro*.6*sin(\x)}); 
	\draw [thick] (x)--(y);
	\draw (c)--(x)--(a);
	\draw (v)--(y)--(u);
	\draw [rounded corners=25] (x) -- (-.2*\pro,\pro*1.2)--(-\pro*1.2,\pro*.5)--(b);
	\draw [rounded corners=25] (y) -- (-.2*\pro,-\pro*1.2)--(-\pro*1.2,-\pro*.5)--(w);		
	\foreach \i in {4,...,6} \draw (v\i)--(v1);
	\foreach \i in {5,...,7} \draw (v\i)--(v2);
	\foreach \i in {6,...,8} \draw (v\i)--(v3);
	\foreach \i in {3,...,5} \draw  (v\i)--(v8);
	\draw (v4)--(v7);
	\draw (a)--(v)--(c)--(u)--(b)--(w)--(a);
	\foreach \i in {a,b,c, v, u, w, x, y} \draw [thick] (\i) circle (\pu);
	\foreach \i in {1,...,8}\draw [thick] (v\i) circle (\pu);
	\fill (x) circle (\pu);
	\fill (y) circle (\pu);
	\foreach \i in {c, w, v7, v8}\fill[blue!75!white]  (\i) circle (\pu);
	\foreach \i in {a,u, v1, v2, v3}\fill[red!75!white]  (\i) circle (\pu);
	\foreach \i in {v,b, v4, v5, v6}\fill[green!75!white]  (\i) circle (\pu);
	\node at ($(a)+(0,-.25)$) {\scriptsize{$\re{a}$}};
	\node at ($(b)+(.25,.15)$) {\scriptsize{$\gr{b}$}};
	\node at ($(c)+(-.25,.15)$) {\scriptsize{$\bl{c}$}};
	\node at ($(v)+(-.25,-.15)$) {\scriptsize{$\gr{v}$}};
	\node at ($(w)+(.25,-.15)$) {\scriptsize{$\bl{w}$}};
	\node at ($(u)+(0,.25)$) {\scriptsize{$\re{u}$}};
	\node at ($(x)+(0,.2)$) {\scriptsize{$x$}};
	\node at ($(y)+(0,-.25)$) {\scriptsize{$y$}};
	\node  [scale=.8] at ($(v1)+(-.15,.15)$) {\scriptsize{$\re{0}$}};
	\node  [scale=.8] at ($(v3)+(.2,-.23)$) {\scriptsize{$\re{i-1}$}};
	\node [scale=.8] at ($(v4)+(.2,.1)$) {\scriptsize{$\gr{i}$}};
	\node  [scale=.8] at ($(v6)+(-.05,-.15)$) {\scriptsize{$\gr{2i-1}$}};
	\node  [scale=.8] at ($(v7)+(-.1,-.25)$) {\scriptsize{$\bl{2i}$}};
	\node  [scale=.8] at ($(v8)+(0,.2)$) {\scriptsize{$\bl{3i-2}$}};
\end{tikzpicture}
\caption{The Vega graph $\grot_i^{00}$.}
\label{fig:Vega1}
\end{figure}
 
For each $i\ge 2$ there are three further {\it Vega graphs} 
$\grot_i^{10}=\grot_i^{00}-y$, $\grot_i^{01}=\grot_i^{00}-\gr{(2i-1)}$,
and $\grot_i^{11}=\grot_i^{00}-\{y, \gr{2i-1}\}$.
Thus for $\mu, \nu \in \{0,1\}$ the vertex $y$ belongs to $\grot_i^{\mu\nu}$
if and only if $\mu=0$, while $\nu=0$ indicates the presence of $\gr{2i-1}$.
	
\subsection{Automorphisms}\label{subsec:auto}
We will write $\Aut(\Omega)$ for the automorphism group of a given 
graph~$\Omega$, i.e., for the group of adjacency preserving bijections 
$V(\Omega)\longrightarrow V(\Omega)$. 
The main reason why knowing automorphisms of 
Vega graphs will be helpful for us is that they often allow us to reduce 
the number of cases we need to consider. Moreover, they sometimes suggest 
non-obvious embeddings of smaller Vega graphs into larger ones, that are 
in turn useful when considering a maximal Vega subgraph of a given graph $G$ 
we wish to analyse. In all cases, the lists of automorphisms we provide 
could be shown to be exhaustive, but there is no need for verifying this.    
	
We start with three automorphisms of order two that exist for all $i\ge 2$ 
and appropriate values of $\mu$, $\nu$. First, for both indices $\nu\in\{0, 1\}$
the composition~$\sigma$ of the four transpositions
\[
	x\lra  y\,, 
	\qquad 
	\re{a}\lra \re{u}\,, 
	\qquad 
	\gr{b} \lra \gr{v}\,, 
	\qquad 
	\bl{c}\lra \bl{w}
\]
is an automorphism of $\grot_i^{0\nu}$. 

Second, $\grot_i^{\mu 0}$ has an 
automorphism~$\tau_0$ exchanging the colours \re{red} and \gr{green}. 
More precisely,~$\tau_0$ is the composition of the transpositions
\[
	\re{a} \lra \gr{b}\,, 
	\qquad 
	\re{u} \lra \gr{v}
\]
with the reflection $j\longmapsto 2i-1-j$ of the inner Andr\'asfai graph. 
	
Similarly, $\grot_i^{\mu 1}$ has an automorphism~$\tau_1$ exchanging 
\bl{blue} and \gr{green}, namely the composition of 
\[
	\gr{b }\lra \bl{c}\,, 
	\qquad 
	\gr{v}\lra \bl{w}
\]
with the reflection $j\longmapsto i-1-j$ of $\G_i-(\gr{2i-1})$.
 
It could be shown that for $i\ge 3$ these automorphisms generate the 
entire automorphism group, i.e., that
\[
	\Aut(\grot^{\mu\nu}_i)
	=
	\begin{cases}
		\{1,\sigma, \tau_0, \sigma\tau_0\}  & \text {if } (\mu, \nu) = (0, 0)\cr
		\{1,\sigma, \tau_1, \sigma\tau_1\}  & \text {if } (\mu, \nu) = (0, 1)\cr
		\{1,\tau_0\} & \text{if } (\mu, \nu) = (1, 0)\cr
		\{1,\tau_1\} & \text{if } (\mu, \nu) = (1, 1),
	\end{cases}
\]
but we do not need this knowledge in the sequel.
	
What will be important, however, is that for $i=2$ and $(\mu, \nu)\ne (0, 1)$
there are further `sporadic' automorphisms. We begin their discussion with an 
alternative way of drawing~$\grot_2^{00}$: Start with $\Gamma_3$, add simultaneously 
four twins as indicated in Figure~\ref{fig:Vega_2_rhoC}, and join them to a new vertex. 

\begin{figure}[ht]
\vskip -.3cm
\centering
\begin{subfigure}[b]{0.32\textwidth}
\begin{tikzpicture}[scale=.9]
	\coordinate (r) at (3,0);
	\foreach \i in {1,...,8} {
		\coordinate (a\i) at (\i*45:1.4);
		\coordinate (b\i) at (\i*45:3);
		\coordinate (c\i) at (180+\i*45:2.3);
	}
	\draw (c3) -- (r) -- (c5);
	\draw [rounded corners=35] (r) -- (2.5,2.4)--(c7);
	\draw [rounded corners=35] (r) -- (2.5,-2.4)--(c1);
	\draw (a3) -- (a7);
	\draw (r)--(c3);
	\draw (a6)--(c3)--(a8)--(a4)--(c7)--(a2)--(a6);
	\draw (a2)--(c5)--(a8)--(a5)--(a2);
	\draw (a1)--(a4)--(c1)--(a6)--(a1);	
	\draw (a6)--(a3)--(a8)--(a3)--(c7);
	\draw (a2)--(a7)--(a4)--(a7)--(c3);
	\draw (c1)--(a5)--(a1)--(c5)--(r);
	\foreach \i in {1,3,5,7}{
		\fill (c\i) circle (2pt);
	}			
	\foreach \i in {1,...,8} {
		\fill (a\i) circle (2pt);		
	}
	\fill (r) circle (2pt);
	\fill (a3) circle (1.7pt);
	\fill (a7) circle (1.7pt);
	\foreach \i in {r,c3,c7} \draw [fill=blue] (\i) circle (2pt);
	\foreach \i in {c5,a5,a2,a8} \draw [fill=green!90!black] (\i) circle (2pt);
	\foreach \i in {a1,c1,a4,a6} \draw [fill=red] (\i) circle (2pt);
	\node at ($(r)+(.3,0)$) {\small $\bl{4}$};
	\node at ($(a1)+(.2,-.1)$) {\small $\re{0}$};
	\node at ($(a2)+(0,.25)$) {$\gr{v}$};
	\node at ($(a3)+(-.2,-.05)$) {$x$};
	\node at ($(a4)+(-.3,0)$) {$\re{u}$};
	\node at ($(a5)+(-.2,.1)$) {\small $\gr{3}$};
	\node at ($(a6)+(0,-.25)$) {$\re{a}$};
	\node at ($(a7)+(.2,.1)$) {$y$};
	\node at ($(a8)+(.3,0)$) {$\gr{b}$};
	\node at ($(c1)+(-.2,-.1)$) {\small $\re{1}$};
	\node at ($(c3)+(.2,-.2)$) {$\bl{w}$};
	\node at ($(c5)+(.2,.1)$) {\small $\gr{2}$};
	\node at ($(c7)+(-.2,.1)$) {$\bl{c}$};
\end{tikzpicture}
\vskip -.3cm
\caption{}
\label{fig:Vega_2_rhoC}
\end{subfigure}
\hfill
\begin{subfigure}[b]{0.32\textwidth}
\begin{tikzpicture}[scale=.9]
	\coordinate (r) at (3,0);
	\foreach \i in {1,...,8} {
		\coordinate (a\i) at (\i*45:1.4);
		\coordinate (b\i) at (\i*45:3);
		\coordinate (c\i) at (180+\i*45:2.3);
	}
	\draw (c3) -- (r) -- (c5);
	\draw [blue, rounded corners=35] (r) -- (2.5,2.4)--(c7);
	\draw [yellow, thick, rounded corners=35] (r) -- (2.5,-2.4)--(c1);
	\draw [dashed, rounded corners=35] (r) -- (2.5,-2.4)--(c1);
	\draw [thick] (a3) -- (a7);
	\draw [blue] (r)--(c3);
	\draw [thick, cyan] (a6)--(c3)--(a8)--(a4)--(c7)--(a2)--(a6);
	\draw [green!80!black] (a2)--(c5)--(a8)--(a5)--(a2);
	\draw [red] (a1)--(a4)--(c1)--(a6)--(a1);	
	\draw (a6)--(a3)--(a8)--(a3)--(c7);
	\draw  (a2)--(a7)--(a4)--(a7)--(c3);
	\draw [yellow, thick] (c1)--(a5)--(a1)--(c5)--(r);
	\draw [dashed] (c1)--(a5)--(a1)--(c5)--(r);
	\foreach \i in {1,3,5,7}{
		\draw [thick] (c\i) circle (1.7pt);
	}			
	\foreach \i in {1,...,8} {
		\draw [thick] (a\i) circle (1.7pt);		
	}
	\fill (r) circle (1.7pt);
	\fill (a3) circle (1.7pt);
	\fill (a7) circle (1.7pt);
	\foreach \i in {r,c3,c7} \draw [fill=blue] (\i) circle (2pt);
	\foreach \i in {c5,a5,a2,a8} \draw [fill=green!90!black] (\i) circle (2pt);
	\foreach \i in {a1,c1,a4,a6} \draw [fill=red] (\i) circle (2pt);
	\node at ($(r)+(.3,0)$) {\small $\bl{4}$};
	\node at ($(a1)+(.2,-.1)$) {\small $\re{0}$};
	\node at ($(a2)+(0,.25)$) {$\gr{v}$};
	\node at ($(a3)+(-.2,-.05)$) {$x$};
	\node at ($(a4)+(-.3,0)$) {$\re{u}$};
	\node at ($(a5)+(-.2,.1)$) {\small $\gr{3}$};
	\node at ($(a6)+(0,-.25)$) {$\re{a}$};
	\node at ($(a7)+(.2,.1)$) {$y$};
	\node at ($(a8)+(.3,0)$) {$\gr{b}$};
	\node at ($(c1)+(-.2,-.1)$) {\small $\re{1}$};
	\node at ($(c3)+(.2,-.2)$) {$\bl{w}$};
	\node at ($(c5)+(.2,.1)$) {\small $\gr{2}$};
	\node at ($(c7)+(-.2,.1)$) {$\bl{c}$};
\end{tikzpicture}
\vskip -.3cm
\caption{}
\label{fig:Vega_2_rhoB}
\end{subfigure}
\hfill
\begin{subfigure}[b]{0.32\textwidth}
\begin{tikzpicture}[scale=.77]
	\coordinate (v) at (30:2.2cm);
	\coordinate (a) at (90:2.2cm);
	\coordinate (w) at (150:2.2cm);
	\coordinate (b) at (210:2.2cm);
	\coordinate (u) at (270:2.2cm);
	\coordinate (c) at (330:2.2cm);
	\coordinate (x) at (3,1.5);
	\coordinate (y) at (3,-1.5);
	\foreach \i in {0,...,8}{
		\coordinate (v\i) at (90 + \i*72:1cm);
	}
	\draw [thick] (x)--(y);
	\draw (c)--(x)--(a);
	\draw (v)--(y)--(u);
	\draw [rounded corners=35] (x) -- (0,3)--(-3,1)--(b);
	\draw [rounded corners=35] (y) -- (0,-3)--(-3,-1)--(w);
	\draw [thick, yellow](v0)--(v2)--(v4)--(v1)--(v3)--(v0);
	\draw [dashed] (v0)--(v2)--(v4)--(v1)--(v3)--(v0);
	\draw [thick, cyan] (a)--(v)--(c)--(u)--(b)--(w)--(a);
	\foreach \i in {a,b,c, v, u, w, x, y, v1, v2, v3, v4, v5, v6, v7, v8}{
		\draw[ thick]  (\i) circle (2pt);
	}	
	\draw [red!75!black] (v0) --(a)--(v4);
	\draw [green!74!black] (b) edge [bend right=20](v3);
	\draw [green!74!black](v3) edge [bend right=20] (v);
	\draw [green!74!black] (b)--(v2);
	\draw [green!74!black] (v2) edge [bend left=20] (v);
	\draw [blue!80!black] (w)--(v1);
	\draw [blue!80!black, rounded corners=15] (v1)--(-.7,-1.3)--(c);
	\draw [red!75!black, rounded corners=25] (v0)--(-2,.4)--(u);
	\draw [red!75!black, rounded corners=15] (v4)--(1.5,-.4)--(u);		
	\foreach \i in {c, w,v1}{
		\fill[blue!75!white]  (\i) circle (2pt);
	}	
	\foreach \i in {a,u,v0,v4}{
		\fill[red!75!white]  (\i) circle (2pt);
	}	
	\foreach \i in {v,b,v2,v3}{
		\fill[green!75!white]  (\i) circle (2pt);
	}		
	\fill (x) circle (2pt);
	\fill (y) circle (2pt);	
	\node at ($(a)+(-.25,.07)$) {$\re{a}$};
	\node at ($(b)+(.2,.3)$) {$\gr{b}$};
	\node at ($(c)+(.2,-.1)$) {$\bl{c}$};
	\node at ($(v)+(.2,.1)$) {$\gr{v}$};
	\node at ($(w)+(.4,-.05)$) {$\bl{w}$};
	\node at ($(u)+(-.25,-.1)$) {$\re{u}$};
	\node at ($(x)+(.3,.05)$) {$x$};
	\node at ($(y)+(.3,-.05)$) {$y$};
	\node at ($(v0)+(.23,.05)$) {\small $\re{0}$};
	\node at ($(v4)+(.2,.05)$) {\small $\re{1}$};
	\node at ($(v3)+(.2,-.15)$) {\small $\gr{2}$};
	\node at ($(v2)+(.2,-.15)$) {\small $\gr{3}$};
	\node at ($(v1)+(-.1,-.3)$) {\small $\bl{4}$};
\end{tikzpicture}
\vskip -.3cm
\caption{}
\label{fig:Vega_2_rhoA}
\end{subfigure}
\vskip -.3cm
\caption{Three pictures of $\grot_2^{00}$.}
\label{fig:Vega_2_rho}
\end{figure}
 	
The dihedral group $\DD_4$ acts in the usual way by rotations and reflections 
on the ``imaginary square'' $\bl{c}\re{1}\bl{w}\gr{2}$. This yields a faithful 
action of $\DD_4$ on $\grot_2^{00}$ with fixed point~\bl{$4$} and, in fact, 
it could be proved that 
\[
	\Aut(\grot_2^{00})\cong \DD_4\,. 
\]
We shall occasionally use the reflection~$\varrho$ about the line $\re{a}\gr{v}$. So explicitly~$\varrho$ is the composition of the five transpositions
\[
	\bl{c}\lra\gr{2}\,,  
	\qquad
	\re{u}\lra \gr{b}\,,
	\qquad 
	\re{1}\lra\bl{w}\,, 
	\qquad
	x\lra \re{0}\,,
	\qquad 
	\gr{3}\lra y
\]
(see Figure~\ref{fig:1419}).
As~$\varrho$ exchanges $\gr{3}$ and $y$, it establishes an exceptional 
isomorphism between~$\grot_2^{01}$ and~$\grot_2^{10}$. 
More generally we shall always regard $\varrho$ as an isomorphism 
from~$\grot_2^{\mu\nu}$ to~$\grot_2^{\nu\mu}$. 
The graph~$\grot_2^{01}$ has only the four standard automorphisms 
mentioned earlier. 

Finally, $\grot_2^{11}$ is isomorphic to the Mycielski-Gr\"otzsch graph 
(see Figure~\ref{fig:43}) and its automorphism group can be shown to 
be the symmetry group $\DD_5$ of the ``imaginary pentagon''~$\re{01}\bl{w}x\gr{v}$.

\begin{figure}[h]		
\centering
\begin{tikzpicture}[scale=.8]			
	\coordinate (v) at (30:1.8cm);
	\coordinate (a) at (90:1.8cm);
	\coordinate (w) at (150:1.8cm);
	\coordinate (b) at (210:1.8cm);
	\coordinate (u) at (270:1.8cm);
	\coordinate (c) at (330:1.8cm);
	\coordinate (x) at (2.5,1.5);
	\draw (a)--(x)--(c);
	\draw [rounded corners=20] (x) -- (-.5,2.3)--(-2.3,.8)--(b);
	\draw [cyan, thick]  (a)--(v)--(c)--(u)--(b)--(w)--(a);
	\foreach \i in {0,...,5} \coordinate (v\i) at (90-72*\i:.9);
	\coordinate (v3) at (234:.7);
	\draw [black!10] (v0)--(v3)--(v1);
	\draw [thick] (v0)--(v2)--(v4)--(v1);
	\draw [red!75!black] (v0) --(a)--(v1);
	\draw [green!74!black] (b)--(v2)--(v);
	\draw [blue!80!black] (w)--(v4);
	\draw [blue!80!black, rounded corners=15] (v4)--(-.6,-1.3)--(c);
	\draw [red!75!black, rounded corners=25] (v0)--(-2,.4)--(u);
	\draw [red!75!black, rounded corners=15] (v1)--(1.5,-.4)--(u);
	\node at ($(a)+(-.2,.1)$) {$\re{a}$};
	\node at ($(b)+(-.2,-.2)$) {$\gr{b}$};
	\node at ($(c)+(.2,-.2)$) {$\bl{c}$};
	\node at ($(v)+(.05,.2)$) {$\gr{v}$};
	\node at ($(w)+(0,.2)$) {$\bl{w}$};
	\node at ($(u)+(-.3,-.1)$) {$\re{u}$};
	\node at ($(x)+(-.1,.2)$) {$x$};
	\node at ($(v0)+(-.2,.2)$) {\footnotesize \re{0}};
	\node at ($(v1)+(.1,.2)$) {\footnotesize \re{1}};
	\node at ($(v2)+(.2,-.05)$) {\footnotesize  \gr{2}};
	\node at ($(v4)+(-.15,-.15)$) {\footnotesize  \bl{4}};
	\node [black!10] at ($(v3)+(-.1,.2)$) {\footnotesize 3};
	\foreach \i in {a,b,c,u,v,w,v0,v1,v2,v4,v5, x} \fill (\i) circle (2pt);
	\foreach \i in {a,u, v0, v1}{
		\fill [red]  (\i) circle (1.7pt);
	}	
	\foreach \i in {c, w,v4}{
		\fill[blue!75!white]  (\i) circle (1.7pt);
	}	
	\foreach \i in {v,b,v2}{
		\fill[green]  (\i) circle (1.7pt);
	}				
	\fill [black!10] (v3) circle (2pt);
	\coordinate (c) at (5,0);
	\foreach \i in {1,...,5} {
		\coordinate (a\i) at ($(-18+\i*72:1)+(5,0)$);
		\coordinate (b\i) at ($(18+\i*72:1.5)+(5,0)$);	
		\coordinate (d\i) at ($(18+\i*72:2)+(5,0)$);
	}
	\draw (b3)--(a4)--(b4)--(a4)--(c);
	\draw (b1) [thick, cyan, out=162, in=72] to (d2) [out=-108, in=162] to (b3);
	\draw (b2) [blue!75!black,  out=234, in=144] to (d3) [out=-36, in=234] to (b4);
	\draw (b3) [green!75!black, out=-54, in=216] to (d4) [out=36, in=-54] to (b5);
	\draw (b4) [thick, cyan, out=18, in=-72] to (d5) [out=108, in=18] to (b1);
	\draw (b5) [thick, out=90, in=0] to (d1) [out=180, in=90] to (b2);
	\draw [thick] (a2)--(b2);
	\draw [thick] (a1)--(b5);
	\draw [thick, cyan] (b3)--(a3)--(c)--(a5)--(b4);
	\draw [red!75!black] (b1)--(a1)--(c)--(a2)--(b1);
	\draw [blue!75!black] (b2)--(a3);
	\draw [green!75!black] (b5)--(a5);
	\fill (c) circle (2pt);
	\foreach \i in {1,...,5} {
		\fill (a\i) circle (2pt);		
		\fill (b\i) circle (2pt);
	}
	\foreach \i in {a2, a1, c, b1} \fill [red] (\i) circle (1.7pt);
	\foreach \i in {b2, a3, b4} \fill [blue] (\i) circle (1.7pt);
	\foreach \i in {b5, b3, a5} \fill [green] (\i) circle (1.7pt); 
	\node at ($(c)+(-.3,.15)$) {$\re{a}$};
	\node at ($(a3)+(-.25,-.15)$) {$\bl{w}$};
	\node at ($(a4)+(0,-.23)$) {$x$};
	\node at ($(a5)+(.25,-.1)$) {$\gr{v}$};
	\node at ($(b3)+(-.1,-.2)$) {$\gr{b}$};
	\node at ($(b4)+(.1,-.2)$) {$\bl{c}$};
	\node at ($(b1)+(0,.2)$) {$\re{u}$};
	\node at ($(a1)+(.17,.15)$) {\footnotesize  $\re{0}$};
	\node at ($(a2)+(-.17,.15)$) {\footnotesize  $\re{1}$};
	\node at ($(b2)+(-.2,.05)$) {\footnotesize $\bl{4}$};
	\node at ($(b5)+(.2,.05)$) {\footnotesize  $\gr{2}$};
\end{tikzpicture}
\caption{The isomorphism $\grot^{11}_2\cong\grot$.}
\label{fig:43}
\end{figure}		
 
Let us conclude this subsection by showing that we cannot obtain~$\grot_i^{11}$ 
from~$\grot_i^{00}$ by deleting an edge.  

\begin{lemma}\label{c:310}
	If $i\ge 2$ and $q, z\in V(\grot_i^{00})$ satisfy 
	$\grot_i^{00}-\{q,z\}\cong \grot_i^{11}$, then $qz\not\in E(\grot_i^{00})$.
\end{lemma}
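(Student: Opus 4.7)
The plan is to argue by contradiction and assume $qz\in E(\grot_i^{00})$. The starting point is an edge count. Inspection of the Vega structure shows that $y$ and $\gr{2i-1}$ are non-adjacent (their only common neighbour is $\gr{v}$), so removing them from $\grot_i^{00}$ deletes exactly $\deg(y)+\deg(\gr{2i-1})=4+(i+2)=i+6$ edges. On the other hand, deleting an \emph{adjacent} pair $\{q,z\}$ deletes $\deg(q)+\deg(z)-1$ edges. Matching these forces
\[
	\deg(q)+\deg(z)=i+7\,.
\]
Since only three degree values appear in $\grot_i^{00}$, namely $4$ (at $x,y$), $i+2$ (at the inner Andr\'asfai vertices and at $\bl{c},\bl{w}$) and $i+3$ (at $\re{a},\re{u},\gr{b},\gr{v}$), the pair $\{\deg q,\deg z\}$ is necessarily $\{4,i+3\}$, with the extra possibility $\{i+2,i+2\}=\{5,5\}$ when $i=3$; for $i=2$ the pair $\{i+2,i+3\}$ already coincides with $\{4,5\}$.

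In the generic case $q\in\{x,y\}$, $z\in\{\re{a},\re{u},\gr{b},\gr{v}\}$, I plan to use $\sigma$ and $\tau_0$ to reduce to $q=x$, $z=\re{a}$. A short calculation in $\grot_i^{00}-\{x,\re{a}\}$ shows that $\re{u}$ retains its full degree $i+3$, while $\gr{b}$ and $\gr{v}$ each fall to $i+2$ (having lost $x$ and $\re{a}$, respectively); hence $\re{u}$ is the unique degree-$(i+3)$ vertex in the remainder, and its neighbour $\gr{b}$ has degree~$i+2$. But in $\grot_i^{11}$ the unique degree-$(i+3)$ vertex is $\re{a}$, whose neighbourhood $\Gr\cup\{\gr{v},\bl{w},x\}$ consists of $i+2$ vertices of degree $i+1$ together with the degree-$3$ vertex~$x$, and in particular contains no vertex of degree $i+2$. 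The mismatch of the multisets of neighbourhood-degrees rules out any isomorphism.

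For $i=3$ with $\deg q=\deg z=5$ I would distinguish two sub-cases. If both $q,z$ lie in $\G_3$, then $x,y$ keep degree~$4$ (being non-adjacent to inner vertices), and every other remaining vertex had original degree at least~$5$ but can lose only~$1$, because the edge $qz$ together with the triangle-freeness of $\grot_3^{00}$ excludes common neighbours; thus the remainder has no degree-$3$ vertex, contradicting the presence of $x$ in $\grot_3^{11}$. If instead one of $q,z$ is $\bl{c}$ or $\bl{w}$, then two of the four degree-$6$ vertices $\re{a},\re{u},\gr{b},\gr{v}$ remain untouched, so the remainder has two degree-$6$ vertices whereas $\grot_3^{11}$ has only one.

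The main obstacle is the case $i=2$, where the coincidence $i+2=\deg(x)=4$ collapses several degree classes and the crude degree-sequence invariant no longer suffices. Here the sporadic reflection $\varrho$ enlarges $\Aut(\grot_2^{00})$ to $\DD_4$, and the sixteen adjacent pairs of degrees $\{4,5\}$ split into exactly two orbits, with representatives $\{x,\re{a}\}$ and $\{\re{1},\re{a}\}$. The first is settled by the generic argument above. For the second, one checks that $\gr{b}$ is the unique degree-$5$ vertex in $\grot_2^{00}-\{\re{1},\re{a}\}$ and that two of its neighbours, namely $\gr{2}$ and $\re{u}$, still have degree $4$; since the unique degree-$5$ vertex in $\grot_2^{11}\cong\grot$ has all five of its neighbours of degree $3$, a final contradiction ensues. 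Thus the small-$i$ analysis reduces to a $\DD_4$-orbit enumeration coupled with the same neighbourhood-degree invariant used above.
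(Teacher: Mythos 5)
Your proof is correct, and it takes a genuinely different route from the paper's. Both begin with the same edge count $\dd(q)+\dd(z)=i+7$, but from there the paper first observes that $\grot_i^{11}$ is maximal triangle-free (so every pair of non-adjacent vertices has a common neighbour) and uses this to rule out $q,z\in\{\re{a},\gr{b},\re{u},\gr{v}\}$ outright: e.g.\ $\re{u}$ is the sole common neighbour both of $\{\bl{c},\re{0}\}$ and of $\{y,\re{1}\}$, and these two constraints on the second deleted vertex are incompatible. Combined with the degree equation, this collapses everything to $i=3$ with $q,z\in\G_i\cup\{\bl{c},\bl{w}\}$, which the paper then kills with a parity count on the number of surviving degree-$(i+3)$ vertices. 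You instead keep the full case split by $i$ and rule out each case with local degree invariants — the degree multiset of the neighbourhood of the unique degree-$(i+3)$ vertex for the generic pairs, a minimum-degree bound plus triangle-freeness for the $i=3$ interior pairs, a count of degree-$6$ vertices for the $i=3$ pairs meeting $\{\bl{c},\bl{w}\}$, and a $\DD_4$-orbit enumeration with the same neighbourhood-degree invariant for $i=2$. Your approach is more computational but is entirely self-contained at the level of degree sequences; the paper's is shorter because the common-neighbour observation does most of the work up front. (One small presentational point: for readers' peace of mind it would be worth stating explicitly that the two $\DD_4$-orbits of adjacent $\{4,5\}$-pairs for $i=2$ each have size $8$ — which follows since the stabiliser of $\re{a}$ is $\{1,\varrho\}$ and $\varrho$ moves both $x$ and $\re{1}$.)
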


\begin{proof}
	Assume contrariwise that for some edge $qz$ of $\grot_i^{00}$ the 
	graphs $\grot_i^{00}-\{q,z\}$ and $\grot_i^{11}$ are isomorphic. 
	We label the vertices of $\grot_i^{00}$ as in Figure~\ref{fig:Vega1}. 
	Recall that in $\grot_i^{11}$ any two non-adjacent vertices have a common 
	neighbour. Since $\re{u}$ is the only common neighbour of~$\bl{c}$ and~$\re{0}$, 
	this proves that if $q=\re{u}$, then $z\in \{\bl{c},\re{0}\}$.
	But $\re{u}$ is also the only common neighbour of~$y$ and $\re{1}$ and, 
	therefore, $q=\re{u}$ would imply $z\in \{y,\re{1}\}$ as well.  
	Altogether the case $q=\re{u}$ is impossible. By $\sigma$- and $\tau_0$-symmetry
	this argument actually shows
	\begin{equation}\label{eq:38}
		q, z\not\in \{\re{a}, \gr{b}, \re{u}, \gr{v}\}\,.
	\end{equation}
	We indicate degrees of vertices in $\grot_i^{00}$ by $\dd(\cdot)$.
	Because of 
	\[
		|E(\grot_i^{00})|-|E(\grot_i^{11})|=\dd(y)+\dd(\gr{2i-1})=4+(i+2)=i+6
	\]
	and $qz\in E(\grot_i^{00})$ we have
	\begin{equation}\label{eq:39}
		\dd(q)+\dd(z)=i+7\,.
	\end{equation}
	As $\grot^{00}_i$ has the degree table
	\begin{center}
	\begin{tabular}{c|c|c|c}
			$t\in$&  $\{\re{a}, \gr{b}, \re{u}, \gr{v}\}$&  
			$\G_i\cup\{\bl{c}, \bl{w}\}$ & $\{x, y\}$
			\\ \hline
			$\dd(t)=$&  $i+3$&  $i+2$ & $4$
	\end{tabular}\,\,,
	\end{center}
	it follows from~\eqref{eq:38} and~\eqref{eq:39} that $i=3$ 
	and $q, z\in \G_i\cup\{\bl{c}, \bl{w}\}$. 
	It is not difficult to see, however, that if two adjacent vertices 
	belonging to this set are deleted from $\grot_i^{00}$, 
	then an even number of the vertices $\re{a},\gr{b},\re{u},\gr{v}$ 
	keeps the degree $i+3$. The graph~$\grot^{11}_i$, on the other hand, 
	has exactly one such vertex (namely, the vertex which would be 
	called~$\re{a}$ in the standard labelling of~$\grot^{11}_i$). 
	This contradiction concludes the proof.
\end{proof}

\subsection{Properties of maximal triangle-free graphs satisfying 
\texorpdfstring{\Dp{4}}{D4}}
	
Let $\fD_4$ be the class of maximal triangle-free graphs satisfying \Dp{4}. 
In this subsection we present two lemmata on subgraphs of such graphs. 
The first of them concerns the {\it cube}, that is the graph remaining 
from~$K_{4,4}$ after the deletion of a perfect matching. Brandt~\cite{B}
proved that maximal triangle-free graphs $G$ with $\delta(G)>|V(G)|/3$ 
contain no induced cubes. His argument goes through under the weaker 
assumption $G\in \fD_4$ and for the sake of completeness we would like 
to provide full details. 
	
\begin{lemma}[Cube lemma]\label{l:31}
	No graph in $\fD_4$ contains an induced cube.
\end{lemma}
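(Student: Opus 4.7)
Suppose for a contradiction that some $G\in\fD_4$ contains an induced cube $Q$ with parts $A=\{a_1,\dots,a_4\}$ and $B=\{b_1,\dots,b_4\}$ satisfying $a_ib_j\in E(G)$ iff $i\ne j$. The first step uses only triangle-freeness: for every $v\in V(G)$ the set $N(v)\cap(A\cup B)$ is contained in $A$, contained in $B$, or equal to $\{a_i,b_i\}$ for a unique $i\in[4]$, because otherwise some edge $a_ib_j$ with $i\ne j$ closes a triangle through $v$. Maximal triangle-freeness of $G$ then provides, for each $i\in[4]$, a common neighbour $c_i$ of $a_i$ and $b_i$, which by the classification satisfies $N(c_i)\cap(A\cup B)=\{a_i,b_i\}$; in particular the four vertices $c_1,c_2,c_3,c_4$ lie outside $V(Q)$ and are pairwise distinct.

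The heart of the proof is to apply \Dp{4} with $m=4$ to the twelve-vertex sequence
\[
  (a_1,a_2,a_3,a_4,b_1,b_2,b_3,b_4,c_1,c_2,c_3,c_4)\,,
\]
producing some $y\in V(G)$ adjacent to at least five of its terms. Writing $S=N(y)\cap A$, $T=N(y)\cap B$, $R=N(y)\cap\{c_1,\dots,c_4\}$, triangle-freeness imposes three constraints: the pair $(S,T)$ is \emph{compatible}, meaning $S=\emptyset$, $T=\emptyset$, or $|S|=|T|=1$ with coincident index; the indices appearing in $S$ and $R$ form disjoint subsets of $[4]$, since $a_i\sim c_i$ would otherwise give the triangle $ya_ic_i$; and symmetrically the indices appearing in $T$ and $R$ are disjoint. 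If $T=\emptyset$, these disjointness conditions force $|S|+|R|\le 4$, contradicting $|S|+|T|+|R|\ge 5$, and the case $S=\emptyset$ is symmetric. The only remaining option is $|S|=|T|=1$ with common index $i_0$, for which the count bound $2+|R|\ge 5$ forces $R=\{c_j:j\ne i_0\}$. After relabelling, $y$ is a common neighbour of $a_1,b_1,c_2,c_3,c_4$ and non-adjacent to every other vertex in $A\cup B\cup\{c_1\}$.

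Eliminating this residual diagonal case is the principal obstacle. Since $y$ has exactly the same cube-type as $c_1$, one may substitute $y$ for $c_1$ in the twelve-vertex sequence and reapply \Dp{4}. Repeating the case analysis, now with the added triangle constraints $yc_j\in E(G)$ for $j\in\{2,3,4\}$, one checks that every alternative is blocked and the only possible outcome is a further vertex $y'$, distinct from and non-adjacent to $y$, that is again a common neighbour of the independent $5$-set $W=\{a_1,b_1,c_2,c_3,c_4\}$. Iterating this substitution yields an independent family of common neighbours of $W$; combining this with maximality applied to the non-edges between successive members of the family (and with $c_1$) and a final carefully chosen \Dp{4} application built around $W$ produces the contradiction. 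Orchestrating that last application so that every triangle-free configuration compatible with the count-$5$ bound is ruled out is the delicate point, and it is here that my plan matches, in the \Dp{4}-framework, the counting argument in Brandt's original treatment \cite{B}.
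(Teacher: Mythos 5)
Your first two paragraphs are correct and match the paper's argument: you extract the common neighbours $c_i$, apply \Dp{4} to the twelve-vertex sequence, and conclude (after relabelling) that the resulting vertex $y$ is adjacent to precisely $a_1$, $b_1$, $c_2$, $c_3$, $c_4$ among the twelve. The classification into $S$, $T$, $R$ is a slightly more verbose version of the paper's observation that any five-element independent subset must contain some $a_i$, some $b_j$, hence exactly one $a_i$ and one $b_i$ with the same index, and then the three $c$'s of other indices.

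The third paragraph contains a genuine gap, which you yourself flag as ``the delicate point''. Substituting $y$ for $c_1$ and reapplying \Dp{4} need not escalate: since $y$ is adjacent to $c_2$, $c_3$, $c_4$, these three become pairwise non-adjacent, so $W=\{a_1,b_1,c_2,c_3,c_4\}$ is still a five-element independent subset of the new twelve vertices; \Dp{4} then returns \emph{some} common neighbour of $W$, but $y$ itself already qualifies, so you may simply receive $y$ again rather than a new $y'$. Even granting distinctness, an increasing independent family of common neighbours of $W$ is not in itself contradictory, and the sentence invoking ``maximality applied to the non-edges between successive members'' and ``a final carefully chosen \Dp{4} application'' is a plan, not a proof. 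The paper's actual continuation is different and is exactly what closes this gap: having made the replaced $c$-vertex a hub (adjacent to the other three $c$'s), it applies \Dp{3} to the \emph{nine} vertices obtained by deleting the hub's triple $a_4,b_4,c_4$. Every four-element independent set there has the form $\{a_i,b_i,c_j,c_k\}$ with $\{i,j,k\}=\{1,2,3\}$, so one gets a common neighbour $s$ of (say) $a_2,b_2,c_1,c_3$, which replaces $c_2$. This turns $\{c_1,c_2,c_3,c_4\}$ into a $4$-cycle, after which the twelve vertices have independence number exactly four and a final invocation of \Dp{4} gives the contradiction outright. That structural step --- engineering a $4$-cycle on the $c$'s so that no five-element independent set survives --- is the idea missing from your outline.
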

\begin{proof}
	Assume contrariwise that some $G\in \fD_4$ has eight vertices 
	\[
		a_1, a_2, a_3, a_4, b_1, b_2, b_3, b_4
	\]
	such that $a_ib_j\in E(G)$ whenever $i, j\in [4]$ are distinct, 
	whilst $a_ib_i\not\in E(G)$ for all $i\in [4]$.

	\begin{figure}[h!]
	\centering
	\begin{subfigure}[b]{0.23\textwidth}
	\begin{tikzpicture}[scale=.7]
		\coordinate (a1) at (-1,1.5);
		\coordinate (b4) at (-1.5,1);
		\coordinate (a3) at (1,-1.5);
		\coordinate (b2) at (1.5,-1);
		\coordinate (b3) at (-1,-1.5);
		\coordinate (a4) at (-1.5,-1);
		\coordinate (b1) at (1,1.5);
		\coordinate (a2) at (1.5,1);
		\coordinate (c1) at (0,1.5);
		\coordinate (c2) at (1.5,0);
		\coordinate (c3) at (0,-1.5);
		\coordinate (c4) at (-1.5,0);
		\foreach \i in {1,...,4} {
			\fill (a\i) circle (2pt);
			\fill (b\i) circle (2pt);	
			\fill [blue] (c\i) circle (2pt);
			\draw [blue, thick] (a\i)--(b\i);
		}
		\draw (a1)--(b2)--(a3)--(b4)--(a1)--(b3)--(a4)--(b1)--(a2)--(b3);
		\draw (b2)--(a4);
		\draw (a3) -- (b1);
		\draw (b4) --(a2);
		\node at ($(a1)+(0,.2)$) {\tiny $a_2$};
		\node at ($(a2)+(.28,0)$) {\tiny $a_3$};
		\node at ($(a3)+(0,-.2)$) {\tiny $a_4$};
		\node at ($(a4)+(-.23,0)$) {\tiny $a_1$};
		\node at ($(b1)+(.08,.2)$) {\tiny $b_2$};
		\node at ($(b2)+(.28,0)$) {\tiny $b_3$};
		\node at ($(b3)+(0,-.15)$) {\tiny $b_4$};
		\node at ($(b4)+(-.23,0)$) {\tiny $b_1$};
		\node [blue] at ($(c1)+(0,.2)$) {\tiny $c_2$};
		\node [blue] at ($(c2)+(.28,0)$) {\tiny $c_3$};
		\node [blue] at ($(c3)+(0,-.2)$) {\tiny $c_4$};
		\node [blue] at ($(c4)+(-.23,0)$) {\tiny $c_1$};
	\end{tikzpicture}
	\caption{}
	\label{fig:Wurfela}
	\end{subfigure}
	\hfill
	\begin{subfigure}[b]{0.23\textwidth}
	\begin{tikzpicture}[scale=.7]
		\coordinate (a1) at (-1,1.5);
		\coordinate (b4) at (-1.5,1);
		\coordinate (a3) at (1,-1.5);
		\coordinate (b2) at (1.5,-1);
		\coordinate (b3) at (-1,-1.5);
		\coordinate (a4) at (-1.5,-1);
		\coordinate (b1) at (1,1.5);
		\coordinate (a2) at (1.5,1);
		\coordinate (c1) at (0,1.5);
		\coordinate (c2) at (1.5,0);
		\coordinate (c3) at (0,-1.5);
		\coordinate (c4) at (-1.5,0);
		\coordinate (t) at (0,-1.3);
		\draw [red, thick ] (b3)--(t)--(a3);
		\draw [red, thick ] (c4) --(t) -- (c2) --(t) --(c1);
		\fill (t) circle (2pt);
		\foreach \i in {1,...,4} {
			\fill (a\i) circle (2pt);
			\fill (b\i) circle (2pt);	
			\fill  (c\i) circle (2pt);
			\draw (a\i)--(b\i);
		}
		\draw (a1)--(b2)--(a3)--(b4)--(a1)--(b3)--(a4)--(b1)--(a2)--(b3);
		\draw (b2)--(a4);
		\draw (a3) -- (b1);
		\draw (b4) --(a2);
		\node at ($(a1)+(0,.2)$) {\tiny $a_2$};
		\node at ($(a2)+(.28,0)$) {\tiny $a_3$};
		\node at ($(a3)+(0,-.2)$) {\tiny $a_4$};
		\node at ($(a4)+(-.23,0)$) {\tiny $a_1$};
		\node at ($(b1)+(.08,.2)$) {\tiny $b_2$};
		\node at ($(b2)+(.28,0)$) {\tiny $b_3$};
		\node at ($(b3)+(0,-.15)$) {\tiny $b_4$};
		\node at ($(b4)+(-.23,0)$) {\tiny $b_1$};
		\node at ($(c1)+(0,.2)$) {\tiny $c_2$};
		\node at ($(c2)+(.28,0)$) {\tiny $c_3$};
		\node at ($(c3)+(0,-.2)$) {\tiny $c_4$};
		\node at ($(c4)+(-.23,0)$) {\tiny $c_1$};
		\node [red] at ($(t)+(.35,.1)$) {\tiny $t$};
	\end{tikzpicture}
	\caption{}
	\label{fig:Wurfelb}
	\end{subfigure}
	\hfill
	\begin{subfigure}[b]{0.23\textwidth}
	\begin{tikzpicture}[scale=.7]
		\coordinate (a1) at (-1,1.5);
		\coordinate (b4) at (-1.5,1);
		\coordinate (a3) at (1,-1.5);
		\coordinate (b2) at (1.5,-1);
		\coordinate (b3) at (-1,-1.5);
		\coordinate (a4) at (-1.5,-1);
		\coordinate (b1) at (1,1.5);
		\coordinate (a2) at (1.5,1);
		\coordinate (c1) at (0,1.5);
		\coordinate (c2) at (1.5,0);
		\coordinate (c3) at (0,-1.5);
		\coordinate (c4) at (-1.5,0);
		\coordinate (s) at (0,1.3);
		\draw [black!20] (b2)--(a3)--(b4)--(a1)--(b3)--(a4)--(b1)--(a2)--(b3);
		\draw [black!20] (b3)--(a3) -- (b1);
		\draw [black!20] (c4) --(c3)--(c1)--(c3)--(c2);
		\draw [thick] (b4)--(a4)--(b1)--(a2);
		\draw [thick] (a4)--(b2)--(a2)--(b4);
		\draw [thick] (b4)--(a1)--(b2);
		\draw [thick] (a1) -- (b1);
		\draw [thick, red] (b1)--(s)--(a1)--(s)--(c2)--(s)--(c4);
		\fill (s) circle (2pt);
		\foreach \i in {1,2,4} {
			\fill (a\i) circle (2pt);
			\fill (b\i) circle (2pt);	
			\fill  (c\i) circle (2pt);
		}
		\foreach \i in {a3,b3,c3} \fill [black!20] (\i) circle (2pt);
		\node at ($(a1)+(0,.2)$) {\tiny $a_2$};
		\node at ($(a2)+(.28,0)$) {\tiny $a_3$};
		\node [black!20] at ($(a3)+(0,-.2)$) {\tiny $a_4$};
		\node at ($(a4)+(-.23,0)$) {\tiny $a_1$};
		\node at ($(b1)+(.08,.2)$) {\tiny $b_2$};
		\node at ($(b2)+(.28,0)$) {\tiny $b_3$};
		\node [black!20] at ($(b3)+(0,-.15)$) {\tiny $b_4$};
		\node at ($(b4)+(-.23,0)$) {\tiny $b_1$};
		\node at ($(c1)+(0,.2)$) {\tiny $c_2$};
		\node at ($(c2)+(.28,0)$) {\tiny $c_3$};
		\node [black!20] at ($(c3)+(0,-.2)$) {\tiny $c_4$};
		\node at ($(c4)+(-.23,0)$) {\tiny $c_1$};
		\node [red] at ($(s)+(.3, -.08)$) {\tiny $s$};
	\end{tikzpicture}
	\caption{}
	\label{fig:Wurfelc}
	\end{subfigure}
	\hfill
	\begin{subfigure}[b]{0.23\textwidth}
	\begin{tikzpicture}[scale=.7]
		\coordinate (a1) at (-1,1.5);
		\coordinate (b4) at (-1.5,1);
		\coordinate (a3) at (1,-1.5);
		\coordinate (b2) at (1.5,-1);
		\coordinate (b3) at (-1,-1.5);
		\coordinate (a4) at (-1.5,-1);
		\coordinate (b1) at (1,1.5);
		\coordinate (a2) at (1.5,1);
		\coordinate (c1) at (0,1.5);
		\coordinate (c2) at (1.5,0);
		\coordinate (c3) at (0,-1.5);
		\coordinate (c4) at (-1.5,0);
		\draw [red, thick] (c3)--(c2)--(c1)--(c4)--(c3);
		\draw (a1)--(b2)--(a3)--(b4)--(a1)--(b3)--(a4)--(b1)--(a2)--(b3);
		\draw (a2)--(b2)--(a4)--(b4);
		\draw (a3) -- (b1);
		\draw [thick, red] (b1)--(a1);
		\draw (b4) --(a2);
		\draw [thick, red] (a3)--(b3);
		\foreach \i in {1,...,4} {
			\fill (a\i) circle (2pt);
			\fill (b\i) circle (2pt);	
			\fill  (c\i) circle (2pt);
		}
		\node at ($(a1)+(0,.2)$) {\tiny $a_2$};
		\node at ($(a2)+(.28,0)$) {\tiny $a_3$};
		\node at ($(a3)+(0,-.2)$) {\tiny $a_4$};
		\node at ($(a4)+(-.23,0)$) {\tiny $a_1$};
		\node at ($(b1)+(.08,.2)$) {\tiny $b_2$};
		\node at ($(b2)+(.28,0)$) {\tiny $b_3$};
		\node at ($(b3)+(0,-.15)$) {\tiny $b_4$};
		\node at ($(b4)+(-.23,0)$) {\tiny $b_1$};
		\node at ($(c1)+(0,.2)$) {\tiny $c_2$};
		\node  at ($(c2)+(.28,0)$) {\tiny $c_3$};
		\node at ($(c3)+(0,-.2)$) {\tiny $c_4$};
		\node at ($(c4)+(-.23,0)$) {\tiny $c_1$};
	\end{tikzpicture}
	\caption{}
	\label{fig:Wurfeld}
	\end{subfigure}
	\caption{The proof of the cube lemma.}
	\label{fig:lem31}
	\end{figure}
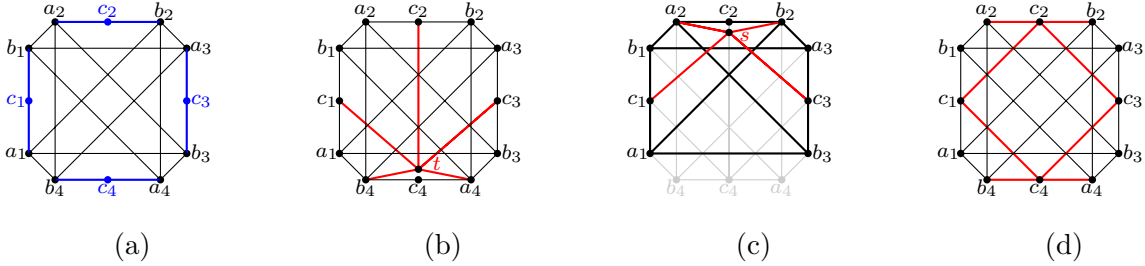
 	
	Because $G$ has diameter two, there exist vertices 
	$c_i\in \Nn(a_i)\cap \Nn(b_i)$ for all $i\in [4]$ 
	(see Figure~\ref{fig:Wurfela}). 
	By~\Dp{4} there is a five-element set 
	$T\subseteq\{a_1, a_2, a_3, a_4, b_1, b_2, b_3, b_4, c_1, c_2, c_3, c_4\}$
	possessing a common neighbour $t$.
	Since $T$ contains at most one vertex from each of the edges $b_ic_i$, we may 
	assume $a_4\in T$. Similarly we obtain $b_4\in T$ and thus 
	$T=\{a_4, b_4, c_1, c_2, c_3\}$. So $c_4$ can be replaced by $t$ 
	(see Figure~\ref{fig:Wurfelb})
	and, without loss of generality, we may assume 
	\begin{equation}\label{eq:31}
		c_1c_4, c_2c_4,c_3c_4\in E(G)\,.
	\end{equation}
	Next we apply \Dp{3} to the nine vertices 
	$a_1$, $a_2$, $a_3$, $b_1$, $b_2$, $b_3$, $c_1$, $c_2$, $c_3$,
	thereby finding a vertex $s$ which is, without loss of generality,  
	adjacent to $a_2$, $b_2$, $c_1$, $c_3$ (see Figure~\ref{fig:Wurfelc}).		
	Replacing $c_2$ by $s$ we change~\eqref{eq:31} to the more `symmetric'
	configuration
	\[
		c_1c_2, c_2c_3, c_3c_4, c_4c_1\in E(G)\,.
	\]
	But now the largest independent set among the twelve vertices 
	$a_i$, $b_i$, $c_i$ with $i\in [4]$
	has size four, contrary to~\Dp{4}. 
\end{proof}

In the sequel, whenever we apply the above lemma, we write

\begin{center}
	\begin{tabular}{cc}
		$(Q)$&
		\begin{tabular}{c|c|c|c}
			$a_1$&  $a_2$&  $a_3$ & $a_4$
			\\ \hline
			$b_1$&  $b_2$&  $b_3$ & $b_4$
		\end{tabular}
	\end{tabular}
\end{center}		
to denote the cube $Q$ with vertices $a_i$, $b_i$ and 
$E(Q) =\{a_i b_j\colon i\neq j \text{ and }i,j\in[4]\}$. 
Notice that if $Q$ appears as a non-induced subgraph of some triangle-free 
graph $G$, then one of the edges $a_ib_i$ needs to be present in $G$.

\medskip

We shall now take a closer look at the graph $N$ displayed in Figure~\ref{fig:NN},
which we have already encountered in the proofs of Lemma~\ref{l:21} 
and Lemma~\ref{l:31}. It will be convenient to read the indices $0$, $1$, $2$ 
in~$N$ modulo $3$. As we have seen in earlier proofs, if $N$ is a subgraph of 
some graph $G$ satisfying~\Dp{3}, then one of the three sets 
$\{a_i, b_i, c_{i+1}, c_{i+2}\}$ needs to have a 
common neighbour. It turns out that for $G\in\fD_4$ much more is true. 	

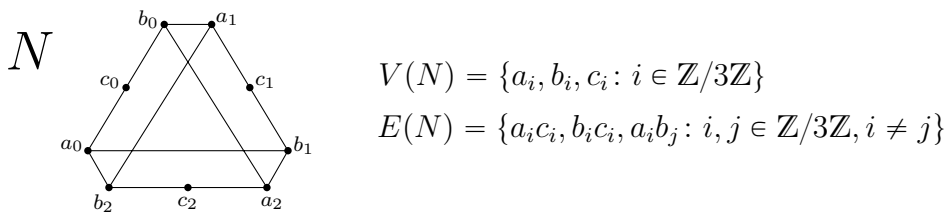
\begin{figure}[h!]
\vskip -.2cm
\centering		
\begin{tikzpicture}[scale=.7]
	\coordinate (c2) at (0,-.1);
	\coordinate (b2) at (-1.5,-.1);
	\coordinate (a2) at (1.5,-.1);
	\coordinate (b0) at (-.45,3);
	\coordinate(a1) at (.45,3);
	\coordinate (a0) at (-1.9,.6);
	\coordinate (b1) at (1.9,.6);
	\coordinate (c0) at ($(b0)!.5!(a0)$);
	\coordinate (c1) at ($(b1)!.5!(a1)$);
	\draw (a0)--(b0);
	\draw (a1)--(b1);
	\draw (a2)--(b2);
	\draw (b0)--(a1)--(b2)--(a0)--(b1)--(a2)--cycle;
	\foreach \i in {0,1,2}{
		\fill (a\i) circle (2pt);
		\fill (b\i) circle (2pt);
		\fill (c\i) circle (2pt);
	}
	\node at ($(b0)+(-.3,.1)$) {\tiny $b_0$};
	\node at ($(c0)+(-.3,.1)$) {\tiny $c_0$};
	\node at ($(a0)+(-.3,.1)$) {\tiny $a_0$};
	\node at ($(b1)+(.3,.1)$) {\tiny $b_1$};
	\node at ($(c1)+(.3,.1)$) {\tiny $c_1$};
	\node at ($(a1)+(.3,.1)$) {\tiny $a_1$};
	\node at ($(b2)+(-.1,-.3)$) {\tiny $b_2$};
	\node at ($(c2)+(0,-.3)$) {\tiny $c_2$};
	\node at ($(a2)+(.1,-.3)$) {\tiny $a_2$};
	\node at (-3, 2.5) {\LARGE $N$};
	\node at (7.3, 2) {$V(N) = \{a_i, b_i, c_i\colon i \in \ZZ/3\ZZ\}$};
	\node at (9, 1) {$E(N) =\{a_ic_i, b_ic_i, a_ib_j\colon i,j\in \ZZ/3\ZZ, i\neq j\}$};
\end{tikzpicture}
\vskip -.3cm
\caption{The graph $N$.}
\label{fig:NN}
\vskip -.3cm
\end{figure}
 
\begin{lemma}\label{l:35}
	If $N$ is a subgraph of some $G\in \fD_4$, then 
	for every $i\in \ZZ/3\ZZ$ either $c_{i-1}c_{i+1}$ 
	is an edge of $G$ or there is a common neighbour 
	of $a_i$, $b_i$, $c_{i-1}$, $c_{i+1}$.
\end{lemma}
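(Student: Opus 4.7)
The plan is to establish the contrapositive: assuming $c_1c_2\notin E(G)$ I shall produce a common neighbour of $\{a_0,b_0,c_1,c_2\}$. By the maximal triangle-freeness of $G$ fix $w\in V(G)$ with $wc_1,wc_2\in E(G)$, and recall that $c_0\in V(N)$ is already adjacent to both $a_0$ and $b_0$. Among the six distinguished vertices $\{a_0,b_0,c_0,c_1,c_2,w\}$, the pairs that are known edges of $G$ are precisely $\{a_0c_0,\,b_0c_0,\,c_1w,\,c_2w\}$, while by triangle-freeness and the hypothesis $c_1c_2\notin E(G)$ the four vertices $a_0,b_0,c_1,c_2$ are pairwise non-adjacent.

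The main step is to apply \Dp{4} to the $12$-multiset in which each of these six vertices appears with multiplicity~$2$. This produces $y\in V(G)$ adjacent to at least five elements of the multiset, and hence to at least three distinct members of $\{a_0,b_0,c_0,c_1,c_2,w\}$. Since $G$ is triangle-free, $y$ cannot be simultaneously adjacent to both endpoints of any of the four known edges. A routine enumeration of the subsets of $\{a_0,b_0,c_0,c_1,c_2,w\}$ of size four or more avoiding every forbidden pair shows that $\{a_0,b_0,c_1,c_2\}$ is the only allowed $4$-subset and that no allowed subset of size $5$ or $6$ exists; in particular, as soon as $y$ is adjacent to four of the six vertices, $y$ is itself the desired common neighbour.

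The remaining possibility is that $y$ is adjacent to exactly three of the six, realising one of the six admissible triples
\[
	\{a_0,b_0,c_1\},\ \{a_0,b_0,c_2\},\ \{a_0,b_0,w\},\ \{a_0,c_1,c_2\},\ \{b_0,c_1,c_2\},\ \{c_0,c_1,c_2\}.
\]
The involutions $c_1\leftrightarrow c_2$ and $a_i\leftrightarrow b_i$ are automorphisms of $N$ preserving both the hypothesis and the target; they collapse the six cases to four essential subcases. In each subcase I regard $y$ as a candidate ``bottom vertex'' of a $Q_3$-cube with top $\{a_0,b_0,c_1,c_2\}$ (matched with whichever top vertex is missing from the triple), adjoin $y$ to the pool of known vertices, and reapply \Dp{4} to a refined $12$-multiset now featuring $y$. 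The three new edges incident to $y$ introduce additional forbidden pairs and tighten the enumeration of admissible $4$-subsets; this either recovers $\{a_0,b_0,c_1,c_2\}$ as the only allowed $4$-subset (directly furnishing the common neighbour) or supplies common neighbours of the remaining three triples of $\{a_0,b_0,c_1,c_2\}$. In the latter situation those three vertices together with $y$ form the bottom of a cube whose top is $\{a_0,b_0,c_1,c_2\}$; Lemma~\ref{l:31} prevents this cube from being induced, and triangle-freeness forbids any extra edge other than one of the cube's perfect matching, which automatically promotes the corresponding bottom vertex to a common neighbour of $\{a_0,b_0,c_1,c_2\}$.

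The principal obstacle is the bookkeeping inherent in this subcase analysis: each of the four essential cases produces a slightly different pattern of forbidden pairs, and one must verify in every case that the tightened \Dp{4} argument indeed closes, either by re-identifying $\{a_0,b_0,c_1,c_2\}$ as the unique admissible $4$-subset or by supplying the three additional cube-bottom vertices needed to invoke Lemma~\ref{l:31}.
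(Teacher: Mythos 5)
Your strategy departs substantially from the paper's. The paper first applies \Dp{3} to $V(N)$ to conclude that some $X_j$ is non-empty (say $X_0$, by symmetry), and only then proves that $c_0c_2\notin E(G)$ forces $X_1\neq\varnothing$. The vertex $x_0\in X_0$ is not decorative: it is one of the twelve vertices in the terminal configuration whose independence number is $4$, and without it the final \Dp{4} contradiction does not materialise. You instead try to prove the unconditional statement ``$c_1c_2\notin E(G)\Rightarrow X_0\neq\varnothing$'' directly, with nothing analogous to $x_0$ available. That makes the target strictly harder, and in my view the argument does not close.

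Concretely, the gap lies in the treatment of the six admissible triples. Your cube-matching idea is sound for the two triples $\{a_0,b_0,c_1\}$ and $\{a_0,c_1,c_2\}$ (and their symmetric images), where $y$ misses exactly one vertex of $\{a_0,b_0,c_1,c_2\}$. But the remaining two essential cases are structurally different: if $\{a_0,b_0,w\}\subseteq\Nn(y)$ then triangle-freeness (via $w$) forces $y$ off \emph{both} $c_1$ and $c_2$, and if $\{c_0,c_1,c_2\}\subseteq\Nn(y)$ then (via $c_0$) $y$ misses \emph{both} $a_0$ and $b_0$; in neither case is $y$ a candidate cube-bottom vertex matched with a single top vertex, so the framework you set up does not apply. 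Moreover, the ``refined $12$-multiset'' is never specified, and when one actually tries, say $\{a_0,b_0,c_1,c_2,y,w\}$ with multiplicity two in the case $\{a_0,b_0,w\}\subseteq\Nn(y)$, the list of admissible triples does not shrink in any useful way: the new vertex $z$ returned by \Dp{4} may again be adjacent to, for example, $\{a_0,b_0,w\}$ or the fresh triple $\{c_1,c_2,y\}$, so there is no visible progress and no termination argument. Finally, even in the favourable cases, a single invocation of \Dp{4} supplies one new vertex, whereas the cube lemma needs a complete bottom of four vertices; you never explain how to manufacture all four, nor why the process of ``iterating'' does not cycle. The paper avoids all of these difficulties precisely by securing $x_0$ at the outset and then orchestrating a single, carefully chosen twelve-vertex configuration whose independence number is pinned at $4$.
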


\begin{proof}
	The only independent sets of size four in $N$ are
	\[
		\{a_0,b_0,c_1,c_2\}\,,
		\qquad 
		\{a_1,b_1,c_0,c_2\}\,,
		\quad\text{ and } \quad 
		\{a_2,b_2,c_0,c_1\}\,.
	\]
	For every $i\in \ZZ/3\ZZ$ let $X_i$ be the set of common neighbours 
	of $a_i$, $b_i$, $c_{i-1}$, $c_{i+1}$. As $G$ satisfies~\Dp{3}, the 
	sets $X_0$, $X_1$, and $X_2$ cannot be empty simultaneously and thus 
	we can assume~${X_0\ne\vn}$. For reasons of symmetry we only need to 
	prove that if $c_0c_2\not\in E(G)$, then~$X_1\ne\vn$. To this end we 
	consider a common neighbour $y$ of $c_0$, $c_2$ (see Figure~\ref{fig:lem31A}).
	
	\begin{figure}[ht]
	\vskip -.3cm
	\centering
	\begin{subfigure}[b]{.173\textwidth}
	\centering
	\begin{tikzpicture}[scale=.51]
		\coordinate (c2) at (0,-.1);
		\coordinate (b2) at (-1.5,-.1);
		\coordinate (a2) at (1.5,-.1);
		\coordinate (b0) at (-.45,3);
		\coordinate (a1) at (.45,3);
		\coordinate (a0) at (-1.9,.6);
		\coordinate (b1) at (1.9,.6);
		\coordinate (c0) at ($(b0)!.5!(a0)$);
		\coordinate (c1) at ($(b1)!.5!(a1)$);
		\coordinate (y) at (0,1.25);
		\draw [thin] (a0)--(b0);
		\draw [thin] (a1)--(b1);
		\draw [thin] (a2)--(b2);
		\draw [thick, blue] (b0)--(a1)--(b2)--(a0)--(b1)--(a2)--cycle;
		\draw [red, thick] (c2)--(y)--(c0);
		\foreach \i in {0,1,2}{
			\fill [blue] (a\i) circle (2pt);
			\fill [blue] (b\i) circle (2pt);
		}
		\fill (c1) circle (2pt);
		\fill [red](c0) circle (2pt);
		\fill [red](c2) circle (2pt);
		\fill [red](y) circle (2pt);
		\node at ($(b0)+(-.35,.1)$) {\tiny $b_0$};
		\node at ($(c0)+(-.35,.1)$) {\tiny $c_0$};
		\node at ($(a0)+(-.35,.1)$) {\tiny $a_0$};
		\node at ($(b1)+(.35,.1)$) {\tiny $b_1$};
		\node at ($(c1)+(.35,.1)$) {\tiny $c_1$};
		\node at ($(a1)+(.35,.1)$) {\tiny $a_1$};
		\node at ($(b2)+(-.1,-.3)$) {\tiny $b_2$};
		\node at ($(c2)+(0,-.3)$) {\tiny $c_2$};
		\node at ($(a2)+(.1,-.3)$) {\tiny $a_2$};
		\node at ($(y)+(.2,0)$) {\tiny $y$};
	\end{tikzpicture}
	\vskip -.2cm
	\caption{}\label{fig:lem31A}
	\end{subfigure}
	\hfill
	\begin{subfigure}[b]{.268\textwidth}
	\centering
	\begin{tikzpicture}[scale=.8]
		\foreach \i in {1,...,8} {
			\coordinate (a\i) at (\i*45:1);
		}
		\coordinate (b1) at (-2.2,0.5);
		\coordinate (b2) at (2.2,.5);
		\coordinate (b3) at (1.5,-1.5);
		\coordinate(b4) at (-1.5,-1.5);
		\draw [black!20] (a7)--(b2)--(b3);
		\draw [red, thick] (a5)--(b1)--(b4);
		\draw [blue, thick, rounded corners=1] (a4)--(a7)--(a2)--(a6)--(b3)--(a8)--cycle;
		\draw [ultra thin] (a4)--(b4)--(a6);
		\draw  [ultra thin] (a2)--(a5)--(a8);
		\foreach \i in {2,4,5,6,7,8} {
			\fill (a\i) circle (1.5pt);		
		}
		\foreach \i in {1,3,4} {
			\fill (b\i) circle (1.5pt);		
		}
		\fill [black!20] (b2) circle (1.5pt);
		\foreach \i in {b4,b1,a5} \fill [red] (\i) circle (1.5pt);
		\foreach \i in {a4,a7,a2,a6,b3,a8} \fill [blue] (\i) circle (1.5pt);
		\node at ($(a2)+(0,.25)$) {\tiny $b_2$};
		\node at ($(a4)+(-.2,.2)$) {\tiny $b_0$};
		\node at ($(a5)+(-.2,-.2)$) {\tiny $c_2$};
		\node at ($(a6)+(0,-.3)$) {\tiny $a_0$};
		\node at ($(a7)+(.2,-.2)$) {\tiny $a_1$};
		\node at ($(a8)+(.2,.2)$) {\tiny $a_2$};
		\node at ($(b1)+(-.2,.1)$) {\tiny $y$};
		\node at ($(b3)+(.3,-.1)$) {\tiny $b_1$};
		\node [black!20] at ($(b2)+(.2,.1)$) {\tiny $c_1$};
		\node at ($(b4)+(-.3,-.1)$) {\tiny $c_0$};
	\end{tikzpicture}
	\vskip -.2cm
	\caption{}
	\label{fig:lem31B}
	\end{subfigure} 
	\hfill
	\begin{subfigure}[b]{.268\textwidth}
	\centering
	\begin{tikzpicture}[scale=.8]
		\foreach \i in {1,...,8} {
			\coordinate (a\i) at (\i*45:1);
		}
		\coordinate (b1) at (-2.2,0.5);
		\coordinate (b2) at (2.2,.5);
		\coordinate (b3) at (1.5,-1.5);
		\coordinate(b4) at (-1.5,-1.5);
		\draw [ultra thin] (a3)--(b1)--(b4)--(a6);
		\draw [ultra thin] (a1)--(a4)--(a7)--(a2)--(a5)--(a8)--(a3)--(a6)--(a1);
		\draw [ultra thin] (a3) -- (a7) -- (b2);
		\draw [ultra thin] (b4) -- (a4);
		\draw [ultra thin] (a4) -- (a8) -- (b3);
		\draw [ultra thin] (b1)--(a5);
		\draw [ultra thin] (a1)--(b2)--(b3)--(a6);
		\draw [ultra thin] (a1) -- (a5);
		\draw [ultra thin] (a2) -- (a6);
		\draw [blue, thick, rounded corners=1] (a2)--(a5)--(a8)--(a4)--(a7)--cycle;
		\draw [red, thick] (b1)--(b4);
		\draw [red, thick] (b2)--(b3);
		\foreach \i in {1,...,8} {
			\fill (a\i) circle (1.5pt);		
		}
		\foreach \i in {2,4,5,7,8} {
			\fill [blue](a\i) circle (1.5pt);		
		}
		\foreach \i in {1,...,4} {
			\fill [red] (b\i) circle (1.5pt);		
		}
		\node [blue] at (.25,.8) {\tiny $P$};
		\node at ($(a1)+(.2,.2)$) {\tiny $x_0$};
		\node at ($(a2)+(0,.25)$) {\tiny $b_2$};
		\node at ($(a3)+(-.2,.2)$) {\tiny $z$};
		\node at ($(a4)+(-.2,.2)$) {\tiny $b_0$};
		\node at ($(a5)+(-.2,-.2)$) {\tiny $c_2$};
		\node at ($(a6)+(0,-.3)$) {\tiny $a_0$};
		\node at ($(a7)+(.2,-.2)$) {\tiny $a_1$};
		\node at ($(a8)+(.2,.2)$) {\tiny $a_2$};
		\node at ($(b1)+(-.2,.1)$) {\tiny $y$};
		\node at ($(b3)+(.3,-.1)$) {\tiny $b_1$};
		\node at ($(b2)+(.2,.1)$) {\tiny $c_1$};
		\node at ($(b4)+(-.3,-.1)$) {\tiny $c_0$};
		\node [red] at (2.05,-.5) {\tiny $e_2$};
		\node [red] at (-2,-.5) {\tiny $e_1$};
	\end{tikzpicture}
	\vskip -.2cm
	\caption{}
	\label{fig:lem31C}
	\end{subfigure}
	\hfill
	\begin{subfigure}[b]{.268\textwidth}
	\centering
	\begin{tikzpicture}[scale=.8]
		\foreach \i in {1,...,8} {
			\coordinate (a\i) at (\i*45:1);
		}
		\coordinate (b1) at (-2.2,.15);
		\coordinate (b2) at (2.1,-.1);
		\coordinate (b3) at (1.5,-1.5);
		\coordinate(b4) at (-1.5,-1.6);
		\coordinate (q) at (1.4,1.6);
		\draw [red, thick] (a8)--(q)--(b2) -- (q) --(a2);
		\draw (a3)--(b1)--(b4)--(a6);
		\draw (a1) -- (a4) -- (a7) -- (a2) -- (a5) -- (a8) -- (a3) -- (a6) -- (a1);
		\draw (a3) -- (a7) -- (b2);
		\draw (b4) -- (a4);
		\draw (a4) -- (a8);
		\draw (b1)--(a5);
		\draw (a1)--(b2);
		\draw (a1) -- (a5);
		\draw (a2) -- (a6);
		\draw [black!20]  (a8) -- (b3) --(b2) --(b3)--(a6);
		\foreach \i in {1,...,8} {
			\fill (a\i) circle (1.5pt);		
		}
		\foreach \i in {1,2,4} {
			\fill (b\i) circle (1.5pt);		
		}
		\fill (q) circle (1.5pt);
		\fill [black!20] (b3) circle (1.5pt);
		\node at ($(a1)+(.2,.1)$) {\tiny $x_0$};
		\node at ($(a2)+(0,.25)$) {\tiny $b_2$};
		\node at ($(a3)+(-.2,.2)$) {\tiny $z$};
		\node at ($(a4)+(-.2,.2)$) {\tiny $b_0$};
		\node at ($(a5)+(-.2,-.2)$) {\tiny $c_2$};
		\node at ($(a6)+(0,-.3)$) {\tiny $a_0$};
		\node at ($(a7)+(.2,-.2)$) {\tiny $a_1$};
		\node at ($(a8)+(.25,0)$) {\tiny $a_2$};
		\node at ($(b1)+(-.2,.1)$) {\tiny $y$};
		\node [black!20] at ($(b3)+(.3,-.1)$) {\tiny $b_1$};
		\node at ($(b2)+(.2,.1)$) {\tiny $c_1$};
		\node at ($(b4)+(-.3,-.1)$) {\tiny $c_0$};
		\node at ($(q)+(.2,.1)$) {\tiny $q$};
	\end{tikzpicture}
	\vskip -.2cm
	\caption{}
	\label{fig:lem31D}
	\end{subfigure}
	\vskip -.2cm
	\caption{The proof of Lemma~\ref{l:35}.}
	\label{fig:lem34a}
	\vskip -.3cm
	\end{figure}
 	
	Working with the hexagon \bl{$b_0a_1b_2a_0b_1a_2$} and the path \re{$c_0yc_2$} 
	(see Figure~\ref{fig:lem31B}) we see that the graph $N+y-c_1$ has only three 
	independent sets of size four, namely 
	\[
		\{a_1, b_1, c_0, c_2\}\,,
		\qquad 
		\{a_0, a_1, a_2, y\}\,,
		\qquad 
		\{b_0, b_1, b_2, y\}\,.
	\]
	If the first of them has a neighbour we are done, so due to~\Dp{3}
	and $a$-$b$-symmetry we can assume that there is a common neighbour~$z$ 
	of $\{a_0,a_1,a_2,y\}$. Together with an arbitrary vertex $x_0\in X_0$ 
	we can now build the configuration shown in Figure~\ref{fig:lem31C}.
	
	Let us now look at the set $A$ consisting of the nine vertices belonging
	to the pentagon $\bl{P=b_2a_1b_0a_2c_2}$ and the two edges $\re{e_1=yc_0}$, 
	$\re{e_2=c_1b_1}$ (see Figure~\ref{fig:lem31C}). By~\Dp{3} there is an 
	independent set $U\subseteq A$ of size four possessing a common neighbour~$q$. 
	Clearly,~$U$ contains two vertices from~\bl{$P$} and one vertex from each 
	of the edges~$\re{e_1}$, $\re{e_2}$.

	If $\{b_0,a_2\}\cap U = \vn$, then only the possibility 
	$U=\{c_2,a_1,c_0,b_1\}$ remains, and we reach 
	$q\in X_1$, as required. Now assume for the sake of contradiction 
	that either $b_0$ or $a_2$ is in $U$. Both cases can be treated analogously 
	and we only display the argument for $a_2\in U$. 
	Now we have 
		$\{a_2, c_1, b_2\} \subseteq U$
	and the largest independent set among the twelve vertices 
	\[
		y, c_0, q, c_1, b_2, z, b_0, c_2, a_0, a_1, a_2, x_0
	\]
	has size four (see Figure~\ref{fig:lem31D}), contrary to~\Dp{4}.
\end{proof}

When using Lemma~\ref{l:35} in the sequel, we will sometimes draw 
the configuration at hand as in Figure~\ref{fig:1224}. The dashed orange 
non-edge forces the existence of a green vertex together with four green edges. 

\begin{figure}[h!]
\centering
\begin{tikzpicture}[scale=.75]
	\coordinate (c2) at (0,-.1);
	\coordinate (b2) at (-1.5,-.1);
	\coordinate (a2) at (1.5,-.1);
	\coordinate (b0) at (-.45,3);
	\coordinate(a1) at (.45,3);
	\coordinate (a0) at (-1.9,.6);
	\coordinate (b1) at (1.9,.6);
	\coordinate (c0) at ($(b0)!.5!(a0)$);
	\coordinate (c1) at ($(b1)!.5!(a1)$);
	\coordinate (n) at (0,1.1);
	\draw (b0)--(a1)--(b2)--(a2)--cycle;
	\draw (c0)--(a0)--(b1)--(a1);
	\draw (c0)--(b0);
	\draw (a0)--(b2);
	\draw (a2)--(b1);
	\draw [orange, thick, dashed] (c0)--(c1);
	\draw [green!90!black, thick] (c0)--(n)--(c1)--(n)--(a2)--(n)--(b2);
	\draw [fill=green] (n) circle (2pt);
	\foreach \i in {0,1,2}{
		\fill (a\i) circle (2pt);
		\fill (b\i) circle (2pt);
		\fill  (c\i) circle (2pt);
	}
\end{tikzpicture}
\caption{Applications of Lemma~\ref{l:35}.}
\label{fig:1224}
\end{figure}
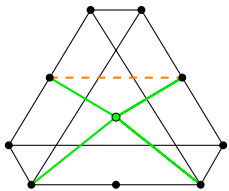
\vspace{-1em} 	
\subsection{Gr\"otzsch subgraphs}\label{subsec:43}
In this subsection we label the vertices of the Mycielski-Gr\"otzsch 
graph $\grot$ as shown in Figure~\ref{fig:beautifulLA}. So the eleven vertices
are called $a_i$, $b_i$, $c$ with indices $i\in \ZZ/5\ZZ$, and the twenty 
edges of $\grot$ are all pairs of the form $a_ic$, $a_ib_{i\pm 2}$, or $b_ib_{i+2}$.

The results that follow deal with graphs $G$ such that 
$\grot \subseteq G\in \fD_4$. Here are some questions motivating them. 
\begin{enumerate}
	\item[$\bullet$] Which subsets of $V(\grot)$ have common neighbours? 
	\item[$\bullet$] How far can we go in the direction of proving 
		the $\grot$-twin property? 
\end{enumerate}
At first sight some of the ensuing statements may seem very weak.
This is because we do not `know' at the present level of generality 
how the given copy of~$\grot$ `sits' in the Vega graph of which~$G$ 
is a blow-up, so that there is still a large number of possibilities. 
For the very same reason, however, the results obtained here turn out
to be very flexible later, when we study the 
scenario $\grot_i^{\mu\nu}\subseteq G\in\fD_4$. The fact 
that $\grot_i^{\mu\nu}$ can contain `many' copies of $\grot$ then means that 
results on $\grot$ tend to be applicable in several distinct ways. 

\begin{lemma}[Beautiful lemma]\label{l:beauti}
	If $i\in \ZZ/5\ZZ$, $\grot\subseteq G\in \fD_4$, and $q\in V(G)$ is 
	adjacent to~$a_{i-1}$,~$a_{i+1}$, then $a_iq\in E(G)$.
\end{lemma}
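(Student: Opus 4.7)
The plan is to argue by contradiction. By the 5-fold symmetry of $\grot$ I may assume $i=0$, so that $q$ is adjacent to both $a_1$ and $a_4$, and I suppose for contradiction that $qa_0\notin E(G)$.

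I would first collect the forced non-adjacencies of $q$ and of an auxiliary vertex. Since $c\sim a_j$ for every $j$, triangle-freeness combined with $q\sim a_1$ gives $q\ne c$ and $qc\notin E(G)$; each vertex in $\{b_1,b_2,b_3,b_4\}$ is adjacent to $a_1$ or to $a_4$ in $\grot$, so $q\not\sim b_j$ for all $j\in\{1,2,3,4\}$. By maximality the non-edge $a_0q$ admits a common neighbour $r$, and the same reasoning yields $r\not\sim c,a_1,a_4,b_2,b_3$. A short case check using that $\grot$ is an induced subgraph of $G$ shows $r\notin V(\grot)$.

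Next I would locate the copy of the graph $N$ from Lemma~\ref{l:35} in $G$ defined by
\[
a'_0=a_0,\ a'_1=a_1,\ a'_2=a_4,\qquad b'_0=q,\ b'_1=b_2,\ b'_2=b_3,\qquad c'_0=r,\ c'_1=b_4,\ c'_2=b_1.
\]
All edges required by $N$ either belong to $\grot$ or are among $qa_1,qa_4,ra_0,rq$, so this is indeed a subgraph of $G$. Applying Lemma~\ref{l:35} to the copy yields: for $i=0$ the first alternative is automatic since $b_1b_4\in E(\grot)$; for $i=1$ either $rb_1\in E(G)$ or there exists a common neighbour $s_1$ of $\{a_1,b_2,r,b_1\}$; and for $i=2$ either $rb_4\in E(G)$ or a common neighbour $s_2$ of $\{a_4,b_3,r,b_4\}$. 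The edge $b_1b_4$ precludes $r$ from being adjacent to both $b_1$ and $b_4$, leaving three sub-cases.

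Finally, in each sub-case I aim to exhibit an induced cube in $G$, contradicting Lemma~\ref{l:31}. The prototype has top $\{a_0,a_3,a_4,s_1\}$ and bottom $\{b_1,b_2,r,c\}$: its twelve required top-bottom edges belong to $\grot$ or are among the adjacencies $s_1\sim r,b_1,b_2$ together with $r\sim a_3$, while the four diagonals $a_0b_1, a_3b_2, a_4r, s_1c$ are all non-edges by the above derivations. A mirror cube with top $\{a_0,a_1,a_2,s_2\}$ and bottom $\{b_3,b_4,r,c\}$ handles the symmetric situation and requires $r\sim a_2$. The residual scenario, in which the appropriate $s_i$ is available but $r$ has no neighbour in $\{a_2,a_3\}$ -- so that $r$'s only $\grot$-neighbour among the $a_j$'s is $a_0$ -- would be settled by applying Lemma~\ref{l:35} once more to an auxiliary $N$-copy built from $s_1$, $s_2$, $r$ and further vertices of $\grot$, producing an additional vertex that closes the cube argument. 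I expect the main obstacle to be the bookkeeping in this last step: verifying at every branch of the case split that the adjacencies required by the candidate cube hold while its diagonals remain non-edges.
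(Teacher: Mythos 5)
Your route---build a copy of $N$ on $\{a_0,a_1,a_4,q,b_2,b_3,r,b_4,b_1\}$ (taking $r$ a common neighbour of the non-edge $a_0q$), funnel through Lemma~\ref{l:35}, and finish with cubes and Lemma~\ref{l:31}---is genuinely different from the paper's. The paper instead works with a common neighbour $z$ of $a_1$ and $q$, applies $\Dp{4}$ directly to the twelve vertices of $\grot-a_3+q+z$ (whose only possible independent $5$-set is $\{a_0,a_4,b_0,b_4,z\}$, yielding an auxiliary vertex $b_2'$), and concludes by noting that $\grot-b_2-a_4+b_2'+q+z$ has no independent $5$-set at all.

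However, there is a genuine gap in your argument. Both of your proposed cubes rest on adjacencies of $r$ inside $\{a_0,\dots,a_4\}$ that you never derive: the prototype cube (top $\{a_0,a_3,a_4,s_1\}$, bottom $\{b_1,b_2,r,c\}$, with pairing $a_0{\leftrightarrow}b_1$, $a_3{\leftrightarrow}b_2$, $a_4{\leftrightarrow}r$, $s_1{\leftrightarrow}c$) needs $ra_3\in E(G)$ among its twelve required edges, and the mirror cube needs $ra_2\in E(G)$. Nothing in what you have established---$r\sim a_0,q$; $r\not\sim a_1,a_4,c,b_2,b_3$; and a dichotomy on $rb_1$, $rb_4$ from Lemma~\ref{l:35}---forces either of these. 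Indeed, a priori $r$ may have $a_0$ as its only neighbour in the inner pentagon even in the sub-cases where you have $rb_1\in E(G)$ or $rb_4\in E(G)$, so the ``residual scenario'' in which $r\not\sim a_2,a_3$ is not confined to the third sub-case but can arise throughout; and the remedy you sketch for it (a further $N$-copy built from $s_1$, $s_2$, $r$ and more of $\grot$) is not actually worked out. Until you either obtain $ra_2$ respectively $ra_3$ from the hypotheses, or replace the cubes by ones whose required edges are among those you have already secured, the case analysis does not close and the contradiction is not reached.
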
	

\begin{proof}
	Due to symmetry we can assume $i=1$, so that $a_0q, a_2q\in E(G)$ 
	(see Figure~\ref{fig:beautifulLB}). 
	Suppose $a_1q\not\in E(G)$ and let $z$ 
	be a common neighbour of $a_1$, $q$. 
	The graph $\grot-a_3+q+z$ can be drawn as in Figure~\ref{fig:beautifulLC}. 
	Its only independent set of size $5$ 
	is $\{\gr{a_0},\gr{a_4},\gr{b_0},\gr{b_4},\gr{z}\}$. 
	If \re{$b_2'$} denotes a common neighbour of this set guaranteed by \Dp{4}, 
	then the graph $\grot-b_2-a_4 +b'_2+q+z$ drawn in Figure~\ref{fig:beautifulLD}  
	has no independent set of size five, which contradicts $G\in \fD_4$. 
\end{proof}
	
\begin{figure}[h!]
\centering
\vskip -.4cm
\begin{subfigure}[b]{.195\textwidth}
\centering
\begin{tikzpicture}[scale=.7]
	\coordinate (c) at (0,0);
	\foreach \i in {1,...,5} {
		\coordinate (a\i) at (-18+\i*72:1);
		\coordinate (b\i) at (18+\i*72:1.5);	
		\coordinate (d\i) at (18+\i*72:2);
	}
	\draw (b1) [orange, thick,out=162, in=72] to (d2) [out=-108, in=162] to (b3);
	\draw (b2) [out=234, in=144] to (d3) [out=-36, in=234] to (b4);
	\draw (b3) [cyan, thick, out=-54, in=216] to (d4) [out=36, in=-54] to (b5);
	\draw (b4) [cyan, thick, out=18, in=-72] to (d5) [out=108, in=18] to (b1);
	\draw (b5) [out=90, in=0] to (d1) [out=180, in=90] to (b2);
	\draw [cyan, thick] (b1)--(a1)--(c)--(a4)--(b3);
	\draw [cyan, thick]  (b5)--(a5)--(b4);
	\draw [orange, thick] (c)--(a5);
	\draw [orange, thick] (a4)--(b4);
	\draw [orange, thick] (a1)--(b5);
	\draw [green!75!black, thick] (c)--(a3)--(b2)--(a2)--(c);
	\draw (b3)--(a3);
	\draw (a2)--(b1);
	\node at ($(c)+(-.3,.1)$) {\tiny $c$};
	\node at ($(a1)+(.15,.15)$) {\tiny $a_2$};
	\node at ($(a2)+(-.2,.2)$) {\tiny $a_3$};
	\node at ($(a3)+(-.2,-.15)$) {\tiny $a_4$};
	\node at ($(a4)+(.1,-.25)$) {\tiny $a_0$};
	\node at ($(a5)+(.3,-.1)$) {\tiny $a_1$};
	\node at ($(b1)+(-.06,.27)$) {\tiny $b_0$};
	\node at ($(b2)+(-.22,.1)$) {\tiny $b_1$};
	\node at ($(b3)+(-.1,-.2)$) {\tiny $b_2$};
	\node at ($(b4)+(.2,-.2)$) {\tiny $b_3$};
	\node at ($(b5)+(.2,.2)$) {\tiny $b_4$};
	\fill (c) circle (2pt);
	\foreach \i in {1,...,5} {
		\fill (a\i) circle (2pt);		
		\fill (b\i) circle (2pt);
	}
\end{tikzpicture}
\vskip -.2cm
\caption{}	
\label{fig:beautifulLA}
\end{subfigure}
\hfill
\begin{subfigure}[b]{.255\textwidth}
\centering
\begin{tikzpicture}[scale=.9]
	\foreach \i in {1,...,8} {
		\coordinate (a\i) at (\i*45:.9);
		\coordinate (c\i) at (\i*45:1.8);
	}
	\draw [thick, cyan](a1) -- (a4) -- (a7) -- (a2) -- (a5) -- (a8) -- (a3) -- (a6) -- (a1);
	\draw [thick, orange] (a3) -- (a7);
	\draw [orange, thick] (a4) -- (a8);
	\draw [orange, thick](a1) -- (a5);
	\draw [orange, thick](a2) -- (a6);
	\draw [green!75!black, thick] (a4)--(c3)--(c4)--(c5)--(a4);
	\draw (a3)--(c4)--(a5);
	\draw (a2) --(c3);
	\draw (c5)--(a6);
	\draw [red, thick] (a1)--(c8)--(a7);
	\foreach \i in {1,...,8} {
		\fill (a\i) circle (2pt);		
	}
	\foreach \i in {3,4,5, 8} {
		\fill (c\i) circle (2pt);		
	}
	\node at ($(a1)+(.2,.1)$) {\tiny $a_0$};
	\node at ($(a2)+(0,.3)$) {\tiny $b_0$};
	\node at ($(a3)+(-.2,.2)$) {\tiny $b_4$};
	\node at ($(a4)+(-.2,.1)$) {\tiny $c$};
	\node at ($(a5)+(-.2,-.2)$) {\tiny $b_3$};
	\node at ($(a6)+(0,-.3)$) {\tiny $b_2$};
	\node at ($(a7)+(.2,-.2)$) {\tiny $a_2$};
	\node at ($(a8)+(.2,.1)$) {\tiny $a_1$};
	\node at ($(c3)+(-.3,.1)$) {\tiny $a_3$};
	\node at ($(c4)+(-.2,-.1)$) {\tiny $b_1$};
	\node at ($(c5)+(-.2,-.2)$) {\tiny $a_4$};
	\node at ($(c8)+(.2,.1)$) {\tiny $q$};
\end{tikzpicture}
\vskip -.2cm
\caption{}	
\label{fig:beautifulLB}
\end{subfigure}
\hfill
\begin{subfigure}[b]{.255\textwidth}
\centering
\begin{tikzpicture}[scale=.9]
	\foreach \i in {1,...,8} {
		\coordinate (a\i) at (\i*45:.9);
		\coordinate (c\i) at (\i*45:1.8);
	}
	\draw (a1) -- (a4) -- (a7) -- (a2) -- (a5) -- (a8) -- (a3) -- (a6) -- (a1);
	\draw (a3) -- (a7);
	\draw (a4) -- (a8);
	\draw (a1) -- (a5);
	\draw (a2) -- (a6);
	\draw [ultra thin, black!50] (a4)--(c3)--(c4);
	\draw (c4)--(c5)--(a4);
	\draw (a3)--(c4)--(a5);
	\draw [ultra thin, black!50] (a2) --(c3);
	\draw (c5)--(a6);
	\draw (a1)--(c8)--(a7);
	\draw [red, thick] (a8)--(c7)--(c8);
	\foreach \i in {1,...,8} {
		\fill (a\i) circle (2pt);		
	}
	\foreach \i in {4,5, 7,8} {
		\fill (c\i) circle (2pt);		
	}
	\fill [black!20] (c3) circle (1.8pt);
	\foreach \i in {a1, c5, a2, a3, c7} \fill [green!80!black] (\i) circle (2pt); 
	\node [green!75!black] at ($(a1)+(.2,.1)$) {\tiny $a_0$};
	\node [green!75!black] at ($(a2)+(0,.3)$) {\tiny $b_0$};
	\node [green!75!black] at ($(a3)+(-.2,.2)$) {\tiny $b_4$};
	\node at ($(a4)+(-.2,.1)$) {\tiny $c$};
	\node at ($(a5)+(-.2,-.2)$) {\tiny $b_3$};
	\node at ($(a6)+(0,-.3)$) {\tiny $b_2$};
	\node at ($(a7)+(.2,-.2)$) {\tiny $a_2$};
	\node at ($(a8)+(.2,.1)$) {\tiny $a_1$};
	\node [black!40!white] at ($(c3)+(-.3,.1)$) {\tiny $a_3$};
	\node at ($(c4)+(-.2,-.1)$) {\tiny $b_1$};
	\node [green!75!black] at ($(c5)+(-.2,-.2)$) {\tiny $a_4$};
	\node at ($(c8)+(.2,.1)$) {\tiny $q$};
	\node [green!75!black] at ($(c7)+(.2,-.1)$) {\tiny $z$};
\end{tikzpicture}
\vskip -.2cm
\caption{}	
\label{fig:beautifulLC}
\end{subfigure}
\hfill
\begin{subfigure}[b]{.255\textwidth}
\centering
\begin{tikzpicture}[scale=.9]			
	\foreach \i in {1,...,8} {
		\coordinate (a\i) at (\i*45:.9);
		\coordinate (c\i) at (\i*45:1.8);
	}
	\draw (a1) -- (a4) -- (a7) -- (a2) -- (a5) -- (a8);
	\draw [red, thick] (a3) -- (a6) -- (a1);
	\draw (a3) -- (a7);
	\draw  (a4) -- (a8);
	\draw (a1) -- (a5);
	\draw [red, thick](a2) -- (a6);
	\draw (a4)--(c3)--(c4);
	\draw [ultra thin, black!50](c4)--(c5)--(a4);
	\draw (a3)--(c4);
	\draw (c4)--(a5);
	\draw (a2) --(c3);
	\draw [red, ultra thin] (c5)--(a6);
	\draw [red, thick](a6)--(c7);
	\draw (a1)--(c8)--(a7);			
	\draw (a8)--(c7)--(c8);
	\foreach \i in {1,...,8} {
		\fill (a\i) circle (2pt);		
	}
	\foreach \i in {3,4,7, 8} {
		\fill (c\i) circle (2pt);		
	}
	\foreach \i in {a1, a2, a3, c7} \fill [green!80!black] (\i) circle (2pt); 
	\fill [green!30] (c5) circle (1.8pt);
	\node at ($(a1)+(.2,.1)$) {\tiny $a_0$};
	\node at ($(a2)+(0,.3)$) {\tiny $b_0$};
	\node at ($(a3)+(-.2,.2)$) {\tiny $b_4$};
	\node at ($(a4)+(-.2,.1)$) {\tiny $c$};
	\node at ($(a5)+(-.2,-.2)$) {\tiny $b_3$};
	\node at ($(a6)+(0,-.3)$) {\tiny \re{$b_2'$}};
	\node at ($(a7)+(.2,-.2)$) {\tiny $a_2$};
	\node at ($(a8)+(.2,.1)$) {\tiny $a_1$};
	\node at ($(c3)+(-.3,.1)$) {\tiny $a_3$};
	\node at ($(c4)+(-.2,-.1)$) {\tiny $b_1$};
	\node [black!40 ] at ($(c5)+(-.2,-.2)$) {\tiny $a_4$};
	\node at ($(c8)+(.2,.1)$) {\tiny $q$};
	\node at ($(c7)+(.2,-.1)$) {\tiny $z$};			
\end{tikzpicture}
\vskip -.2cm
\caption{}	
\label{fig:beautifulLD}
\end{subfigure}
\vskip -.2cm
\caption{The proof of the beautiful lemma.}
\label{fig:beautifulL}
\vskip -.2cm
\end{figure}
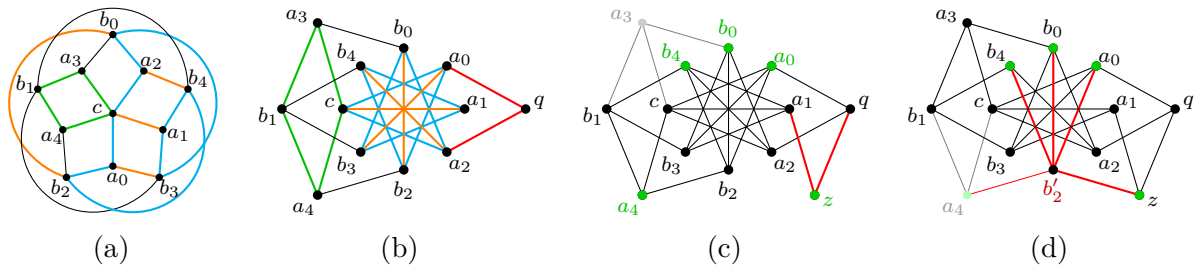 		

The Mycielski-Gr\"otzsch graph has three kinds of edges. 
For those containing the central vertex $c$ the twin property
demands no additional assumptions. 

\begin{lemma}\label{l:37}
	For every $i\in \ZZ/5\ZZ$ all graphs $G\in \fD_4$ have 
	the $(\grot,ca_i)$-twin property.
\end{lemma}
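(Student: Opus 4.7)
The plan is to reduce the statement to the case $i=0$ by the $\ZZ/5\ZZ$-rotational symmetry of the Mycielski--Gr\"otzsch graph, and then to derive the required edge from a single application of the beautiful lemma (Lemma~\ref{l:beauti}) to an auxiliary copy of $\grot$ in which $a_0$ has been replaced by its $H$-twin.

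I would fix a copy $H\cong\grot$ in $G$ labelled in the standard way, let $q',z'\in V(G)$ be $H$-twins of $c$ and $a_0$, respectively, and aim to show $q'z'\in E(G)$. First I would discard the degenerate cases in which one of the twins already lies in $V(H)$: because inside $\grot$ the vertex $c$ is the only vertex with neighbourhood $\{a_0,\dots,a_4\}$ and $a_0$ is the only vertex with neighbourhood $\{c,b_2,b_3\}$, any such $H$-twin must coincide with the original vertex itself, and the required edge then follows directly from the defining property of the remaining twin. Hence I may assume $q',z'\notin V(H)$; the disjointness of the two $H$-neighbourhoods additionally forces $q'\ne z'$.

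The decisive move is then to form the copy $H':=H(z')$ of $\grot$ in $G$, which is obtained from $H$ by swapping $a_0$ with its $H$-twin $z'$. In $H'$ the vertex playing the role of $a_0$ is $z'$, while the vertices playing the roles of $a_1$ and $a_4$ are unchanged from $H$. Since $q'$ is an $H$-twin of $c$, it is adjacent to all five of $a_0,\dots,a_4$, hence in particular to the vertices playing the roles of $a_1$ and $a_{-1}=a_4$ in $H'$. Applying the beautiful lemma to the copy $H'$ with parameter $i=0$ then yields $q'z'\in E(G)$, as desired. The only subtlety worth mentioning---and it is hardly a real obstacle---is the observation that the beautiful lemma, though stated under the assumption $\grot\subseteq G$, may be freely invoked for any copy of $\grot$ sitting inside $G$, in particular for the twin-modified copy $H'$ rather than only the original $H$.
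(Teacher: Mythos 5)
Your proof is correct and takes essentially the same approach as the paper's: form the twin-modified copy $\grot(a_i')$ (your $H'=H(z')$), observe that the twin of $c$ is adjacent to $a_{i-1}$ and $a_{i+1}$, and apply the beautiful lemma to that copy. The paper uses $i=1$ instead of $i=0$ and silently skips the degenerate cases you carefully address, but these are cosmetic differences.
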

\begin{proof}
	Due to symmetry we can assume $i=1$. Suppose $\grot\subseteq G$ and 
	that $a_1', c'\in V(G)$ are $\grot$-twins of $a_1$, $c$. 
	Since $a_0, a_2\in \Nn(c')$, the beautiful lemma applied to~$\grot(a_1')$ 
	and~$c'$ instead of~$\grot$ and~$q$ yields $a_1'c'\in E(G)$.
\end{proof}

For the other edges of $\grot$ the twin property cannot be proved unconditionally. But the situation can be analysed satisfactorily as follows. 

\begin{lemma}\label{l:38}
	Let $i\in \ZZ/5\ZZ$, $\eps\in \{1, -1\}$, and $\grot\subseteq G\in \fD_4$. 
	If $a_i'$, $b'_{i+2\eps}$ are non-adjacent $\grot$-twins 
	of $a_i$, $b_{i+2\eps}$, then there exists a common neighbour 
	of $\{a_i', a_{i+2\eps}, b_{i+\eps}, b'_{i+2\eps}\}$.
\end{lemma}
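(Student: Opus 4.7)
The plan is to apply Lemma~\ref{l:35} to a carefully embedded copy of $N$ inside $G$. By the dihedral symmetries of $\grot$ I would first reduce to the case $(i,\eps)=(0,1)$, so that the hypothesis supplies non-adjacent $\grot$-twins $a_0'$ of $a_0$ and $b_2'$ of $b_2$, and the goal becomes to find a common neighbour of $\{a_0',\,a_2,\,b_1,\,b_2'\}$. The pertinent $\grot$-neighbourhoods are
\[
	\Nn(a_0)\cap V(\grot)=\{c,b_2,b_3\}
	\qand
	\Nn(b_2)\cap V(\grot)=\{a_0,a_4,b_0,b_4\}\,,
\]
which are disjoint. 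A direct inspection shows that no other vertex of $\grot$ realises either of these as its $\grot$-neighbourhood, so both $a_0'$ and $b_2'$ necessarily lie outside $V(\grot)$.

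Since $G$ is maximal triangle-free and $a_0'b_2'\notin E(G)$, some common neighbour $t$ of $a_0'$ and $b_2'$ exists; the disjointness just noted forces $t\notin V(\grot)$ as well. I would then introduce the labelling
\[
	(\alpha_0,\beta_0,\gamma_0)=(a_0',b_2',t),\quad
	(\alpha_1,\beta_1,\gamma_1)=(a_4,b_3,b_1),\quad
	(\alpha_2,\beta_2,\gamma_2)=(b_0,c,a_2)
\]
and verify that these nine distinct vertices span a subgraph of $G$ isomorphic to $N$. This step is routine: six of the twelve required edges ($a_4b_1$, $b_0a_2$, $b_3b_1$, $ca_2$, $a_4c$, $b_0b_3$) already lie in $\grot$; two more are the edges $a_0't$ and $b_2't$ supplied by $t$; and the remaining four ($a_0'b_3$, $a_0'c$, $a_4b_2'$, $b_0b_2'$) hold directly by the $\grot$-twin relations.

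Finally I would invoke Lemma~\ref{l:35} on this copy of $N$ at the distinguished index $0$: it delivers a dichotomy, namely either $\gamma_{-1}\gamma_{1}=a_2b_1\in E(G)$, or there is a common neighbour of $\{\alpha_0,\beta_0,\gamma_{-1},\gamma_{1}\}=\{a_0',a_2,b_1,b_2'\}$, which is exactly the desired conclusion. The first alternative will be excluded by triangle-freeness, since $a_2b_4$ and $b_1b_4$ both lie in $\grot\subseteq G$, so $a_2b_1\in E(G)$ would yield the triangle $a_2b_4b_1$. The only conceptually delicate point is isolating the correct nine-vertex configuration playing the role of $N$; once that embedding has been named, the remainder of the argument reduces to elementary edge-checking and triangle-freeness.
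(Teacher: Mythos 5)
Your argument is correct and matches the paper's proof: both introduce a common neighbour $r$ (your $t$) of $a_0'$, $b_2'$, recognise the resulting nine vertices as a copy of $N$, and apply Lemma~\ref{l:35} at the index corresponding to $\{a_0',b_2'\}$, with the alternative $a_2b_1\in E(G)$ ruled out because $\grot$ is (induced and) triangle-free in $G$. The only small slip is the assertion that $a_0'$ and $b_2'$ must lie outside $V(\grot)$ --- a $\grot$-twin of a vertex can be that vertex itself, so at most one of them (not necessarily both) is forced out of $V(\grot)$ by the non-adjacency hypothesis; fortunately this is never used, since the nine chosen vertices are pairwise distinct in any case.
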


\begin{proof}
	By symmetry we can assume $i=0$ and $\eps=1$. Let $r$ be a common 
	neighbour of~$a_0'$,~$b_2'$ (see Figure~\ref{fig:lem37A}). 
	Due to $a_2b_1\not\in E(G)$ our claim follows from Lemma~\ref{l:35} 
	(see Figure~\ref{fig:lem37B}).
\end{proof}

\begin{figure}[ht]
\centering
\def\co{blue}
\def\ct{red}
\vskip -.3cm
\begin{subfigure}[b]{.23\textwidth}
\begin{tikzpicture}[scale=.8]
	\coordinate (c) at (0,0);
	\foreach \i in {1,...,5} {
		\coordinate (a\i) at (-18+\i*72:1);
		\coordinate (b\i) at (18+\i*72:1.5);	
		\coordinate (d\i) at (18+\i*72:2.1);
	}
	\coordinate (r) at (-.5,-.8);
	\draw (b3) [thin, black!30,out=-54, in=216] to (d4) [out=36, in=-54] to (b5);
	\draw (b5) [thin, black!30, out=90, in=0] to (d1) [out=180, in=90] to (b2);
	\draw (b1) [thick, \co, out=162, in=72] to (d2) [out=-108, in=162] to (b3);
	\draw (b2) [thick, \ct, out=234, in=144] to (d3) [out=-36, in=234] to (b4);		
	\draw (b4) [thick, \co, out=18, in=-72] to (d5) [out=108, in=18] to (b1);
	\draw [thin, black!30] (b1)--(a1)--(c)--(a4);
	\draw [thin, black!30] (b5)--(a5)--(b4);
	\draw [thin, black!30]  (c)--(a5);
	\draw [thin, black!30] (a4)--(b4);
	\draw [thin, black!30] (a1)--(b5);
	\draw [thin, black!30] (c)--(a3)--(b2)--(a2)--(c);
	\draw [thin, black!30] (b3)--(a3);
	\draw [thin, black!30]  (a2)--(b1);
	\draw [thick,\ct] (a4)--(r)--(b3);
	\draw [thick,\co] (b4)--(a4)--(c)--(a3)--(b3);
	\draw [thick,\ct] (c)--(a1)--(b1);
	\draw [thick,\ct] (b2)--(a3);
	\node at ($(c)+(-.3,.1)$) {\tiny $c$};
	\node at ($(a1)+(.05,.25)$) {\tiny $a_2$};
	\node [black!20] at ($(a2)+(-.2,.2)$) {\tiny $a_3$};
	\node at ($(a3)+(-.2,-.15)$) {\tiny $a_4$};
	\node at ($(a4)+(.1,-.3)$) {\tiny $a'_0$};
	\node [black!20] at ($(a5)+(.25,-.1)$) {\tiny $a_1$};
	\node at ($(b1)+(-.06,.27)$) {\tiny $b_0$};
	\node  at ($(b2)+(-.3,.1)$) {\tiny $b_1$};
	\node at ($(b3)+(-.15,-.2)$) {\tiny $b'_2$};
	\node at ($(b4)+(.2,-.2)$) {\tiny $b_3$};
	\node[black!20] at ($(b5)+(.2,.2)$) {\tiny $b_4$};
	\node at ($(r) + (.2,.1)$) {\tiny $r$};
	\fill (c) circle (2pt);
	\foreach \i in {1,...,5} {
		\fill (a\i) circle (2pt);		
		\fill (b\i) circle (2pt);
		}
	\fill (r) circle (2pt);
	\foreach \i in {a4, b3, r} \fill [cyan] (\i) circle (1.7pt);
	\foreach \i in { b5, a2, a5} \fill [black!20] (\i) circle (2pt);
\end{tikzpicture}
\vskip -.2cm
\caption{}	
\label{fig:lem37A}
\end{subfigure}
\hfill
\begin{subfigure}[b]{.25\textwidth}
\begin{tikzpicture}[scale=.8]			
	\coordinate (c2) at (0,-.1);
	\coordinate (b2) at (-1.5,-.1);
	\coordinate (a2) at (1.5,-.1);
	\coordinate (b0) at (-.45,3);
	\coordinate (a1) at (.45,3);
	\coordinate (a0) at (-1.9,.6);
	\coordinate (b1) at (1.9,.6);
	\coordinate (c0) at ($(b0)!.5!(a0)$);
	\coordinate (c1) at ($(b1)!.5!(a1)$);
	\coordinate (q) at (0, 1);
	\draw [thick, \co] (a0)--(b2)--(a1)--(b0)--(a2)--(b1)--(a0);
	\draw [thick, \ct] (b0) --(a0);
	\draw [thick, \ct] (b1) --(a1);
	\draw [thick, \ct] (b2) --(a2);
	\draw [green!90!black, thick] (c0)--(q)--(c1)--(q)--(b2)--(q)--(a2);
	\draw [thick, dashed, orange] (c0)--(c1);
	\draw [fill=green] (q) circle (1.8pt);
	\foreach \i in {0,1,2}{
		\fill (a\i) circle (2pt);
		\fill (b\i) circle (2pt);
		\fill (c\i) circle (2pt);
	}
	\foreach \i in {a2, b2, c2} \fill [cyan] (\i) circle (1.7pt);
	\def\s{\tiny}
	\node at ($(b0)+(-.3,.1)$) {\s $b_0$};
	\node at ($(c0)+(-.3,.1)$) {\s $a_2$};
	\node at ($(a0)+(-.3,.1)$) {\s $c$};
	\node at ($(b1)+(.3,.1)$) {\s $a_4$};
	\node at ($(c1)+(.3,.1)$) {\s $b_1$};
	\node at ($(a1)+(.3,.1)$) {\s $b_3$};
	\node at ($(b2)+(-.1,-.3)$) {\s $a'_0$};
	\node at ($(c2)+(0,-.3)$) {\s $r$};
	\node at ($(a2)+(.1,-.3)$) {\s $b'_2$};
\end{tikzpicture}
\vskip -.2cm
\caption{}	
\label{fig:lem37B}
\end{subfigure}
\hfill
\begin{subfigure}[b]{.24\textwidth}
\begin{tikzpicture}[scale=.8]		
	\coordinate (c) at (0,0);
	\foreach \i in {1,...,5} {
		\coordinate (a\i) at (-18+\i*72:1);
		\coordinate (b\i) at (18+\i*72:1.5);	
		\coordinate (d\i) at (18+\i*72:2.1);
	}
	\coordinate (r) at (162:2.2);
	\draw (b5) [thick, \ct, out=90, in=0] to (d1) [out=180, in=90] to (b2);
	\draw (b1) [thick, \ct, out=162, in=72] to (r) [out=-108, in=162] to (b3);
	\draw (b2) [thick, \ct, out=234, in=144] to (d3) [out=-36, in=234] to (b4);		
	\draw (b4) [thick, \co, out=18, in=-72] to (d5) [out=108, in=18] to (b1);
	\draw (b3) [thick, \co, out=-54, in=216] to (d4) [out=36, in=-54] to (b5);
	\draw [thin, black!30] (b1)--(a1)--(c)--(a4);
	\draw [thin, black!30] (b5)--(a5)--(b4);
	\draw [thin, black!30] (c)--(a5);
	\draw [thin, black!30] (a4)--(b4);
	\draw [thin, black!30] (a1)--(b5);
	\draw [thin, black!30] (c)--(a3)--(b2)--(a2)--(c);
	\draw [thin, black!30] (b3)--(a3);
	\draw [thin, black!30] (a2)--(b1);
	\draw [thick,\co] (b1)--(a1)--(b5);
	\draw [thick,\co] (b3)--(a4)--(b4);
	\draw [thick,\ct] (a1)--(c)--(a4);
	\node at ($(c)+(-.3,.1)$) {\tiny $c$};
	\node at ($(a1)+(.2,.15)$) {\tiny $a_2$};
	\node [black!20] at ($(a2)+(-.2,.2)$) {\tiny $a_3$};
	\node [black!20] at ($(a3)+(-.2,-.15)$) {\tiny $a_4$};
	\node at ($(a4)+(.1,-.3)$) {\tiny $a_0$};
	\node  [black!20] at ($(a5)+(.25,-.1)$) {\tiny $a_1$};
	\node at ($(b1)+(.03,.27)$) {\tiny $b'_0$};
	\node  at ($(b2)+(-.2,.1)$) {\tiny $b_1$};
	\node at ($(b3)+(-.15,-.2)$) {\tiny $b'_2$};
	\node at ($(b4)+(.2,-.2)$) {\tiny $b_3$};
	\node at ($(b5)+(.25,.1)$) {\tiny $b_4$};
	\node at ($(r) + (-.2,.1)$) {\tiny $r$};
	\fill (c) circle (2pt);
	\foreach \i in {1,...,5} {
		\fill (a\i) circle (2pt);		
		\fill (b\i) circle (2pt);
	}
	\fill (r) circle (2pt);
	\foreach \i in {b1, b3} \fill [lime] (\i) circle (1.7pt);
	\fill [cyan] (r) circle (1.7pt);
	\foreach \i in {a2, a3, a5} \fill [black!20] (\i) circle (2pt);
\end{tikzpicture}
\vskip -.2cm
\caption{}	
\label{fig:lem37C}
\end{subfigure}
\hfill
\begin{subfigure}[b]{.255\textwidth}
\begin{tikzpicture}[scale=.8]			
	\coordinate (c2) at (0,-.1);
	\coordinate (b2) at (-1.5,-.1);
	\coordinate (a2) at (1.5,-.1);
	\coordinate (b0) at (-.45,3);
	\coordinate (a1) at (.45,3);
	\coordinate (a0) at (-1.9,.6);
	\coordinate (b1) at (1.9,.6);
	\coordinate (c0) at ($(b0)!.5!(a0)$);
	\coordinate (c1) at ($(b1)!.5!(a1)$);
	\coordinate (q) at (0, 1);
	\draw [thick, \co] (a0)--(b2)--(a1)--(b0)--(a2)--(b1)--(a0);
	\draw [thick, \ct] (b0) --(a0);
	\draw [thick, \ct] (b1) --(a1);
	\draw [thick, \ct] (b2) --(a2);
	\draw [thick, green!90!black] (c0)--(q)--(c1)--(q)--(b2)--(q)--(a2);
	\draw [thick, dashed, orange] (c0)--(c1);
	\draw [fill=green] (q) circle (1.8pt);
	\foreach \i in {0,1,2}{
		\fill (a\i) circle (2pt);
		\fill (b\i) circle (2pt);
		\fill (c\i) circle (2pt);
	}
	\foreach \i in {a2, b2} \fill [lime] (\i) circle (1.7pt);
	\fill [cyan](c2) circle (1.7pt); 
	\def\s{\tiny}
	\node at ($(b0)+(-.3,.1)$) {\s $b_4$};
	\node at ($(c0)+(-.3,.1)$) {\s $b_1$};
	\node at ($(a0)+(-.3,.1)$) {\s $b_3$};
	\node at ($(b1)+(.3,.1)$) {\s $a_0$};
	\node at ($(c1)+(.3,.1)$) {\s $c$};
	\node at ($(a1)+(.3,.1)$) {\s $a_2$};
	\node at ($(b2)+(-.1,-.3)$) {\s $b'_0$};
	\node at ($(c2)+(0,-.3)$) {\s $r$};
	\node at ($(a2)+(.1,-.3)$) {\s $b'_2$};
\end{tikzpicture}
\vskip -.2cm
\caption{}	
\label{fig:lem37D}
\end{subfigure}
\vskip -.2cm
\caption{The proofs of Lemma~\ref{l:38} and Lemma~\ref{l:39}.}
\label{fig:lem37}
\end{figure}
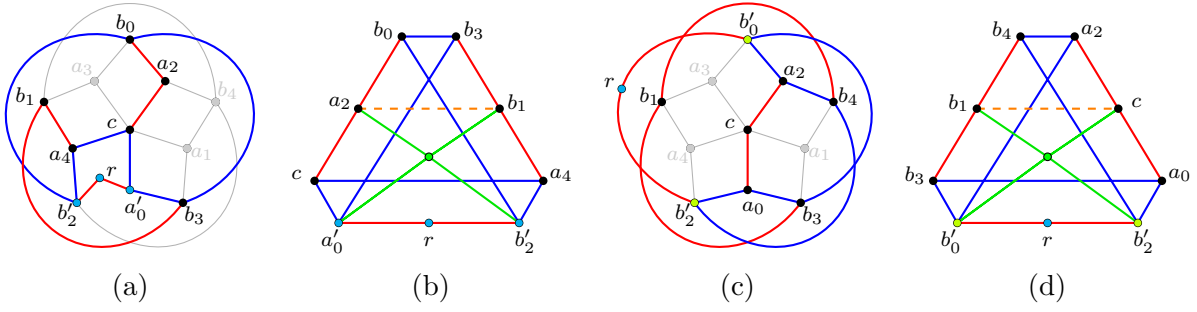
 		
\begin{lemma}\label{l:39}
	If $i\in \ZZ/5\ZZ$, $\grot\subseteq G\in \fD_4$, and 
	$b'_i$, $b'_{i+2}$ are non-adjacent $\grot$-twins of $b_i$, $b_{i+2}$, 
	then there is a common neighbour 
	of $\{b_i', b_{i+1}, b'_{i+2}, c\}$.
\end{lemma}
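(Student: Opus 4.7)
The plan is to emulate the proof of Lemma~\ref{l:38} essentially verbatim, only with a different copy of the auxiliary graph~$N$ from Lemma~\ref{l:35}. By the $\ZZ/5\ZZ$-rotational symmetry of~$\grot$ we may take $i=0$, so that our goal becomes: find a common neighbour of $\{b_0', b_1, b_2', c\}$. Since $G$ is maximal triangle-free and $b_0'b_2'\notin E(G)$, we first pick a common neighbour~$r$ of $b_0'$ and $b_2'$. The strategy is then to exhibit a copy of~$N$ inside~$G$ in which $r$, $b_1$, and~$c$ play the roles of the three ``$c$-vertices'' of~$N$, and in which the distinguished non-edge driving the lemma is~$cb_1$.

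The proposed $N$-hexagon is
\[
	b_3 - b_0' - a_2 - b_4 - b_2' - a_0 - b_3.
\]
Of its six edges, the four incident to $b_0'$ or $b_2'$ are present thanks to the $\grot$-twin property, since $\{a_2, b_3\}\subseteq \Nn(b_0)\cap V(\grot)$ and $\{a_0, b_4\}\subseteq \Nn(b_2)\cap V(\grot)$; the remaining two, $a_2b_4$ and $a_0 b_3$, are built into $\grot$ in the form $a_jb_{j\pm 2}$. The attachments of the three $c$-vertices to the hexagon are in place as well: $b_1$ is joined to $b_3$ and $b_4$ via the pentagonal edges $b_jb_{j+2}\in E(\grot)$, $c$ is joined to $a_0$ and $a_2$ via the central edges of $\grot$, and $r$ is joined to $b_0'$ and $b_2'$ by choice. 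A routine verification using twin-freeness of~$\grot$ together with $b_0'b_2'\notin E(G)$ shows that these nine vertices are distinct and hence really do form a copy of~$N$ inside~$G$.

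With $N\subseteq G$ established, I would apply Lemma~\ref{l:35} at the index where the pair $(a_i, b_i)$ corresponds to $(b_2', b_0')$ and the pair $(c_{i-1}, c_{i+1})$ corresponds to $(c, b_1)$. The decisive non-edge is $cb_1$, which is missing from~$\grot$ (the only neighbours of~$c$ in~$\grot$ are $a_0,\dots,a_4$) and hence from~$G$, because every copy of the maximal triangle-free graph~$\grot$ inside a triangle-free graph is automatically induced. Lemma~\ref{l:35} then delivers the required common neighbour of $\{b_0', b_1, b_2', c\}$. The main obstacle in this plan is purely combinatorial bookkeeping: one must spot the correct hexagon so that the intrinsic non-edge $cb_j$ of~$\grot$ lands in precisely the slot $c_{i-1}c_{i+1}$ of the $N$-copy.
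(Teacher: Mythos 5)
Your proposal is correct and is essentially the paper's own proof: both take a common neighbour $r$ of $b_0'$, $b_2'$, build the same copy of $N$ with hexagon $b_3-b_0'-a_2-b_4-b_2'-a_0-b_3$ and $c$-vertices $b_1$, $c$, $r$ attached to the pairs $\{b_3,b_4\}$, $\{a_0,a_2\}$, $\{b_0',b_2'\}$ respectively, and invoke Lemma~\ref{l:35} with the non-edge $b_1c$ to obtain the common neighbour of $\{b_0',b_1,b_2',c\}$. The only difference is that you spell out the edge and distinctness checks that the paper leaves to the reader (and Figure~\ref{fig:lem37D}).
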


\begin{proof}
	It suffices to consider the case $i=0$. If~$r$ denotes a common neighbour 
	of $b_0'$, $b_2'$ (see Figure~\ref{fig:lem37C}), then due to $b_1c\not\in E(G)$ 
	Lemma~\ref{l:35} leads to the desired vertex 
	(see Figure~\ref{fig:lem37D}).
\end{proof}

The three foregoing lemmata will assist us later when proving the twin lemma for 
Vega graphs (cf.\ Lemma~\ref{lem:vtwin}). In an attempt to facilitate later references
we visualise the beautiful lemma and the two previous lemmata in 
Figure~\ref{fig:1244}. 
The idea is that in Figure~\ref{fig:1244a} the existence of the two blue edges 
leads to the green one. Moreover, in the Figures~\ref{fig:1244b} and~\ref{fig:1244c} 
the dashed orange non-edge forces the existence of the green vertex together with 
four green edges.

\begin{figure}[h]
\centering
\begin{subfigure}[b]{.31\textwidth}
\begin{tikzpicture}[scale=1]			
	\coordinate (c) at (0,0);
	\foreach \i in {1,...,5} {
		\coordinate (a\i) at ($(-18+\i*72:1)$);
		\coordinate (b\i) at ($(18+\i*72:1.5)$);	
		\coordinate (d\i) at ($(18+\i*72:2)$);
	}
	\coordinate (q) at (.3,-.5);			
	\draw [green!90!black, thick] (q)--(a4);						
	\draw [cyan, thick] (a3)--(q)--(a5);
	\draw (b4)--(a4)--(c);
	\draw (a4)--(b3);
	\draw (b1) [out=162, in=72] to (d2) [out=-108, in=162] to (b3);
	\draw (b2) [out=234, in=144] to (d3) [out=-36, in=234] to (b4);
	\draw (b3) [out=-54, in=216] to (d4) [out=36, in=-54] to (b5);
	\draw (b4) [out=18, in=-72] to (d5) [out=108, in=18] to (b1);
	\draw (b5) [out=90, in=0] to (d1) [out=180, in=90] to (b2);
	\draw (a2)--(b2);
	\draw (a1)--(b5);
	\draw (b3)--(a3)--(c)--(a5)--(b4);
	\draw (b1)--(a1)--(c)--(a2)--(b1);
	\draw (b2)--(a3);
	\draw (b5)--(a5);
	\fill (c) circle (2pt);
	\foreach \i in {1,...,5} {
		\fill (a\i) circle (2pt);		
		\fill (b\i) circle (2pt);
	}
	\draw [fill=cyan](q) circle (1.8pt);
\end{tikzpicture}
\caption{Beautiful Lemma}	
\label{fig:1244a}
\end{subfigure}
\hfill
\begin{subfigure}[b]{.31\textwidth}
\begin{tikzpicture}[scale=1]			
	\coordinate (c) at (0,0);
	\foreach \i in {1,...,5} {
		\coordinate (a\i) at ($(-18+\i*72:1)$);
		\coordinate (b\i) at ($(18+\i*72:1.5)$);	
		\coordinate (d\i) at ($(18+\i*72:2)$);
	}
	\coordinate (q) at (-.5,.1);			
	\draw (b4)--(a4)--(c);
	\draw [orange, thick, dashed] (a4)--(b3);
	\draw (b1) [out=162, in=72] to (d2) [out=-108, in=162] to (b3);
	\draw (b2) [out=234, in=144] to (d3) [out=-36, in=234] to (b4);
	\draw (b3) [out=-54, in=216] to (d4) [out=36, in=-54] to (b5);
	\draw (b4) [out=18, in=-72] to (d5) [out=108, in=18] to (b1);
	\draw (b5) [out=90, in=0] to (d1) [out=180, in=90] to (b2);
	\draw (a2)--(b2);
	\draw (a1)--(b5);
	\draw (b3)--(a3)--(c)--(a5)--(b4);
	\draw (b1)--(a1)--(c)--(a2)--(b1);
	\draw (b2)--(a3);
	\draw (b5)--(a5);
	\foreach \i in {a4, b3, b2, a1}{
		\draw [green!90!black, thick] (q)--(\i);
	}
	\draw [fill=green](q) circle (2pt);
	\fill (c) circle (2pt);
	\foreach \i in {1,...,5} {
		\fill (a\i) circle (2pt);		
		\fill (b\i) circle (2pt);
	}
	\foreach \i in {a4, b3, b2, a1}{
			\draw [fill=cyan!80!black] (\i) circle (1.7pt);
	}
\end{tikzpicture}
\caption{Lemma~\ref{l:38}} 
\label{fig:1244b}
\end{subfigure}
\hfill
\begin{subfigure}[b]{.31\textwidth}
\begin{tikzpicture}[scale=1]			
	\coordinate (c) at (0,0);
	\foreach \i in {1,...,5} {
		\coordinate (a\i) at ($(-18+\i*72:1)$);
		\coordinate (b\i) at ($(18+\i*72:1.5)$);	
		\coordinate (d\i) at ($(18+\i*72:2)$);
	}
	\coordinate (q) at (0,.6);			
	\draw (b4)--(a4)--(c);
	\draw (a4) --(b3);
	\draw (b1) [out=162, in=72] to (d2) [out=-108, in=162] to (b3);
	\draw (b2) [out=234, in=144] to (d3) [out=-36, in=234] to (b4);
	\draw (b3) [out=-54, in=216] to (d4) [out=36, in=-54] to (b5);
	\draw (b4) [out=18, in=-72] to (d5) [out=108, in=18] to (b1);
	\draw (b5) [orange, thick, dashed, out=90, in=0] to (d1) [out=180, in=90] to (b2);
	\draw (a2)--(b2);
	\draw (a1)--(b5);
	\draw (b3)--(a3)--(c)--(a5)--(b4);
	\draw (b1)--(a1)--(c)--(a2)--(b1);
	\draw (b2)--(a3);
	\draw (b5)--(a5);
	\foreach \i in {b2, b5, b1, c}{
		\draw [green!90!black, thick] (q)--(\i);
	}
	\draw [fill=green](q) circle (2pt);
	\fill (c) circle (2pt);
	\foreach \i in {1,...,5} {
		\fill (a\i) circle (2pt);		
		\fill (b\i) circle (2pt);
	}
	\foreach \i in {b2, b5, b1, c}{
		\draw [fill=cyan!80!black] (\i) circle (1.7pt);
	}
\end{tikzpicture}
\caption{Lemma~\ref{l:39}}	
\label{fig:1244c}
\end{subfigure}
\caption{}
\label{fig:1244}
\end{figure}
 
\begin{lemma}\label{l:310}
	Let $\grot\subseteq G\in \fD_4$. If $u\in V(G)$ is adjacent to $a_1$, $a_4$ 
	and $v\in V(G)$ is adjacent to $u$, $b_0$, then either $b_1v\in E(G)$, 
	or $b_4v\in E(G)$, or $v$ is adjacent to $\grot$-twins of $b_1$ and $b_4$. 
\end{lemma}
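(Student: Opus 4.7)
The plan is to dispose of the two easy alternatives and, in the remaining case, build the required twins via a carefully chosen $N$-subgraph and Lemma~\ref{l:35}. First, the beautiful lemma (Lemma~\ref{l:beauti}) applied to $u$ (which is adjacent to $a_4=a_{-1}$ and $a_1=a_{+1}$) yields $ua_0\in E(G)$. Together with the edges $uv$ and $vb_0$, triangle-freeness then forces $N(v)\cap V(\grot)\subseteq \{c,b_1,b_4\}$, so if $vb_1\in E$ or $vb_4\in E$ the conclusion is immediate. Henceforth I assume $vb_1,vb_4\notin E(G)$ and aim to exhibit $\grot$-twins $b_1',b_4'$ of $b_1,b_4$ both adjacent to $v$.

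The key construction is the following embedding of the graph $N$ from Lemma~\ref{l:35} into $G$:
\[
 [a_0]=b_0,\ [b_0]=u,\ [c_0]=v,\ [a_1]=a_1,\ [b_1]=b_2,\ [c_1]=b_4,\ [a_2]=a_4,\ [b_2]=b_3,\ [c_2]=b_1.
\]
One verifies directly that the twelve edges of $N$ are all present in $G$, as each lies either in $\grot$ or among the five known edges $ua_0, ua_1, ua_4, uv, vb_0$. Applying Lemma~\ref{l:35} to the non-edges $[c_0][c_2]=vb_1$ and $[c_0][c_1]=vb_4$ produces a common neighbour $b_4'$ of $\{v,a_1,b_1,b_2\}$ and a common neighbour $b_1'$ of $\{v,a_4,b_3,b_4\}$. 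Triangle-freeness immediately gives $N(b_1')\cap V(\grot)\subseteq \{a_3,a_4,b_3,b_4\}$ and $N(b_4')\cap V(\grot)\subseteq \{a_1,a_2,b_1,b_2\}$, so the candidates $b_1',b_4'$ are the desired $\grot$-twins as soon as the two ``missing'' adjacencies $b_1'a_3$ and $b_4'a_2$ are in place.

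Establishing these missing edges is the main obstacle. My plan is to iterate the $N$-technique: for $b_1'a_3$, embed a secondary copy of $N$ into $G$ with $[c_0]=b_1'$, for instance via $[a_0]=b_4$, $[b_0]=v$, $[a_1]=u$, $[b_1]=b_2$, $[c_1]=a_0$, $[a_2]=b_0$, $[b_2]=a_1$, $[c_2]=b_3$, whose twelve $N$-edges are all already available. Since $b_1'b_3, a_0b_3\in E(G)$, triangle-freeness forces $[c_1][c_0]=a_0b_1'\notin E(G)$, and Lemma~\ref{l:35} at $i=2$ then supplies a common neighbour of $\{b_0,a_0,a_1,b_1'\}$. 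A further triangle-freeness analysis of this auxiliary vertex---combined, if necessary, with an application of Lemma~\ref{f:24} or the $(\grot,ca_i)$-twin property (Lemma~\ref{l:37})---pins down its $\grot$-neighbourhood sufficiently to force $b_1'a_3\in E(G)$. The argument for $b_4'a_2$ is completely symmetric. The delicate part is to ensure that this cascade of auxiliary common neighbours terminates by forcing the desired adjacencies, rather than producing an endless chain of new intermediate vertices.
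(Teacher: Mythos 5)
Your first half is correct and is essentially the paper's argument: the beautiful lemma gives $ua_0\in E(G)$, and your $N$-embedding together with Lemma~\ref{l:35} (applied for $i=1$ and $i=2$, using the non-edges $vb_1$ and $vb_4$) produces $b_4'$ adjacent to $\{a_1,b_1,b_2,v\}$ and $b_1'$ adjacent to $\{a_4,b_3,b_4,v\}$. The genuine gap is in the second half. Your secondary $N$-embedding plus Lemma~\ref{l:35} does yield a common neighbour $z$ of $\{b_0,a_0,a_1,b_1'\}$, but this by itself cannot force $a_3b_1'\in E(G)$: since $\Nn(b_0)\cap\Nn(a_0)\cap\Nn(a_1)\cap V(\grot)=\{b_3\}$, the $\grot$-neighbourhood of $z$ is contained in $\Nn(b_3)\cap V(\grot)=\{a_0,a_1,b_0,b_1\}$, and nothing in Lemma~\ref{f:24} or Lemma~\ref{l:37} relates such a $z$ to the edge $a_3b_1'$. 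You flag the danger of an endless cascade yourself, and indeed the iteration as sketched would produce ever more auxiliary vertices without ever pinning the target adjacency down.

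The paper closes the gap without iterating at all. Once Lemma~\ref{l:35} has produced $b_1'$ and $b_4'$, the eleven vertices $b_3,\,b_1',\,b_1,\,a_1,\,a_0,\,b_0,\,u,\,b_2,\,v,\,a_4,\,b_4'$ form a fresh copy $\grot^*$ of the Mycielski--Gr\"otzsch graph, with $b_3$ as its central vertex, $(b_1',b_1,a_1,a_0,b_0)$ as its $a$-vertices in cyclic order and $(u,b_2,v,a_4,b_4')$ as the corresponding $b$-vertices; every edge of $\grot^*$ is either an edge of $\grot$, one of the hypothesised edges $uv,\,ua_1,\,ua_4,\,vb_0$, the edge $ua_0$ from the beautiful lemma, or one of the eight adjacencies of $b_1',b_4'$ supplied by Lemma~\ref{l:35}. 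In $\grot^*$ the two cyclic $a$-neighbours of $b_1'$ are $b_0$ and $b_1$, and $a_3$ is adjacent to both (as $a_3b_0,a_3b_1\in E(\grot)$), so a single application of the beautiful lemma to $\grot^*$ yields $a_3b_1'\in E(G)$ at once; by symmetry one gets $a_2b_4'\in E(G)$. The missing idea is thus to package the accumulated vertices into a new $\grot$-copy and apply the beautiful lemma, rather than into another copy of $N$.
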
		

\begin{figure}[h!]
\centering
\def\co{green!80!black}
\def\ct{cyan!80!black}
\def\ch{blue}
\def\cf{red}
\begin{subfigure}[b]{.24\textwidth}
\begin{tikzpicture}[scale=.8]
	\coordinate (c) at (0,0);
	\foreach \i in {1,...,5} {
		\coordinate (a\i) at (-18+\i*72:1);
		\coordinate (b\i) at (18+\i*72:1.5);	
		\coordinate (d\i) at (18+\i*72:2);
		\draw [black!30](a\i)--(c);
	}
	\coordinate (r) at (162:2.2);
	\coordinate (u) at (.4, -.6);
	\coordinate (v) at (.5,.2);
	\draw[black!30] (a3)--(u)--(a5)--(u);
	\draw [thick, \cf] (u)--(v)--(b1);
	\draw [black!30] (a1)--(b1)--(a2)--(b2)--(a3)--(b3)--(a4)--(b4)--(a5)--(b5)--(a1);
	\draw [black!30] (a4)--(u);
	\draw [thick, \ch] (b4)--(a5)--(u)--(a3)--(b3);
	\draw [thick, \cf] (a3)--(b2);
	\draw [thick, \cf] (a5)--(b5);
	\draw (b5) [black!30, out=90, in=0] to (d1) [out=180, in=90] to (b2);		
	\draw (b2) [thick, \cf, out=234, in=144] to (d3) [out=-36, in=234] to (b4);		
	\draw (b3) [thick, \cf, out=-54, in=216] to (d4) [out=36, in=-54] to (b5);
	\draw (b1) [thick, \ch, out=162, in=72] to (d2) [out=-108, in=162] to (b3);
	\draw (b4) [thick, \ch,  out=18, in=-72] to (d5) [out=108, in=18] to (b1);
	\node [black!20] at ($(c)+(-.3,.1)$) {\tiny $c$};
	\node [black!20] at ($(a1)+(.15,.2)$) {\tiny $a_2$};
	\node [black!20] at ($(a2)+(-.25,.2)$) {\tiny $a_3$};
	\node at ($(a3)+(-.2,-.1)$) {\tiny $a_4$};
	\node [black!20] at ($(a4)+(.1,-.3)$) {\tiny $a_0$};
	\node at ($(a5)+(.25,-.1)$) {\tiny $a_1$};
	\node at ($(b1)+(-.06,.27)$) {\tiny $b_0$};
	\node at ($(b2)+(-.2,.1)$) {\tiny $b_1$};
	\node at ($(b3)+(-.15,-.2)$) {\tiny $b_2$};
	\node at ($(b4)+(.2,-.2)$) {\tiny $b_3$};
	\node at ($(b5)+(.2,.2)$) {\tiny $b_4$};
	\node at ($(u) + (.1,-.15)$) {\tiny $u$};
	\node at ($(v) + (.15,0)$) {\tiny $v$};
	\fill (c) circle (2pt);
	\foreach \i in {1,...,5} {
		\fill (a\i) circle (2pt);		
		\fill (b\i) circle (2pt);
	}
	\fill (u) circle (2pt);
	\fill (v) circle (2pt);
	\fill [cyan] (u) circle (1.7pt);
	\fill [cyan] (v) circle (1.7pt);
	\foreach \i in {a2, c, a1, a4} \fill [black!20] (\i) circle (2pt);
\end{tikzpicture}
\vskip -.2cm
\caption{}	
\label{fig:lem39A}
\end{subfigure}
\hfill
\begin{subfigure}[b]{.255\textwidth}
\begin{tikzpicture}[scale=.8]			
	\coordinate (c2) at (0,-.1);
	\coordinate (b2) at (-1.5,-.1);
	\coordinate (a2) at (1.5,-.1);
	\coordinate (b0) at (-.45,3);
	\coordinate(a1) at (.45,3);
	\coordinate (a0) at (-1.9,.6);
	\coordinate (b1) at (1.9,.6);
	\coordinate (c0) at ($(b0)!.5!(a0)$);
	\coordinate (c1) at ($(b1)!.5!(a1)$);
	\coordinate (q) at (-.2, 1.3);
	\coordinate (q2) at (.1, .95);
	\draw [thick, \ch] (a0)--(b2)--(a1)--(b0)--(a2)--(b1)--(a0);
	\draw [thick, \cf] (b0) --(a0);
	\draw [thick, \cf] (b1) --(a1);
	\draw [thick, \cf] (b2) --(a2);
	\draw [thick, dashed, orange] (c2)--(c1);
	\draw [thick, dashed, orange] (c0)--(c2);
	\draw [thick, \co] (c0)--(q)--(c2)--(q)--(b1)--(q)--(a1);
	\draw [\ct, thick] (c1)--(q2)--(c2)--(q2)--(b0)--(q2)--(a0);
	\foreach \i in {0,1,2}{
		\fill (a\i) circle (2pt);
		\fill (b\i) circle (2pt);
		\fill (c\i) circle (2pt);
	}
	\fill [cyan] (b2) circle (1.7pt);
	\fill [cyan](c2) circle (1.7pt); 
	\draw [fill=\co] (q) circle (2pt);
	\draw [fill=\ct] (q2) circle (2pt);
	\node at ($(q2)+(.25,-.1)$) {\tiny $b_1'$};
	\node at ($(q)+(-.2,-.1)$) {\tiny $b_4'$};
	\node at ($(b0)+(-.3,.1)$) {\tiny $b_3$};
	\node at ($(c0)+(-.3,.1)$) {\tiny $b_1$};
	\node at ($(a0)+(-.3,.1)$) {\tiny $a_4$};
	\node at ($(b1)+(.3,.1)$) {\tiny $b_2$};
	\node at ($(c1)+(.3,.1)$) {\tiny $b_4$};
	\node at ($(a1)+(.3,.1)$) {\tiny $a_1$};
	\node at ($(b2)+(-.1,-.3)$) {\tiny $u$};
	\node at ($(c2)+(0,-.3)$) {\tiny $v$};
	\node at ($(a2)+(.1,-.3)$) {\tiny $b_0$};
\end{tikzpicture}
\vskip -.2cm
\caption{}	
\label{fig:lem39B}
\end{subfigure}
\hfill
\begin{subfigure}[b]{.24\textwidth}
\begin{tikzpicture}[scale=.8]			
	\coordinate (c) at (0,0);
	\coordinate (t) at (.3,-.5);
	\foreach \i in {1,...,5} {
		\coordinate (a\i) at (-18+\i*72:1);
		\coordinate (b\i) at (18+\i*72:1.5);	
		\coordinate (d\i) at (18+\i*72:2);
		\draw (c)--(a\i);
	}
	\draw (a1)--(b1)--(a2)--(b2)--(a3)--(b3)--(a4)--(b4)--(a5)--(b5)--(a1);
	\draw (b1) [out=162, in=72] to (d2) [out=-108, in=162] to (b3);
	\draw (b2) [out=234, in=144] to (d3) [out=-36, in=234] to (b4);
	\draw (b4) [out=18, in=-72] to (d5) [out=108, in=18] to (b1);
	\draw (b5) [thick, \co, out=90, in=0] to (d1) [out=180, in=90] to (b2);
	\draw (b3) [thick, \co, out=-54, in=216] to (d4) [out=36, in=-54] to (b5);
	\draw [thick, \co] (a5)--(b5)--(a1);
	\draw [thick, \ct] (b3)--(a4)--(b4)--(a4)--(c);
	\draw [green, thick] (t)--(a4);						
	\draw [cyan, thick] (a3)--(t)--(a5);
	\draw [fill=cyan](t) circle (2pt);
	\fill (c) circle (2pt);
	\foreach \i in {1,...,5} {
		\fill (a\i) circle (2pt);		
		\fill (b\i) circle (2pt);
	}
	\foreach \i in {a4, b1, b3} \fill [cyan] (\i) circle (1.7pt);
	\draw [fill=\co] (b5) circle (2pt);
	\draw [fill=\ct] (a4) circle (2pt);
	\node [cyan!80!black] at ($(t)+(.12,-.23)$) {\tiny $a_3$};
	\node at ($(c)+(-.3,.15)$) {\tiny $b_3$};
	\node at ($(a1)+(.1,.2)$) {\tiny $a_1$};
	\node at ($(a2)+(-.2,.2)$) {\tiny $a_0$};
	\node at ($(a3)+(-.15,-.15)$) {\tiny $b_0$};
	\node at ($(a4)+(.1,-.3)$) {\tiny $b_1'$};
	\node at ($(a5)+(.25,-.1)$) {\tiny $b_1$};
	\node at ($(b1)+(-.06,.2)$) {\tiny $u$};
	\node at ($(b2)+(-.25,.1)$) {\tiny $b_2$};
	\node at ($(b3)+(-.1,-.2)$) {\tiny $v$};
	\node at ($(b4)+(.2,-.2)$) {\tiny $a_4$};
	\node at ($(b5)+(.2,.15)$) {\tiny $b_4'$};
\end{tikzpicture}
\vskip -.2cm
\caption{}	
\label{fig:lem39C}
\end{subfigure}
\hfill
\begin{subfigure}[b]{.24\textwidth}
\begin{tikzpicture}[scale=.8]			
	\coordinate (c) at (0,0);
	\coordinate (t) at (.3,-.5);	
	\foreach \i in {1,...,5} {
		\coordinate (a\i) at (-18+\i*72:1);
		\coordinate (b\i) at (18+\i*72:1.5);	
		\coordinate (d\i) at (18+\i*72:2);
		\draw (c)--(a\i);
	}
	\draw (a1)--(b1)--(a2)--(b2)--(a3)--(b3)--(a4)--(b4)--(a5)--(b5)--(a1);
	\draw (b1) [out=162, in=72] to (d2) [out=-108, in=162] to (b3);
	\draw (b2) [out=234, in=144] to (d3) [out=-36, in=234] to (b4);		
	\draw (b4) [out=18, in=-72] to (d5) [out=108, in=18] to (b1);
	\draw (b5) [thick, \ct, out=90, in=0] to (d1) [out=180, in=90] to (b2);
	\draw (b3) [thick, \ct, out=-54, in=216] to (d4) [out=36, in=-54] to (b5);
	\draw [thick, \co] (b3)--(a4)--(b4)--(a4)--(c);
	\draw [thick, \ct] (a5)--(b5)--(a1);
	\draw [green, thick](t)--(a4);						
	\draw [cyan, thick] (a3)--(t)--(a5);
	\draw [fill=cyan](t) circle (2pt);
	\fill (c) circle (2pt);
	\foreach \i in {1,...,5} {
		\fill (a\i) circle (2pt);		
		\fill (b\i) circle (2pt);
	}
	\foreach \i in { b5, b1, b3} \draw [fill=cyan] (\i) circle (2pt);
	\draw [fill=\co] (a4) circle (2pt);
	\draw [fill=\ct] (b5) circle (2pt);
	\node [cyan!80!black] at ($(t)+(.12,-.23)$) {\tiny $a_2$};
	\node at ($(c)+(-.3,.15)$) {\tiny $b_2$};
	\node at ($(a1)+(.1,.2)$) {\tiny $a_4$};
	\node at ($(a2)+(-.2,.2)$) {\tiny $a_0$};
	\node at ($(a3)+(-.15,-.15)$) {\tiny $b_0$};
	\node at ($(a4)+(.1,-.3)$) {\tiny $b_4'$};
	\node at ($(a5)+(.25,-.1)$) {\tiny $b_4$};
	\node at ($(b1)+(-.06,.2)$) {\tiny $u$};
	\node at ($(b2)+(-.25,.1)$) {\tiny $b_3$};
	\node at ($(b3)+(-.1,-.2)$) {\tiny $v$};
	\node at ($(b4)+(.2,-.2)$) {\tiny $a_1$};
	\node at ($(b5)+(.2,.15)$) {\tiny $b_1'$};
\end{tikzpicture}		
\vskip -.2cm
\caption{}	
\label{fig:lem39D}
\end{subfigure}
\vskip -.3cm
\caption{The proof of Lemma~\ref{l:310}.}
\label{fig:lem39}
\end{figure}
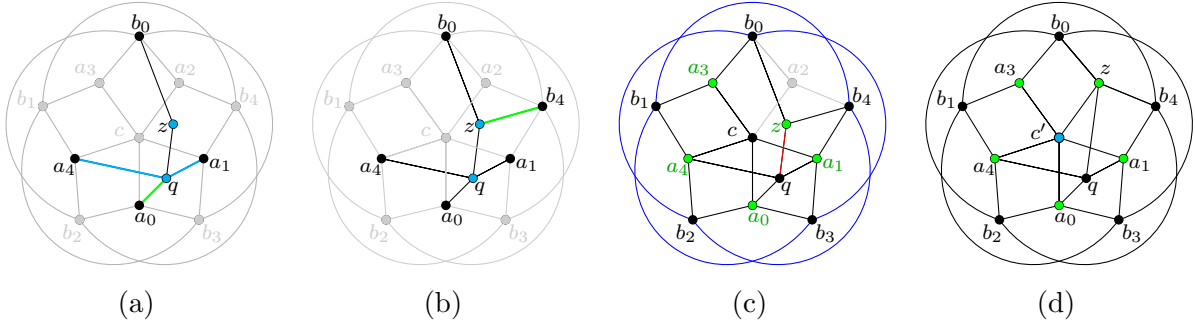
 \vskip-.3cm

\begin{proof}
	The beautiful lemma implies $a_0u\in E(G)$  (see Figure~\ref{fig:lem39A}). 
	Suppose that neither~$b_1v$ nor~$b_4v$ is an edge of~$G$. 
	Due to Lemma~\ref{l:35} there are common neighbours~$b_1'$ and~$b'_4$ 
	of $\{a_4,b_3,b_4,v\}$ and $\{a_1,b_1,b_2,v\}$, respectively   
	(see Figure~\ref{fig:lem39B}). 
	
	It remains to show $a_3b_1', a_2b_4'\in E(G)$, for then $b_1'$, $b_4'$
	are the desired $\grot$-twins in $\Nn(v)$. Both statements follow from 
	the beautiful lemma applied to appropriate copies of $\grot$, as indicated 
	in the Figures~\ref{fig:lem39C} and~\ref{fig:lem39D}.
\end{proof}

We proceed with a series of results that are drawn schematically in 
Figure~\ref{fig:1138}. Again the blue vertices and edges and the orange dashed 
non-edges force the existence of green vertices and edges. 
The colour light-green indicates twins. 

\begin{figure}[h!]
\vskip -.2cm
\begin{subfigure}[b]{.24\textwidth}
\begin{tikzpicture}[scale=.7]			
	\coordinate (c) at (0,0);
	\foreach \i in {1,...,5} {
		\coordinate (a\i) at ($(-18+\i*72:1)$);
		\coordinate (b\i) at ($(18+\i*72:1.5)$);	
		\coordinate (d\i) at ($(18+\i*72:2)$);
	}
	\coordinate (s) at (-1.43, .3);
	\coordinate (t) at (1.43, .3);				
	\coordinate (q) at (-.1,.6);			
	\draw [cyan, thick] (c)--(q)--(b1);
	\draw [orange, thick, dashed] (b5)--(q)--(b2);
	\draw (b4)--(a4)--(c);
	\draw (a4)--(b3);
	\draw (b1) [out=162, in=72] to (d2) [out=-108, in=162] to (b3);
	\draw (b2) [out=234, in=144] to (d3) [out=-36, in=234] to (b4);
	\draw (b3) [out=-54, in=216] to (d4) [out=36, in=-54] to (b5);
	\draw (b4) [out=18, in=-72] to (d5) [out=108, in=18] to (b1);
	\draw (b5) [out=90, in=0] to (d1) [out=180, in=90] to (b2);
	\draw (a2)--(b2);
	\draw (a1)--(b5);
	\draw (b3)--(a3)--(c)--(a5)--(b4);
	\draw (b1)--(a1)--(c)--(a2)--(b1);
	\draw (b2)--(a3);
	\draw (b5)--(a5);
	\fill (c) circle (2pt);
	\foreach \i in {1,...,5} {
		\fill (a\i) circle (2pt);		
		\fill (b\i) circle (2pt);
	}
	\draw [green!90!black, thick] (s)--(q)--(t);
	\draw [fill = lime] (s) circle (1.8pt);
	\draw [fill = lime] (t) circle (1.8pt);
	\draw [fill=cyan](q) circle (1.8pt);
\end{tikzpicture}
\caption{Corollary~\ref{cor:311}}
\label{fig:1138a}
\end{subfigure}
\hfill
\begin{subfigure}[b]{.24\textwidth}
\begin{tikzpicture}[scale=.7]			
	\coordinate (c) at (0,0);
	\foreach \i in {1,...,5} {
		\coordinate (a\i) at ($(-18+\i*72:1)$);
		\coordinate (b\i) at ($(18+\i*72:1.5)$);	
		\coordinate (d\i) at ($(18+\i*72:2)$);
	}
	\coordinate (q) at (.6,-.4);			
	\draw [orange, thick, dashed] (q)--(b1);
	\draw [green!90!black, thick] (a1)--(q)--(a4)--(q)--(a2);						
	\draw [cyan, thick] (a3)--(q)--(a5);
	\draw (b4)--(a4)--(c);
	\draw (a4)--(b3);
	\draw (b1) [out=162, in=72] to (d2) [out=-108, in=162] to (b3);
	\draw (b2) [out=234, in=144] to (d3) [out=-36, in=234] to (b4);
	\draw (b3) [out=-54, in=216] to (d4) [out=36, in=-54] to (b5);
	\draw (b4) [out=18, in=-72] to (d5) [out=108, in=18] to (b1);
	\draw (b5) [out=90, in=0] to (d1) [out=180, in=90] to (b2);
	\draw (a2)--(b2);
	\draw (a1)--(b5);
	\draw (b3)--(a3)--(c)--(a5)--(b4);
	\draw (b1)--(a1)--(c)--(a2)--(b1);
	\draw (b2)--(a3);
	\draw (b5)--(a5);
	\fill (c) circle (2pt);
	\foreach \i in {1,...,5}{
		\fill (a\i) circle (2pt);		
		\fill (b\i) circle (2pt);
	}
	\draw [fill=cyan](q) circle (1.8pt);
		\end{tikzpicture}
\caption{Lemma~\ref{l:312}}
\label{fig:1138b}
\end{subfigure}
\hfill
\begin{subfigure}[b]{.24\textwidth}
\begin{tikzpicture}[scale=.7]			
	\coordinate (c) at (0,0);
	\foreach \i in {1,...,5} {
		\coordinate (a\i) at ($(-18+\i*72:1)$);
		\coordinate (b\i) at ($(18+\i*72:1.5)$);	
		\coordinate (d\i) at ($(18+\i*72:2)$);
	}
	\coordinate (q) at (.6,-.8);			
	\coordinate (z) at (-.6, -.8);
	\draw [green!90!black, thick] (a2)--(z)--(b3)--(z)--(a5);
	\draw [green!90!black, thick] (q)--(z);
	\draw [fill=green!80!black] (z) circle (1.8pt);
	\draw [cyan, thick] (b2)--(q)--(a4);
	\draw [orange, thick, dashed] (q)--(a5);
	\draw [green!90!black, thick] (q)--(b1);
	\draw (b4)--(a4)--(c);
	\draw (a4)--(b3);
	\draw (b1) [out=162, in=72] to (d2) [out=-108, in=162] to (b3);
	\draw (b2) [out=234, in=144] to (d3) [out=-36, in=234] to (b4);
	\draw (b3) [out=-54, in=216] to (d4) [out=36, in=-54] to (b5);
	\draw (b4) [out=18, in=-72] to (d5) [out=108, in=18] to (b1);
	\draw (b5) [out=90, in=0] to (d1) [out=180, in=90] to (b2);
	\draw (a2)--(b2);
	\draw (a1)--(b5);
	\draw (b3)--(a3)--(c)--(a5)--(b4);
	\draw (b1)--(a1)--(c)--(a2)--(b1);
	\draw (b2)--(a3);
	\draw (b5)--(a5);
	\fill (c) circle (2pt);
	\foreach \i in {1,...,5}{
		\fill (a\i) circle (2pt);		
		\fill (b\i) circle (2pt);
	}
	\draw [fill=cyan](q) circle (1.8pt);
\end{tikzpicture}	
\caption{Lemma~\ref{l:313}}
\label{fig:1138c}	
\end{subfigure}
\hfill
\begin{subfigure}[b]{.24\textwidth}
\begin{tikzpicture}[scale=.7]			
	\coordinate (c) at (0,0);
	\foreach \i in {1,...,5} {
		\coordinate (a\i) at ($(-18+\i*72:1)$);
		\coordinate (t\i) at ($(-18+\i*75:1.15)$);
		\coordinate (b\i) at ($(18+\i*72:1.5)$);	
		\coordinate (s\i) at ($(18+\i*75:1.65)$);
		\coordinate (d\i) at ($(18+\i*72:2)$);
	}
	\coordinate (q) at (-.5,.2);			
	\draw [cyan, thick] (a4)--(q)--(b2);
	\draw [orange, thick, dashed] (a1)--(q)--(b1);
	\draw (b4)--(a4)--(c);
	\draw (a4)--(b3);
	\draw (b1) [out=162, in=72] to (d2) [out=-108, in=162] to (b3);
	\draw (b2) [out=234, in=144] to (d3) [out=-36, in=234] to (b4);
	\draw (b3) [out=-54, in=216] to (d4) [out=36, in=-54] to (b5);
	\draw (b4) [out=18, in=-72] to (d5) [out=108, in=18] to (b1);
	\draw (b5) [out=90, in=0] to (d1) [out=180, in=90] to (b2);
	\draw (a2)--(b2);
	\draw (a1)--(b5);
	\draw (b3)--(a3)--(c)--(a5)--(b4);
	\draw (b1)--(a1)--(c)--(a2)--(b1);
	\draw (b2)--(a3);
	\draw (b5)--(a5);
	\fill (c) circle (2pt);
	\foreach \i in {1,...,5} {
		\fill (a\i) circle (2pt);		
		\fill (b\i) circle (2pt);
	}
	\draw [green!90!black, thick] (t1)--(q)--(s1);
	\draw [fill = lime] (t1) circle (1.8pt);
	\draw [fill = lime] (s1) circle (1.8pt);
	\draw [fill=cyan](q) circle (1.8pt);
\end{tikzpicture}
\caption{Lemma~\ref{l:314}}
\label{fig:1138d}
\end{subfigure}
\vskip -.3cm
\caption{}
\label{fig:1138}
\end{figure}

\vspace{-1em}
 
\begin{cor}\label{cor:311}
	Let $\grot\subseteq G\in \fD_4$. If $q\in V(G)$ is adjacent to $c$, $b_0$, 
	then either $q$ is an $\grot$-twin of $a_2$, or it is an $\grot$-twin of $a_3$, 
	or it is adjacent to $\grot$-twins of $b_1$ and $b_4$.
\end{cor}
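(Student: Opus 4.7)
The plan is to derive this corollary as an essentially immediate consequence of Lemma~\ref{l:310}, combined with a short triangle-free argument that identifies the twin type.

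The starting observation is that $c$ is adjacent in $\grot$ to both $a_1$ and $a_4$, so Lemma~\ref{l:310} applies with $u=c$ and $v=q$---the hypotheses $qc, qb_0\in E(G)$ being precisely what the corollary assumes. The lemma produces three alternatives: either $b_1q\in E(G)$, or $b_4q\in E(G)$, or $q$ is adjacent to $\grot$-twins of $b_1$ and $b_4$. The third alternative is already the last conclusion of the corollary, so only the first two cases demand further work.

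By the involution $i\longmapsto -i$ of $\ZZ/5\ZZ$, which extends to an automorphism of $\grot$ fixing $c$ and $b_0$ and swapping $a_2\lra a_3$ as well as $b_1\lra b_4$, the two remaining cases are symmetric. I would thus treat only the case $b_1q\in E(G)$ and aim to conclude that $q$ is an $\grot$-twin of $a_3$, which amounts to $\Nn(q)\cap V(\grot)=\{c,b_0,b_1\}=\Nn(a_3)\cap V(\grot)$. The inclusion $\supseteq$ is automatic from the hypotheses. For the reverse inclusion I would invoke the triangle-free condition: $q$ shares the neighbour $c$ with each of $a_0,a_1,a_2,a_3,a_4$, it shares $b_0$ with $b_2, b_3$, and it shares $b_1$ with $b_3, b_4$. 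Hence no vertex of $\grot$ other than $c$, $b_0$, $b_1$ themselves can belong to $\Nn(q)$, giving the desired equality.

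No genuine obstacle arises in this argument: once Lemma~\ref{l:310} is invoked, what remains is bookkeeping of edges forbidden by triangle-freeness. The entire substance of the corollary is already contained in Lemma~\ref{l:310}; the present statement should be seen as the natural specialisation of that lemma to the case $u=c$, where the hypothesis $u\in\Nn(a_1)\cap\Nn(a_4)$ holds tautologically inside $\grot$.
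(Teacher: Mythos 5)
Your proposal is correct and takes essentially the same route as the paper, whose entire proof of this corollary is the single sentence ``Apply Lemma~\ref{l:310} to $u=c$ and $v=q$.'' Your additional bookkeeping --- verifying via triangle-freeness that $b_1q\in E(G)$ forces $\Nn(q)\cap V(\grot)=\{c,b_0,b_1\}$, and invoking the reflection automorphism of $\grot$ --- simply makes explicit what the paper treats as immediate.
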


\begin{proof}
	Apply Lemma~\ref{l:310} to $u=c$ and $v=q$. 
\end{proof}

The maximal independent sets of $\grot$ are the neighbourhoods of vertices 
and the five sets of the form $\{a_{i-1},a_i,a_{i+1},b_i\}$. It can happen 
that some of these sets have common neighbours in an ambient graph $G$ 
belonging to $\fD_4$. Given $i\in \ZZ/5\ZZ$ and $\grot\subseteq G$ we shall 
write $\Ext(\grot, a_i)$ for the set of common neighbours 
of $\{a_{i-1}, a_{i+1}, b_i\}$. More generally, if~$\Omega \subseteq G$ is 
isomorphic to $\grot$ and $a\in V(\Omega)$ has degree three in $\Omega$, 
then $\Ext(\Omega, a)$ is defined analogously. The beautiful lemma implies
$\Ext(\Omega, a)\subseteq \Nn(a)$. If $\Ext(\Omega, a) = \vn$, 
we say that~$a$ is {\it reliable (with respect to $\Omega$)}.

\begin{lemma}\label{l:312}
	If $\grot\subseteq G\in \fD_4$ and $q\in V(G)$ is adjacent to $a_1$, $a_4$, 
	then either~$q$ is an $\grot$-twin of $c$ or $q\in \Ext(\grot,a_0)$.
\end{lemma}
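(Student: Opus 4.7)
The plan is to argue by contradiction: assume $q$ is not an $\grot$-twin of $c$ and $qb_0\notin E(G)$, and I will derive a contradiction by producing a $\grot$-twin $a_3'$ of $a_3$ adjacent to $q$, then invoking the beautiful lemma in the modified copy $\grot-a_3+a_3'$ to force $qa_2\in E(G)$ in the original~$\grot$.

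First I would pin down $\Nn(q)\cap V(\grot)$ completely. The beautiful lemma (Lemma~\ref{l:beauti}) with $i=0$ gives $qa_0\in E(G)$, and triangle-freeness immediately forces $qc\notin E(G)$ (else the triangle $qca_1$) and $qb_i\notin E(G)$ for every $i\in\{1,2,3,4\}$ (every such $b_i$ is adjacent to some $a_j$ with $j\in\{0,1,4\}$). Because $q$ is not a twin of $c$, at least one of $qa_2, qa_3$ must be a non-edge, but applying the beautiful lemma with $i=2$ (from $q\sim a_1,a_3$) and with $i=3$ (from $q\sim a_2,a_4$) shows that $qa_2\in E(G)\Leftrightarrow qa_3\in E(G)$, so in fact both fail. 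Hence $\Nn(q)\cap V(\grot)=\{a_0,a_1,a_4\}$, and in particular $qa_2\notin E(G)$ will be the targeted contradiction.

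Next, by maximality of $G$ I pick a common neighbour $v$ of $q$ and $b_0$; triangle-freeness excludes $v\sim a_j$ for every $j$ and $v\sim b_2,b_3$. Applying Lemma~\ref{l:310} with $u=q$ splits the argument into three cases: (A) $vb_1\in E(G)$, (B) $vb_4\in E(G)$, or (C) $v$ is adjacent to $\grot$-twins $b_1'$ of $b_1$ and $b_4'$ of $b_4$. The automorphism of $\grot$ swapping $a_2\leftrightarrow a_3$ and $b_1\leftrightarrow b_4$ (while fixing $a_0,a_1,a_4,b_0,c$) makes (A) and (B) symmetric, and case (C) can be folded into case (A) by passing to the modified copy $\grot-b_1+b_1'$, inside which $v$ is now adjacent to the new $b_1$-vertex. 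So it suffices to complete case (A). There the known non-adjacencies give $\Nn_\grot(v)\subseteq\{c,b_0,b_1\}$ with $b_0,b_1\in\Nn(v)$, and only the edge $vc$ is in question. If $vc\in E(G)$ then $v$ itself is the desired $\grot$-twin of $a_3$; Corollary~\ref{cor:311} applied to $v$ confirms this, because the twin-of-$a_2$ branch is blocked by $vb_1\in E(G)$ and the branch where $v$ is adjacent to twins of $b_1,b_4$ would produce the triangle $vb_1b_4'$. If $vc\notin E(G)$, choose a common neighbour $c^*$ of $v$ and $c$: since $v\not\sim a_j$ for any $j$, $c^*$ must be a $\grot$-twin of some $a_i$, and the triangle constraints $c^*\not\sim b_0,b_1$ imposed by $c^*\sim v$ restrict $i\in\{0,1\}$. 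For $i=0$ the beautiful lemma inside $\grot-a_0+c^*$ forces $qc^*\in E(G)$, producing the forbidden triangle $qvc^*$. Once a twin $a_3'$ of $a_3$ adjacent to $q$ has been exhibited in any case, I set $\grot'=\grot-a_3+a_3'$ and invoke the beautiful lemma in $\grot'$ with $i=2$ to conclude $qa_2\in E(G)$, contradicting the first paragraph.

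The main obstacle will be the residual sub-case $i=1$ of (A) with $vc\notin E(G)$, where $c^*$ is a $\grot$-twin of $a_1$ rather than $a_0$: the single substitution $\grot-a_1+c^*$ does not close the argument through the beautiful lemma directly, and one must chase the configuration through a second modified Gr\"otzsch subgraph (or combine a further common-neighbour choice with Corollary~\ref{cor:311} and Lemma~\ref{l:39}) before the contradiction materialises.
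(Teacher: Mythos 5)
You begin exactly as the paper does: the beautiful lemma gives $qa_0\in E(G)$, you assume $q\notin\Ext(\grot,a_0)$, take a common neighbour $v$ of $q$ and $b_0$, and invoke Lemma~\ref{l:310} with $u=q$; collapsing the three outcomes to the single case $v\sim b_1$ via a reflection and, if necessary, a twin-replacement is also essentially what the paper does. The divergence comes afterwards. The paper at this point makes a direct \Dp{4} application to the graph $\grot-a_2+q+z$, locates the unique independent $5$-set, and feeds the resulting common neighbour $c'$ through two further uses of the beautiful lemma to conclude $a_3,a_2\in\Nn(q)$, so that $q$ \emph{is} a twin of $c$. You instead try to close a contradiction by chasing through common neighbours of $v$.

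There is a genuine gap in your argument at the step where, in the sub-case $vc\notin E(G)$, you pick a common neighbour $c^*$ of $v$ and $c$ and assert that ``$c^*$ must be a $\grot$-twin of some $a_i$.'' Nothing established up to this point forces that. What the triangle conditions give you is only that $\Nn(c^*)\cap V(\grot)\subseteq\{c,b_2,b_3,b_4\}$ (since $c^*\sim c$ kills the $a_j$'s, and $c^*\sim v\sim b_0,b_1$ kills $b_0,b_1$). For $c^*$ to be a twin of $a_0$ you would need $\Nn(c^*)\cap V(\grot)=\{c,b_2,b_3\}$ \emph{exactly}, and for a twin of $a_1$ you would need $\{c,b_3,b_4\}$; there is no lemma at your disposal (no attachment-type statement for $\grot$, no applicable corollary) that rules out, say, $\Nn(c^*)\cap V(\grot)=\{c\}$ or $\{c,b_3\}$, or even $\{c,b_2,b_4\}$. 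This needs a real argument — precisely the kind of argument the paper supplies with its \Dp{4} independent-set step. Moreover, even if this were granted, your last paragraph concedes that the residual $i=1$ branch is not closed; so there are in effect two unresolved points in the same sub-case. (Your handling of the sub-case $vc\in E(G)$ is fine: there $v$ is itself a twin of $a_3$, you do not even need Corollary~\ref{cor:311}, and the beautiful lemma in $\grot(v)$ with $q\sim a_1, v$ indeed yields $qa_2\in E(G)$, contradicting the first paragraph.)
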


\begin{proof}
	The beautiful lemma tells us $qa_0\in E(G)$. If $q\not\in \Ext(\grot, a_0)$, 
	then $qb_0\notin E(G)$ and we can pick a common neighbour $z$ of $q$, $b_0$ 
	(see Figure \ref{fig:lem311A}). 
	Plugging $u=q$, $v=z$ into Lemma~\ref{l:310} we learn that either $zb_1\in E(G)$, 
	or $zb_4\in E(G)$, or $z$ is adjacent to $\grot$-twins of both $b_1$, $b_4$. 
	By symmetry we may assume that $\Nn(z)$ contains some $\grot$-twin $b_4'$ of $b_4$. 
	As we only need to show $a_2, a_3\in \Nn(q)$, it is permissible to replace $\grot$
	by $\grot(b_4')$ and, hence, we can even assume $b_4z\in E(G)$ 
	(see Figure~\ref{fig:lem311B}). 
			
	\begin{figure}[h!]
	\centering
	\begin{subfigure}[b]{.24\textwidth}
	\centering
	\begin{tikzpicture}[scale=.9]			
		\coordinate (c) at (0,0);		
		\foreach \i in {1,...,5}{
			\coordinate (a\i) at (-18+\i*72:1);
			\coordinate (b\i) at (18+\i*72:1.5);	
			\coordinate (d\i) at (18+\i*72:2);
			\draw [black!30](a\i)--(c);
		}
		\coordinate (r) at (162:2.2);
		\coordinate (u) at (.4, -.6);
		\coordinate (v) at (.5,.2);
		\draw[black] (a4)--(u)--(a3)--(u)--(a5)--(u)--(v)--(b1);
		\draw [black!30] (a1)--(b1)--(a2)--(b2)--(a3)--(b3)--(a4)--(b4)--(a5)--(b5)--(a1);
		\draw (b5) [black!30, out=90, in=0] to (d1) [out=180, in=90] to (b2);		
		\draw (b2) [black!30, out=234, in=144] to (d3) [out=-36, in=234] to (b4);		
		\draw (b3) [black!30, out=-54, in=216] to (d4) [out=36, in=-54] to (b5);
		\draw (b1) [black!30, out=162, in=72] to (d2) [out=-108, in=162] to (b3);
		\draw (b4) [black!30, out=18, in=-72] to (d5) [out=108, in=18] to (b1);
		\draw [thick, cyan] (a3)--(u)--(a5);
		\draw [thick, green] (u)--(a4);
		\node [black!20] at ($(c)+(-.3,.1)$) {\tiny $c$};
		\node [black!20] at ($(a1)+(.1,.2)$) {\tiny $a_2$};
		\node [black!20] at ($(a2)+(-.2,.2)$) {\tiny $a_3$};
		\node at ($(a3)+(-.15,-.15)$) {\tiny $a_4$};
		\node at ($(a4)+(.1,-.2)$) {\tiny $a_0$};
		\node at ($(a5)+(.25,-.1)$) {\tiny $a_1$};
		\node at ($(b1)+(.03,.2)$) {\tiny $b_0$};
		\node [black!20] at ($(b2)+(-.23,.1)$) {\tiny $b_1$};
		\node [black!20] at ($(b3)+(-.1,-.2)$) {\tiny $b_2$};
		\node [black!20] at ($(b4)+(.2,-.2)$) {\tiny $b_3$};
		\node [black!20] at ($(b5)+(.2,.15)$) {\tiny $b_4$};
		\node at ($(u) + (.1,-.15)$) {\tiny $q$};
		\node at ($(v) + (-.15,-.05)$) {\tiny $z$};
		\fill (c) circle (2pt);
		\foreach \i in {1,...,5} {
			\fill (a\i) circle (2pt);		
			\fill (b\i) circle (2pt);
		}
		\fill (u) circle (2pt);
		\fill (v) circle (2pt);
		\fill [cyan] (u) circle (1.7pt);
		\fill [cyan] (v) circle (1.7pt);
		\foreach \i in {a2,c, a1, b5, b2, b3, b4} \fill [black!20] (\i) circle (2pt);		
	\end{tikzpicture}
	\vskip -.2cm
	\caption{}	
	\label{fig:lem311A}
	\end{subfigure}
	\hfill
	\begin{subfigure}[b]{.24\textwidth}
	\centering
	\begin{tikzpicture}[scale=.9]						
		\draw[black] (a4)--(u)--(a3)--(u)--(a5)--(u)--(v)--(b1)--(v)--(b5);
		\draw [black!20] (a1)--(b1)--(a2)--(b2)--(a3)--(b3)--(a4)--(b4)--(a5)--(b5)--(a1);
		\draw [black!20] (a1)--(c)--(a2)--(c)--(a3)--(c)--(a4)--(c)--(a5);
		\draw (b5) [black!20, out=90, in=0] to (d1) [out=180, in=90] to (b2);		
		\draw (b2) [black!20, out=234, in=144] to (d3) [out=-36, in=234] to (b4);		
		\draw (b3) [black!20, out=-54, in=216] to (d4) [out=36, in=-54] to (b5);
		\draw (b1) [black!20, out=162, in=72] to (d2) [out=-108, in=162] to (b3);
		\draw (b4) [black!20,  out=18, in=-72] to (d5) [out=108, in=18] to (b1);
		\draw [green, thick] (v)--(b5);
		\node [black!20] at ($(c)+(-.3,.1)$) {\tiny $c$};
		\node [black!20] at ($(a1)+(.1,.2)$) {\tiny $a_2$};
		\node [black!20] at ($(a2)+(-.2,.2)$) {\tiny $a_3$};
		\node at ($(a3)+(-.15,-.15)$) {\tiny $a_4$};
		\node at ($(a4)+(.1,-.2)$) {\tiny $a_0$};
		\node at ($(a5)+(.25,-.1)$) {\tiny $a_1$};
		\node at ($(b1)+(.03,.2)$) {\tiny $b_0$};
		\node [black!20] at ($(b2)+(-.23,.1)$) {\tiny $b_1$};
		\node [black!20] at ($(b3)+(-.1,-.2)$) {\tiny $b_2$};
		\node [black!20] at ($(b4)+(.2,-.2)$) {\tiny $b_3$};
		\node at ($(b5)+(.2,.15)$) {\tiny $b_4$};
		\node at ($(u) + (.1,-.15)$) {\tiny $q$};
		\node at ($(v) + (-.15,-.05)$) {\tiny $z$};
		\fill (c) circle (2pt);
		\foreach \i in {1,...,5} {
			\fill (a\i) circle (2pt);		
			\fill (b\i) circle (2pt);
		}
		\fill (u) circle (2pt);
		\fill (v) circle (2pt);
		\fill [cyan] (u) circle (1.7pt);
		\fill [cyan] (v) circle (1.7pt);
		\foreach \i in {a1,a2,c, b2, b3, b4} \fill [black!20] (\i) circle (2pt);		
	\end{tikzpicture}
	\vskip -.2cm
	\caption{}	
	\label{fig:lem311B}
	\end{subfigure}
	\hfill
	\begin{subfigure}[b]{.24\textwidth}
	\centering
	\begin{tikzpicture}[scale=.9]						
		\draw (a4)--(u)--(a3)--(u)--(a5)--(u)--(v)--(b1)--(v)--(b5);
		\draw (b1)--(a2)--(b2)--(a3)--(b3)--(a4)--(b4)--(a5)--(b5);
		\draw (c)--(a2)--(c)--(a3)--(c)--(a4)--(c)--(a5);
		\draw [black!20] (c)--(a1)--(b1)--(a1)--(b5);
		\draw (b5) [ blue, out=90, in=0] to (d1) [out=180, in=90] to (b2);		
		\draw (b2) [blue, out=234, in=144] to (d3) [out=-36, in=234] to (b4);		
		\draw (b3) [blue, out=-54, in=216] to (d4) [out=36, in=-54] to (b5);
		\draw (b1) [blue, out=162, in=72] to (d2) [out=-108, in=162] to (b3);
		\draw (b4) [blue, out=18, in=-72] to (d5) [out=108, in=18] to (b1);
		\draw [red](u)--(v);
		\node at ($(c)+(-.3,.1)$) {\tiny $c$};
		\node [black!20] at ($(a1)+(.1,.2)$) {\tiny $a_2$};
		\node at ($(a2)+(-.2,.2)$) {\tiny \gr{$a_3$}};
		\node at ($(a3)+(-.15,-.15)$) {\tiny \gr{$a_4$}};
		\node at ($(a4)+(.1,-.2)$) {\tiny \gr{$a_0$}};
		\node at ($(a5)+(.25,-.1)$) {\tiny \gr{$a_1$}};
		\node at ($(b1)+(.03,.2)$) {\tiny $b_0$};
		\node at ($(b2)+(-.23,.1)$) {\tiny $b_1$};
		\node at ($(b3)+(-.1,-.2)$) {\tiny $b_2$};
		\node at ($(b4)+(.2,-.2)$) {\tiny $b_3$};
		\node at ($(b5)+(.2,.15)$) {\tiny $b_4$};
		\node at ($(u) + (.1,-.15)$) {\tiny $q$};
		\node at ($(v) + (-.15,-.05)$) {\tiny \gr{$z$}};
		\fill (c) circle (2pt);
		\foreach \i in {1,...,5} {
			\fill (a\i) circle (2pt);		
			\fill (b\i) circle (2pt);
		}
		\fill (u) circle (2pt);
		\fill (v) circle (2pt);
		\fill [black!20] (a1) circle (2pt);		
		\foreach \i in {v, a4, a5, a3, a2} \fill [green] (\i) circle (1.7pt);
	\end{tikzpicture}
	\vskip -.2cm
	\caption{}	
	\label{fig:lem311C}
	\end{subfigure}
	\hfill		
	\begin{subfigure}[b]{.24\textwidth}
	\centering
	\begin{tikzpicture}[scale=.9]			
		\coordinate (v) at (a1);
		\draw (a1)--(b1)--(a2)--(b2)--(a3)--(b3)--(a4)--(b4)--(a5)--(b5)--(a1);
		\draw (a1)--(c)--(a2)--(c)--(a3)--(c)--(a4)--(c)--(a5);
		\draw (a4)--(u)--(a3)--(u)--(a5)--(u)--(v)--(b1)--(v)--(b5);
		\draw (b5) [out=90, in=0] to (d1) [out=180, in=90] to (b2);		
		\draw (b2) [out=234, in=144] to (d3) [out=-36, in=234] to (b4);		
		\draw (b3) [out=-54, in=216] to (d4) [out=36, in=-54] to (b5);
		\draw (b1) [out=162, in=72] to (d2) [out=-108, in=162] to (b3);
		\draw (b4) [out=18, in=-72] to (d5) [out=108, in=18] to (b1);
		\node at ($(c)+(-.3,.1)$) {\tiny $c'$};
		\node at ($(a1)+(.1,.2)$) {\tiny $z$};
		\node at ($(a2)+(-.2,.2)$) {\tiny $a_3$};
		\node at ($(a3)+(-.15,-.15)$) {\tiny $a_4$};
		\node at ($(a4)+(.1,-.2)$) {\tiny $a_0$};
		\node at ($(a5)+(.25,-.1)$) {\tiny $a_1$};
		\node at ($(b1)+(.03,.2)$) {\tiny $b_0$};
		\node at ($(b2)+(-.23,.1)$) {\tiny $b_1$};
		\node at ($(b3)+(-.1,-.2)$) {\tiny $b_2$};
		\node at ($(b4)+(.2,-.2)$) {\tiny $b_3$};
		\node at ($(b5)+(.2,.15)$) {\tiny $b_4$};
		\node at ($(u) + (.1,-.15)$) {\tiny $q$};
		\fill (c) circle (2pt);
		\foreach \i in {1,...,5}{
			\fill (a\i) circle (2pt);		
			\fill (b\i) circle (2pt);
		}
		\fill (u) circle (2pt);
		\fill (v) circle (2pt);
		\draw [fill=cyan] (c) circle (2pt);
		\foreach \i in {v, a4, a5, a3, a2} \fill [green] (\i) circle (1.7pt);
	\end{tikzpicture}
	\vskip -.2cm
	\caption{}	
	\label{fig:lem311D}
	\end{subfigure}
	\vskip -.2cm
	\caption{The proof of Lemma \ref{l:312}.}
	\label{fig:lem311}
	\end{figure}
 		
	Due to \Dp{4} the graph $\grot-a_2+q+z$ depicted in Figure~\ref{fig:lem311C}
	has an independent set~$T$ of size five possessing a common neighbour $c'$. 
	Since $T$ contains at most two vertices of the pentagon $B=\bl{b_0b_2b_4b_1b_3}$
	and at most one vertex from each of the edges $qz$, $a_3c$, at least one 
	of $a_0$, $a_1$, $a_4$ needs to be in $T$. Thus $c$ and $q$ are not 
	in~$T$ or, in other words,~$T$ is a subset of the 
	ten-cycle $b_0a_3b_1a_4b_2a_0b_3a_1b_4z$,
	whence $T=\{\gr{z},\gr{a_0}, \gr{a_1}, \gr{a_3}, \gr{a_4}\}$. 
		
	The beautiful lemma applied to $a_4, z\in \Nn(q)$ and the graph $\grot-c-a_2+c'+z$ 
	drawn in Figure \ref{fig:lem311D} shows $a_3\in \Nn(q)$. A final application of 
	the beautiful lemma to $a_1, a_3\in \Nn(q)$ and~$\grot$ gives $a_2\in \Nn(q)$, 
	wherefore $q$ is indeed an $\grot$-twin of~$c$. 
\end{proof}

\begin{lemma}\label{l:313}
	If $\grot\subseteq G\in \fD_4$ and $q\in V(G)$ is adjacent to $a_0$, $b_1$, 
	but not to $a_1$, then it is adjacent to $b_0$ and to some vertex 
	in $\Ext(\grot, a_2)$. 
\end{lemma}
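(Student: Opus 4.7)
The plan is to exploit Lemma~\ref{l:35} applied to a carefully chosen copy of~$N$ inside~$G$, and then invoke Lemma~\ref{l:31} to eliminate an unwanted alternative. Consider the identification
\[
	(a_0^N, b_0^N, c_0^N) = (a_0, b_1, q)\,, \quad
	(a_1^N, b_1^N, c_1^N) = (a_3, b_2, b_0)\,, \quad
	(a_2^N, b_2^N, c_2^N) = (b_4, c, a_1)\,.
\]
All twelve $N$-edges are present in~$G$: the two involving~$q$ hold by hypothesis, and the remaining ten belong to~$\grot$.

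For the second part of the conclusion, I would apply Lemma~\ref{l:35} with $i=1$. Since $c_0^N c_2^N = qa_1 \notin E(G)$ by hypothesis, there must exist a common neighbour~$w$ of $\{a_1^N, b_1^N, c_0^N, c_2^N\} = \{a_3, b_2, q, a_1\}$. As~$w$ is adjacent to each of~$a_1$,~$a_3$, and~$b_2$, we have $w \in \Ext(\grot, a_2)$, and $qw \in E(G)$ supplies the required neighbour.

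For the first part, I would apply Lemma~\ref{l:35} with $i=2$: either $c_1^N c_0^N = qb_0 \in E(G)$ (which is what we want), or there is a common neighbour~$w'$ of $\{a_2^N, b_2^N, c_1^N, c_0^N\} = \{b_4, c, b_0, q\}$. In the latter case triangle-freeness forces $\Nn(w') \cap V(\grot) = \{b_0, b_4, c\}$, since adjacency to~$c$ excludes every~$a_i$, adjacency to~$b_0$ excludes~$b_2, b_3$, and adjacency to~$b_4$ excludes~$b_1$. Hence~$w'$ is an $\grot$-twin of~$a_2$, and the cube
\begin{center}
	\begin{tabular}{cc}
		$(Q)$&
		\begin{tabular}{c|c|c|c}
			$q$ & $c$ & $b_3$ & $b_4$\\ \hline
			$a_1$ & $b_1$ & $w'$ & $a_0$
		\end{tabular}
	\end{tabular}
\end{center}
is induced in~$G$: the twelve cross-edges all lie in~$E(G)$ (two from the hypothesis, three by construction of~$w'$, and seven from~$\grot$), the four diagonals $qa_1$, $cb_1$, $b_3 w'$, $b_4 a_0$ are non-edges, and both rows are independent by a short triangle-freeness check (each pair in the top row shares one of $a_0$, $b_1$ as a common neighbour, while independence of the bottom uses the restricted $\grot$-neighbourhood of~$w'$). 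This contradicts Lemma~\ref{l:31}, so $qb_0 \in E(G)$.

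The main subtlety is the identification of~$w'$ as an $\grot$-twin of~$a_2$: this is precisely what makes the cube both well-formed and induced, and thus allows the cube lemma to close the argument.
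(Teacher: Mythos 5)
Your proof is correct, and it follows a genuinely different route from the paper's. The paper takes a common neighbour $z$ of $q$ and $a_1$ as an auxiliary vertex to build the required copy of $N$, applies Lemma~\ref{l:35} once (giving a vertex $t\in\Ext(\grot,a_2)\cap\Nn(q)$ directly), and then obtains $b_0\in\Nn(q)$ by applying the beautiful lemma to $\grot-a_2+t$. You instead embed $N$ so that $q$ itself is a degree-two vertex (namely $c_0^N$), which cleverly avoids any auxiliary vertex. Then by applying Lemma~\ref{l:35} twice — once with $i=1$ for the $\Ext(\grot,a_2)$ part and once with $i=2$ for the $b_0$ part — and ruling out the unwanted alternative in the second case with an induced-cube argument, you reach the same conclusion via the cube lemma rather than the beautiful lemma. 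I checked your identification of the $N$-edges (all twelve lie in $E(G)$), the deduction that $\Nn(w')\cap V(\grot)=\{b_0,b_4,c\}$ making $w'$ an $\grot$-twin of $a_2$, and the verification that the displayed cube is induced (using triangle-freeness through $a_0$ and $b_1$ for the independence of the top row, and the restricted neighbourhood of $w'$ for the bottom); it all goes through. The trade-off: the paper's version is shorter once the beautiful lemma is in hand, whereas yours is self-contained at the cost of a more delicate induced-cube check and a second invocation of Lemma~\ref{l:35}.
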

					
\begin{figure}[ht]
\centering
\begin{subfigure}[b]{.32\textwidth}
\centering
\begin{tikzpicture}[scale=.8]
	\coordinate (c) at (0,0);
	\foreach \i in {1,...,5} {
		\coordinate (a\i) at (-18+\i*72:1);
		\coordinate (b\i) at (18+\i*72:1.5);	
		\coordinate (d\i) at (18+\i*72:2);
		\draw [black!30](a\i)--(c);
	}
	\coordinate (q) at (-.3, -.4);
	\coordinate (z) at (.5,-.7);
	\draw [black!30] (a1)--(b1)--(a2)--(b2)--(a3)--(b3)--(a4)--(b4)--(a5)--(b5)--(a1);
	\draw (a4)--(q)--(b2);
	\draw (q)--(z)--(a5);
	\draw [thick, blue] (b2)--(q)--(a4)--(c)--(a5)--(b5);
	\draw [thick, red] (a4)--(b3);
	\draw [thick, red] (q)--(z)--(a5);
	\draw [thick, red] (c)--(a2)--(b2);
	\draw (b2) [black!30, out=234, in=144] to (d3) [out=-36, in=234] to (b4);		
	\draw (b1) [black!30, out=162, in=72] to (d2) [out=-108, in=162] to (b3);
	\draw (b4) [black!30,  out=18, in=-72] to (d5) [out=108, in=18] to (b1);
	\draw (b5) [thick, blue, out=90, in=0] to (d1) [out=180, in=90] to (b2);		
	\draw (b3) [thick, red, out=-54, in=216] to (d4) [out=36, in=-54] to (b5);
	\node at ($(c)+(-.3,.1)$) {\tiny $c$};
	\node [black!20] at ($(a1)+(.1,.2)$) {\tiny $a_2$};
	\node at ($(a2)+(-.2,.2)$) {\tiny $a_3$};
	\node [black!20]  at ($(a3)+(-.2,-.15)$) {\tiny $a_4$};
	\node at ($(a4)+(.1,-.3)$) {\tiny $a_0$};
	\node at ($(a5)+(.25,-.1)$) {\tiny $a_1$};
	\node [black!20]at ($(b1)+(-.06,.27)$) {\tiny $b_0$};
	\node at ($(b2)+(-.23,.1)$) {\tiny $b_1$};
	\node at ($(b3)+(-.15,-.2)$) {\tiny $b_2$};
	\node [black!20] at ($(b4)+(.2,-.2)$) {\tiny $b_3$};
	\node at ($(b5)+(.2,.2)$) {\tiny $b_4$};
	\node at ($(q) + (-.15,-.15)$) {\tiny $q$};
	\node at ($(z) + (0,-.2)$) {\tiny $z$};
	\fill (c) circle (2pt);
	\foreach \i in {1,...,5} {
		\fill (a\i) circle (2pt);		
		\fill (b\i) circle (2pt);
	}
	\fill (q) circle (2pt);
	\fill (z) circle (2pt);
	\fill [cyan] (q) circle (1.7pt);
	\fill [cyan] (z) circle (1.7pt);
	\foreach \i in {b1,b4,a1,a3} \fill [black!20] (\i) circle (2pt);
\end{tikzpicture}
\caption{}
\label{fig:lem312A}
\end{subfigure}
\hfill
\begin{subfigure}[b]{.32\textwidth}
\centering
\begin{tikzpicture}[scale=.8]			
	\coordinate (c2) at (0,-.1);
	\coordinate (b2) at (-1.5,-.1);
	\coordinate (a2) at (1.5,-.1);
	\coordinate (b0) at (-.45,3);
	\coordinate(a1) at (.45,3);
	\coordinate (a0) at (-1.9,.6);
	\coordinate (b1) at (1.9,.6);
	\coordinate (c0) at ($(b0)!.5!(a0)$);
	\coordinate (c1) at ($(b1)!.5!(a1)$);
	\coordinate (q) at (0, 1.1);
	\draw [thick, blue] (a0)--(b2)--(a1)--(b0)--(a2)--(b1)--(a0);
	\draw [thick, red] (b0) --(a0);
	\draw [thick, red] (b1) --(a1);
	\draw [thick, red] (b2) --(a2);
	\draw [thick, dashed, orange] (c0)--(c1);
	\draw [green!80!black] (c0)--(q)--(c1)--(q)--(b2)--(q)--(a2);
	\foreach \i in {0,1,2}{
		\fill (a\i) circle (2pt);
		\fill (b\i) circle (2pt);
		\fill (c\i) circle (2pt);
	}
	\foreach \i in {b2, c2, q} \draw [fill=cyan] (\i) circle (2pt);
	\def\z{\tiny}
	\node at ($(b0)+(-.3,.1)$) {\z $b_4$};
	\node at ($(c0)+(-.3,.1)$) {\z $b_2$};
	\node at ($(a0)+(-.3,.1)$) {\z $a_0$};
	\node at ($(b1)+(.2,.1)$) {\z $c$};
	\node at ($(c1)+(.23,.1)$) {\z $a_3$};
	\node at ($(a1)+(.3,.1)$) {\z $b_1$};
	\node at ($(b2)+(-.1,-.25)$) {\z $q$};
	\node at ($(c2)+(0,-.25)$) {\z $z$};
	\node at ($(a2)+(.1,-.2)$) {\z $a_1$};
	\node at ($(q)+(0,-.23)$) {\tiny $t$};
\end{tikzpicture}
\caption{}
\label{fig:lem312B}
\end{subfigure}
\hfill
\begin{subfigure}[b]{.32\textwidth}
\centering
\begin{tikzpicture}[scale=.8]			
	\coordinate (c) at (0,0);
	\foreach \i in {1,...,5} {
		\coordinate (a\i) at (-18+\i*72:1);
		\coordinate (b\i) at (18+\i*72:1.5);	
		\coordinate (d\i) at (18+\i*72:2);
		\draw (a\i)--(c);
	}
	\coordinate (q) at (.3, -.5);
	\draw (a1)--(b1)--(a2)--(b2)--(a3)--(b3)--(a4)--(b4)--(a5)--(b5)--(a1);
	\draw (b2) [out=234, in=144] to (d3) [out=-36, in=234] to (b4);		
	\draw (b1) [red, thick,out=162, in=72] to (d2) [out=-108, in=162] to (b3);
	\draw (b4) [out=18, in=-72] to (d5) [out=108, in=18] to (b1);
	\draw (b5) [blue, thick,  out=90, in=0] to (d1) [out=180, in=90] to (b2);		
	\draw (b3) [red, thick, out=-54, in=216] to (d4) [out=36, in=-54] to (b5);
	\draw [blue, thick] (b1)--(a2)--(b2);
	\draw [blue, thick] (b5)--(a5);
 	\draw [red, thick] (a5)--(c)--(a2);
 	\draw [green!80!black] (b2)--(a3)--(b3)--(a3)--(c);
	\draw [thick, cyan] (a3)--(q)--(a5);
	\draw [thick, green] (q)--(a4);
	\fill (c) circle (2pt);
	\foreach \i in {1,...,5} {
		\fill (a\i) circle (2pt);		
		\fill (b\i) circle (2pt);
	}
	\foreach \i in {a3,q} \draw [fill=cyan] (\i) circle (2pt);
	\node at ($(c)+(-.3,.1)$) {\tiny $b_2$};
	\node at ($(a1)+(.1,.2)$) {\tiny $a_4$};
	\node at ($(a2)+(-.2,.2)$) {\tiny $b_4$};
	\node at ($(a3)+(-.15,-.15)$) {\tiny $t$};
	\node at ($(a4)+(.1,-.3)$) {\tiny $b_0$};
	\node at ($(a5)+(.25,-.1)$) {\tiny $a_0$};
	\node at ($(b1)+(-.06,.27)$) {\tiny $b_1$};
	\node  at ($(b2)+(-.23,.1)$) {\tiny $a_1$};
	\node at ($(b3)+(-.15,-.2)$) {\tiny $a_3$};
	\node  at ($(b4)+(.2,-.2)$) {\tiny $b_3$};
	\node at ($(b5)+(.2,.1)$) {\tiny $c$};
	\node at ($(q) + (.15,-.15)$) {\tiny $q$};
\end{tikzpicture}
\vskip -.2cm
\caption{}
\label{fig:lem312C}
\end{subfigure}
\caption{The proof of Lemma~\ref{l:313}.}
\label{fig:lem312}
\end{figure}

\begin{proof}
	Let $z$ be a common neighbour of $q$, $a_1$ (see Figure~\ref{fig:lem312A}). 
	In view of $b_2a_3\notin E(G)$,  Lemma~\ref{l:35} tells us that 
	there is a common neighbour~$t$ of $q$, $a_1$, $b_2$, and $a_3$ 
	(see Figure~\ref{fig:lem312B}). 
	Clearly, $t$ is in $\Ext(\grot, a_2)$. Due to $a_0, t\in \Nn(q)$ the beautiful 
	lemma applied to $\grot -a_2 + t$  yields $b_0\in \Nn(q)$ 
	(see Figure~\ref{fig:lem312C}).
\end{proof}
 
\begin{lemma}\label{l:314}
	If $\grot\subseteq G\in \fD_4$ and $a_0, b_1\in \Nn(q)$, then either 
	$q\in \Ext(\grot, a_1)$, or $b_0q\in E(G)$, or $q$ is adjacent to $\grot$-twins 
	of $a_2$, $b_0$. 
\end{lemma}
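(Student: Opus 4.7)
The plan is to handle the easy case with Lemma~\ref{l:313} and then to construct the twins explicitly. If $qa_1\notin E(G)$, then Lemma~\ref{l:313} directly gives $qb_0\in E(G)$, establishing the second alternative. Otherwise I assume $qa_1\in E(G)$ and $qa_2,qb_0\notin E(G)$, so it suffices to exhibit $\grot$-twins of $a_2$ and $b_0$ in $\Nn(q)$. Triangle-freeness then pins down $\Nn(q)\cap V(\grot)=\{a_0,a_1,b_1\}$ exactly, because any other $\grot$-vertex adjacent to $q$ would close a triangle with one of these three.

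To produce the $\grot$-twin of $a_2$, I would take $t$ to be a common neighbour of $q$ and $b_0$ (available by maximality) and construct an $N$-subgraph of $G$ via the assignment
\[
	(a_0,b_0,a_1,b_1,a_2,b_2;c_0,c_1,c_2)\longmapsto(b_2,b_1,a_3,a_0,q,b_0;b_4,c,t)\,.
\]
A routine check confirms that all twelve $N$-edges are present in $G$. Applying Lemma~\ref{l:35} at the index $i=2$, and using the non-edge $cb_4$ (whose would-be triangle $c\,a_1\,b_4$ is excluded by triangle-freeness), one obtains a common neighbour $X$ of $\{q,b_0,c,b_4\}$. Triangle-freeness then confines $\Nn(X)\cap V(\grot)$ to $\{c,b_0,b_4\}=\Nn(a_2)\cap V(\grot)$, so $X$ is the desired $\grot$-twin of $a_2$ adjacent to $q$.

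For the $\grot$-twin of $b_0$, I would mirror the construction with $s$ a common neighbour of $q,a_2$ playing the role of $t$. The dual $N$-embedding
\[
	(a_0,b_0,a_1,b_1,a_2,b_2;c_0,c_1,c_2)\longmapsto(c,b_1,b_4,a_0,q,a_2;a_3,b_2,s)\,,
\]
together with Lemma~\ref{l:35} at $i=2$ (using the non-edge $a_3b_2$), delivers a common neighbour $Y$ of $\{q,a_2,a_3,b_2\}$. Triangle-freeness again locates $\Nn(Y)\cap V(\grot)$ inside $\{a_2,a_3,b_2,b_3\}=\Nn(b_0)\cap V(\grot)$, and only the adjacency $Yb_3\in E(G)$ is missing. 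I would force it with the beautiful lemma (Lemma~\ref{l:beauti}) applied to a suitable substitute $\grot$-copy such as $\grot(X)$, obtained by replacing $a_2$ with the already constructed twin~$X$; this substitution exposes two non-consecutive $a$-neighbours of $Y$ to which the beautiful lemma can be applied.

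The main obstacle is exactly this last step. The twin of $a_2$ has only three $\grot$-neighbours and is therefore delivered in one shot by Lemma~\ref{l:35} combined with adjacency to $q$; the twin of $b_0$ has four $\grot$-neighbours, placing it just beyond the reach of a single application of Lemma~\ref{l:35}. Closing the gap demands the beautiful lemma on a carefully chosen substitute $\grot$-copy, and it is the twin $X$ produced in the previous step that makes such a copy available.
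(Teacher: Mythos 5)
Your proposal follows the paper's overall plan up to the last move, so let me focus on where it diverges. You split on whether $a_1\in\Nn(q)$ and dispatch the negative case with Lemma~\ref{l:313}; the paper instead splits on $a_2\in\Nn(q)$ versus not (which is the more direct route to alternative one) and handles the remainder uniformly, without needing to know whether $a_1\in\Nn(q)$. That difference is harmless. Your $N$-subgraphs are also different from the paper's: you bring in auxiliary common neighbours $t$ and $s$, whereas the paper's single $N$-copy lives entirely inside $V(\grot)\cup\{q\}$ and is used twice with two different indices $i$ of Lemma~\ref{l:35}. Again, this is just extra machinery, not a flaw; your two applications of Lemma~\ref{l:35} do produce the vertices $X$ and $Y$ with the stated adjacencies, and the triangle-freeness bookkeeping for $X$ is correct.

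The genuine gap is the final step, $Yb_3\in E(G)$, and the route you sketch does not close it. You propose the beautiful lemma applied to $\grot(X)$, the copy of $\grot$ obtained by swapping $a_2$ for its twin $X$. But $X$ and $Y$ are both adjacent to $q$, so $XY\notin E(G)$; hence passing from $\grot$ to $\grot(X)$ removes the $a$-neighbour $a_2$ from $Y$'s neighbourhood and adds nothing. Inside $\grot(X)$ the vertex $Y$ is adjacent only to $a_3$ (a degree-three vertex) and $b_2$ (a degree-four vertex), so the beautiful lemma --- which needs adjacency to \emph{two} of the degree-three vertices $a_{i-1},a_{i+1}$ --- has nothing to bite on. There is also no evident relabelling of a $\grot$-copy that makes $b_3$ a degree-three vertex lying ``between'' two of $Y$'s known neighbours; $b_3$ has degree four in $\grot$, and a copy where it plays a degree-three role would have to re-cast $b_0$ as the degree-five hub, which needs a fifth neighbour outside $V(\grot)$ with properties you have not established. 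The paper finishes instead with the cube lemma: in the cube with rows $(q,b_3,b_2,a_3)$ and $(b_0,Y,b_1,a_0)$ all twelve cross-edges lie in $G$, while the diagonals $qb_0$, $b_2b_1$, $a_3a_0$ are non-edges (the first by your standing assumption, the latter two by triangle-freeness via $b_4$ and $c$), so the remaining diagonal $Yb_3$ must be present. Note that this argument does not even use $X$, nor does it need $a_1\in\Nn(q)$: once $b_3\in\Nn(Y)$ is known, the non-edge $Ya_1$ follows from the $\grot$-edge $a_1b_3$.
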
		
			
\begin{figure}[h!]
\vskip -.3cm
\centering
\begin{subfigure}[b]{.24\textwidth}
\centering
\begin{tikzpicture}[scale=.8]
	\coordinate (c) at (0,0);
	\foreach \i in {1,...,5}{
		\coordinate (a\i) at (-18+\i*72:1);
		\coordinate (b\i) at (18+\i*72:1.5);	
		\coordinate (d\i) at (18+\i*72:2);
		\draw [black!30](a\i)--(c);
	}
	\coordinate (q) at (-.3, -.4);
	\coordinate (z) at (.5,-.7);
	\draw [black!30] (a1)--(b1)--(a2)--(b2)--(a3)--(b3)--(a4)--(b4)--(a5)--(b5)--(a1);
	\draw [red, thick] (a4)--(q)--(b2);
	\draw [thick, red] (c)--(a1)--(b5);
	\draw [thick, red] (a2)--(b1);
	\draw [thick, blue] (b2)--(a2)-- (c)--(a4)--(b3);
	\draw (b2) [black!30, out=234, in=144] to (d3) [out=-36, in=234] to (b4);		
	\draw (b4) [black!30,  out=18, in=-72] to (d5) [out=108, in=18] to (b1);
	\draw (b5) [blue, thick, out=90, in=0] to (d1) [out=180, in=90] to (b2);		
	\draw (b3) [thick, blue, out=-54, in=216] to (d4) [out=36, in=-54] to (b5);	
	\draw (b1) [red, thick, out=162, in=72] to (d2) [out=-108, in=162] to (b3);
	\node at ($(c)+(-.3,.1)$) {\tiny $c$};
	\node at ($(a1)+(.05,.25)$) {\tiny $a_2$};
	\node at ($(a2)+(-.2,.2)$) {\tiny $a_3$};
	\node [black!20]  at ($(a3)+(-.2,-.15)$) {\tiny $a_4$};
	\node at ($(a4)+(.1,-.25)$) {\tiny $a_0$};
	\node [black!20]  at ($(a5)+(.25,-.1)$) {\tiny $a_1$};
	\node at ($(b1)+(.03,.2)$) {\tiny $b_0$};
	\node at ($(b2)+(-.23,.1)$) {\tiny $b_1$};
	\node at ($(b3)+(-.15,-.2)$) {\tiny $b_2$};
	\node [black!20] at ($(b4)+(.2,-.2)$) {\tiny $b_3$};
	\node at ($(b5)+(.2,.2)$) {\tiny $b_4$};
	\node at ($(q) + (-.15,-.15)$) {\tiny $q$};
	\fill (c) circle (2pt);
	\foreach \i in {1,...,5}{
		\fill (a\i) circle (2pt);		
		\fill (b\i) circle (2pt);
	}
	\fill (q) circle (2pt);
	\fill [cyan] (q) circle (1.7pt);
	\foreach \i in {b4,a5,a3} \fill [black!20] (\i) circle (2pt);
\end{tikzpicture}
\vskip -.2cm
\caption{}	
\label{fig:lem313A}
\end{subfigure}
\hfill
\begin{subfigure}[b]{.24\textwidth}
\centering
\begin{tikzpicture}[scale=.75]			
	\coordinate (c2) at (0,-.1);
	\coordinate (b2) at (-1.5,-.1);
	\coordinate (a2) at (1.5,-.1);
	\coordinate (b0) at (-.45,3);
	\coordinate(a1) at (.45,3);
	\coordinate (a0) at (-1.9,.6);
	\coordinate (b1) at (1.9,.6);
	\coordinate (c0) at ($(b0)!.5!(a0)$);
	\coordinate (c1) at ($(b1)!.5!(a1)$);
	\coordinate (q) at (0, 1.1);
	\draw [thick, blue] (a0)--(b2)--(a1)--(b0)--(a2)--(b1)--(a0);
	\draw [thick, red] (b0) --(a0);
	\draw [thick, red] (b1) --(a1);
	\draw [thick, red] (b2) --(a2);
	\draw [thick, dashed, orange] (c0)--(c2);
	\draw [thick,green!80!black,] (c0)--(q)--(c2)--(q)--(b1)--(q)--(a1);
	\fill (q) circle (2pt);
	\fill [cyan] (q) circle (1.7pt);
	\foreach \i in {0,1,2}{
		\fill (a\i) circle (2pt);
		\fill (b\i) circle (2pt);
		\fill (c\i) circle (2pt);
	}
	\draw [fill=cyan](c2) circle (2pt); 
	\node at ($(b0)+(-.3,.1)$) {\tiny $b_4$};
	\node at ($(c0)+(-.3,.1)$) {\tiny $a_2$};
	\node at ($(a0)+(-.2,.1)$) {\tiny $c$};
	\node at ($(b1)+(.3,.1)$) {\tiny $a_3$};
	\node at ($(c1)+(.3,.1)$) {\tiny $b_0$};
	\node at ($(a1)+(.3,.1)$) {\tiny $b_2$};
	\node at ($(b2)+(-.1,-.3)$) {\tiny $a_0$};
	\node at ($(c2)+(0,-.3)$) {\tiny $q$};
	\node at ($(a2)+(.1,-.3)$) {\tiny $b_1$};
	\node at ($(q)+(-.25,-.15)$) {\tiny $b_0'$};
\end{tikzpicture}
\vskip -.2cm
\caption{}	
\label{fig:lem313B}
\end{subfigure}
\hfill
\begin{subfigure}[b]{.24\textwidth}
\centering
\begin{tikzpicture}[scale=.75]			
	\coordinate (c2) at (0,-.1);
	\coordinate (b2) at (-1.5,-.1);
	\coordinate (a2) at (1.5,-.1);
	\coordinate (b0) at (-.45,3);
	\coordinate (a1) at (.45,3);
	\coordinate (a0) at (-1.9,.6);
	\coordinate (b1) at (1.9,.6);
	\coordinate (c0) at ($(b0)!.5!(a0)$);
	\coordinate (c1) at ($(b1)!.5!(a1)$);
	\coordinate (q) at (0, 1.1);
	\draw (a0)--(b2)--(a1)--(b0)--(a2)--(b1)--(a0);
	\draw (b0) --(a0);
	\draw (b1) --(a1);
	\draw (b2) --(a2);
	\draw [green!80!black, thick] (c1)--(q)--(c2)--(q)--(b0)--(q)--(a0);
	\draw [thick, dashed, orange] (c2)--(c1);
	\foreach \i in {0,1,2}{
		\fill (a\i) circle (2pt);
		\fill (b\i) circle (2pt);
		\fill (c\i) circle (2pt);
	}
	\fill (q) circle (2pt);
	\draw [fill=cyan] (q) circle (2pt);
	\draw [fill=cyan](c2) circle (2pt); 
	\node at ($(b0)+(-.3,.1)$) {\tiny $b_4$};
	\node at ($(c0)+(-.3,.1)$) {\tiny $a_2$};
	\node at ($(a0)+(-.2,.1)$) {\tiny $c$};
	\node at ($(b1)+(.3,.1)$) {\tiny $a_3$};
	\node at ($(c1)+(.3,.1)$) {\tiny $b_0$};
	\node at ($(a1)+(.3,.1)$) {\tiny $b_2$};
	\node at ($(b2)+(-.1,-.3)$) {\tiny $a_0$};
	\node at ($(c2)+(0,-.3)$) {\tiny $q$};
	\node at ($(a2)+(.1,-.3)$) {\tiny $b_1$};
	\node at ($(q)+(.25,-.1)$) {\tiny $a_2'$};
\end{tikzpicture}
\vskip -.2cm
\caption{}	
\label{fig:lem313C}
\end{subfigure}
\hfill
\begin{subfigure}[b]{.24\textwidth}
\centering
\begin{tikzpicture}[scale=.75]			
	\coordinate (c2) at (-.45,-.1);
	\coordinate (b2) at (-1.5,-.1);
	\coordinate (a2) at (1.5,-.1);
	\coordinate (b0) at (-.45,3);
	\coordinate(a1) at (.45,3);
	\coordinate (a0) at (-1.9,.6);
	\coordinate (b1) at (1.9,.6);
	\coordinate (c0) at ($(b0)!.5!(a0)$);
	\coordinate (c1) at ($(b1)!.5!(a1)$);
	\coordinate (q) at (.2, 1.1);
	\coordinate (r) at (.5, .4);
	\draw [black!20] (c0)--(q);
	\draw [black!20] (b2)--(a0)--(b0)--(a1);
	\draw [black!20] (b0)--(a2);
	\draw [black!20] (a0)--(b1);
	\draw (b2)--(a1);
	\draw (a1)--(b1)--(a2)--(b2);
	\draw (q)--(c2)--(q)--(b1)--(q)--(a1);
	\draw (b2)--(r)--(c1)--(r)--(a2);
	\foreach \i in {1,2}{
		\fill (a\i) circle (2pt);
		\fill (b\i) circle (2pt);
		\fill (c\i) circle (2pt);
	}
	\fill (q) circle (2pt);
	\fill (r) circle (2pt);
	\fill [cyan] (q) circle (1.7pt);
	\fill [cyan](c2) circle (1.7pt); 
	\foreach \i in {a0,c0,b0} \fill [black!20] (\i) circle (2pt);
	\node [black!20] at ($(b0)+(-.3,.1)$) {\tiny $b_4$};
	\node [black!20]  at ($(c0)+(-.3,.1)$) {\tiny $a_2$};
	\node [black!20] at ($(a0)+(-.2,.1)$) {\tiny $c$};
	\node at ($(b1)+(.3,.1)$) {\tiny $a_3$};
	\node at ($(c1)+(.3,.1)$) {\tiny $b_0$};
	\node at ($(a1)+(.3,.1)$) {\tiny $b_2$};
	\node at ($(b2)+(-.1,-.3)$) {\tiny $a_0$};
	\node at ($(c2)+(0,-.3)$) {\tiny $q$};
	\node at ($(a2)+(.1,-.3)$) {\tiny $b_1$};
	\node at ($(q)+(-.33,-.1)$) {\tiny $b_0'$};
	\node at ($(r)+(0,-.23)$) {\tiny $b_3$};
\end{tikzpicture}
\vskip -.2cm
\caption{}	
\label{fig:lem313D}
\end{subfigure}
\vskip -.2cm
\caption{The proof of Lemma \ref{l:314}.}
\label{fig:lem313}
\vskip -.2cm
\end{figure}
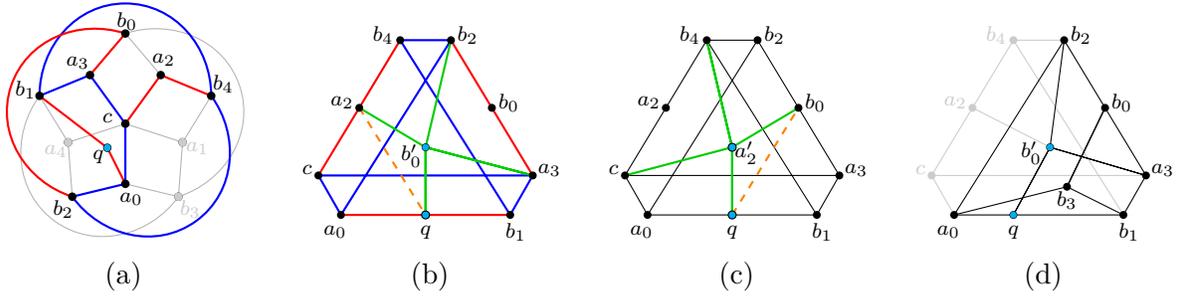

\begin{proof}
	If $a_2q\in E(G)$, then $q\in \Ext(\grot, a_1)$ and we are done. So we may 
	suppose $a_2, b_0\notin \Nn(q)$ and need to prove that $\Nn(q)$ 
	contains $\grot$-twins of those two vertices. Due to Lemma~\ref{l:35} 
	there are vertices $b'_0$ and $a'_2$ adjacent to $\{a_2,a_3,b_2,q\}$ 
	and  $\{b_0,b_4,c,q\}$, respectively (see Figures~\ref{fig:lem313B} 
	and~\ref{fig:lem313C}).
	Obviously $a_2'$ is an $\grot$-twin of $a_2$. 
	Moreover, the cube
		\begin{center}
			\begin{tabular}{cc}
				$(Q)$&
		\begin{tabular}{c|c|c|c}
			$q$&  $b_3$&  $b_2$ & $a_3$
			\\ \hline
			$b_0$&  $b_0'$&  $b_1$ & $a_0$
		\end{tabular}
		\end{tabular}
	\end{center}	
	drawn in Figure~\ref{fig:lem313D} yields $b_0'b_3\in E(G)$ and thus $b_0'$ is 
	an $\grot$-twin of $b_0$. 
\end{proof}
 
When applying one of the three previous lemmata it is often useful to know that 
certain vertices in the copy of $\grot$ under consideration are reliable, as 
this could eliminate one of several possible outcomes. So far, however, 
we have no way of inferring reliability. The last two lemmata of this subsection 
change this situation.
	
\begin{lemma}\label{l:315}
	If $\grot\subseteq G\in \fD_4$, then at least one of $a_1$, $a_2$, $a_3$ is reliable. 
\end{lemma}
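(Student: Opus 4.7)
The plan is to argue by contradiction: assume that all three of $a_1$, $a_2$, $a_3$ are unreliable, and let $q_i\in\Ext(\grot,a_i)$ for $i=1,2,3$ be witnesses. By the beautiful lemma (Lemma~\ref{l:beauti}) each $q_i$ is adjacent to $a_i$, and triangle-freeness combined with the adjacencies of $\grot$ pins down $\Nn(q_i)\cap V(\grot)=\{a_{i-1},a_i,a_{i+1},b_i\}$ exactly, while forcing the three $q_i$'s to be pairwise non-adjacent.

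The main combinatorial step is to feed Lemma~\ref{l:35} an $N$-subgraph which uses $q_2$ itself as one of the connector ``$c$-vertices''; this is permissible because $q_2\in\Ext(\grot,a_2)$ is adjacent to both $a_2$ and $b_2$. Taking $x_0=a_0$, $x_1=a_2$, $x_2=a_4$, $y_0=q_3$, $y_1=b_2$, $y_2=q_1$, $z_0=b_3$, $z_1=q_2$ and $z_2=b_1$, a direct check confirms the required edges and non-edges; moreover, the two ``connector non-edges'' $z_1 z_2 = q_2 b_1$ and $z_0 z_1 = q_2 b_3$ are both present in $G$, so Lemma~\ref{l:35} yields common neighbours $t$ of $\{a_0,b_1,q_2,q_3\}$ and $s$ of $\{a_4,b_3,q_1,q_2\}$. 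A triangle-free check rules out $t,s\in V(\grot)\cup\{q_1,q_2,q_3\}$: the only $\grot$-vertex adjacent to both $a_0$ and $b_1$ is $b_3$, which is not adjacent to $q_2$, and none of the $q_j$ has the required neighbours. Applying Lemma~\ref{l:313} to $t$ (using that $t\sim q_2$ forces $t\not\sim a_1$) then pins down $\Nn(t)\cap V(\grot)=\{a_0,b_0,b_1\}$, and the mirror-symmetric argument gives $\Nn(s)\cap V(\grot)=\{a_4,b_3,b_4\}$, with $s\sim q_1,q_2$.

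With $t$ and $s$ in hand, the augmented configuration $\grot\cup\{q_1,q_2,q_3,t,s\}$ already contains the independent $4$-set $\{a_2,b_2,t,s\}$ together with $b_0$ and $b_4$ as ``three-of-four'' candidates ($b_0$ misses $s$, $b_4$ misses $t$). The finishing step is either to produce the two remaining ``three-of-four'' candidates---one missing $a_2$, one missing $b_2$---so as to exhibit an induced cube contradicting Lemma~\ref{l:31}, or to extract from the enlarged configuration a $5$-element independent set whose common neighbour forced by~\Dp{4} must coincide with a vertex adjacent to one of its own neighbours, yielding a triangle.

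The main obstacle will be this final step, as controlling the interplay between $t$, $s$ and the original $q_i$'s is delicate: the twin alternative in Lemma~\ref{l:314}, should it apply along the way, might introduce further new vertices---namely $\grot$-twins of $a_2$ and $b_0$ or of $a_2$ and $b_4$---rather than a direct contradiction, and closing this case analysis, either by iterating the $N$-subgraph construction on those twins or by observing that they themselves provide extensions already enumerated among $q_1,q_2,q_3$, will be the crux of the proof.
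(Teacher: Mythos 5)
Your setup is largely correct and valid as far as it goes, but the argument never actually closes. You have chosen a different $N$-subgraph from the paper's: whereas the paper puts $t_1$ and $t_3$ in the ``$a$-roles'' of $N$ and $t_2$ in a ``$c$-role'', you put $q_1,q_3$ in ``$b$-roles'' and $q_2$ in a ``$c$-role''. Both choices are legitimate, and your deduction of the two common neighbours $t$ (of $\{a_0,b_1,q_2,q_3\}$) and $s$ (of $\{a_4,b_3,q_1,q_2\}$) from Lemma~\ref{l:35}, as well as the triangle-freeness bookkeeping that pins down $\Nn(t)\cap V(\grot)=\{a_0,b_0,b_1\}$ and $\Nn(s)\cap V(\grot)=\{a_4,b_3,b_4\}$, are all sound. (Incidentally, you do not even need Lemma~\ref{l:313} for these: once $t$ is adjacent to $a_0,b_0,b_1$, the remaining non-adjacencies with $V(\grot)$ follow from triangle-freeness alone.)

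The genuine gap is the final step, which you explicitly flag as unresolved. Neither the suggested cube construction nor the ``five-element independent set'' sketch is actually carried out, and the concern you raise about Lemma~\ref{l:314} introducing $\grot$-twins rather than a direct contradiction is a real obstruction, not a formality. The paper avoids all of this by a much shorter route: after a single application of Lemma~\ref{l:35} it obtains one new vertex $q$ adjacent to $\{a_0,b_1,t_2,t_3\}$, and then observes directly that the $12$-vertex graph $\grot-a_3-b_0+q+t_2+t_3$ has no independent set of size five, which immediately contradicts \Dp{4} (any common neighbour of five of those twelve vertices would make them independent). Your route introduces two new vertices and sixteen vertices total, and an analogous ``no independent $5$-set'' computation is not attempted; the cube idea also stalls, since no candidate vertex adjacent to $b_2,t,s$ but not $a_2$ (or to $a_2,t,s$ but not $b_2$) is exhibited. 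Until one of these endgames is closed, the proposal does not prove the lemma.
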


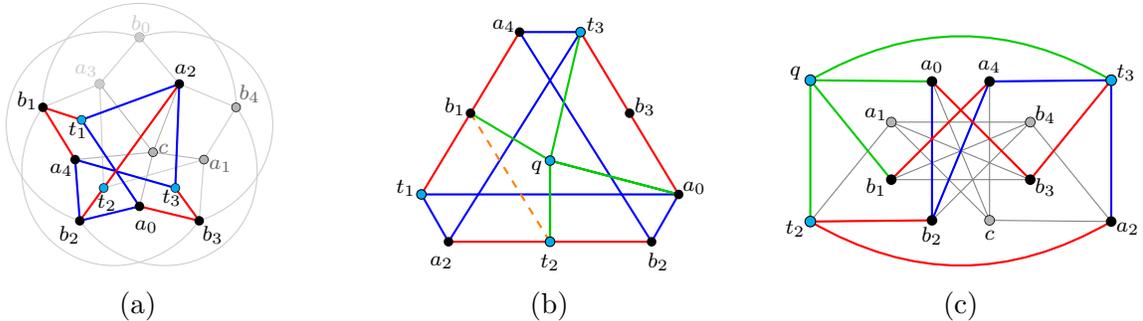
\begin{figure}[ht]
\centering
\vskip -.3cm
\begin{subfigure}[b]{.32\textwidth}
\centering
\begin{tikzpicture}[scale=.9]
	\coordinate (c) at (.2,-.2);
	\foreach \i in {1,...,5} {
		\coordinate (a\i) at (-18+\i*72:1);
		\coordinate (b\i) at (18+\i*72:1.5);	
		\coordinate (d\i) at (18+\i*72:2);
		\coordinate (t\i) at (18 +\i*72:.9);
		\draw [black!30](a\i)--(c);
	}
	\draw [black!20] (a1)--(b1)--(a2)--(b2)--(a3)--(b3)--(a4)--(b4)--(a5)--(b5)--(a1);
	\draw [black!20](a1)--(t2)--(a4);
	\draw [black!20](a2)--(t3)--(a5);
	\draw [black!20](a3)--(t4)--(a1);
	\draw [thick, blue] (t4)--(a3)--(b3)--(a4)--(t2)--(a1)--(t4);
	\draw [thick, red] (a3)--(b2)--(t2);
	\draw [thick, red] (b3)--(t3)--(a1);
	\draw [thick, red] (a4)--(b4)--(t4);
	\draw (b2) [black!20, out=234, in=144] to (d3) [out=-36, in=234] to (b4);		
	\draw (b1) [black!20, out=162, in=72] to (d2) [out=-108, in=162] to (b3);
	\draw (b4) [black!20,  out=18, in=-72] to (d5) [out=108, in=18] to (b1);
	\draw (b5) [black!20, out=90, in=0] to (d1) [out=180, in=90] to (b2);		
	\draw (b3) [black!20, out=-54, in=216] to (d4) [out=36, in=-54] to (b5);
	\node [black!60] at ($(c)+(.15,.05)$) {\tiny $c$};
	\node [black] at ($(a1)+(.1,.2)$) {\tiny $a_2$};
	\node [black!20] at ($(a2)+(-.2,.2)$) {\tiny $a_3$};
	\node at ($(a3)+(-.2,-.15)$) {\tiny $a_4$};
	\node at ($(a4)+(.1,-.3)$) {\tiny $a_0$};
	\node [black!60] at ($(a5)+(.25,-.1)$) {\tiny $a_1$};
	\node [black!20] at ($(b1)+(.03,.2)$) {\tiny $b_0$};
	\node at ($(b2)+(-.23,.1)$) {\tiny $b_1$};
	\node at ($(b3)+(-.15,-.2)$) {\tiny $b_2$};
	\node at ($(b4)+(.2,-.2)$) {\tiny $b_3$};
	\node [black!60] at ($(b5)+(.2,.2)$) {\tiny $b_4$};
	\node at ($(t2)+(-.05,-.15)$) {\tiny $t_1$};
	\node at ($(t3)+(.05,-.22)$) {\tiny $t_2$};
	\node at ($(t4)+(-.05,-.15)$) {\tiny $t_3$};
	\fill (c) circle (2pt);
	\foreach \i in {1,...,5}{
		\fill (a\i) circle (2pt);		
		\fill (b\i) circle (2pt);
	}
	\foreach \i in {2,3,4} {
		\fill (t\i) circle (2pt);
		\fill [cyan] (t\i) circle (1.7pt);
	}
	\foreach \i in {a5,b5,c} \fill [black!30] (\i) circle (1.7pt);
	\foreach \i in {b1,a2} \fill [black!20] (\i) circle (2pt);
\end{tikzpicture}
\vskip -.2cm
\caption{}	
\label{fig:lem314A}
\end{subfigure}
\hfill
\begin{subfigure}[b]{.32\textwidth}
\centering
\begin{tikzpicture}[scale=.9]			
	\coordinate (c2) at (0,-.1);
	\coordinate (b2) at (-1.5,-.1);
	\coordinate (a2) at (1.5,-.1);
	\coordinate (b0) at (-.45,3);
	\coordinate(a1) at (.45,3);
	\coordinate (a0) at (-1.9,.6);
	\coordinate (b1) at (1.9,.6);
	\coordinate (c0) at ($(b0)!.5!(a0)$);
	\coordinate (c1) at ($(b1)!.5!(a1)$);
	\coordinate (q) at (0, 1.1);
	\draw [thick, blue] (a0)--(b2)--(a1)--(b0)--(a2)--(b1)--(a0);
	\draw [thick, red] (b0) --(a0);
	\draw [thick, red] (b1) --(a1);
	\draw [thick, red] (b2) --(a2);
	\draw [thick, dashed, orange] (c0)--(c2);
	\draw [thick,green!80!black] (c0)--(q)--(c2)--(q)--(b1)--(q)--(a1);
	\foreach \i in {0,1,2}{
		\fill (a\i) circle (2pt);
		\fill (b\i) circle (2pt);
		\fill (c\i) circle (2pt);
	}
	\foreach \i in {q, a0, a1, c2} \draw [fill = cyan](\i) circle (2pt); 
	\node at ($(b0)+(-.23,.1)$) {\tiny $a_4$};
	\node at ($(c0)+(-.23,.1)$) {\tiny $b_1$};
	\node at ($(a0)+(-.23,.1)$) {\tiny $t_1$};
	\node at ($(b1)+(.23,.1)$) {\tiny $a_0$};
	\node at ($(c1)+(.23,.1)$) {\tiny $b_3$};
	\node at ($(a1)+(.23,.1)$) {\tiny $t_3$};
	\node at ($(b2)+(-.1,-.3)$) {\tiny $a_2$};
	\node at ($(c2)+(0,-.3)$) {\tiny $t_2$};
	\node at ($(a2)+(.1,-.3)$) {\tiny $b_2$};
	\node at ($(q)+(-.2,-.1)$) {\tiny $q$};
\end{tikzpicture}
\vskip -.2cm
\caption{}	
\label{fig:lem314B}
\end{subfigure}
\hfill
\begin{subfigure}[b]{.32\textwidth}
\centering
\begin{tikzpicture}[scale=1]			
	\foreach \i in {1,...,8} {
		\coordinate (a\i) at (22.5+\i*45:1);
	}
	\coordinate (b1) at (-2,.94);
	\coordinate (b2) at (2,.94);
	\coordinate (b3) at (2,-.94);
	\coordinate(b4) at (-2,-.94);
	\draw [green!80!black, thick] (b1) edge [bend left = 30] (b2);
	\draw [green!80!black, thick] (a2)--(b1)--(b4);
	\draw [black!50](b4)--(a5);
	\draw [black!50](a1) -- (a4) -- (a7) -- (a2) -- (a5) ;
	\draw [black!50](a5)-- (a8) -- (a3) ;
	\draw [black!50](a3)-- (a6) -- (a1);
	\draw [black!50](b4) -- (a3) -- (a7) -- (b2)--(a1);
	\draw [green!80!black, thick] (b1) -- (a4);
	\draw [black!50](a4)--(a8) -- (b3);
	\draw [black!50](b2)--(b3);
	\draw [black!50](a1) -- (a5);
	\draw [black!50](a2) -- (a6)--(b3);
	\draw [thick, blue](b3)--(b2)--(a1)--(a5)--(a2);
	\draw [thick, red](b2)--(a7)--(a2);
	\draw [thick, red](b4)--(a5);
	\draw [thick, red](b4)edge [bend right=30] (b3);
	\draw [thick, red](a1)--(a4);
	\foreach \i in {1,...,8}{
		\fill (a\i) circle (2pt);		
	}
	\foreach \i in {1,...,4} {
		\fill (b\i) circle (2pt);		
	}
	\foreach \i in {b1, b4, b2} \draw [fill=cyan] (\i) circle (2pt);
	\fill [black!30](a3) circle (1.7pt); 
	\fill [black!30] (a6) circle (1.7pt); 
	\fill [black!30] (a8) circle (1.7pt);
	\node at ($(a1)+(0,.2)$) {\tiny $a_4$};
	\node at ($(a2)+(0,.2)$) {\tiny $a_0$};
	\node at ($(a3)+(-.2,.1)$) {\tiny $a_1$};
	\node at ($(a4)+(-.2,-.1)$) {\tiny $b_1$};
	\node at ($(a5)+(0,-.2)$) {\tiny $b_2$};
	\node at ($(a6)+(0,-.2)$) {\tiny $c$};
	\node at ($(a7)+(.2,-.1)$) {\tiny $b_3$};
	\node at ($(a8)+(.2,.1)$) {\tiny $b_4$};
	\node at ($(b1)+(-.2,.1)$) {\tiny $q$};
	\node at ($(b3)+(.22,-.1)$) {\tiny $a_2$};
	\node at ($(b4)+(-.2,-.1)$) {\tiny $t_2$};
	\node at ($(b2)+(.2,.1)$) {\tiny $t_3$};
\end{tikzpicture}
\vskip -.2cm
\caption{}	
\label{fig:lem314C}
\end{subfigure}
\vskip -.2cm
\caption{The proof of Lemma~\ref{l:315}.}
\label{fig:lem314}
\vskip -.3cm
\end{figure}

\begin{proof}
	Assume contrariwise that there exist vertices $t_i\in \Ext(\grot, a_i)$ 
	for $i\in\{1, 2, 3\}$. Let us recall that this means 
	$a_{i-1}, a_{i+1}, b_i\in \Nn(t_i)$. The beautiful lemma yields $t_2a_2\in E(G)$ 
	(see Figure~\ref{fig:lem314A}). Since $G$ is triangle-free, $b_1, t_2\in\Nn(a_3)$
	implies $b_1t_2\not\in E(G)$. Together with Lemma~\ref{l:35} this
	ensures the existence of a common neighbour $q$ of $\{a_0, b_1, t_2, t_3\}$ 
	(see Figure~\ref{fig:lem314B}).
	But now the graph $\grot -a_3 -b_0 + q+t_2+t_3$ drawn in Figure~\ref{fig:lem314C} 
	has no independent set of size five, which contradicts $G\in \fD_4$.		
\end{proof}
 				
\begin{lemma}\label{l:316}
	Let $\grot\subseteq G\in \fD_4$ and let $a_1'$ be an $\grot$-twin of~$a_1$. 
	If $t_2'\in \Ext(\grot(a_1'), a_2)$ and there exists a common neighbour 
	of $a_1$, $b_1$, $t_2'$, then $a_0$ is reliable (with respect to $\grot$).
\end{lemma}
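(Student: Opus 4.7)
Plan. Suppose for contradiction that $\Ext(\grot,a_0)$ is non-empty and fix $t_0$ in it. By the beautiful lemma (Lemma~\ref{l:beauti}) applied to $\grot$ we have $t_0\sim a_0$, so $t_0$ is adjacent to each of $a_0,a_1,a_4,b_0$; symmetrically, the beautiful lemma applied to $\grot(a_1')$ gives $t_2'\sim a_2$, so $t_2'$ is adjacent to each of $a_1',a_2,a_3,b_2$. Triangle-freeness then forces a long list of non-edges: in particular $st_0, sc, sa_1', sa_2, sa_3, sa_4, sb_2, sb_3, sb_4$ are all non-edges, and analogous lists of non-edges hold for $t_0$ and for $t_2'$.

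The key observation is that the six vertices $s, a_1, t_0, a_4, b_2, t_2'$ form a hexagon in $G$ whose only potential chord is $t_0t_2'$, all other chords being excluded by the non-edge list. When $t_0t_2'\notin E(G)$, the maximal triangle-freeness of $G$ furnishes a common neighbour $c_2^N$ of $t_0$ and $t_2'$; together with $b_1\in\Nn(s)\cap\Nn(a_4)$ and $b_4\in\Nn(a_1)\cap\Nn(b_2)$ this yields a copy of the graph $N$ in $G$ through the identification
\[
(a_0^N,a_1^N,a_2^N,b_0^N,b_1^N,b_2^N,c_0^N,c_1^N,c_2^N)=(s,b_2,t_0,a_4,a_1,t_2',b_1,b_4,c_2^N).
\]
Successive applications of Lemma~\ref{l:35} to this copy of $N$ for the various values of $i\in\mathbb{Z}/3\mathbb{Z}$, combined with the already available non-edges and eventually with the cube lemma (Lemma~\ref{l:31}), force either a triangle or an independent $5$-set without common neighbour, contradicting our hypotheses.

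It remains to treat the case $t_0t_2'\in E(G)$ separately, where the hexagon above fails to be induced. Here a direct construction of a $12$-vertex subgraph built from $V(\grot)\cup\{t_0,t_2',s,a_1'\}$---for instance, removing $c,b_3,b_4$ from $V(\grot)$---has maximum independent set of size at most $4$, contradicting \Dp{4}. The main obstacle in both branches is the bookkeeping: in the $N$-based branch, eliminating all twin-based alternative conclusions of Lemma~\ref{l:35}, and in the second branch, exhaustively verifying the independence number bound with the help of the extensive non-edge list.
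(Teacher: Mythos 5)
Your starting move is the same as the paper's: assume for contradiction that $\Ext(\grot, a_0)\ne\vn$, fix $t_0$ there, and use the beautiful lemma to get $t_0\sim a_0$ and $t_2'\sim a_2$. After that you diverge. The paper finishes with two crisp applications of \Dp{4}: the twelve vertices of $\grot-a_0-a_2+a_1'+t_0+t_2'$ have at most one independent $5$-set, namely $\{b_3,b_4,c,t_0,t_2'\}$, so \Dp{4} gives a common neighbour $z$ of it; then the twelve vertices of $\grot-a_0-a_2-b_3+t_0+t_2'+q+z$ have no independent $5$-set at all, contradicting \Dp{4}. (Note that this needs no case split on $t_0t_2'$: if $t_0t_2'\in E(G)$, the first twelve-vertex set already has no independent $5$-set.)

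Your proposal has genuine gaps at the crucial points. In the $t_0t_2'\notin E(G)$ branch, the embedding of $N$ you wrote down does check out, but ``successive applications of Lemma~\ref{l:35} \dots force either a triangle or an independent $5$-set'' is a plan rather than an argument; the conclusions of Lemma~\ref{l:35} for the three indices $i\in\ZZ/3\ZZ$ produce new vertices and new edges, and you have not shown that any combination of these outcomes actually contradicts \Dp{4} or triangle-freeness. In the $t_0t_2'\in E(G)$ branch, your proposed twelve-vertex set $(V(\grot)\setminus\{c,b_3,b_4\})\cup\{t_0,t_2',s,a_1'\}$ is not shown to have independence number at most $4$, and in fact this looks wrong: $\{a_0,a_2,a_4,s,a_1'\}$ is independent unless $s\sim a_0$, and $sa_0$ is not among the edges you can force (it does not appear in your own non-edge bookkeeping either, precisely because nothing adjacent to $a_0$ is known to be adjacent to $s$). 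So the claimed contradiction to \Dp{4} is not established. The route through $N$ is an interesting idea, but as written neither branch closes.
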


\vskip -.3cm 
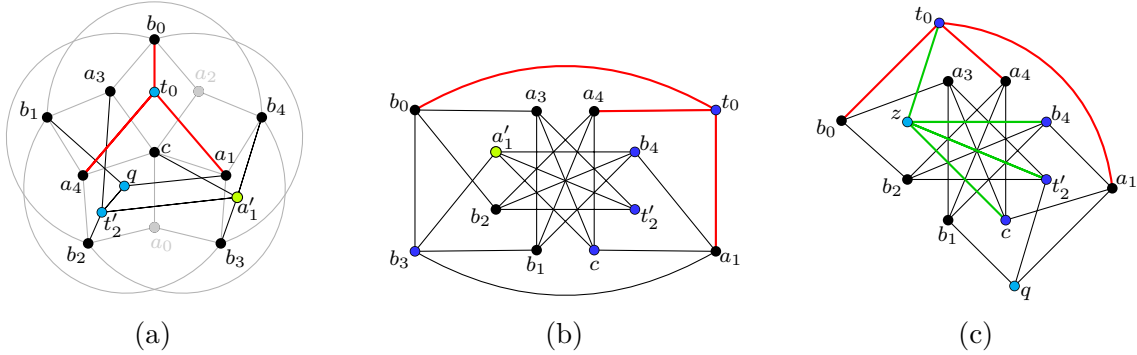
\begin{figure}[ht]
\centering
\begin{subfigure}[b]{.32\textwidth}
\centering
\begin{tikzpicture}[scale=1]			
	\coordinate (c) at (0,0);
	\foreach \i in {1,...,5} {
		\coordinate (a\i) at (-18+\i*72:1);
		\coordinate (b\i) at (18+\i*72:1.5);	
		\coordinate (d\i) at (18+\i*72:2);
		\draw [black!30](a\i)--(c);
	}
	\coordinate (t) at (-.7,-.8);
	\coordinate (q) at (-.4, -.45);
	\coordinate (a) at (1.1, -.6);
	\coordinate (t0) at (0,.8);
	\draw [black!30] (a1)--(b1)--(a2)--(b2)--(a3)--(b3)--(a4)--(b4)--(a5)--(b5)--(a1);
	\draw (a2)--(t)--(a)--(t)--(b3);
	\draw (c) -- (a) --(b5) -- (a) --(b4);
	\draw (b2)--(q)--(t)--(q)--(a5);
	\draw [thick, red] (b1)--(t0)--(a3)--(t0)--(a5);
	\draw (b2) [black!30, out=234, in=144] to (d3) [out=-36, in=234] to (b4);		
	\draw (b1) [black!30, out=162, in=72] to (d2) [out=-108, in=162] to (b3);
	\draw (b4) [black!30, out=18, in=-72] to (d5) [out=108, in=18] to (b1);
	\draw (b5) [black!30, out=90, in=0] to (d1) [out=180, in=90] to (b2);		
	\draw (b3) [black!30, out=-54, in=216] to (d4) [out=36, in=-54] to (b5);
	\node at ($(c)+(.15,.05)$) {\tiny $c$};
	\node [black!20]  at ($(a1)+(.1,.2)$) {\tiny $a_2$};
	\node at ($(a2)+(-.2,.2)$) {\tiny $a_3$};
	\node at ($(a3)+(-.15,-.15)$) {\tiny $a_4$};
	\node [black!20] at ($(a4)+(.1,-.23)$) {\tiny $a_0$};
	\node at ($(a5)+(0,.25)$) {\tiny $a_1$};
	\node at ($(b1)+(.03,.2)$) {\tiny $b_0$};
	\node at ($(b2)+(-.23,.1)$) {\tiny $b_1$};
	\node at ($(b3)+(-.15,-.15)$) {\tiny $b_2$};
	\node at ($(b4)+(.2,-.2)$) {\tiny $b_3$};
	\node at ($(b5)+(.2,.15)$) {\tiny $b_4$};
	\node at ($(t)+(.15,-.17)$) {\tiny $t_2'$};
	\node at ($(a) +(.15,-.15)$) {\tiny $a_1'$};
	\node at ($(q)+(.1,.13)$) {\tiny $q$};
	\node at ($(t0)+(.2,.05)$) {\tiny $t_0$};
	\fill (c) circle (2pt);
	\foreach \i in {1,...,5} {
		\fill (a\i) circle (2pt);		
		\fill (b\i) circle (2pt);
	}
	\foreach \i in {t,q,t0} {
		\fill (\i) circle (2pt);
		\fill [cyan] (\i) circle (1.7pt);
	}
	\draw [fill = lime] (a) circle (2pt);
	\foreach \i in { a1, a4} \fill [black!20] (\i) circle (2pt);
\end{tikzpicture}
\vskip -.2cm
\caption{}	
\label{fig:lem315A}
\end{subfigure}
\hfill
\begin{subfigure}[b]{.32\textwidth}
\centering
\begin{tikzpicture}[scale=1]			
	\foreach \i in {1,...,8} {
		\coordinate (a\i) at (22.5+\i*45:1);
	}
	\coordinate (b1) at (-2,.94);
	\coordinate (b2) at (2,.94);
	\coordinate (b3) at (2,-.94);
	\coordinate(b4) at (-2,-.94);
	\draw (b1) edge [thick, red, bend left = 30] (b2);
	\draw (b4) edge [bend right = 30 ] (b3);
	\draw (a2)--(b1)--(b4)--(a5);
	\draw (a1) -- (a4) -- (a7) -- (a2) -- (a5) -- (a8) -- (a3) -- (a6) -- (a1);
	\draw (b4) -- (a3)--(a7);
	\draw (b1) -- (a4) -- (a8) -- (b3);
	\draw (a1)--(b2)--(b3)--(a6);
	\draw (a1) -- (a5);
	\draw (a2) -- (a6);
	\draw [thick,red] (b3)--(b2)--(a1);
	\foreach \i in {1,...,8} {
		\fill (a\i) circle (2pt);		
	}
	\foreach \i in {1,...,4} {
		\fill (b\i) circle (2pt);		
	}
	\foreach \i in {b4,a8,a6,b2,a7} \fill [blue!80] (\i) circle (1.7pt);
	\draw [fill=lime] (a3) circle (2pt);
	\node at ($(a1)+(0,.2)$) {\tiny $a_4$};
	\node at ($(a2)+(0,.2)$) {\tiny $a_3$};
	\node at ($(a3)+(.1,.2)$) {\tiny $a_1'$};
	\node at ($(a4)+(-.2,-.1)$) {\tiny $b_2$};
	\node at ($(a5)+(0,-.2)$) {\tiny $b_1$};
	\node at ($(a6)+(0,-.2)$) {\tiny $c$};
	\node at ($(a7)+(.2,-.1)$) {\tiny $t_2'$};
	\node at ($(a8)+(.2,.1)$) {\tiny $b_4$};
	\node at ($(b1)+(-.2,.1)$) {\tiny $b_0$};
	\node at ($(b3)+(.2,-.1)$) {\tiny $a_1$};
	\node at ($(b4)+(-.2,-.1)$) {\tiny $b_3$};
	\node at ($(b2)+(.2,.1)$) {\tiny $t_0$};
\end{tikzpicture}
\vskip -.2cm
\caption{}	
\label{fig:lem315B}
\end{subfigure}
\hfill
\begin{subfigure}[b]{.32\textwidth}
\centering
\begin{tikzpicture}[scale=1]
	\foreach \i in {1,...,8} {
		\coordinate (a\i) at (22.5+\i*45:1);
	}
	\coordinate (b1) at (-.5,1.7);
	\coordinate (b2) at (1.8,-.5);
	\coordinate (b3) at (.5,-1.8);
	\coordinate (b4) at (-1.8,.4);
	\draw (b4)--(a4);
	\draw (a1) -- (a4) -- (a7) -- (a2) -- (a5) -- (a8) -- (a3) -- (a6) -- (a1);
	\draw (b4) --(a2);
	\draw (a7) -- (b3);
	\draw (a6)--(b2)--(b3)--(a5);
	\draw (a4)--(a8)--(b2);
	\draw (a1) -- (a5);
	\draw (a2) -- (a6);
	\draw (a3)--(a7);
	\draw [green!80!black, thick] (a6)--(a3)--(a7)--(a3)--(a8)--(a3)--(b1);
	\draw [red, thick] (b4)--(b1) --(a1);
	\draw (b1) edge [red, thick, bend left=40] (b2);
	\foreach \i in {1,...,8} {
		\fill (a\i) circle (2pt);		
	}
	\foreach \i in {1,...,4} {
		\fill (b\i) circle (2pt);		
	}
	\foreach \i in {a6,a7,a8,b1} \fill [blue!80] (\i) circle (1.7pt);
	\fill [cyan] (a3) circle (1.7pt);
	\fill [cyan] (b3) circle (1.7pt);
	\node at ($(a1)+(.2,.1)$) {\tiny $a_4$};
	\node at ($(a2)+(.2,.1)$) {\tiny $a_3$};
	\node at ($(a3)+(-.15,.1)$) {\tiny $z$};
	\node at ($(a4)+(-.2,-.1)$) {\tiny $b_2$};
	\node at ($(a5)+(0,-.2)$) {\tiny $b_1$};
	\node at ($(a6)+(0,-.18)$) {\tiny $c$};
	\node at ($(a7)+(.2,-.1)$) {\tiny $t_2'$};
	\node at ($(a8)+(.2,.1)$) {\tiny $b_4$};
	\node at ($(b1)+(-.2,.1)$) {\tiny $t_0$};
	\node at ($(b3)+(.15,-.1)$) {\tiny $q$};
	\node at ($(b4)+(-.2,-.1)$) {\tiny $b_0$};
	\node at ($(b2)+(.2,.1)$) {\tiny $a_1$};
\end{tikzpicture}
\vskip -.2cm
\caption{}	
\label{fig:lem315C}
\end{subfigure}
\vskip -.2cm
\caption{The proof of Lemma~\ref{l:316}.}
\label{fig:lem315}
\end{figure}

\begin{proof}
	Let $q$ be a common neighbour of $\{a_1,b_1,t_2'\}$ and assume for the sake of 
	contradiction that there exists some $t_0\in \Ext(\grot, a_0)$, so that 
	$a_1, a_4, b_0 \in \Nn(t_0)$ (see Figure~\ref{fig:lem315A}). 
	Since the only independent set of size five in the graph 
	$\grot - a_0-a_2 + a_1'+t_0+t_2'$ drawn in Figure~\ref{fig:lem315B} 
	is  $\{b_3, b_4, c, t_0, t_2'\}$, property~\Dp{4} guarantees the existence 
	of a common neighbour $z$ of this set.  
	But now the graph $\grot -a_0 -a_2-b_3+t_0+t_2'+q+z$ drawn in 
	Figure~\ref{fig:lem315C} has no independent set of size five, contrary 
	to $G\in \fD_4$. 
\end{proof}

\subsection{Independent sets}
Let us now return to Vega graphs and study their independent subsets. 
An independent set 
$T\subseteq V(\grot^{\mu\nu}_i)$ is said to be {\it small} if it 
intersects~$\{x,y\}$ and two of the sets $\Gr$, $\Gg$, $\Gb$. So if $\mu=1$, then 
all small sets contain $x$. 
We proceed with a classification of independent sets.

\begin{lemma}\label{l:317}
	If $T\subseteq V(\grot_i^{\mu\nu})$ is independent, then one of the following 
	six cases occurs. 
	\begin{enumerate}[label=\alabel]
		\item \label{it:a} There is a vertex of $V(\grot_i^{\mu\nu})$ whose 
		neighbourhood contains $T$;
		\item\label{it:b} $\mu=1$ and 
		$\{\re{u},\gr{v},\bl{w}\}\subseteq T\subseteq \{\re{u},\gr{v},\bl{w},x\}$;
		\item\label{it:c} $\nu=1$ and 
		$\{\gr{b,v},\re{i-1}\}\subseteq T\subseteq \gr{\{b,v\}}\cup \Gr$;
		\item\label{it:d} $\{\bl{c,w},\re{0}\}\subseteq T\subseteq \bl{\{c,w\}}\cup \Gr$; 
		\item\label{it:e} $\nu=0$ and 
		$\{\bl{c,w},\gr{2i-1}\}\subseteq T\subseteq \bl{\{c,w\}}\cup \Gg$;
		\item\label{it:f} $T$ is small.
	\end{enumerate}
\end{lemma}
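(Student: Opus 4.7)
The proof will be a case analysis based on the intersections of $T$ with the hexagon $\cC_6$, the pair $\{x, y\}$, and the three colour classes of the inner Andr\'asfai graph $\G_i$. Two preliminary observations will drive the argument. \emph{First}, the only independent triples in the hexagon are $\{\re{a}, \bl{c}, \gr{b}\} = \Nn(x) \cap V(\cC_6)$ and $\{\re{u}, \gr{v}, \bl{w}\} = \Nn(y) \cap V(\cC_6)$. \emph{Second}, by the very definition of the Vega graph, for each colour the two hexagon vertices of that colour are non-adjacent but together dominate the entire corresponding $\G_i$-colour class; consequently $T$ cannot contain both a hexagon vertex and a $\G_i$-vertex of the same colour.

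First I would handle the case $T \cap \{x, y\} \ne \emptyset$. Since $x$ and $y$ are adjacent, exactly one of them lies in $T$, and I may assume $x \in T$ (the case $y \in T$ is covered via the automorphism $\sigma$ when $\mu = 0$, and cannot occur when $\mu = 1$). Then $T \cap V(\cC_6) \subseteq \{\re{u}, \gr{v}, \bl{w}\}$. If equality holds, the second observation forces $T \cap V(\G_i) = \emptyset$, so $T \subseteq \{x, \re{u}, \gr{v}, \bl{w}\}$: when $\mu = 0$ this lies in $\Nn(y)$ and we are in~\ref{it:a}, while $\mu = 1$ gives~\ref{it:b}. If $|T \cap \{\re{u}, \gr{v}, \bl{w}\}| \leq 2$ and $T$ is not small, then $T \cap V(\G_i)$ meets at most one colour class, and inspection of the neighbourhoods $\Nn(\re{a}), \Nn(\gr{b}), \Nn(\bl{c})$ yields $T \subseteq \Nn(v)$ for some $v \in \{\re{a}, \gr{b}, \bl{c}\}$.

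Next I would treat $T \cap \{x, y\} = \emptyset$, splitting by $T \cap V(\cC_6)$. Empty, singleton, ``different-colour pair'', and triple intersections all yield either $T \subseteq \Nn(v)$ for some vertex $v$ or~\ref{it:b}; in the empty case one uses that any independent subset of $\G_i$ contained in a single colour class lies in the neighbourhood of the matching hexagon vertex, while independent subsets of $\G_i$ spanning more than one colour class admit a common $\G_i$-neighbour by the structure of the Andr\'asfai graph. The genuinely new configurations come from the three same-colour hexagon pairs. For $\{\re{a}, \re{u}\}$ one checks that every independent subset of $\Gg \cup \Gb$ admits a common $\Gr$-neighbour, so we always land in~\ref{it:a}. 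For $\{\gr{b}, \gr{v}\}$, whose common neighbours are $\Gg$, one shows $T \subseteq \Nn(\gr{k})$ for some $\gr{k} \in \Gg$ unless $T$ contains $\re{i-1}$ in the absence of $\gr{2i-1}$, i.e.\ $\nu = 1$, giving case~\ref{it:c}. The analogous analysis of $\{\bl{c}, \bl{w}\}$ yields case~\ref{it:d} (when $\re{0} \in T$) or case~\ref{it:e} (when $\gr{2i-1} \in T$ and $\nu = 0$).

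The main obstacle is verifying in each same-colour pair subcase that $\re{i-1}, \re{0}, \gr{2i-1}$ are the \emph{only} obstructions to~\ref{it:a}. This reduces to a direct calculation with the edge set $\{jk : j - k \in \{i, \dots, 2i-1\} \bmod (3i-1)\}$ of $\G_i$: for instance, $\re{0}$ is the unique element of $\Gr$ with no neighbour in $\Gb$ (its $\G_i$-neighbourhood is all of $\Gg$), and $\re{i-1}$ is the unique element of $\Gr$ whose only green neighbour is $\gr{2i-1}$. Once these adjacency facts are pinned down, the remaining work is essentially bookkeeping of the many subcases.
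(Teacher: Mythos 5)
Your proposal is correct and follows essentially the same strategy as the paper: a case analysis on how $T$ intersects $\{x,y\}$, the external hexagon $\cC_6$, and the colour classes of $\G_i$, with the same-colour hexagon pairs producing the exceptional cases~\ref{it:c}--\ref{it:e}. The paper organizes the split in a slightly different order (first reducing to the case where $T\cap\G_i$ meets at most one colour class, then classifying $T\cap\cC_6$ by the colours it hits), but the underlying bookkeeping and the identification of $\re{0}$, $\re{i-1}$, $\gr{2i-1}$ as the only obstructions to finding a governing vertex are the same.
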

\begin{proof}
	A vertex $q\in V(\grot_i^{\mu\nu})$ is said to {\it govern} $T$ 
	if $T\subseteq \Nn(q)$. Let~$\Phi$ be the set of colours~$\varphi$ 
	satisfying $T\cap \G_{\varphi}\ne\vn$. It is easily seen that $|\Phi|=3$ 
	is impossible. 
	If $|\Phi|=2$ there is a vertex $j\in \G_i$ 
	such that $T\cap \G_i\subseteq \Nn(j)$ and either~$j$ governs~$T$ or~$T$ is small. 
	We may henceforth suppose that $|\Phi|\le 1$. 
	
	Next, let~$\Psi$ be the set of colours 
	of the vertices in $T\cap \cC_6$. If $|\Psi|=3$ and neither~$x$ nor~$y$ governs~$T$, 
	then~\ref{it:b} holds. If $|\Psi|=2$, then $T\cap \cC_6$ consists of two vertices 
	and the vertex between them governs~$T$. If $|\Psi|\le 1$ and, 
	moreover, $|T\cap \cC_6|\le 1$, then
	there is a hexagonal vertex governing~$T$. In all remaining cases $T\cap \cC_6$ 
	is a pair of vertices of the same colour.
	
	But if $\re{\{a,u\}}\subseteq T$, then one of $\re{0}$, $\re{i-1}$ governs~$T$ 
	(depending on whether the vertices in~$T\cap \G_i$ are \gr{green} 
	or \bl{blue}). Similarly, if $\gr{\{b,v\}}\subseteq T$ and none 
	of $\gr{i}$, $\gr{2i-2}$, $\gr{2i-1}$ governs~$T$, then~\ref{it:c} holds. 
	Finally, if $\bl{\{c,w\}}\subseteq T$ and none of $\bl{2i}$, $\bl{3i-2}$ 
	governs~$T$, then~\ref{it:d} or~\ref{it:e} holds. 
\end{proof}

In the remainder of this subsection we study situations where 
$\grot^{\mu\nu}_i\subseteq G\in \fD_4$ and for some $q\in V(G)$ the set 
$\Nn(q)\cap V(\grot_i^{\mu\nu})$ is in one of the cases \ref{it:b}\,--\,\ref{it:f}.
We begin with a couple of simple applications of the cube lemma. 

\begin{lemma}\label{l:318}
	Let $\grot_i^{\mu\nu}\subseteq G\subseteq \fD_4$ and $q\in V(G)$. 
	\begin{enumerate}[label=\alabel]
		\item \label{it:318a} If $\re{u}, \gr{v}, \bl{w}\in \Nn(q)$, 
			then $x\in \Nn(q)$.
		\item\label{it:318b} If $\re{a}, \gr{b}, \bl{c}\in \Nn(q)$ and $\mu=0$, 
			then $y\in \Nn(q)$.
	\end{enumerate}
\end{lemma}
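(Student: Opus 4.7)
The plan for both parts is to apply the cube lemma (Lemma~\ref{l:31}) to a non-induced cube that lives inside $G$ and is naturally assembled from $q$, the six hexagonal vertices, and whichever of $x$, $y$ is relevant.

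For part~\ref{it:318a} I would consider the cube
\begin{center}
\begin{tabular}{cc}
$(Q_1)$&
\begin{tabular}{c|c|c|c}
$q$& $\re{a}$& $\gr{b}$ & $\bl{c}$ \\ \hline
$x$& $\re{u}$& $\gr{v}$ & $\bl{w}$
\end{tabular}
\end{tabular}
\end{center}
in which $q$ is opposite to $x$. Nine of the twelve required cross edges are present by the Vega definition (the vertex $x$ attaches to $\{\re{a},\gr{b},\bl{c}\}$ and the hexagon $\cC_6$ supplies all edges between the two colour classes $\{\re{a},\gr{b},\bl{c}\}$ and $\{\re{u},\gr{v},\bl{w}\}$), while the remaining three, $q\re{u}$, $q\gr{v}$, $q\bl{w}$, are the hypothesis. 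The upper row is independent because each of the potential edges $q\re{a}$, $q\gr{b}$, $q\bl{c}$ would combine with one hexagon edge and one hypothesised edge to form a triangle; the lower row is independent for the analogous reason, using common neighbours of $x$ with $\re{u}$, $\gr{v}$, $\bl{w}$ among $\{\re{a},\gr{b},\bl{c}\}$. Finally, the three remaining diagonals $\re{a}\re{u}$, $\gr{b}\gr{v}$, $\bl{c}\bl{w}$ are non-edges of $G$: since $i\ge 2$ each pair has a common neighbour in $\Gr$, $\Gg$, or $\Gb$, so any such edge would produce a triangle. The cube lemma therefore forces the fourth diagonal $qx$ to lie in $E(G)$, as desired.

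Part~\ref{it:318b} is the mirror argument. When $\mu=0$ the vertex $y$ belongs to $\grot_i^{\mu\nu}$, and the symmetric cube
\begin{center}
\begin{tabular}{cc}
$(Q_2)$&
\begin{tabular}{c|c|c|c}
$q$& $\re{u}$& $\gr{v}$ & $\bl{w}$ \\ \hline
$y$& $\re{a}$& $\gr{b}$ & $\bl{c}$
\end{tabular}
\end{tabular}
\end{center}
satisfies the hypotheses of Lemma~\ref{l:31} by verifications identical in spirit: all cross edges are either hypothesis ($q\re{a}$, $q\gr{b}$, $q\bl{c}$) or present in the Vega graph; both rows are independent by triangle-freeness; and the three ``non-$qy$'' diagonals $\re{u}\re{a}$, $\gr{v}\gr{b}$, $\bl{w}\bl{c}$ are ruled out as before via common neighbours in $\Gr$, $\Gg$, $\Gb$. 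Hence $qy\in E(G)$.

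I do not anticipate any serious obstacle: the argument is essentially a clean double application of Lemma~\ref{l:31} to a cube that is already visible inside the hexagon-plus-$xy$ gadget of the Vega graph, once $q$ is adjoined as the antipode of $x$ or $y$.
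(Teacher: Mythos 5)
Your argument is correct and is precisely the paper's: the same cube $(Q_1)$ is the one used for part~\ref{it:318a}, and part~\ref{it:318b} follows by the $\sigma$-symmetry exchanging $x\leftrightarrow y$, $\re{a}\leftrightarrow\re{u}$, $\gr{b}\leftrightarrow\gr{v}$, $\bl{c}\leftrightarrow\bl{w}$ (i.e.\ your $(Q_2)$). The extra verifications you spell out (row independence, the three diagonals being non-edges) are automatic consequences of triangle-freeness once the twelve cross-edges are in place and are left implicit in the paper, but your reasoning for them is sound.
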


\begin{proof}
	Part~\ref{it:318a} follows from the fact that the cube
	\begin{center}
		\begin{tabular}{cc}
			$(Q)$&
		\begin{tabular}{c|c|c|c}
			$q$&  $\re{a}$&  $\gr{b}$ & $\bl{c}$\\ \hline
			$x$&  $\re{u}$&  $\gr{v}$ & $\bl{w}$
		\end{tabular}
			\end{tabular}
	\end{center}	
	cannot be induced. By $\sigma$-symmetry part~\ref{it:318b} holds as well. 
\end{proof}

\begin{lemma}\label{l:319}
	Suppose $\grot_i^{\mu\nu}\subseteq G\in \fD_4$ and that $q\in V(G)$ is 
	adjacent to $\gr{b}$, $\gr{v}$. 
	\begin{enumerate}[label=\alabel]
		\item\label{it:319a} If $\re{j}\in \Nn(q)\cap \Gr$, then 
			$\re{[0,j]}\subseteq \Nn(q)$.
		\item\label{it:319b} If $\bl{j}\in \Nn(q)\cap\Gb$, then 
			$\bl{[j,3i-2]}\subseteq \Nn(q)$.
	\end{enumerate}
\end{lemma}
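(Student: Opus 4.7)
The plan is to reduce both parts to the cube lemma (Lemma~\ref{l:31}) by exhibiting, for each vertex whose edge with~$q$ we wish to force, a cube inside $\grot_i^{\mu\nu}\cup\{q\}$ in which the only unaccounted-for $a_ib_i$-pair is precisely the desired edge.

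For part~\ref{it:319a} I would fix $k\in[0,j-1]$ and use the three known edges $q\gr{b}$, $q\gr{v}$, $q\re{j}$ as the cross-edges emanating from~$q$ in the cube
\begin{center}
\begin{tabular}{c|c|c|c}
$q$ & $\re{a}$ & $\re{u}$ & $\gr{k+i}$\\ \hline
$\re{k}$ & $\gr{b}$ & $\gr{v}$ & $\re{j}$
\end{tabular}
\end{center}
The remaining nine cross-edges are immediate from the definition of $\grot_i^{\mu\nu}$: the hexagon $\cC_6$ contributes $\re{a}\gr{v}$ and $\re{u}\gr{b}$; the adjacencies of $\re{a},\re{u}$ with every red vertex of $\G_i$ take care of $\re{a}\re{k}$, $\re{u}\re{k}$, $\re{a}\re{j}$, $\re{u}\re{j}$; and $\gr{k+i}\in\Gg=\Nn(\gr{b})\cap\Nn(\gr{v})$ together with $(k+i)-k=i$ handles $\gr{k+i}\gr{b}$, $\gr{k+i}\gr{v}$, $\gr{k+i}\re{k}$. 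Meanwhile the three pairs $\re{a}\gr{b}$, $\re{u}\gr{v}$, $\gr{k+i}\re{j}$ are non-edges---the first two because the hexagon is induced, the third because $\Nn_{\G_i}(\re{j})\cap\Gg=\{\gr{m}:m\ge j+i\}$ and $k+i<j+i$. An application of Lemma~\ref{l:31} then forces $q\re{k}\in E(G)$, as desired.

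Part~\ref{it:319b} is entirely parallel: for each $k\in(j,3i-2]$ I would invoke the cube
\begin{center}
\begin{tabular}{c|c|c|c}
$q$ & $\bl{c}$ & $\bl{w}$ & $\gr{k-i}$\\ \hline
$\bl{k}$ & $\gr{b}$ & $\gr{v}$ & $\bl{j}$
\end{tabular}
\end{center}
and run the same verification, now using that $\gr{k-i}\in\Gg$ is a green neighbour of $\bl{k}$ but not of $\bl{j}$, since $\Nn_{\G_i}(\bl{j})\cap\Gg=\{\gr{m}:m\le j-i\}$ and $k-i>j-i$.

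I do not anticipate a substantive obstacle. The only mild care point is to check that the Andr\'asfai vertex in the fourth column of the cube actually belongs to $\grot_i^{\mu\nu}$, i.e., is not the possibly-absent vertex $\gr{2i-1}$; but the bounds $k\le j-1\le i-2$ in~(a) give $k+i\le 2i-2$, while $k>j\ge 2i$ together with $k\le 3i-2$ in~(b) give $i+1\le k-i\le 2i-2$, so in both cases the chosen vertex lies safely in $\{\gr{i},\ldots,\gr{2i-2}\}$ irrespective of the value of~$\nu$.
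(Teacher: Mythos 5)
Your proof is correct and follows exactly the same route as the paper: for part~\ref{it:319a} the paper considers $\re{t}\in\re{[0,j)}$ and appeals to the identical cube with $\gr{t+i}$ in the last column, and for~\ref{it:319b} it uses the identical cube with $\gr{t-i}$; in each case the cube lemma forces the desired edge. Your extra check that the auxiliary green vertex lies in $\gr{[i,2i-2]}$ and is therefore always present regardless of $\nu$ is a welcome detail that the paper leaves implicit.
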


\begin{proof}
	For the proof of part~\ref{it:319a} we consider any $\re{t}\in \re{[0,j)}$ 
	and look at the cube 
		\begin{center}
		\begin{tabular}{cc}
				$(Q)$&
		\begin{tabular}{c|c|c|c}
			$q$&  $\re{a}$&  $\re{u}$ & $\gr{t+i}$
			\\ \hline 
			$\re{t}$&  $\gr{b}$&  $\gr{v}$ & $\re{j}$
		\end{tabular} 
		\end{tabular}.
	\end{center}	
	Since $(Q)$ is not induced and $\re{j}\gr{(t+i)}\not\in E(G)$, 
	we have indeed $q\re{t}\in E(G)$.
	Similarly, to prove part~\ref{it:319b}, we observe that for given 
	$\bl{t}\in \bl{(j,3i-2]}$ the facts that the cube 
	\begin{center}
		\begin{tabular}{cc}
			$(Q)$&
	\begin{tabular}{c|c|c|c}
		$q$&  $\bl{c}$&  $\bl{w}$ & $\gr{t-i}$
		\\ \hline
		$\bl{t}$&  $\gr{b}$&  $\gr{v}$ & $\bl{j}$
	\end{tabular}
	\end{tabular}
\end{center}	
	is not induced and $\gr{(t-i)}\bl{j}\not\in E(G)$
	yield the edge $q\bl{t}\in E(G)$.
\end{proof}

\begin{cor}\label{cor:320}
	If $\grot_i^{\mu 1}\subseteq G \in \fD_4$ and some $q\in V(G)$ is adjacent 
	either to $\gr{b}$, $\gr{v}$, $\re{i-1}$ or to $\bl{c}$, $\bl{w}$, $\re{0}$, 
	then~$G$ has a subgraph isomorphic to~$\grot_i^{\mu 0}$.
\end{cor}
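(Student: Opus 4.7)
The plan is to add $q$ to the given copy of $\grot_i^{\mu 1}$ so that $q$ plays the role of the vertex $\gr{2i-1}$ that is missing in the passage from $\grot_i^{\mu 0}$ to $\grot_i^{\mu 1}$. Recalling that the inner vertex $\gr{2i-1}\in \G_i$ has neighbours in $\G_i$ precisely the red vertices $\Gr$ (a short calculation of differences modulo $3i-1$), and is in addition joined to the two hexagonal green vertices $\gr{b}$ and $\gr{v}$, the whole neighbourhood of $\gr{2i-1}$ in $\grot_i^{\mu 0}$ equals $\Gr\cup\{\gr{b},\gr{v}\}$. So it suffices to produce all these edges incident with $q$ and to check $q\notin V(\grot_i^{\mu 1})$.

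First I would reduce the two cases of the hypothesis to one. The automorphism $\tau_1$ of $\grot_i^{\mu 1}$ introduced in Subsection~\ref{subsec:auto} exchanges $\gr{b}\lra\bl{c}$ and $\gr{v}\lra\bl{w}$, and acts on $\G_i-\gr{2i-1}$ by the reflection $j\longmapsto i-1-j$, which in particular swaps $\re{0}\lra\re{i-1}$. Hence, if $q$ is adjacent to $\bl{c}$, $\bl{w}$, $\re{0}$, then after replacing the given embedding $\grot_i^{\mu 1}\hookrightarrow G$ by its composition with $\tau_1$ the vertex $q$ becomes adjacent to $\gr{b}$, $\gr{v}$, $\re{i-1}$. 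So we may assume the first case throughout.

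Now Lemma~\ref{l:319}\ref{it:319a}, applied to $\re{j}=\re{i-1}$, yields at once $\Gr\subseteq\Nn(q)$, and combined with the hypothesis this gives $\{\gr{b},\gr{v}\}\cup\Gr\subseteq\Nn(q)$. Let $H$ be the subgraph of $G$ with vertex set $V(\grot_i^{\mu 1})\cup\{q\}$ consisting of the edges of $\grot_i^{\mu 1}$ together with the edges from $q$ to the vertices of $\{\gr{b},\gr{v}\}\cup\Gr$. Then $H$ is a subgraph of $G$, and the map that is the identity on $V(\grot_i^{\mu 1})$ and sends $q$ to $\gr{2i-1}$ is an isomorphism $H\longrightarrow \grot_i^{\mu 0}$, since the neighbourhood of $\gr{2i-1}$ in $\grot_i^{\mu 0}$ is exactly the image of $\Nn_H(q)$ under that map.

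The only point that still needs checking is $q\notin V(\grot_i^{\mu 1})$, so that $H$ indeed has $|V(\grot_i^{\mu 0})|$ vertices. The common neighbours of $\gr{b}$ and $\gr{v}$ in $\grot_i^{\mu 1}$ are exactly the remaining green vertices of $\G_i$, i.e., $\{\gr{i},\dots,\gr{2i-2}\}$, because the four hexagonal vertices $\re{a},\re{u},\bl{c},\bl{w}$ and the two vertices $x,y$ each lie in the neighbourhood of only one of $\gr{b}$, $\gr{v}$. Meanwhile, a differences-modulo-$3i-1$ computation in the Andr\'asfai graph shows that the neighbours of $\re{i-1}$ in $\G_i$ form the set $\{\gr{2i-1}\}\cup\Gb$, which is disjoint from $\{\gr{i},\dots,\gr{2i-2}\}$. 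So no vertex of $\grot_i^{\mu 1}$ can be adjacent to all three of $\gr{b}$, $\gr{v}$, $\re{i-1}$, and in particular $q$ cannot belong to $V(\grot_i^{\mu 1})$. The whole argument is short; the single nontrivial step is the invocation of Lemma~\ref{l:319}\ref{it:319a}, and no obstacle of substance arises.
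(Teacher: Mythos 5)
Your proof is correct and follows the same route as the paper's: reduce to one case via $\tau_1$, then apply Lemma~\ref{l:319}\ref{it:319a} to get $\Gr\subseteq\Nn(q)$ so that $q$ can assume the r\^ole of $\gr{2i-1}$. The paper's version is terser and omits your final verification that $q\notin V(\grot_i^{\mu 1})$, but that is a harmless extra check rather than a different argument.
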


\begin{proof}
	Because of the automorphism~$\tau_1$ it suffices to treat the case 
	$\gr{b}, \gr{v}, \re{i-1}\in \Nn(q)$ (recall that $\tau_1(\bl{c})=\gr{b}$, 
	$\tau_1(\bl{w})=\gr{v}$, $\tau_1(\re{0})=\re{i-1}$). Lemma~\ref{l:319}\ref{it:319a} 
	yields $\Gr\subseteq \Nn(q)$ and, therefore,~$q$ can play the r\^{o}le 
	of~$\gr{2i-1}$.
\end{proof}

\begin{lemma}\label{l:321}
	Let $\grot_i^{\mu\nu}\subseteq G\in \fD_4$ and let $q\in V(G)$ be adjacent 
	to $\bl{c}$, $\bl{w}$. 
	\begin{enumerate}[label=\alabel]
		\item\label{it:321a} If $\re{j}\in \Nn(q)\cap\Gr$, then 
			$\re{[j,i-1]}\subseteq \Nn(q)$.
		\item\label{it:321b} If $\gr{j}\in \Nn(q)\cap \Gg$, then 
			$\gr{[i,j]}\subseteq \Nn(q)$.
	\end{enumerate}
\end{lemma}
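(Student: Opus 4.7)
The plan is to prove both parts by a single cube-lemma argument that mirrors the structure of Lemma~\ref{l:319}, but with the hexagonal pair $\{\gr{b},\gr{v}\}$ replaced by $\{\bl{c},\bl{w}\}$.

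For part~\ref{it:321a}, assuming without loss of generality that $j\le i-2$ (the case $j=i-1$ being vacuous), I would fix a red vertex $\re{t}$ with $j<t\le i-1$ and apply the cube lemma to
\begin{center}
\begin{tabular}{cc}
$(Q)$&
\begin{tabular}{c|c|c|c}
$q$&  $\re{a}$&  $\re{u}$ & $\bl{j+2i}$
\\ \hline
$\re{t}$&  $\bl{c}$&  $\bl{w}$ & $\re{j}$
\end{tabular}
\end{tabular}.
\end{center}
The twelve cross edges all lie in $\grot_i^{\mu\nu}\subseteq G$: the three edges from $q$ are given, the vertices $\re{a},\re{u}$ are adjacent to every member of $\Gr$ and to their hexagonal neighbours $\bl{w},\bl{c}$, and the blue vertex $\bl{j+2i}$ has red neighbourhood $\re{[j+1,i-1]}$ in~$\G_i$, containing $\re{t}$ but not $\re{j}$, while being automatically adjacent to $\bl{c}$ and $\bl{w}$. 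Of the four column pairs, $(\re{a},\bl{c})$ and $(\re{u},\bl{w})$ cannot be edges of the triangle-free graph~$G$ because of the hexagonal paths through $\gr{v}$ and $\gr{b}$ respectively, and $(\bl{j+2i},\re{j})$ cannot be an edge because $\gr{j+i}$ is a common neighbour of them in~$\G_i$. Hence $(Q)$ forces $q\re{t}\in E(G)$.

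Part~\ref{it:321b} is handled by the exact analogue for a green vertex $\gr{s}$ with $i\le s<j$ via
\begin{center}
\begin{tabular}{cc}
$(Q')$&
\begin{tabular}{c|c|c|c}
$q$&  $\gr{b}$&  $\gr{v}$ & $\bl{s+i}$
\\ \hline
$\gr{s}$&  $\bl{c}$&  $\bl{w}$ & $\gr{j}$
\end{tabular}
\end{tabular};
\end{center}
here $\bl{s+i}$ is the blue vertex whose green neighbourhood in~$\G_i$ is $\gr{[i,s]}$, containing $\gr{s}$ but not $\gr{j}$, and the triangle obstructions to the three auxiliary column pairs use $\re{u}$, $\re{a}$, and any common red neighbour of $\bl{s+i}$ and $\gr{j}$ in $\G_i$ (one exists, since $s<j$).

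The only conceptual step is the selection of the fourth-column blue vertex in each cube so that its red (respectively green) $\G_i$-neighbourhood separates the source vertex from the `base' vertex $\re{j}$ (respectively $\gr{j}$); after that, every verification reduces to reading off the definitions of the Andr\'asfai graph and the external hexagon, and no real obstacle remains.
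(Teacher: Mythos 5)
Your proof is correct and follows essentially the same cube-lemma strategy as the paper's proof; the only (immaterial) difference is that in part~\ref{it:321a} you use the single fixed blue vertex $\bl{j+2i}$ in the fourth column for all choices of $\re{t}$, whereas the paper uses the $t$-dependent vertex $\bl{t-i}$ (that is, $\bl{t+2i-1}$), while in part~\ref{it:321b} your cube coincides with the paper's.
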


\begin{proof}
	As in the proof of Lemma~\ref{l:319} we consider any vertices 
	$\re{t}\in \re{(j,i-1]}$, $\gr{s}\in \gr{[i,j)}$ and look at the cubes	
	\begin{center}
	\begin{tabular}{ccc}
	\begin{tabular}{cc}
		$(Q)$&
		\begin{tabular}{c|c|c|c}
			$q$&  $\re{a}$&  $\re{u}$ & $\bl{t-i}$
			\\ \hline
			$\re{t}$&  $\bl{c}$&  $\bl{w}$ & $\re{j}$
		\end{tabular}
	\end{tabular} &
	and &
	\begin{tabular}{cc}
		$(Q)$&
		\begin{tabular}{c|c|c|c}
			$q$&  $\gr{b}$&  $\gr{v}$ & $\bl{s+i}$
			\\ \hline
			$\gr{s}$&  $\bl{c}$&  $\bl{w}$ & $\gr{j}$
		\end{tabular}
	\end{tabular}
		\end{tabular},			
	\end{center}
	respectively.
\end{proof}

\begin{lemma}\label{l:321r}
	Let $\grot_i^{\mu\nu}\subseteq G\in \fD_4$ and let $q\in V(G)$ be adjacent 
	to $\re{a}$, $\re{u}$. 
	\begin{enumerate}[label=\alabel]
		\item\label{it:321ra} If $\gr{j}\in \Nn(q)\cap\Gg$, then 
			$\gr{[j,2i-1-\nu]}\subseteq \Nn(q)$.
		\item\label{it:321rb} If $\bl{j}\in \Nn(q)\cap \Gb$, then 
			$\bl{[2i,j]}\subseteq \Nn(q)$.
	\end{enumerate}
\end{lemma}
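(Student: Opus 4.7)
The plan is to mimic the proof of Lemma~\ref{l:321} verbatim, exploiting the threefold symmetry of the hexagon $\cC_6$: the role played there by the pair $\{\bl{c}, \bl{w}\}$ (jointly adjacent to $\Gb$) is here played by $\{\re{a}, \re{u}\}$ (jointly adjacent to $\Gr$). Both parts reduce to a single application of the cube lemma (Lemma~\ref{l:31}) applied to cubes obtained by cyclically permuting the three `diagonal' pairs $\{\re{a}, \re{u}\}$, $\{\gr{b}, \gr{v}\}$, $\{\bl{c}, \bl{w}\}$ of the hexagon.

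For part~\ref{it:321ra}, given $\gr{t}\in\gr{(j, 2i-1-\nu]}$, I would consider the cube
\begin{center}
\begin{tabular}{cc}
$(Q)$&
\begin{tabular}{c|c|c|c}
$q$&$\gr{b}$&$\gr{v}$&$\re{t-i}$\\ \hline
$\gr{t}$&$\re{a}$&$\re{u}$&$\gr{j}$
\end{tabular}
\end{tabular}.
\end{center}
All twelve cross-edges lie in $G$: the three at $q$ by hypothesis; $\gr{b}\re{u}$ and $\gr{v}\re{a}$ are hexagon edges; the pair $\{\gr{b}, \gr{v}\}$ is adjacent to all of $\Gg$ (hence to $\gr{t}, \gr{j}$), the pair $\{\re{a}, \re{u}\}$ is adjacent to all of $\Gr$ (hence to $\re{t-i}\in\Gr$), and $\re{(t-i)}\gr{t}$ is a short edge of $\G_i$. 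Of the four matching pairs, $\gr{b}\re{a}$ and $\gr{v}\re{u}$ are non-hexagonal, while $\re{(t-i)}\gr{j}$ is not an edge of $\G_i$ since the difference $j-(t-i)$ lies in $[1, i-1]$ modulo $3i-1$. The cube lemma therefore forces the fourth matching pair $q\gr{t}$ to be an edge of $G$.

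Part~\ref{it:321rb} is entirely parallel. For $\bl{t}\in\bl{[2i, j)}$ I would apply the cube lemma to
\begin{center}
\begin{tabular}{cc}
$(Q)$&
\begin{tabular}{c|c|c|c}
$q$&$\bl{c}$&$\bl{w}$&$\re{t-2i+1}$\\ \hline
$\bl{t}$&$\re{a}$&$\re{u}$&$\bl{j}$
\end{tabular}
\end{tabular},
\end{center}
in which $\re{t-2i+1}\in\Gr$, the $\G_i$-edge $\re{(t-2i+1)}\bl{t}$ has difference $2i-1$, and the non-edge $\re{(t-2i+1)}\bl{j}$ reflects the fact that $j-(t-2i+1)\in[2i, 3i-3]$ modulo $3i-1$. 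No step presents a genuine obstacle; the only item requiring attention is the bookkeeping of indices in $\G_i$, to confirm that the putative edges and non-edges of the Andr\'asfai graph indeed behave as claimed.
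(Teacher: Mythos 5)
Your proof is correct and uses exactly the same cube construction as the paper (the paper writes $\re{s+i}$ in the second cube instead of your $\re{t-2i+1}$, but since $(s+i)-(s-2i+1)=3i-1$ these are the same vertex of $\G_i$ modulo $3i-1$). You have simply spelled out the index bookkeeping that the paper compresses into ``arguing similarly again.''
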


\begin{proof}
	Arguing similarly again, we consider any vertices $\gr{t}\in \gr{(j,2i-1-\nu]}$, 
	$\bl{s}\in \bl{[2i,j)}$ and look at the cubes	
	\begin{center}
		\begin{tabular}{ccc}
							\begin{tabular}{cc}
					$(Q)$&
					\begin{tabular}{c|c|c|c}
						$q$&  $\gr{b}$&  $\gr{v}$ & $\re{t-i}$
						\\ \hline
						$\gr{t}$&  $\re{a}$&  $\re{u}$ & $\gr{j}$
					\end{tabular}
				\end{tabular} &
								and &
								\begin{tabular}{cc}
					$(Q)$&
					\begin{tabular}{c|c|c|c}
						$q$&  $\bl{c}$&  $\bl{w}$ & $\re{s+i}$
						\\ \hline
						$\bl{s}$&  $\re{a}$&  $\re{u}$ & $\bl{j}$
					\end{tabular}
				\end{tabular}
							\end{tabular},
		\end{center}
	respectively.	
\end{proof}

\begin{lemma}\label{l:322}
	If $\grot_i^{\mu 0}\subseteq G\in \fD_4$ and some vertex $q\in V(G)$ is adjacent to 
	either $\bl{c}$, $\bl{w}$, $\re{0}$ or to $\bl{c}$, $\bl{w}$, $\gr{2i-1}$, 
	then~$G$ contains a subgraph isomorphic to~$\grot_{i+1}^{\mu 1}$.
\end{lemma}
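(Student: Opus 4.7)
My plan is, first, to reduce to the case where $q$ is adjacent to $\bl{c}$, $\bl{w}$, and $\re{0}$.  The automorphism $\tau_0$ of $\grot_i^{\mu 0}$ fixes $\bl{c}$ and $\bl{w}$ while exchanging $\re{0}$ with $\gr{2i-1}$, so the two branches of the disjunctive hypothesis are interchangeable.  In this first case, Lemma~\ref{l:321}\ref{it:321a} yields $\Gr\subseteq\Nn(q)$, meaning $q$ is adjacent to every red vertex of the inner Andr\'asfai graph $\G_i$.

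Next, I would exhibit a copy of $\grot_{i+1}^{\mu 1}$ in $G$ via an explicit identification.  Keep the hexagon and the vertices $x$ (and $y$, if $\mu=0$) unchanged; apply the shift pattern $(s_r,s_g,s_b)=(1,2,3)$ so that each old red $\re{j}$ plays the role of the new $\re{j+1}$, each old green $\gr{j}$ (for $j\le 2i-2$) plays the role of the new $\gr{j+2}$, and each old blue $\bl{j}$ plays the role of the new $\bl{j+3}$; drop the old green $\gr{2i-1}$, since its image would land on the removed $\gr{2i+1}$; and let $q$ fill the slot of the new blue $\bl{2i+2}$.  A direct calculation verifies that this shift transforms the old connection set $\{i,\dots,2i-1\}\pmod{3i-1}$ consistently into $\{i+1,\dots,2i+1\}\pmod{3i+2}$, so every cross-colour edge of the old inner graph lifts to the required edge of the new one; and the known neighbourhood $\{\bl{c},\bl{w}\}\cup\Gr$ of $q$ matches precisely the required adjacencies of $\bl{2i+2}$ among the already-identified vertices.

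Two new inner slots remain to be filled: the new red $\re{0}$ and the new green $\gr{i+1}$.  The required adjacencies are that $\gr{i+1}$ must be adjacent to $\gr{b}$, $\gr{v}$, $q$, all of $\Gb$, and the new $\re{0}$; while $\re{0}$ must be adjacent to $\re{a}$, $\re{u}$, the new $\gr{i+1}$, and the images of the surviving old greens.  I would locate these two vertices using Property~\Dp{4} together with Lemma~\ref{l:35} and Lemma~\ref{l:321r}\ref{it:321ra}, applied to configurations inside $\grot_i^{\mu 0}\cup\{q\}$ that force the required common neighbours.  Since $\grot_{i+1}^{\mu 1}$ is maximal triangle-free and $G$ is triangle-free, any valid vertex identification automatically yields an induced copy, so once the two vertices are produced it suffices to check that the required edges lie in $G$.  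The main obstacle is the simultaneous existence of both new vertices together with the crucial edge between them: their neighbourhoods are tightly constrained, so the search has to be organised carefully, first fixing one vertex and then locating the second in relation to it.
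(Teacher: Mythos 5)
Your opening steps match the paper: reduce via $\tau_0$ to the case $\bl{c},\bl{w},\re{0}\in\Nn(q)$ and invoke Lemma~\ref{l:321}\ref{it:321a} to get $\Gr\subseteq\Nn(q)$. After that, however, your choice of embedding creates a gap.

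You propose a colour-preserving shift in which the old $\gr{2i-1}$ is deleted and $q$ becomes the new $\bl{2i+2}$. A vertex count shows this forces you to produce \emph{two} fresh inner vertices (your $\re{0}$ and $\gr{i+1}$) with mutually dependent constraints: each must be adjacent to the other, one must additionally see $q$ and all of $\Gb$, the other must see all of the surviving greens. You say these would come from \Dp{4} together with Lemma~\ref{l:35} and Lemma~\ref{l:321r}\ref{it:321ra}, and you acknowledge that the coupled constraints are ``the main obstacle.'' That obstacle is not addressed, and it is precisely the substance of the lemma; without producing the two vertices and verifying the joint edge, the proof does not close.

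The paper sidesteps this by \emph{not} deleting $\gr{2i-1}$. It instead permutes the colour classes cyclically (old red becomes green, old green becomes blue, and old blue together with $q$ and one further vertex $z$ becomes red), with a corresponding rotation of the hexagon roles. This leaves exactly one new inner vertex $z$ to be found, and it is located via a single application of \Dp{2} to the hexagon $\gr{v}-\gr{(2i-1)}-\gr{b}-\bl{w}-q-\bl{c}$: either the triple $\bl{c},\bl{w},\gr{2i-1}$ or the triple $\gr{b},\gr{v},q$ has a common neighbour $z$, and in each case Lemma~\ref{l:321}\ref{it:321b} (respectively Lemma~\ref{l:319}\ref{it:319b} after one cube argument gives $\bl{2i}\in\Nn(z)$) yields the remaining adjacencies of $z$. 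You would need to either complete your two-vertex search (and I do not see a clean way of producing the two vertices simultaneously together with the edge between them), or switch to the single-vertex colour-permutation embedding.
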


	\begin{figure}[ht]
	\centering
	\def\si{3cm}
	\def\sj{1.6}
	\def\sk{1.8}
	\begin{subfigure}[b]{.32\textwidth}
	\centering
	\begin{tikzpicture}[scale=1]
		\phantom{\node at (0,2.3) {s};}
		\foreach \i in {1,...,7} {
			\coordinate (a\i) at (\i*60:1.5);
		}
		\draw [cyan, thick] (a1)--(a2);
		\draw [green!70!black, thick] (a2)--(a3)--(a4);
		\draw [cyan, thick] (a4)--(a5);
		\draw (a5)--(a6)--(a1);
		\foreach \i in {1,...,6} {
			\pgfmathsetmacro\j{\i+1};
			\fill (a\i) circle (1.5pt);		
		}
		\fill [blue] (a1) circle (1.5pt);
		\fill [blue] (a5) circle (1.5pt);
		\foreach \i in {2,3,4} \fill [green!70!black] (a\i) circle (1.5pt);
		\node [left] at (a2) {$\gr{b}$};
		\node [left] at (a3) {\small $\gr{2i-1}$};
		\node [left] at (a4) {$\gr{v}$};
		\node [right] at (a5) {$\bl{c}$};
		\node [right] at (a6) {$q$};
		\node [right] at (a1) {$\bl{w}$};
	\end{tikzpicture}
	\vskip -.1cm
	\caption{}	
	\label{fig:lem321A}
	\end{subfigure}
	\hfill
	\begin{subfigure}[b]{.32\textwidth}
	\centering
	\begin{tikzpicture}[scale=.6]			
		\coordinate (v) at (30:\si);
		\coordinate (a) at (90:\si);
		\coordinate (w) at (150:\si);
		\coordinate (b) at (210:\si);
		\coordinate (u) at (270:\si);
		\coordinate (c) at (330:\si);
		\coordinate (x) at (3.8,0);
		\foreach \i in {1,...,36}{
			\coordinate (v\i) at (\i*10:\sj cm);
		}
		\draw [thick](c)--(x);
		\draw [rounded corners=20] (x) -- (2.5, -2.8)--(0,-3.4)--(-2.2,-2.5)--(b);
		\draw [rounded corners=30] (x) -- (2.8,2.6)--(a);
		\draw  (a)--(v)--(c)--(u)--(b)--(w)--(a);
		\draw [red!70!black, thick,domain=-20:90] plot ({\sj*cos(\x)},{\sj*sin(\x)}); 
		\draw [blue, thick,domain=100:210] plot ({\sj*cos(\x)},{\sj*sin(\x)}); 
		\draw [green!60!black, thick,domain=230:330] plot ({\sj*cos(\x)},{\sj*sin(\x)}); 
		\draw [red!70!black ,domain=5:65, dashed] plot ({\sk*cos(\x)},{\sk*sin(\x)}); 
		\draw [blue, dashed, domain=115:202] plot ({\sk*cos(\x)},{\sk*sin(\x)}); 
		\draw [green!60!black, dashed,domain=240:327] plot ({\sk*cos(\x)},{\sk*sin(\x)}); 
		\foreach \i in {c, w}{
			\draw [fill=blue!75!white]  (\i) circle (3pt);
		}	
		\foreach \i in {a,u}{
			\draw [fill=red!75!white]  (\i) circle (3pt);
		}	
		\foreach \i in {v,b}{
			\draw [fill=green!75!white]  (\i) circle (3pt);
		}	
		\foreach \i in {9,34,7, 36}{
			\draw[red!75!black, very thick]  (v\i) circle (2pt);
			\fill[red!75!white]  (v\i) circle (2pt);
		}
		\foreach \i in {23,33}{
			\draw[green!75!black, very thick]  (v\i) circle (2pt);
			\fill[green!75!white]  (v\i) circle (2pt);
		}
		\foreach \i in {10,21}{
			\draw[blue!75!black, very thick]  (v\i) circle (2pt);
			\fill[blue!75!white]  (v\i) circle (2pt);
		}
		\fill (x) circle (3pt);
		\draw (v22) circle (3pt);
		\node at ($(a)+(0,-.3)$) {\tiny $\re{c}$};
		\node at ($(b)+(-.3,-.1)$) {\tiny $\gr{a}$};
		\node at ($(c)+(-.25,.25)$) {\tiny $\bl{b}$};
		\node at ($(v)+(-.25,-.25)$) {\tiny $\gr{u}$};
		\node at ($(w)+(-.3,.1)$) {\tiny $\bl{v}$};
		\node at ($(u)+(0,.3)$) {\tiny $\re{w}$};
		\node at ($(x)+(.2,.3)$) {\tiny $x$};
		\node at (-.1,.1) {\small missing};
		\node at (-.1,-.4) {\small vertex};
		\draw [->, thick] (-.1,-.7)[out=-90, in = 0] to (-1,-1);
		\node [scale = .8,blue] at ($(v10)+(-.25,.3)$) {\tiny $2i-1$};
		\node [scale = .8,blue] at ($(v21)+(-.3,0)$) {\tiny $i$};
		\node [scale = .8,green!60!black] at ($(v23)+(-.3,-.3)$) {\tiny $i-1$};
		\node [scale = .8,green!60!black] at ($(v33)+(.25,-.1)$) {\tiny $0$};
		\node [scale = .9,red!70!black] at ($(v9)+(.1,.3)$) {\tiny $q$};
		\node [scale = .8,red!70!black] at ($(v7)+(.1,.3)$) {\tiny $2i$};
		\node [scale = .7,red!70!black] at ($(v35)+(.57,.25)$) {\tiny $3i-2$};
		\node [scale = .9,red!70!black] at ($(v34)+(.25,0)$) {\tiny $z$};
	\end{tikzpicture}
	\vskip -.1cm
	\caption{}	
	\label{fig:lem321B}
	\end{subfigure}
	\hfill
	\begin{subfigure}[b]{.32\textwidth}
	\centering
	\begin{tikzpicture}[scale=.6]
		\coordinate (v) at (30:\si);
		\coordinate (a) at (90:\si);
		\coordinate (w) at (150:\si);
		\coordinate (b) at (210:\si);
		\coordinate (u) at (270:\si);
		\coordinate (c) at (330:\si);
		\foreach \i in {1,...,36}{
			\coordinate (v\i) at (\i*10:\sj cm);
		}
		\draw [thick](c)--(x);
		\draw [rounded corners=20] (x) -- (2.5, -2.8)--(0,-3.4)--(-2.2,-2.5)--(b);
		\draw [rounded corners=30] (x) -- (2.8,2.6)--(a);
		\draw  (a)--(v)--(c)--(u)--(b)--(w)--(a);
		\draw [red!60!black, thick,domain=-20:90] plot ({\sj*cos(\x)},{\sj*sin(\x)}); 
		\draw [blue, thick,domain=100:210] plot ({\sj*cos(\x)},{\sj*sin(\x)}); 
		\draw [green!60!black, thick,domain=230:330] plot ({\sj*cos(\x)},{\sj*sin(\x)}); 
		\draw [red!60!black, domain=-8:66, dashed] plot ({\sk*cos(\x)},{\sk*sin(\x)}); 
		\draw [blue, dashed, domain=115:202] plot ({\sk*cos(\x)},{\sk*sin(\x)}); 
		\draw [green!60!black, dashed,domain=240:300] plot ({\sk*cos(\x)},{\sk*sin(\x)}); 
		\foreach \i in {c, w}{
			\draw [fill=blue!75!white]  (\i) circle (3pt);
		}	
		\foreach \i in {a,u}{
			\draw [fill=red!75!white]  (\i) circle (3pt);
		}	
		\foreach \i in {v,b}{
			\draw [fill=green!75!white]  (\i) circle (3pt);
		}	
		\foreach \i in {9,34,7}{
			\draw[red!75!black, very thick]  (v\i) circle (2pt);
			\fill[red!75!white]  (v\i) circle (2pt);
		}
		\foreach \i in {23,33,31}{
			\draw[green!75!black, very thick]  (v\i) circle (2pt);
			\fill[green!75!white]  (v\i) circle (2pt);
		}
		\foreach \i in {10,21}{
			\draw[blue!75!black, very thick]  (v\i) circle (2pt);
			\fill[blue!75!white]  (v\i) circle (2pt);
		}
		\fill (x) circle (3pt);
		\draw (v22) circle (3pt);
		\node at ($(a)+(0,-.35)$) {\tiny $\re{b}$};
		\node at ($(b)+(-.3,-.1)$) {\tiny $\gr{c}$};
		\node at ($(c)+(-.3,.2)$) {\tiny $\bl{a}$};
		\node at ($(v)+(-.3,-.2)$) {\tiny $\gr{w}$};
		\node at ($(w)+(-.3,.1)$) {\tiny $\bl{u}$};
		\node at ($(u)+(0,.3)$) {\tiny $\re{v}$};
		\node at ($(x)+(.2,.3)$) {\tiny $x$};
		\node at (-.1,.1) {\small missing};
		\node at (-.1,-.4) {\small vertex};
		\draw [->, thick] (-.1,-.7)[out=-90, in = 0] to (-1,-1);
		\node [scale = .8,blue] at ($(v10)+(-.3,.25)$) {\tiny $i-1$};
		\node [scale = .8,blue] at ($(v21)+(-.3,0)$) {\tiny $0$};
		\node [scale = .8, green!60!black] at ($(v23)+(-.4,-.3)$) {\tiny $3i-2$};
		\node [scale = .8,green!60!black] at ($(v31)+(.1,-.3)$) {\tiny $2i$};
		\node [scale = .9,green!60!black] at ($(v33)+(.25,-.15)$) {\tiny $q$};
		\node [scale = .9,red!60!black] at ($(v9)+(.1,.25)$) {\tiny $z$};
		\node [scale = .8,red!70!black] at ($(v7)+(.1,.3)$) {\tiny $i$};
		\node [scale = .7,red!70!black] at ($(v34)+(.58,0)$) {\tiny $2i-1$};
	\end{tikzpicture}
	\vskip -.1cm
	\caption{}	
	\label{fig:lem321C}
	\end{subfigure}
	\vskip -.2cm
	\caption{The proof of Lemma~\ref{l:322}.}
	\label{fig:lem321}
	\vskip -.2cm
	\end{figure}

\begin{proof}
	Since $\tau_0$ fixes $\bl{c}$, $\bl{w}$ and exchanges $\re{0}$, $\gr{2i-1}$,
	it suffices to treat the case $\bl{c},\bl{w},\re{0}\in \Nn(q)$. 
	Lemma \ref{l:321}\ref{it:321a} yields $\Gr\subseteq \Nn(q)$. By~\Dp{2} applied to 
	the hexagon $\gr{v(2i-1)b}\bl{w}q\bl{c}$ (see Figure~\ref{fig:lem321A})
	there exists a vertex $z$ adjacent to either $\bl{c}$, $\bl{w}$, $\gr{2i-1}$ 
	or to $\gr{b}$, $\gr{v}$, $q$. 
	
	In the former case Lemma~\ref{l:321}\ref{it:321b} yields $\Gg\subseteq \Nn(z)$ 
	and the desired copy of $\grot^{\mu 1}_{i+1}$ is shown, with the possible 
	exception of $y$, in Figure~\ref{fig:lem321B}.
	
	So we can henceforth assume $\gr{b}, \gr{v}, q\in \Nn(z)$. Notice that $q\gr{i}$
	cannot be an edge of $G$, since otherwise $G$ contained the triangle $q\re{0}\gr{i}$. 
	Next, the cube
	\begin{center}
		\begin{tabular}{cc}
			$(Q)$&
			\begin{tabular}{c|c|c|c}
					$z$&$\bl{c}$&  $\bl{w}$&  $\gr{i}$
									\\ \hline
			$\bl{2i}$&	$\gr{b}$&  $\gr{v}$&   $q$
			\end{tabular}
		\end{tabular}
	\end{center}	
	cannot be induced, whence $\bl{2i}\in \Nn(z)$. Now Lemma~\ref{l:319}\ref{it:319b}
	tells us $\Gb\subseteq \Nn(z)$ and it remains to look at 
	Figure~\ref{fig:lem321C}.
\end{proof}
 		 
Recall that an independent 
set $T\subseteq V(\grot_i^{\mu\nu})$ 
is said to be small and if it intersects $\{x,y\}$ and two of the three 
sets $\Gr$, $\Gg$, $\Gb$. Since $T$ cannot intersect all three of them, 
there is a unique colour 
$\varphi \in \{\re{\mathrm{red}}, \gr{\mathrm{green}}, \bl{\mathrm{blue}}\}$
such that $T\cap \G_\varphi = \vn$; we call $\phi$ the {\it colour of $T$}. 
Due to $\sigma$-symmetry it usually
suffices to consider small sets containing $x$. 

Given a Vega graph~$\grot_i^{\mu\nu}$ we write $\fD_4(\grot_i^{\mu\nu})$ for the 
class of graphs in~$\fD_4$ that contain~$\grot_i^{\mu\nu}$ but no larger Vega graph,
i.e., no Vega graph~$\grot^{\mu'\nu'}_{i'}$ with more vertices than~$\grot_i^{\mu\nu}$. 
Eventually we shall show shat 
if $\grot_i^{\mu\nu}\subseteq G\in \fD_4(\grot_i^{\mu\nu})$, then there is 
no $q\in V(G)$ such that $\Nn(q)\cap V(\grot_i^{\mu\nu})$ is small.
In the special case $i=2$ this can often be inferred from earlier results using 
exceptional isomorphisms. 

\begin{lemma}\label{lem:2119}
	If $\grot_2^{\mu\nu}\subseteq G\in \fD_4(\grot_2^{\mu\nu})$, $q\in V(G)$, 
	and $T=\Nn(q)\cap V(\grot_i^{\mu\nu})$ is small, then $\mu=\nu=0$ and $T$
	is \re{red} or \gr{green}.
\end{lemma}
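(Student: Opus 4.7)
The plan is a case analysis on $(\mu,\nu)$ and on the colour of $T$, using the observation that every $\grot_2^{\mu\nu}$ contains $\grot_2^{11}\cong\grot$ as a subgraph (delete $y$ if present and $\gr{3}$ if present), so that the lemmata of Subsection~\ref{subsec:43} apply to a fixed copy $\grot\subseteq\Omega:=\grot_2^{\mu\nu}$. First I would reduce cases by symmetry: the $\sigma$-automorphism (valid when $\mu=0$) lets me assume $x\in T$, which is automatic when $\mu=1$; the sporadic isomorphism $\varrho\colon\grot_2^{\mu\nu}\to\grot_2^{\nu\mu}$ identifies the cases $(0,1)$ and $(1,0)$; and $\tau_0,\tau_1$ swap, respectively, red with green (when $\nu=0$) and green with blue (when $\nu=1$).

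Because each colour class of $\G_2$ has at most two elements, the enumeration of small sets $T\ni x$ in each remaining case is very short: a three-element core forced by independence and colour, optionally enlarged by one of the hexagonal vertices $\re{u},\gr{v},\bl{w}$. Translated via the isomorphism of Figure~\ref{fig:43}, every three-element core takes the $\grot$-form $\{a_{i-1},a_{i+1},b_i\}$, so the beautiful lemma (Lemma~\ref{l:beauti}) forces $q\sim a_i$, contradicting $T=\Nn(q)\cap V(\Omega)$. Only the four-element enlargements, where $q\in\Ext(\grot,a_i)$, survive this first pass.

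For each remaining four-element case I plan to use the $\DD_5$-action on $\grot_2^{11}$ to produce a relabelling in which $q$ is adjacent to the triple $\bl{c},\bl{w},\re{0}$. Applying Corollary~\ref{cor:320} to the relabelled copy of $\grot_2^{\mu 1}\subseteq G$ produces a $\grot_2^{\mu 0}\subseteq G$, and a further application of Lemma~\ref{l:322} yields $\grot_3^{\mu 1}\subseteq G$; in each of the cases $(\mu,\nu)\in\{(1,1),(1,0),(0,1)\}$ one of these two conclusions is already a Vega graph strictly larger than $\Omega$, contradicting $G\in\fD_4(\Omega)$. A straightforward inspection shows that this relabelling always exists, except when $(\mu,\nu)=(0,0)$ and the colour of $T$ is red or green, which matches the conclusion of the lemma.

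The main obstacle is the four-element blue case in $(\mu,\nu)=(0,0)$, that is, $T=\{x,\re{1},\gr{2},\bl{w}\}$: this set is $\grot$-isomorphic---via a $\DD_5$-rotation of the $\grot$-labels of $\grot_2^{11}$---to the green four-element set, so $\grot$-arguments alone cannot distinguish them. The remedy is to apply the reflection $\varrho\colon\grot_2^{00}\to\grot_2^{00}$, which is an automorphism when $\mu=\nu=0$; it sends $T$ to $\{\re{0},\re{1},\bl{c},\bl{w}\}$, so in this relabelled copy of $\grot_2^{00}$ the vertex $q$ is adjacent to the triple $\bl{c},\bl{w},\re{0}$ appearing in Lemma~\ref{l:322}, which yields $\grot_3^{01}\subseteq G$ and thus contradicts $G\in\fD_4(\grot_2^{00})$.
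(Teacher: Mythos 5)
Your plan is close in spirit to the paper's proof, but it contains genuine imprecisions in the cases $(\mu,\nu)\neq(0,0)$, where the claimed mechanism does not actually produce the needed contradiction.

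The paper's argument is simpler than what you propose: it does not separate three- from four-element small sets at all. For a blue small set it starts from the three-element core $\{x,\re{1},\gr{2}\}$, observes that $\rho$ sends this core to $\{\re{0},\bl{w},\bl{c}\}$ inside the relabelled copy $\grot_2^{\nu\mu}$, and then applies Corollary~\ref{cor:320} (when $\mu=1$) or Lemma~\ref{l:322} (when $\mu=0$) to this $\rho$-image directly, obtaining in every case a Vega graph that is strictly larger than $\Omega$. In other words, $\rho$ is \emph{the} relabelling, uniformly, and it is used on the core, not on the enlarged set. Your beautiful-lemma reduction from three- to four-element $T$ is correct but buys nothing, since the producing lemmata only need the three adjacencies.

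The more serious issue is your claim that for $(\mu,\nu)\in\{(1,1),(1,0),(0,1)\}$ a $\DD_5$-relabelling of the $\grot_2^{11}$-subgraph suffices. This is true only for $(1,1)$. For $(\mu,\nu)=(1,0)$ or $(0,1)$, rotating the $\grot_2^{11}$-subgraph and applying Corollary~\ref{cor:320} to it produces a $\grot_2^{10}$, which has the \emph{same} number of vertices as $\Omega$ — not a contradiction. Your proposed remedy, ``a further application of Lemma~\ref{l:322},'' then needs a vertex adjacent to $\bl{c},\bl{w},\re{0}$ (or $\bl{c},\bl{w},\gr{2i-1}$) of that freshly produced $\grot_2^{10}$; you do not exhibit such a vertex, and $q$ itself cannot play that r\^ole since it becomes the vertex $\gr{2i-1}$ of the new copy, which is not adjacent to $\bl{c},\bl{w}$ there. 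What actually closes these cases is precisely the step you reserve for $(0,0)$: apply $\rho$ to $\Omega$ itself (not just to $\grot_2^{11}$), so that in the $\rho$-image $\grot_2^{\nu\mu}$ one has $q\sim\bl{c},\bl{w},\re{0}$, and then one single application of Corollary~\ref{cor:320} or Lemma~\ref{l:322} (according to the parity of $\mu$) already produces a strictly larger Vega graph. Your treatment of the $(0,0)$ blue case via the $\rho$-automorphism is exactly right and coincides with the paper, but you should use the same device for the other three $(\mu,\nu)$-values; the $\DD_5$-action on $\grot$-labels cannot substitute for it there. Finally, your ``straightforward inspection'' sentence also glosses over the red case with $\mu=1$, where the paper needs a composition $\sigma\circ\tau_1\circ\rho$ to reach the blue core; this is one more spot where $\DD_5$ alone is insufficient.
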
 

\begin{proof}
	By $\sigma$-symmetry it suffices to consider the case $x\in T$. 
	Suppose first that $T$ is \bl{blue}, whence $T\supseteq \{x, \re{1},\gr{2}\}$. 
	
	\begin{figure}[h!]
	\centering
	\begin{tikzpicture}[scale=1]
		\coordinate (r) at (2,0);
			\foreach \i in {1,...,8} {
			\coordinate (a\i) at (\i*45:.9);
			\coordinate (c\i) at (180+\i*45:1.8);
		}
		\draw (c3) -- (r) -- (c5);		
		\draw [rounded corners=35] (r) -- (2,2)--(c7);
		\draw [rounded corners=35] (r) -- (2,-2)--(c1);
		\draw (a3) -- (a7);		
		\draw (r)--(c3);
		\draw (a6)--(c3)--(a8)--(a4)--(c7)--(a2)--(a6);		
		\draw (a2)--(c5)--(a8)--(a5)--(a2);
		\draw (a1)--(a4)--(c1)--(a6)--(a1);		
		\draw (a6)--(a3)--(a8)--(a3)--(c7);
		\draw (a2)--(a7)--(a4)--(a7)--(c3);
		\draw (c1)--(a5)--(a1)--(c5)--(r);
		\foreach \i in {1,3,5,7}{
			\fill (c\i) circle (2pt);
		}			
		\foreach \i in {1,...,8} {
			\fill (a\i) circle (2pt);		
		}
		\fill (r) circle (2pt);
		\foreach \i in {r,c3,c7} \fill [blue] (\i) circle (1.7pt);
		\foreach \i in {c5,a5,a2,a8} \fill [green] (\i) circle (1.7pt);
		\foreach \i in {a1,c1,a4,a6} \fill [red] (\i) circle (1.7pt);
		\node at ($(r)+(.25,0)$) {\small $\bl{4}$};
		\node at ($(a1)+(.2,-.1)$) {\small $\re{0}$};
		\node at ($(a2)+(0,.25)$) {$\gr{v}$};
		\node at ($(a3)+(-.2,-.05)$) {$x$};
		\node at ($(a4)+(-.3,0)$) {$\re{u}$};
		\node at ($(a5)+(-.2,.1)$) {\small $\gr{3}$};
		\node at ($(a6)+(0,-.25)$) {$\re{a}$};
		\node at ($(a7)+(.2,.1)$) {$y$};
		\node at ($(a8)+(.3,0)$) {$\gr{b}$};
		\node at ($(c1)+(-.2,-.1)$) {\small $\re{1}$};
		\node at ($(c3)+(.2,-.2)$) {$\bl{w}$};
		\node at ($(c5)+(.2,.1)$) {\small $\gr{2}$};
		\node at ($(c7)+(-.2,.1)$) {$\bl{c}$};
		\draw[ultra thick, -stealth] (2.7,-.3)--(4,-.3);
		\node at (3.35, 0) {\LARGE $\varrho$};
		\coordinate (r) at (7.6,0);
		\foreach \i in {1,...,8} {
			\coordinate (a\i) at ($(\i*45:.9)+(5.6,0)$);
			\coordinate (c\i) at ($(180+\i*45:1.8)+(5.6,0)$);
		}
		\draw (c3) -- (r) -- (c5);		
		\draw [rounded corners=35] (r) -- ($(2,2)+(5.6,0)$)--(c7);
		\draw [rounded corners=35] (r) -- ($(2,-2)+(5.6,0)$)--(c1);
		\draw (a3) -- (a7);		
		\draw (r)--(c3);
		\draw (a6)--(c3)--(a8)--(a4)--(c7)--(a2)--(a6);		
		\draw (a2)--(c5)--(a8)--(a5)--(a2);
		\draw (a1)--(a4)--(c1)--(a6)--(a1);		
		\draw (a6)--(a3)--(a8)--(a3)--(c7);
		\draw (a2)--(a7)--(a4)--(a7)--(c3);
		\draw (c1)--(a5)--(a1)--(c5)--(r);
		\foreach \i in {1,3,5,7}{
			\fill (c\i) circle (2pt);
		}			
		\foreach \i in {1,...,8} {
			\fill (a\i) circle (2pt);		
		}
		\fill (r) circle (2pt);
		\foreach \i in {r,c1,c5} \fill [blue] (\i) circle (1.7pt);
		\foreach \i in {c7,a4,a2,a7} \fill [green] (\i) circle (1.7pt);
		\foreach \i in {a3,c3,a8,a6} \fill [red] (\i) circle (1.7pt);
		\node  at ($(r)+(.25,0)$) {\small $\bl{4}$};
		\node at ($(a1)+(.2,-.1)$) {$x$};
		\node at ($(a2)+(0,.25)$) {$\gr{v}$};
		\node at ($(a3)+(-.2,-.05)$) {\small \re{$0$}};
		\node at ($(a4)+(-.3,0)$) {$\gr{b}$};
		\node at ($(a5)+(-.2,.1)$) {$y$};
		\node at ($(a6)+(0,-.25)$) {$\re{a}$};
		\node at ($(a7)+(.2,.1)$) {\small $\gr{3}$};
		\node at ($(a8)+(.3,0)$) {$\re{u}$};
		\node at ($(c1)+(-.2,-.1)$) {$\bl{w}$};
		\node at ($(c3)+(.2,-.2)$) {\small $\re{1}$};
		\node at ($(c5)+(.2,.1)$) {$\bl{c}$};
		\node at ($(c7)+(-.2,.1)$) {\small $\gr{2}$};
	\end{tikzpicture}
	\caption{The isomorphism $\rho$.}
	\label{fig:1419}
	\end{figure}	 
	
	Recall, that the exceptional isomorphism~$\rho$ introduced in~\S\ref{subsec:auto}
	maps~$\grot_2^{\mu\nu}$ onto $\grot_2^{\nu\mu}$ and the three 
	vertices $x$, $\re{1}$, $\gr{2}$ to $\re{0}$, $\bl{w}$, $\bl{c}$ 
	(see Figure~\ref{fig:1419}). 
	In the sequel such situations will be written as		
	\[
		(\grot_2^{\mu\nu}, \{x,\re{1},\gr{2}\}) 
		\cong 
		(\grot_2^{\nu\mu}, \{\re{0},\bl{w},\bl{c}\}) 
			\qquad (\textrm{via } \rho)\,.
	\]
	Depending on whether $\mu=1$ or $\mu=0$ we now get a contradiction 
	to $G\in \fD_4(\grot_i^{\mu\nu})$ from Corollary~\ref{cor:320} or 
	Lemma~\ref{l:322}.
		 				
	Suppose next that~$T$ is \re{red}, which implies $\{x,\gr{3},\bl{4}\}\subseteq T$ 
	and, therefore, $\nu=0$. If, in addition, $\mu=1$, then
	\begin{align*}
			(\grot_2^{10},\{x,\gr{3},\bl{4}\})
			&\cong (\grot_2^{01}, \{\re{0},y,\bl{4}\}) \qquad (\textrm{via } \rho)\\
			&\cong (\grot_2^{01}, \{\re{1},x,\gr{2}\}) \qquad 
			(\textrm{via } \sigma \circ \tau_1)
	\end{align*}
	reduces the current situation to the \bl{blue} case, which has already been 
	dealt with. Thus we have indeed $\mu=\nu=0$. 
	Finally, the case that $T$ is \gr{green} reduces to the earlier ones 
	by $\tau_\nu$-symmetry.
\end{proof}

By an {\it auxiliary path} 
in $\grot_i^{\mu\nu}$ we mean a path of length three in~$\Gamma_i$ 
whose first and last vertex have the same colour, also called the 
{\it colour of the path}. 
For instance, 
if $j$, $j+1$ are consecutive vertices of the same colour~$\varphi$, then 
\[
	\pi_j=j-(j+i)-(j+2i)-(j+1)
\]
is an auxiliary path whose colour is $\varphi$. 
So every Vega graph $\grot_i^{\mu\nu}$ possesses a \re{red} auxiliary 
path $\pi_{\re{0}}=\re{0}-\gr{i}-\bl{2i}-\re{1}$. If $i\ge 3$ there is for every 
vertex of $\G_i$ an auxiliary path starting in that vertex and, in particular, 
there are auxiliary paths of all colours. Every auxiliary path~$\pi$ 
in~$\grot_i^{\mu\nu}$ gives rise 
to a copy~$\grot(\pi)$ of the Mycielski-Gr\"otzsch graph in~$\grot_i^{\mu\nu}$ 
(see Figure~\ref{fig:331}).

\begin{figure}[ht]
\centering
\begin{subfigure}[b]{0.32\textwidth}
\centering
\begin{tikzpicture}[scale=1]
	\coordinate (c) at (0,0);
	\foreach \i in {1,...,5} {
		\coordinate (a\i) at (-18+\i*72:1);
		\coordinate (b\i) at (18+\i*72:1.5);	
		\coordinate (d\i) at (18+\i*72:2);
	}
	\fill (c) circle (2pt);
	\foreach \i in {1,...,5} {
		\draw (c)--(a\i);
		\fill (a\i) circle (2pt);		
		\fill (b\i) circle (2pt);
	}
	\draw (b1) [out=162, in=72] to (d2) [out=-108, in=162] to (b3);
	\draw (b2) [out=234, in=144] to (d3) [out=-36, in=234] to (b4);
	\draw (b3) [out=-54, in=216] to (d4) [out=36, in=-54] to (b5);
	\draw (b4) [out=18, in=-72] to (d5) [out=108, in=18] to (b1);
	\draw (b5) [thick, out=90, in=0] to (d1) [out=180, in=90] to (b2);
	\draw (b5) [ultra thick, red, dashed, out=90, in=0] to (d1) [out=180, in=90] to (b2);
	\draw [thick] (a2)--(b2);
	\draw [ultra thick, red, dashed] (a2)--(b2);
	\draw [thick] (a1)--(b5);
	\draw [ultra thick,red, dashed] (a1)--(b5);
	\draw (b3)--(a3)--(c)--(a5)--(b4);
	\draw (b3)--(a4)--(b4)--(a4)--(c);
	\draw (b1)--(a1)--(c)--(a2)--(b1);
	\draw (b2)--(a3);
	\draw (b5)--(a5);
	\foreach \i in {a1, a2, c, b1} \fill [red] (\i) circle (1.7pt);
	\foreach \i in {b5, a5, b3} \fill [green] (\i) circle (1.7pt);
	\foreach \i in {b2, a3, b4} \fill [blue] (\i) circle (1.7pt); 
	\node at ($(c)+(-.25,.08)$) {$\re{a}$};
	\node at ($(a3)+(-.2,-.15)$) {$\bl{w}$};
	\node at ($(a4)+(0,-.2)$) {$x$};
	\node at ($(a5)+(.2,-.1)$) {$\gr{v}$};
	\node at ($(b3)+(-.1,-.2)$) {$\gr{b}$};
 	\node at ($(b4)+(.1,-.2)$) {$\bl{c}$};
	\node at ($(b1)+(0,.2)$) {$\re{u}$};
\end{tikzpicture}
\vskip -.2cm
\caption{{a \re{red} path}}
\label{fig:331A} 		
\end{subfigure}
\hfill  
\begin{subfigure}[b]{0.32\textwidth}
\centering
\begin{tikzpicture}[scale=1]
	\coordinate (c) at (0,0);
	\foreach \i in {1,...,5}{
		\coordinate (a\i) at (-18+\i*72:1);
		\coordinate (b\i) at (18+\i*72:1.5);	
		\coordinate (d\i) at (18+\i*72:2);
	}
	\fill (c) circle (2pt);
	\foreach \i in {1,...,5} {
		\draw (c)--(a\i);
		\fill (a\i) circle (2pt);		
		\fill (b\i) circle (2pt);
	}
	\draw (b1) [out=162, in=72] to (d2) [out=-108, in=162] to (b3);
	\draw (b2) [  out=234, in=144] to (d3) [out=-36, in=234] to (b4);
	\draw (b3) [ out=-54, in=216] to (d4) [out=36, in=-54] to (b5);
	\draw (b4) [out=18, in=-72] to (d5) [out=108, in=18] to (b1);
	\draw (b5) [thick, out=90, in=0] to (d1) [out=180, in=90] to (b2);
	\draw (b5) [ultra thick,green,dashed,out=90, in=0] to (d1) [out=180, in=90] to (b2);
	\draw (b3)--(a3)--(c)--(a5)--(b4);
	\draw (b3)--(a4)--(b4)--(a4)--(c);
	\draw (b1)--(a1)--(c)--(a2)--(b1);
	\draw (b2)--(a3);
	\draw (b5)--(a5);
	\draw [thick] (a2)--(b2);
	\draw [ultra thick, green, dashed] (a2)--(b2);
	\draw [thick] (a1)--(b5);
	\draw [ultra thick,green, dashed] (a1)--(b5); 
	\foreach \i in {b5, a5, b3} \fill [red] (\i) circle (1.7pt);
	\foreach \i in {a1, a2, b1, c} \fill [green] (\i) circle (1.7pt);
	\foreach \i in {b2, a3, b4} \fill [blue] (\i) circle (1.7pt); 
	\node at ($(c)+(-.25,.12)$) {$\gr{b}$};
	\node at ($(a3)+(-.2,-.15)$) {$\bl{w}$};
	\node at ($(a4)+(0,-.2)$) {$x$};
	\node at ($(a5)+(.2,-.1)$) {$\re{u}$};
	\node at ($(b3)+(-.1,-.2)$) {$\re{a}$};
	\node at ($(b4)+(.1,-.2)$) {$\bl{c}$};
	\node at ($(b1)+(0,.2)$) {$\gr{v}$};
\end{tikzpicture}
\vskip -.2cm
\caption{{a \gr{green} path}}
\label{fig:331B} 		
\end{subfigure}
\hfill  
\begin{subfigure}[b]{0.32\textwidth}
\centering
\begin{tikzpicture}[scale=1]
	\coordinate (c) at (0,0);
	\foreach \i in {1,...,5} {
		\coordinate (a\i) at (-18+\i*72:1);
		\coordinate (b\i) at (18+\i*72:1.5);	
		\coordinate (d\i) at (18+\i*72:2);
	}
	\fill (c) circle (2pt);
	\foreach \i in {1,...,5} {
		\draw (c)--(a\i);
		\fill (a\i) circle (2pt);		
		\fill (b\i) circle (2pt);
	}
	\draw (b1) [out=162, in=72] to (d2) [out=-108, in=162] to (b3);
	\draw (b2) [out=234, in=144] to (d3) [out=-36, in=234] to (b4);
	\draw (b3) [out=-54, in=216] to (d4) [out=36, in=-54] to (b5);
	\draw (b4) [out=18, in=-72] to (d5) [out=108, in=18] to (b1);
	\draw (b5) [thick, out=90, in=0] to (d1) [out=180, in=90] to (b2);
	\draw (b5) [ultra thick,blue,dashed,out=90, in=0] to (d1) [out=180, in=90] to (b2);
	\draw [thick] (a2)--(b2);
	\draw [ultra thick, blue, dashed] (a2)--(b2);
	\draw [thick] (a1)--(b5);
	\draw [ultra thick,blue, dashed] (a1)--(b5);
	\draw (b3)--(a3)--(c)--(a5)--(b4);
	\draw (b3)--(a4)--(b4)--(a4)--(c);
	\draw (b1)--(a1)--(c)--(a2)--(b1);
	\draw (b2)--(a3);
	\draw (b5)--(a5);
	\foreach \i in {b5,b3,a5} \fill [red] (\i) circle (1.7pt);
	\foreach \i in {a2, a1, c, b1} \fill [blue] (\i) circle (1.7pt);
	\foreach \i in {b2, a3,b4} \fill [green] (\i) circle (1.7pt); 
	\node at ($(c)+(-.25,.08)$) {$\bl{c}$};
	\node at ($(a3)+(-.15,-.15)$) {$\gr{v}$};
	\node at ($(a4)+(0,-.2)$) {$x$};
	\node at ($(a5)+(.2,-.1)$) {$\re{u}$};
	\node at ($(b1)+(-.06,.2)$) {$\bl{w}$};
	\node at ($(b3)+(-.1,-.2)$) {$\re{a}$};
	\node at ($(b4)+(.15,-.2)$) {$\gr{b}$};
\end{tikzpicture}
\vskip -.2cm
\caption{{a \bl{blue} path}}
\label{fig:331C} 		
\end{subfigure}
\vskip -.2cm
\caption{A copy $\grot(\pi)$ of the Mycielski-Gr\"otzsch graph in $\grot_i^{\mu\nu}$.}
\label{fig:331}
\end{figure}
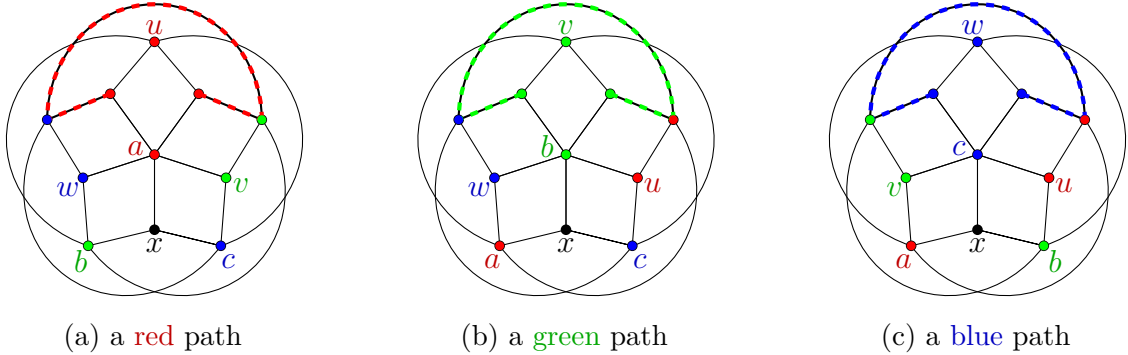
 
One can learn a lot by applying the results from the previous subsection to graphs
of the form~$\grot(\pi)$. 

\begin{lemma}\label{lem:2157}
	If $\grot_i^{\mu\nu}\subseteq G\in \fD_4(\grot_i^{\mu\nu})$
	and $q\in V(G)$, then $\Nn(q)\cap V(\grot_i^{\mu\nu})$ is not small.
\end{lemma}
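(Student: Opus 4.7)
The plan is to argue by contradiction. Suppose that $T = \Nn(q) \cap V(\grot_i^{\mu\nu})$ is small. By the $\sigma$-symmetry of $\grot_i^{\mu\nu}$ we may assume $x \in T$, so $T$ misses exactly one colour class among $\Gr, \Gg, \Gb$; denote this colour by $\varphi$ and, after possibly applying $\tau_\nu$, assume it is chosen conveniently.

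\textbf{The case $i = 2$.} Lemma~\ref{lem:2119} already reduces matters to $\mu = \nu = 0$ with $T$ of colour \re{red} or \gr{green}; by the $\tau_0$-symmetry we may assume $T$ is \re{red}, so $T \supseteq \{x, \bl{4}\}$ together with at least one of $\gr{2}, \gr{3}$. Although the exceptional isomorphism $\rho$ no longer reduces this situation to the \bl{blue} case of Lemma~\ref{lem:2119}, the red auxiliary path $\pi_{\re{0}}=\re{0}-\gr{2}-\bl{4}-\re{1}$ still yields a Mycielski-Gr\"otzsch subgraph $\grot(\pi_{\re{0}})$. A direct check identifies $\re{a}$ as its central vertex $c$ and shows that in a suitable Mycielski-Gr\"otzsch labelling one has $x=a_0$ and $\bl{4}=b_1$. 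Since $q\sim\gr{j}$ for some $\gr{j}\in\Gg$ and $\gr{j}\sim\gr{v}=a_1$, triangle-freeness forces $q\not\sim a_1$. Lemma~\ref{l:313}, or alternatively Lemma~\ref{l:314}, combined with the beautiful lemma then produces either a \re{red} neighbour of $q$ (directly contradicting the colour of $T$), a $\grot(\pi_{\re{0}})$-twin configuration that, via Lemma~\ref{f:24} and the reliability lemmas (Lemmas~\ref{l:315},~\ref{l:316}), again yields a \re{red} neighbour of $q$, or else the hypothesis of Lemma~\ref{l:322}, giving a copy of $\grot_3^{\mu 1}$ in $G$ and contradicting maximality of $\grot_2^{00}$.

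\textbf{The case $i \geq 3$.} Here we may choose a $\varphi$-coloured auxiliary path $\pi$ in $\Gamma_i$. The associated subgraph $\grot(\pi)\subseteq \grot_i^{\mu\nu}$ is a Mycielski-Gr\"otzsch graph that contains $x$ and the entire external hexagon $\cC_6$; its two $\varphi$-coloured Andr\'asfai vertices are precisely the endpoints of $\pi$. Since $T$ omits $\G_\varphi$, the intersection $\Nn(q)\cap V(\grot(\pi))$ avoids these two endpoints, while $x$ and vertices of the other two colour classes appearing in $T$ give enough adjacencies to invoke one of the lemmas of \S\ref{subsec:43}. Depending on the role of $x$ inside the standard labelling of $\grot(\pi)$, the beautiful lemma together with Corollary~\ref{cor:311}, Lemma~\ref{l:312}, Lemma~\ref{l:313}, or Lemma~\ref{l:314} applies, and in every subcase the conclusion is either that $q$ is adjacent to a $\varphi$-coloured vertex of $\G_i$ (possibly via Lemma~\ref{f:24} after transferring a $\grot(\pi)$-twin), contradicting the hypothesis on $\varphi$, or that three hexagonal adjacencies of $q$ trigger Corollary~\ref{cor:320} or Lemma~\ref{l:322}, producing a Vega subgraph strictly larger than $\grot_i^{\mu\nu}$ and contradicting the assumption $G\in\fD_4(\grot_i^{\mu\nu})$.

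The principal obstacle is bookkeeping: for each choice of $\varphi$ and each pattern of which non-$\varphi$ Andr\'asfai vertices appear in $T$, one must (i) determine the role of $x$ (and of $y$, when present) within the Mycielski-Gr\"otzsch labelling of $\grot(\pi)$, (ii) select the correct lemma from \S\ref{subsec:43} and the right pair of adjacencies to feed into it, and (iii) verify that the resulting twin relation or hexagon-neighbour configuration supplies the desired contradiction. The reliability lemmas (Lemmas~\ref{l:315} and~\ref{l:316}) are expected to enter precisely to rule out the possibility that $q$ is merely a twin of some existing vertex of $\grot(\pi)$, so that the only remaining outcome is an honest new $\varphi$-coloured neighbour or a strictly larger Vega subgraph of $G$.
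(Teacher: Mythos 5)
Your overall strategy (argue by contradiction, normalise with $\sigma$, $\tau_\nu$, split into $i=2$ and $i\ge 3$, analyse Mycielski--Gr\"otzsch subgraphs $\grot(\pi)$, deploy the reliability lemmas) is recognisably the paper's toolkit, but the way you deploy it has a genuine gap.

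The central problem is your choice of auxiliary path in the $i\ge 3$ case. You pick a $\varphi$-coloured path $\pi$ where $\varphi$ is the colour the small set $T$ \emph{misses}. But then $\Nn(q)\cap V(\grot(\pi))$ certainly contains $x$ and avoids the two endpoints of $\pi$, and that is all you know: the only non-hexagonal vertices of $\grot(\pi)$ that could lie in $T$ are the two \emph{specific} inner vertices of $\pi$, one of each non-$\varphi$ colour, and there is no reason your $q$ should be adjacent to either of them. Your phrase ``vertices of the other two colour classes appearing in $T$ give enough adjacencies'' papers over exactly this: those vertices of $T$ need not lie in $V(\grot(\pi))$ at all, so they do not feed into the lemmas of \S\ref{subsec:43}. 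The paper circumvents this by normalising instead to ``$\Nn(q)$ hits $x$ and a \gr{green} vertex $\gr{j}$'' and then choosing the auxiliary path \emph{through} that neighbour: in the first case it takes the green path $\pi_{\gr{m}}$ with $\gr{m}$ or $\gr{m+1}\in\Nn(q)$, so $q$ is genuinely adjacent to two $a$-vertices of $\grot(\pi_{\gr{m}})$; then Lemma~\ref{l:315} (fed with the explicit witnesses $\bl{m+1+i}$, $\re{m-i}$) forces $\re{u}$ and $\bl{w}$ to be reliable, Lemma~\ref{l:312} makes $q$ a twin of $\gr{b}$, and triangle-freeness with $\re{u},\bl{w}\in\Nn(q)$ finishes. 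This conclusion is stronger than merely ``$T$ is not $\varphi$-coloured''; it kills \emph{all} red and blue Andr\'asfai neighbours at once, so no colour case split is needed.

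You also do not account for the case $\gr{2i-1}\in\Nn(q)$ (the paper's Second case), where the green path has to be chosen through the blue vertex $\bl{m}\in\Nn(q)$, Lemma~\ref{l:315} is used to deduce $\mu=1$, Lemma~\ref{lem:2119} is then reread as a statement forcing $i\ge 3$, and Lemma~\ref{l:316} is invoked with explicit auxiliary data $a_1'=\gr{i+1}$, $t_2'=\bl{2i+1}$ to restore the reliability of $\re{u}$ before Lemma~\ref{l:312} applies. Nothing in your sketch reproduces this chain. Finally, your $i=2$ case is significantly more elaborate than what is actually required: once one knows $\gr{2}\in\Nn(q)$, a blue neighbour is impossible by triangle-freeness and a red neighbour is ruled out simply by re-reading Lemma~\ref{lem:2119} (which forbids a \bl{blue}-coloured small set); the remaining $i=2$ subcase, where only $\gr{2i-1}=\gr{3}$ is hit, is absorbed into the Second-case argument, where the deduction $\mu=1$ collides with Lemma~\ref{lem:2119}.
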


\begin{proof}
	By $\sigma$- and $\tau_\nu$-symmetry it suffices to show that if $\Nn(q)$
	contains $x$ and a vertex from~$\Gg$, then it is disjoint to $\Gr\cup\Gb$.
	\goodbreak
	
	\smallskip
	
	{\it \hskip 2em First case: $\Nn(q)\cap \gr{[i, 2i-2]}\ne\vn$.}
	
	\smallskip
	
	In the special case $i=2$ this means $\gr{2}\in \Nn(q)$. 
	Since $G$ is triangle-free, $q$ cannot have a neighbour in $\Gb$ and by 
	Lemma~\ref{lem:2119} there are no neighbours of $q$ in $\Gr$ either. 
	Suppose next that $i\ge 3$, so that the interval $\gr{[i, 2i-2]}$ consists of more 
	than one vertex. Thus there exists a pair of consecutive vertices 
	$\gr{\{m, m+1\}}\subseteq \gr{[i, 2i-2]}$ such 
	that $\Nn(q)\cap \gr{\{m, m+1\}}\ne\vn$. Working with the auxiliary path
	\[
		\pi_{\gr{m}} 
		= 
		\gr{m}-\bl{(m+i)}-\re{(m-i+1)}-\gr{(m+1)}
	\]
	we construct the graph~$\grot(\pi_{\gr{m}})$ (see Figure~\ref{il:323A}).
	The \bl{blue} vertex~$\bl{m+1+i}$ is adjacent to $\gr{m+1}$, $\bl{c}$, $\bl{w}$ 
	and, therefore,~$\gr{m}$ is unreliable in~$\grot(\pi_{\gr{m}})$. 
	Similarly, the \re{red} vertex~$\re{m-i}$ exemplifies the unreliability 
	of~$\gr{m+1}$. Now Lemma~\ref{l:315} tells us that~$\re{u}$ and~$\bl{w}$ are 
	reliable. In particular, $q$ is neither in $\Ext(\grot(\pi_{\gr{m}}), \re{u})$
	nor in $\Ext(\grot(\pi_{\gr{m}}), \bl{w})$ and Lemma~\ref{l:312} implies 
	that~$q$ is an $\grot(\pi_{\gr{m}})$-twin of $\gr{b}$. Thus $q$ is adjacent 
	to~$\re{u}$,~$\bl{w}$ and, consequently, to no vertex in $\Gr\cup\Gb$.
	
\begin{figure}[h!]
\begin{subfigure}[b]{.32\textwidth}
\centering
\begin{tikzpicture}[scale=.9]
	\coordinate (c) at (0,0);
	\foreach \i in {1,...,5} {
		\coordinate (a\i) at (-18+\i*72:1);
		\coordinate (b\i) at (18+\i*72:1.5);	
		\coordinate (d\i) at (18+\i*72:2);
	}
	\fill (c) circle (2pt);
	\foreach \i in {1,...,5} {
		\draw (c)--(a\i);
		\fill (a\i) circle (2pt);		
		\fill (b\i) circle (2pt);
	}
	\draw (b1) [out=162, in=72] to (d2) [out=-108, in=162] to (b3);
	\draw (b2) [out=234, in=144] to (d3) [out=-36, in=234] to (b4);
	\draw (b3) [out=-54, in=216] to (d4) [out=36, in=-54] to (b5);
	\draw (b4) [out=18, in=-72] to (d5) [out=108, in=18] to (b1);
	\draw (b5) [thick, out=90, in=0] to (d1) [out=180, in=90] to (b2);
	\draw (b5) [ultra thick,green,dashed,out=90,in=0] to (d1) [out=180, in=90] to 	(b2);
	\draw (b3)--(a3)--(c)--(a5)--(b4);
	\draw (b3)--(a4)--(b4)--(a4)--(c);
	\draw (b1)--(a1)--(c)--(a2)--(b1);
	\draw (b2)--(a3);
	\draw (b5)--(a5);
	\draw [thick] (a2)--(b2);
	\draw [ultra thick, green, dashed] (a2)--(b2);
	\draw [thick] (a1)--(b5);
	\draw [ultra thick,green, dashed] (a1)--(b5); 
	\foreach \i in {b5, a5, b3} \fill [red] (\i) circle (1.7pt);
	\foreach \i in {a1, a2, b1, c} \fill [green] (\i) circle (1.7pt);
	\foreach \i in {b2, a3, b4} \fill [blue] (\i) circle (1.7pt); 
	\node at ($(c)+(-.25,.12)$) {\tiny $\gr{b}$};
	\node at ($(a3)+(-.2,-.15)$) {\tiny $\bl{w}$};
	\node at ($(a4)+(0,-.2)$) {\tiny $x$};
	\node at ($(a5)+(.2,-.1)$) {\tiny $\re{u}$};
	\node at ($(b3)+(-.1,-.15)$) {\tiny $\re{a}$};
	\node at ($(b4)+(.1,-.15)$) {\tiny $\bl{c}$};
	\node at ($(b1)+(0,.2)$) {\tiny $\gr{v}$};
	\node at ($(b2)+(-.48,0)$) {\tiny $\bl{m+i}$};
	\node at ($(b5)+(.85,-.0)$) {\tiny $\re{m-i+1}$};
	\node [scale=.9] at ($(a1) +(.32,.16)$) {\tiny $\gr{m+1}$};
	\node at ($(a2)+(-.15,.15)$){\tiny $\gr{m}$};
\end{tikzpicture} 	
\caption{$\grot(\pi_{\gr{m}})$}
\label{il:323A}
\end{subfigure}
\hfill
\begin{subfigure}[b]{.32\textwidth}
\centering
\begin{tikzpicture}[scale=.9]
	\coordinate (c) at (0,0);
	\foreach \i in {1,...,5} {
		\coordinate (a\i) at (-18+\i*72:1);
		\coordinate (b\i) at (18+\i*72:1.5);	
		\coordinate (d\i) at (18+\i*72:2);
	}
	\fill (c) circle (2pt);
	\foreach \i in {1,...,5} {
		\draw (c)--(a\i);
		\fill (a\i) circle (2pt);		
		\fill (b\i) circle (2pt);
	}
	\draw (b1) [out=162, in=72] to (d2) [out=-108, in=162] to (b3);
	\draw (b2) [out=234, in=144] to (d3) [out=-36, in=234] to (b4);
	\draw (b3) [out=-54, in=216] to (d4) [out=36, in=-54] to (b5);
	\draw (b4) [out=18, in=-72] to (d5) [out=108, in=18] to (b1);
	\draw (b5) [thick, out=90, in=0] to (d1) [out=180, in=90] to (b2);
	\draw (b5) [ultra thick,green,dashed,out=90,in=0] to (d1) [out=180, in=90] to 	(b2);	
	\draw (b3)--(a3)--(c)--(a5)--(b4);
	\draw (b3)--(a4)--(b4)--(a4)--(c);
	\draw (b1)--(a1)--(c)--(a2)--(b1);
	\draw (b2)--(a3);
	\draw (b5)--(a5);
	\draw [thick] (a2)--(b2);
	\draw [ultra thick, green, dashed] (a2)--(b2);
	\draw [thick] (a1)--(b5);
	\draw [ultra thick,green, dashed] (a1)--(b5); 
	\fill (a4) circle (1.7pt);
	\foreach \i in {b5, a5, b3} \fill [red] (\i) circle (1.7pt);
	\foreach \i in {a1, a2, b1, c} \fill [green] (\i) circle (1.7pt);
	\foreach \i in {b2, a3, b4} \fill [blue] (\i) circle (1.7pt); 
	\node at ($(c)+(-.25,.12)$) {\tiny $\gr{b}$};
	\node at ($(a3)+(-.2,-.15)$) {\tiny $\bl{w}$};
	\node at ($(a4)+(0,-.2)$) {\tiny $x$};
	\node at ($(a5)+(.2,-.1)$) {\tiny $\re{u}$};
	\node at ($(b3)+(-.1,-.15)$) {\tiny $\re{a}$};
	\node at ($(b4)+(.15,-.15)$) {\tiny $\bl{c}$};
	\node at ($(b1)+(0,.2)$) {\tiny $\gr{v}$};
	\node at ($(b2)+(-.24,0)$) {\tiny $\bl{m}$};
	\node at ($(b5)+(.42,0)$) {\tiny $\re{i-1}$};
	\node [scale = .9] at ($(a1) +(.31,.18)$) {\tiny $\gr{2i-1}$};
	\node at ($(a2)+(-.12,.2)$){\tiny $\gr{i}$};
\end{tikzpicture} 
\caption{$\grot(\omega)$}
\label{il:324C}
\end{subfigure}
\hfill
\begin{subfigure}[b]{.32\textwidth}
\centering
\begin{tikzpicture}[scale=.9]
	\coordinate (c) at (0,0);
	\foreach \i in {1,...,5} {
		\coordinate (a\i) at (-18+\i*72:1);
		\coordinate (b\i) at (18+\i*72:1.5);	
		\coordinate (d\i) at (18+\i*72:2);
	}
	\fill (c) circle (2pt);
	\foreach \i in {1,...,5} {
		\draw (c)--(a\i);
		\fill (a\i) circle (2pt);		
		\fill (b\i) circle (2pt);
	}
	\draw (b1) [out=162, in=72] to (d2) [out=-108, in=162] to (b3);
	\draw (b2) [out=234, in=144] to (d3) [out=-36, in=234] to (b4);
	\draw (b3) [out=-54, in=216] to (d4) [out=36, in=-54] to (b5);
	\draw (b4) [out=18, in=-72] to (d5) [out=108, in=18] to (b1);
	\draw (b5) [thick, out=90, in=0] to (d1) [out=180, in=90] to (b2);
	\draw (b5) [ultra thick,green,dashed,out=90,in=0] to (d1) [out=180, in=90] to 	(b2);	
	\draw (b3)--(a3)--(c)--(a5)--(b4);
	\draw (b3)--(a4)--(b4)--(a4)--(c);
	\draw (b1)--(a1)--(c)--(a2)--(b1);
	\draw (b2)--(a3);
	\draw (b5)--(a5);
	\draw [thick] (a2)--(b2);
	\draw [ultra thick, green, dashed] (a2)--(b2);
	\draw [thick] (a1)--(b5);
	\draw [ultra thick,green, dashed] (a1)--(b5); 
	\fill (a4) circle (1.7pt);
	\foreach \i in {b5, a5, b3} \fill [red] (\i) circle (1.7pt);
	\foreach \i in {a1, a2, b1, c} \fill [green] (\i) circle (1.7pt);
	\foreach \i in {b2, a3, b4} \fill [blue] (\i) circle (1.7pt); 
	\node at ($(c)+(-.25,.12)$) {\tiny $\gr{b}$};
	\node at ($(a3)+(-.2,-.15)$) {\tiny $\bl{w}$};
	\node at ($(a4)+(0,-.2)$) {\tiny $x$};
	\node at ($(a5)+(.2,-.1)$) {\tiny $\re{u}$};
	\node at ($(b3)+(-.1,-.15)$) {\tiny $\re{a}$};
	\node at ($(b4)+(.15,-.15)$) {\tiny $\bl{c}$};
	\node at ($(b1)+(0,.2)$) {\tiny $\gr{v}$};
	\node at ($(b2)+(-.24,0)$) {\tiny $\bl{2i}$};
	\node at ($(b5)+(.17,0)$) {\tiny $\re{1}$};
		\node [scale = .9] at ($(a1) +(.31,.18)$) {\tiny $\gr{2i-1}$};
	\node at ($(a2)+(-.12,.13)$){\tiny $\gr{i}$};
\end{tikzpicture} 
\caption{$\grot(\varpi)$}
\label{il:324CC}
\end{subfigure}
\caption{The proof of Lemma~\ref{lem:2157}.}
\label{il:323}
\end{figure} 

{\it \hskip 2em Second case: We have $\nu=0$ and $\gr{2i-1}\in \Nn(q)$.}
	
	\smallskip
	
	Due to $\Gr\subseteq \Nn(\gr{2i-1})$ we only need to derive a contradiction 
	from the assumption that there exists some blue $\bl{m}\in\Nn(q)\cap \Gb$.
	To this end we consider the \gr{green} auxiliary path 
	$\omega=\gr{i}-\bl{m}-\re{(i-1)}-\gr{(2i-1)}$
	(see Figure~\ref{il:324C}) and the associated graph $\grot(\omega)$.
	The vertices~$\re{0}$ and~$q$ witness that~$\gr{2i-1}$ and~$\re{u}$
	are unreliable; so $x$ is reliable by Lemma~\ref{l:315} or, in other 
	words, $\mu=1$. Now Lemma~\ref{lem:2119} tells us that $i\ge 3$.
	
	Next we apply Lemma~\ref{l:316} for the green auxiliary path 
	$\varpi=\gr{i}-\bl{2i}-\re{1}-\gr{(2i-1)}$ to $\grot(\varpi)$ with 
	the labelling $a_0=\re{u}$, $a_1=\gr{2i-1}$, $a_2=\gr{i}$, $b_1=\re{a}$ 
	and the additional vertices $a_1'=\gr{i+1}$, $t_2'=\bl{2i+1}$. 
	As~$\re{2}$ is a common neighbour of $\gr{2i-1}$, $\bl{2i+1}$, $\re{a}$, 
	we infer that~$\re{u}$ is reliable. Together with $x, \gr{2i-1}\in\Nn(q)$
	and Lemma~\ref{l:312} this entails that $q$ is an $\grot(\varpi)$-twin
	of~$\gr{b}$. But now $q\bl{wm}$ is a triangle in $G$, which is absurd.
\end{proof}

\begin{cor}\label{cor:326}
	If $\grot_i^{\mu\nu}\subseteq G \in \fD_4(\grot_i^{\mu\nu})$ and $\pi$ denotes 
	an auxiliary path in~$\grot_i^{\mu\nu}$ with colour~$\varphi$, then those 
	among $\re{u},\gr{v},\bl{w}$ whose colour is not $\varphi$ are reliable 
	in~$\grot(\pi)$.
\end{cor}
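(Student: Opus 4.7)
The plan is to argue by contradiction using the triangle-freeness of $G$. First I would fix a canonical labelling of $\grot(\pi)$ guided by Figure~\ref{fig:331}: in each of the three colour cases for the auxiliary path $\pi$, the two hexagon vertices among $\{\re{u},\gr{v},\bl{w}\}$ whose colour differs from $\varphi$ end up occupying the positions $a_3$ and $a_5$ of the Mycielski-Gr\"otzsch graph, while the third hexagon vertex (the one of colour $\varphi$) plays the r\^ole of $c$ or~$b_1$, and $x$ always plays the r\^ole of $a_4$.

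Suppose towards a contradiction that such a hexagon vertex $h$ of colour $\psi\ne\varphi$ is not reliable in $\grot(\pi)$, write $h = a_k$ with $k\in\{3,5\}$, and pick some $q\in\Ext(\grot(\pi),h)$. By definition $q$ is adjacent in $G$ to $a_{k-1}$, $a_{k+1}$, and~$b_k$, and applying the beautiful lemma (Lemma~\ref{l:beauti}) to the copy $\grot(\pi)$ of $\grot$ extends this to $q\sim a_k = h$ as well.

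The crucial observation is that, although $a_k$ and $b_k$ are non-adjacent in the abstract Mycielski-Gr\"otzsch graph, the pair $\{h, b_k\}$ does correspond to an edge of the ambient Vega graph $\grot_i^{\mu\nu}$. A glance at Figure~\ref{fig:331} reveals that when $k=3$ the vertex $b_3$ is precisely the hexagon vertex neighbouring $h$ along $\cC_6$, whereas when $k=5$ the vertex $b_5$ is a path vertex belonging to the colour class $\G_\psi$, and one has $\G_\psi\subseteq \Nn(h)$ directly from the definition of the Vega graph. In either case $hb_k\in E(\grot_i^{\mu\nu})\subseteq E(G)$, so $\{q,h,b_k\}$ forms a triangle in $G$, contradicting triangle-freeness. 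Hence $\Ext(\grot(\pi),h)=\vn$, and $h$ is reliable.

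The only mildly tedious element of this argument is the bookkeeping required to confirm the identifications of $a_3, a_5, b_3, b_5$ with concrete vertices of $\grot_i^{\mu\nu}$ in all three colour cases for $\pi$; this is mechanical and poses no real obstacle, especially since the $\sigma$- and $\tau_\nu$-symmetries already reduce the work to essentially a single case.
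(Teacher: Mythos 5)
Your argument has a labelling error that makes the crucial adjacency claim false, and as a result the triangle you are hoping for simply does not exist. Fix the red-path case of Figure~\ref{fig:331A} and use the paper's own labelling of $\grot$ from Figure~\ref{fig:beautifulLA}: the hexagon vertex $\gr{v}$ of colour $\ne\varphi$ sits at $a_1$, and $x$ sits at $a_0$. Then $\Ext(\grot(\pi),\gr{v})=\Ext(\grot(\pi),a_1)$ is the set of common neighbours of $\{a_0,a_2,b_1\}$. Reading off $a_2$ and $b_1$ from Figure~\ref{il:332A} for the path $\pi_{\re{j}}$, this set is $\{x,\re{j},\bl{(j+2i)}\}$. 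The vertex you would need to be adjacent to $\gr{v}$ is therefore $b_1=\bl{(j+2i)}$, a \bl{blue} inner vertex — and $\gr{v}$ is adjacent only to $\Gg$, $\re{a}$, $\bl{c}$, and $y$ in the Vega graph, so $\gr{v}\,b_1\notin E(G)$. (Your argument picked up the wrong $b$-vertex by reading the $b$-index off the TikZ node positions in Figure~\ref{fig:331} rather than off the $\grot$-labelling; the node drawn at radius $\tfrac{3}{2}$ next to the node marked $\gr{v}$ is $b_4$, not the $b_1$ that $\Ext(\grot(\pi),a_1)$ uses.) The same misidentification occurs for $\bl{w}=a_4$, whose $\Ext$-set uses $b_4=\gr{(j+i)}$, again not adjacent to $\bl{w}$. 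So no triangle is forced.

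What does follow from $q\in\Ext(\grot(\pi),\gr{v})$ is that $\Nn(q)\cap V(\grot_i^{\mu\nu})$ contains $x$, a vertex of $\Gr$, and a vertex of $\Gb$; being contained in a neighbourhood of a vertex of the triangle-free graph $G$, this set is independent, and so it is \emph{small} in the sense of the paper. That is the contradiction one actually wants — not triangle-freeness, but Lemma~\ref{lem:2157}, which rules out small neighbourhoods for $G\in\fD_4(\grot_i^{\mu\nu})$. The beautiful lemma (which you invoked to get $q\gr{v}\in E(G)$) is not needed for this.
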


\begin{proof}
	Otherwise there existed a vertex $q\in V(G)$ for 
	which $\Nn(q)\cap V(\grot_i^{\mu\nu})$ is small 
	(see Figure~\ref{fig:331}), contrary to Lemma~\ref{lem:2157}.
\end{proof}

Summarising the work of this subsection, we can now establish a weak form of the 
attachment lemma with an inclusion as opposed to an equality.

\begin{lemma}\label{l:325}
	If $\grot_i^{\mu\nu}\subseteq G\in \fD_4(\grot_i^{\mu\nu})$, then for 
	every $q\in V(G)$ there is some $z\in V(\grot_i^{\mu\nu})$ such 
	that $\Nn(q)\cap V(\grot_i^{\mu\nu})\subseteq \Nn(z)\cap V(\grot_i^{\mu\nu})$.
\end{lemma}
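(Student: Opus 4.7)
The plan is to apply the classification of independent subsets of $V(\grot_i^{\mu\nu})$ provided by Lemma~\ref{l:317} to the set $T:=\Nn(q)\cap V(\grot_i^{\mu\nu})$, which is independent because $G$ is triangle-free. In case~\ref{it:a} there exists a vertex $z\in V(\grot_i^{\mu\nu})$ with $T\subseteq \Nn(z)$ and we are done immediately. The remaining task is to rule out the other five cases, and in each of them we shall derive a contradiction to the hypothesis $G\in \fD_4(\grot_i^{\mu\nu})$ by exhibiting a Vega subgraph of $G$ with strictly more vertices than $\grot_i^{\mu\nu}$.

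First, cases~\ref{it:b}\,--\,\ref{it:e} are handled by collating the extension lemmata from the previous subsection. If case~\ref{it:b} occurs, then $\mu=1$ and $\{\re u,\gr v,\bl w\}\subseteq \Nn(q)$, whence Lemma~\ref{l:318}\ref{it:318a} forces $x\in \Nn(q)$; the vertex $q$ can then play the role of $y$ and yields an embedding of $\grot_i^{0\nu}$ into $G$, contradicting maximality. In case~\ref{it:c} we have $\nu=1$ and $\{\gr b,\gr v,\re{i-1}\}\subseteq \Nn(q)$, so Corollary~\ref{cor:320} produces an embedding of $\grot_i^{\mu 0}$ in $G$. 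In cases~\ref{it:d} and~\ref{it:e} the neighbourhood of $q$ contains $\bl c$, $\bl w$ together with either $\re 0$ or $\gr{2i-1}$, and Lemma~\ref{l:322} then supplies an embedding of $\grot_{i+1}^{\mu 1}$ in $G$. Finally, case~\ref{it:f} is excluded directly by Lemma~\ref{lem:2157}.

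The most delicate point is the verification in case~\ref{it:b} that $q$ is genuinely a new vertex, i.e., $q\notin V(\grot_i^{1\nu})$, so that $V(\grot_i^{1\nu})\cup\{q\}$ really does host a copy of $\grot_i^{0\nu}$. This follows from the observation that in $\grot_i^{0\nu}$ the only vertex simultaneously adjacent to $\re u$, $\gr v$, and $\bl w$ is $y$, and $y$ is absent from $\grot_i^{1\nu}$. Apart from this routine check, the proof amounts to pairing up the classification lemma with the four extension results previously established, with all the substantial combinatorial work already carried out in those lemmata.
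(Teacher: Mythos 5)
Your proof follows exactly the same route as the paper's: classify $T = \Nn(q)\cap V(\grot_i^{\mu\nu})$ via Lemma~\ref{l:317} and then use Lemma~\ref{l:318}, Corollary~\ref{cor:320}, Lemma~\ref{l:322}, and Lemma~\ref{lem:2157} to rule out cases~\ref{it:b}\,--\,\ref{it:f} by producing a forbidden larger Vega subgraph. The added sanity check in case~\ref{it:b} that $q\notin V(\grot_i^{1\nu})$ is harmless and correct.

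There is, however, one small but genuine gap in your handling of case~\ref{it:d}. Unlike cases~\ref{it:c} and~\ref{it:e}, case~\ref{it:d} carries no restriction on $\nu$, so it can occur for $\nu=1$ as well as $\nu=0$. You invoke Lemma~\ref{l:322} to dispatch cases~\ref{it:d} and~\ref{it:e}, but Lemma~\ref{l:322} is stated only under the hypothesis $\grot_i^{\mu 0}\subseteq G$, i.e., it requires $\nu=0$. In the subcase of~\ref{it:d} with $\nu=1$ it is therefore not applicable, and your cited argument breaks down there. The fix is immediate: for $\nu=1$ one instead applies the second alternative of Corollary~\ref{cor:320}, which covers precisely the configuration $\bl{c},\bl{w},\re{0}\in \Nn(q)$ and produces a copy of $\grot_i^{\mu 0}$, again contradicting $G\in \fD_4(\grot_i^{\mu\nu})$. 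With this case split added, your proof is complete and agrees with the paper's.
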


\begin{proof}
	If no such vertex $z$ exists, then $T=\Nn(q)\cap V(\grot_i^{\mu\nu})$ satisfies 
	one of the five statements~\ref{it:b}\,--\,\ref{it:f} in Lemma~\ref{l:317}. 
	In case~\ref{it:b} Lemma~\ref{l:318}\ref{it:318a} 
	yields $T=\{\re{u},\gr{v},\bl{w}, x\}$, and $q$ can play the r\^{o}le of $y$ 
	in a copy of $\grot_i^{0\nu}$ in $G$, which is absurd. 
	Similarly, Corollary~\ref{cor:320}, Lemma~\ref{l:322}, and Lemma~\ref{lem:2157} 
	exclude the remaining cases.
\end{proof}

\subsection{The twin lemma}
The goal of this subsection is to prove the twin lemma for graphs 
in $\fD_4(\grot_i^{\mu\nu})$, which simply asserts that all these 
graphs have the $\grot_i^{\mu\nu}$-twin property. Since Vega graphs
have several different types of edges, the argument involves a case 
analysis. We begin with some edges, for which the twin property can 
be derived from the cube lemma alone. 
 
Edges of a Vega graph $\grot_i^{\mu\nu}$ that connect two vertices 
of $\Gamma_i$ are called {\it Andr\'asfai edges}. Such an edge $jm$ 
is said to be {\it long} if $j-m\neq \pm i$. Moreover, for $\nu=0$ 
the edge $\re{0}\gr{(2i-1)}$ is considered to be long as well. All 
other Andr\'asfai edges are {\it short}. As in the previous section, 
long edges are easier to handle than short ones.

\begin{lemma}\label{l:327}
	If $e$ denotes a long Andr\'asfai edge of a Vega graph $\grot_i^{\mu\nu}$, then 
	every graph $G\in \fD_4$ has the $(\grot_i^{\mu\nu}, e)$-twin property. 
	Moreover, if $\mu=0$ the same holds for $e=xy$.
\end{lemma}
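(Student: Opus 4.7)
The plan is to derive every instance of the twin property from the cube lemma (Lemma~\ref{l:31}) by exhibiting an induced cube in $G$ whenever the edge asserted by the twin property is assumed absent.

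For the edge $xy$ with $\mu=0$ this is straightforward. If $x'$, $y'$ are non-adjacent $\grot_i^{\mu\nu}$-twins of $x$, $y$, then the six edges of the hexagon $\cC_6$ together with the twin hypothesis assemble the configuration
\begin{center}
\begin{tabular}{cc}
$(Q)$ &
\begin{tabular}{c|c|c|c}
$x'$ & $\re{u}$ & $\gr{v}$ & $\bl{w}$\\\hline
$y'$ & $\re{a}$ & $\gr{b}$ & $\bl{c}$
\end{tabular}
\end{tabular}
\end{center}
in $G$, whose three diagonal non-edges $\re{u}\re{a}$, $\gr{v}\gr{b}$, $\bl{w}\bl{c}$ are forced by triangle-freeness through the common neighbours $\re{0}$, $\gr{i}$, $\bl{2i}$. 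Lemma~\ref{l:31} then contradicts the hypothesis $x'y'\notin E(G)$.

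For long Andr\'asfai edges I would follow the same recipe but let the last column of the cube be a pair of Andr\'asfai vertices sitting at $\G_i$-distance $i-1$, so that it supplies the fourth diagonal non-edge. By the colour structure there are three parallel cases. In the red--green case $m=\re{p}$, $j=\gr{(p+i+t)}$ with $t\in\{1,\dots,i-1\}$ (whose boundary value $t=i-1$, $p=0$ also subsumes the special edge $\re{0}\gr{(2i-1)}$ when $\nu=0$), the cube is
\begin{center}
\begin{tabular}{cc}
$(Q)$ &
\begin{tabular}{c|c|c|c}
$j'$ & $\re{a}$ & $\re{u}$ & $\gr{(p+i+t-1)}$\\\hline
$m'$ & $\gr{b}$ & $\gr{v}$ & $\re{(p+t)}$
\end{tabular}
\end{tabular};
\end{center}
its bipartite edges come from the twin property, the hexagon, and the fact that $\re{a},\re{u}$ are joined to every red Andr\'asfai vertex (and $\gr{b},\gr{v}$ to every green one), whilst the diagonal non-edges $\re{a}\gr{b}$, $\re{u}\gr{v}$ and $\gr{(p+i+t-1)}\re{(p+t)}$ acquire common neighbours $x$, $y$ and $\bl{(p+t+2i-1)}$, respectively. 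The green--blue and red--blue long edges are dispatched analogously using the cubes
\begin{center}
\begin{tabular}{c|c|c|c}
$j'$ & $\gr{b}$ & $\gr{v}$ & $\bl{(p+i)}$\\\hline
$m'$ & $\bl{c}$ & $\bl{w}$ & $\gr{(p+1)}$
\end{tabular}
\hspace{1em}and\hspace{1em}
\begin{tabular}{c|c|c|c}
$j'$ & $\re{a}$ & $\re{u}$ & $\bl{(p+2i-1)}$\\\hline
$m'$ & $\bl{c}$ & $\bl{w}$ & $\re{(p+t-i+1)}$
\end{tabular},
\end{center}
in which the Andr\'asfai pair of the last column again sits at $\G_i$-distance $i-1$.

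The main obstacle will be the red--blue case, since a red vertex $\re{p}$ has only $p$ blue $\G_i$-neighbours; here one uses the preliminary observation that long red--blue edges only occur for $p\ge 2$ and $t\le i-2$, which is exactly what makes $\bl{(p+2i-1)}$ and $\re{(p+t-i+1)}$ belong to $\grot_i^{\mu\nu}$, differ from $j$ and $m$, and sit at $\G_i$-distance $t+1\in\{2,\dots,i-1\}$. Once the three cubes have been verified induced in $G$, Lemma~\ref{l:31} supplies a contradiction in every case.
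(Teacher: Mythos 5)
Your proposal takes the same route as the paper: derive each case from the cube lemma (Lemma~\ref{l:31}) by assembling a copy of~$Q$ whose three auxiliary diagonals are non-edges, so that $j'm'$ must be the missing fourth. The $xy$-cube is verbatim the paper's, and your red--green and green--blue cubes agree with the paper's up to a small choice: the paper pairs $\re{(m-i)}$ with $\gr{(j+i)}$ in the last column, whereas you pair the Andr\'asfai neighbours of $j$ and $m$ that are ``just inside'' the long edge, giving a pair at modular distance $i-1$; both choices are readily seen to be non-adjacent, so either works. The one genuine divergence is the red--blue case, which the paper dispatches in a single sentence by invoking the $\tau_\nu$-symmetry (which swaps $\gr{\text{green}}$ with $\re{\text{red}}$ or $\bl{\text{blue}}$), while you build a third cube explicitly; your preliminary observation ($p\ge 2$, $t\le i-2$) is correct and does make the extra column land inside $\Gamma_i$ and stay distinct from $j$, $m$. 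Two cosmetic slips worth flagging: first, the claimed ``common neighbour $y$'' of $\re{u}$ and $\gr{v}$ does not exist when $\mu=1$, but since $\grot_i^{\mu\nu}$ is maximal triangle-free and hence induced in $G$, those non-edges are automatic and no common neighbour need be produced; second, the phrase ``exhibiting an induced cube'' should really read ``exhibiting a cube which, were $j'm'$ absent, would be induced, contradicting the cube lemma''---the conclusion is the same.
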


\begin{proof}
	For the last statement we consider any $\grot_i^{0\nu}$-twins 
	$x'$ and $y'$ of $x$ and $y$, respectively. Since the cube
	\begin{center}
		\begin{tabular}{cc}
			$(Q)$&
		\begin{tabular}{c|c|c|c}
			$x'$&  $\re{u}$&  $\gr{v}$ & $\bl{w}$
			\\ \hline
			$y'$&  $\re{a}$&  $\gr{b}$ & $\bl{c}$
		\end{tabular}
			\end{tabular}
	\end{center}	
	cannot be induced, $x'y'$ is indeed an edge of $G$.

	Now let $e=jm$ be a long Andr\'asfai edge. Suppose first that $\re{j}$ is \re{red} 
	and $\gr{m}$ is \gr{green}. Due to $i<\gr{m}-\re{j}\le 2i-1$ 
	we have $\re{j}\neq\re{i-1}$ and, hence, there is a \gr{green} vertex~$\gr{j+i}$.
	Further\-more, $\re{m-i}$ is \re{red} and $\gr{(j+i)}-\re{(m-i)}=2i-(m-j)\in [i-1]$
	shows $\re{(m-i)}\gr{(j+i)}\not\in E(G)$. So if $j'$, $m'$ 
	are $\grot_i^{\mu\nu}$-twins of $\re{j}$, $\gr{m}$, then the cube
	
	\begin{center}
		\begin{tabular}{cc}
			$(Q)$&
		\begin{tabular}{c|c|c|c}
			$j'$&  $\gr{b}$&  $\gr{v}$ & $\re{m-i}$
			\\ \hline
			$m'$&  $\re{a}$&  $\re{u}$ & $\gr{j+i}$
		\end{tabular}
			\end{tabular}
	\end{center}	
	leads to the desired edge $j'm'\in E(G)$. 

 	Similarly, if $\gr{j}$ is \gr{green} and $\bl{m}$ is \bl{blue}, 
	then $\gr{j}\ne\gr{2i-1}$ and $\gr{(m-i)}\bl{(j+i)}\not\in E(G)$, 
	so that we can work with the cube
 	\begin{center}
 	\begin{tabular}{cc}
 		$(Q)$&
 	\begin{tabular}{c|c|c|c}
 		$j'$&  $\bl{c}$&  $\bl{w}$ & $\gr{m-i}$
 		\\ \hline
 		$m'$&  $\gr{b}$&  $\gr{v}$ & $\bl{j+i}$
 	\end{tabular}
 		\end{tabular}.
 \end{center}	

	The remaining case, where $e$ connects a \re{red} and a \bl{blue} vertex, 
	reduces to one of the previous two by $\tau_\nu$-symmetry.
\end{proof}

We proceed with short Andr\'asfai edges. 

\begin{lemma}\label{lem:short}
	If $e$ denotes a short Andr\'asfai edge of a Vega graph~$\grot_i^{\mu\nu}$,
	then every graph $G\in \fD_4(\grot_i^{\mu\nu})$ has the $(\grot_i^{\mu\nu}, e)$-twin 
	property.
\end{lemma}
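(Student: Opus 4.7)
The plan is to argue by contradiction, mirroring Lemma~\ref{l:twin} but with maximality of~$\grot_i^{\mu\nu}$ within $\fD_4(\grot_i^{\mu\nu})$ replacing the role that $\Gamma_{k+1}$-freeness played for Andr\'asfai graphs. Exploiting the $\sigma$- and $\tau_\nu$-symmetries and the further automorphisms of $\grot_i^{\mu\nu}$ described in~\S\ref{subsec:auto}, I would first reduce to a single representative short Andr\'asfai edge per orbit; the model case is $e=\re{0}\gr{i}$. Supposing that the $(\grot_i^{\mu\nu}, e)$-twin property fails, I take non-adjacent $\grot_i^{\mu\nu}$-twins $\re{0}'$ and $\gr{i}'$ together with a common neighbour $r\in\Nn(\re{0}')\cap\Nn(\gr{i}')$.

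Next, for each $t$ with $0<t<i$ (skipping $t=i-1$ when $\nu=1$, since then $\gr{2i-1}$ is not a vertex of~$\grot_i^{\mu\nu}$), I would consider the hexagon
\[
	\re{0}' - r - \gr{i}' - \bl{(t+2i-1)} - \re{t} - \gr{(t+i)} - \re{0}'.
\]
All six edges are present: the induced copy of $\Gamma_i$ in $G$ supplies the three edges inside~$V(\grot_i^{\mu\nu})$, and the prescribed neighbourhoods of the twins yield the edges involving $\re{0}'$ and $\gr{i}'$. A chord analysis exactly as in Lemma~\ref{l:twin} (using only triangle-freeness of $G$ and inducedness of the Andr\'asfai subgraph) rules out every potential chord except $r\re{t}$.

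To force $r\re{t}\in E(G)$ even when this hexagon threatens to be induced, the idea is to pass to an auxiliary path $\pi$ in $\Gamma_i$ containing~$e$ and work inside the associated copy $\grot(\pi)\subseteq\grot_i^{\mu\nu}$. Since $\re{0}'$ and $\gr{i}'$ restrict to non-adjacent $\grot(\pi)$-twins, Lemma~\ref{l:38} or Lemma~\ref{l:39}---depending on how~$e$ embeds into~$\grot(\pi)$---provides additional common neighbours whose adjacencies, combined with Corollary~\ref{cor:326} giving reliability of enough key vertices, I expect to force the desired edge~$r\re{t}$.

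Iterating over all permissible~$t$, and treating the boundary value $t=i-1$ when $\nu=1$ by an analogous hexagon based on a different auxiliary path, one obtains $\Nn(r)\supseteq\{\re{0}',\gr{i}',\re{1},\dots,\re{i-1}\}$. Hence $V(\Gamma_i)\cup\{\re{0}',\gr{i}',r\}$ induces a copy of $\Gamma_{i+1}$ in~$G$. To reach the contradiction, I would use Lemmas~\ref{l:319},~\ref{l:321}, and~\ref{l:321r} to pin down how~$r$, $\re{0}'$, $\gr{i}'$ attach to the external hexagon~$\cC_6$ and to~$x$ (and~$y$ if $\mu=0$), thereby extending the enlarged Andr\'asfai core to a full Vega subgraph $\grot_{i+1}^{\mu\nu'}\subseteq G$ for a suitable $\nu'$, which contradicts $G\in\fD_4(\grot_i^{\mu\nu})$. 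The hardest part will be the induced-hexagon subcase of the third paragraph: the $\grot$-twin lemmas of~\S\ref{subsec:43} allow several possible outcomes, and establishing that each is incompatible with the present configuration will require a careful choice of~$\pi$ together with iterative use of the beautiful lemma.
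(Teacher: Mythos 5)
The heart of your plan has a genuine gap. You start from an \emph{arbitrary} common neighbour $r$ of the non-adjacent twins $\re{0}'$, $\gr{i}'$ and then, for each permissible $t$, set up the hexagon
\[
	\re{0}' - r - \gr{i}' - \bl{(t+2i-1)} - \re{t} - \gr{(t+i)} - \re{0}'
\]
hoping to force the chord $r\re{t}$. In the Andr\'asfai setting (Lemma~\ref{l:twin}) this works because a graph $G\in\fA$ has no induced hexagons at all; here $G\in\fD_4$ may well contain induced hexagons, and your remedy---invoking Lemma~\ref{l:38} or Lemma~\ref{l:39} on a suitable $\grot(\pi)$---does not help, because those lemmata only guarantee the \emph{existence} of some common neighbour $q$ of the twins with additional adjacencies to $\bl{c},\bl{w}$ (resp.\ $\re{a},\re{u}$). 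They say nothing about the arbitrarily chosen $r$, so you cannot conclude $r\re{t}\in E(G)$. In short, the ``hardest part'' you flag is not merely hard: the strategy of fixing an arbitrary $r$ and closing hexagons one $t$ at a time cannot be repaired along these lines.

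The paper sidesteps this entirely by working with the specific common neighbour $q$ furnished by Lemma~\ref{l:38} (for a red--green short edge, via the red auxiliary path $\pi_{\re{j}}$) or Lemma~\ref{l:39} (for a green--blue short edge), which is adjacent not only to both twins but also to $\bl c,\bl w$ (resp.\ $\re a,\re u$). These extra adjacencies to $\cC_6$ are exactly what allow $q$ to serve as the new vertex in a copy of $\grot_{i+1}^{\mu\nu}$: the remaining adjacencies of $q$ to an arc of $\Gamma_i$ are supplied by Lemma~\ref{l:321} or Lemma~\ref{l:321r}. Contrast this with your plan, which first tries to assemble $\Gamma_{i+1}$ from $r$ and the twins and only afterwards worries about attaching it to the hexagon and to $x$, $y$; that order of business doesn't reflect what the available lemmata actually deliver. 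Finally, you should be careful about the claim that symmetry reduces to a ``single representative'' short edge: the orbit structure distinguishes red--green, green--blue, and red--blue edges, and within the red--green case one must run the argument for each $j\in[0,i-2]$ (only $j=i-1$ reduces to $j=0$ via $\tau_0$), precisely as the paper does.
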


\begin{proof}
	Suppose first that $e$ connects a \re{red} vertex with a \gr{green} vertex. 
	Recalling that the edge $\re{0}\gr{(2i-1)}$ is regarded as being long, we can 
	write~$e=\re{j}\gr{(j+i)}$ for some $\re{j}\in \re{[0,i-1]}$. We may further 
	assume $\re{j}\ne \re{i-1}$, because if the edge $e=\re{(i-1)}\gr{(2i-1)}$ exists,
	then~$\tau_0$ reflects it to~$\re{0}\gr{i}$, which corresponds to~$\re{j}=\re{0}$. 
	
	Working with the \re{red} auxiliary 
	path $\pi_{\re{j}}=\re{j}-\gr{(j+i)}-\bl{(j+2i)}-\re{(j+1)}$ we form the 
	graph $\grot(\pi_{\re{j}})$ (see Figure~\ref{il:332A}). If there are 
	non-adjacent $\grot_i^{\mu\nu}$-twins $j'$, $(j+i)'$ 
	of $\re{j}$, $\gr{j+i}$, then Lemma~\ref{l:38} applied to $\grot(\pi_{\re{j}})$ 
	yields a common neighbour $q$ of $j'$, $(j+i)'$, $\bl{c}$, $\bl{w}$ 
	(see Figure~\ref{fig:1244b}). 
	We colour \re{$j'$ red}, \gr{$(j+i)'$ green}, \bl{$q$ blue}, and construct a 
	copy of $\grot_{i+1}^{\mu\nu}$ whose Andr\'asfai part is shown in 
	Figure~\ref{il:332C}, thereby obtaining a contradiction 
	to $G\in \fD_4(\grot_i^{\mu\nu})$. The edges from~$\bl{q}$ 
	to $[\re{j+1}, \gr{j+i-1}]$ required here exist by Lemma \ref{l:321} applied 
	to~$\grot_i^{\mu\nu}(\re{j'})$ and~$\grot_i^{\mu\nu}(\gr{(j+i)'})$.
					
	\begin{figure}[h!]
	\begin{subfigure}[b]{.26\textwidth}
	\centering
	\begin{tikzpicture}[scale=.9]
		\coordinate (c) at (0,0);
		\foreach \i in {1,...,5} {
			\coordinate (a\i) at (-18+\i*72:1);
			\coordinate (b\i) at (18+\i*72:1.5);	
			\coordinate (d\i) at (18+\i*72:2);
		}
		\fill (c) circle (2pt);
		\foreach \i in {1,...,5} {
			\draw (c)--(a\i);
			\fill (a\i) circle (2pt);		
			\fill (b\i) circle (2pt);
		}
		\draw (b1) [out=162, in=72] to (d2) [out=-108, in=162] to (b3);
		\draw (b2) [out=234, in=144] to (d3) [out=-36, in=234] to (b4);
		\draw (b3) [out=-54, in=216] to (d4) [out=36, in=-54] to (b5);
		\draw (b4) [out=18, in=-72] to (d5) [out=108, in=18] to (b1);
	 	\draw (b5) [thick, out=90, in=0] to (d1) [out=180, in=90] to (b2);
	 	\draw (b5) [ultra thick,red,dashed,out=90,in=0] to (d1) [out=180, in=90] to (b2);
	 	\draw [thick] (a2)--(b2);
	 	\draw [ultra thick, red, dashed] (a2)--(b2);
	 	\draw [thick] (a1)--(b5);
	 	\draw [ultra thick,red, dashed] (a1)--(b5);
	 	\draw (b3)--(a3)--(c)--(a5)--(b4);
	 	\draw (b3)--(a4)--(b4)--(a4)--(c);
	 	\draw (b1)--(a1)--(c)--(a2)--(b1);
	 	\draw (b2)--(a3);
	 	\draw (b5)--(a5);
	 	\fill (a4) circle (1.7pt);
	 	\foreach \i in {a1, a2, c, b1} \fill [red] (\i) circle (1.7pt);
	 	\foreach \i in {b5, a5, b3} \fill [green] (\i) circle (1.7pt);
	 	\foreach \i in {b2, a3, b4} \fill [blue] (\i) circle (1.7pt); 
	 	\node at ($(c)+(-.2,.07)$) {\tiny{$\re{a}$}};
	 	\node at ($(a3)+(-.2,-.15)$) {\tiny{$\bl{w}$}};
	 	\node at ($(a4)+(0,-.2)$) {\tiny{$x$}};
	 	\node at ($(a5)+(.2,-.1)$) {\tiny{$\gr{v}$}};
	 	\node at ($(b3)+(-.1,-.2)$) {\tiny{$\gr{b}$}};
	 	\node at ($(b4)+(.15,-.15)$) {\tiny{$\bl{c}$}};
	 	\node at ($(b1)+(0,.18)$) {\tiny{$\re{u}$}};
	 	\node [scale = .9] at ($(b2)+(.55,-.07)$) {\tiny{$\bl{j+2i}$}};
	 	\node [scale = .9]  at ($(b5)+(-.45,-.13)$) {\tiny{$\gr{j+i}$}};
	 	\node at ($(a1) +(.18,.15)$) {\tiny{$\re{j}$}};
	 	\node [scale = .9] at ($(a2)+(-.25,.15)$){\tiny{$\re{j+1}$}};
	\end{tikzpicture}
	\caption{$\grot(\pi_{\re{j}})$}
	\label{il:332A}
	\end{subfigure}
	\hfill
	\begin{subfigure}[b]{.36\textwidth}
	\centering
	\begin{tikzpicture}[scale=.8]
		\coordinate (v) at (30:4cm);
		\coordinate (a) at (90:4cm);
		\coordinate (w) at (150:4cm);
		\coordinate (b) at (210:4cm);
		\coordinate (u) at (270:4cm);
		\coordinate (c) at (330:4cm);
		\coordinate (x) at (7,3);
		\coordinate (y) at (7,-3);
		\foreach \i in {1,...,36}{
			\coordinate (v\i) at (\i*10:2cm);
		}
		\draw [red!75!black, thick,domain=-20:100] plot ({2*cos(\x)},{2*sin(\x)}); 
		\draw [blue, thick,domain=110:220] plot ({2*cos(\x)},{2*sin(\x)}); 
		\draw [green!75!black, thick,domain=230:330] plot ({2*cos(\x)},{2*sin(\x)}); 
		\draw [red!75!black, ,domain=65:88, dashed] plot ({2.3*cos(\x)},{2.3*sin(\x)}); 
		\draw [blue, dashed, domain=117:138] plot ({2.3*cos(\x)},{2.3*sin(\x)}); 
		\draw [green!75!black, dashed,domain=305:328] plot ({2.3*cos(\x)},{2.3*sin(\x)}); 
		\draw [red!75!black,domain=-12:10, dashed] plot ({2.3*cos(\x)},{2.3*sin(\x)}); 
		\draw [blue, dashed, domain=185:208] plot ({2.3*cos(\x)},{2.3*sin(\x)}); 
		\draw [green!75!black, dashed,domain=235:247] plot ({2.3*cos(\x)},{2.3*sin(\x)}); 
		\draw [thick] (v4)--(v16)--(v28);
		\foreach \i in {10, 2,4,6,34}{
			\draw[red!75!black, very thick]  (v\i) circle (2pt);
			\fill[red!75!white]  (v\i) circle (2pt);
		}
		\foreach \i in {23,33,26,28,30}{
			\draw[green!75!black, very thick]  (v\i) circle (2pt);
			\fill[green!75!white]  (v\i) circle (2pt);
		}
		\foreach \i in {11,22, 14,16,18}{
			\draw[blue!75!black, very thick]  (v\i) circle (2pt);
			\fill[blue!75!white]  (v\i) circle (2pt);
		}
		\node [scale=.9,blue] at ($(v11)+(-.4,.25)$) {\tiny $3i-2$};
		\node [scale=.9,blue] at ($(v14)+(-.7,0)$) {\tiny $j+2i$};
		\node [scale=.9,blue] at ($(v16)+(-.3,0)$) {\tiny $q$};
		\node [scale=.9,blue] at ($(v18)+(-.9,0)$) {\tiny $j+2i-1$};
		\node [scale=.9,blue] at ($(v22)+(-.3,0)$) {\tiny $2i$};
		\node [scale=.9, green!60!black] at ($(v23)+(-.6,-.2)$) {\tiny $2i-1-\nu$};
		\node [scale=.9,green!60!black] at ($(v30)+(.6,-.3)$) {\tiny $i+j-1$};
		\node [scale=.9,green!60!black] at ($(v28)+(0,-.3)$) {\tiny $(i+j)'$};
		\node [scale=.9,green!60!black] at ($(v26)+(-.4,-.3)$) {\tiny $i+j$};
		\node [scale=.9,green!60!black] at ($(v33)+(.3,-.1)$) {\tiny $i$};
		\node [scale=.9,red!70!black] at ($(v10)+(.1,.3)$) {\tiny $0$};
		\node [scale=.9,red!70!black] at ($(v6)+(.3,.1)$) {\tiny $j$};
		\node [scale=.9,red!70!black] at ($(v2)+(.6,0)$) {\tiny $j+1$};
		\node [scale=.9,red!70!black] at ($(v4)+(.3,.05)$) {\tiny $j'$};
		\node [scale=.9,red!70!black] at ($(v34)+(.6,0)$) {\tiny $i-1$};
	\end{tikzpicture} 
	\caption{}
	\label{il:332C}
	\end{subfigure}
	\hfill
	\begin{subfigure}[b]{.36\textwidth}
	\centering
	\begin{tikzpicture}[scale=.8]
		\foreach \i in {1,...,36}{
			\coordinate (v\i) at (\i*10:2cm);
		}
		\draw [red!75!black, thick,domain=-20:100] plot ({2*cos(\x)},{2*sin(\x)}); 
		\draw [blue, thick,domain=110:220] plot ({2*cos(\x)},{2*sin(\x)}); 
		\draw [green!75!black, thick,domain=230:330] plot ({2*cos(\x)},{2*sin(\x)}); 
		\draw [red!75!black, ,domain=65:90, dashed] plot ({2.3*cos(\x)},{2.3*sin(\x)}); 
		\draw [blue, dashed, domain=117:138] plot ({2.3*cos(\x)},{2.3*sin(\x)}); 
		\draw [green!75!black, dashed,domain=305:328] plot ({2.3*cos(\x)},{2.3*sin(\x)}); 
		\draw [red!75!black ,domain=-12:13, dashed] plot ({2.3*cos(\x)},{2.3*sin(\x)}); 
		\draw [blue, dashed, domain=185:208] plot ({2.3*cos(\x)},{2.3*sin(\x)}); 
		\draw [green!75!black, dashed,domain=235:253] plot ({2.3*cos(\x)},{2.3*sin(\x)}); 
		\draw [thick] (v16)--(v4)--(v28);
		\foreach \i in {10, 2,4,6,34}{
			\draw[red!75!black, very thick]  (v\i) circle (2pt);
			\fill[red!75!white]  (v\i) circle (2pt);
		}
		\foreach \i in {23,33,26,28,30}{
			\draw[green!75!black, very thick]  (v\i) circle (2pt);
			\fill[green!75!white]  (v\i) circle (2pt);
		}
		\foreach \i in {11,22, 14,16,18}{
			\draw[blue!75!black, very thick]  (v\i) circle (2pt);
			\fill[blue!75!white]  (v\i) circle (2pt);
		}
		\node [scale=.9,blue] at ($(v11)+(-.4,.25)$) {\tiny{$3i-2$}};
		\node [scale=.9,blue] at ($(v14)+(-.7,0)$) {\tiny{$j+2i$}};
		\node [scale=.9,blue] at ($(v16)+(-.8,0)$) {\tiny{$(j+2i)'$}};
		\node [scale=.9,blue] at ($(v18)+(-.95,0)$) {\tiny{$j+2i-1$}};
		\node [scale=.9,blue] at ($(v22)+(-.3,0)$) {\tiny{$2i$}};
		\node [scale=.9,green!60!black] at ($(v23)+(-.6,-.2)$) {\tiny{$2i-1-\nu$}};
		\node [scale=.9,green!60!black] at ($(v30)+(.16,-.25)$) {\tiny{$j+i$}};
		\node [scale=.9,green!60!black] at ($(v28)+(.15,-.4)$) {\tiny{$(j+i)'$}};
		\node [scale=.9,green!60!black] at ($(v26)+(-0.6,-.35)$) {\tiny{$j+i+1$}};
		\node [scale=.9,green!60!black] at ($(v33)+(.3,-.1)$) {\tiny{$i$}};
		\node [scale=.9,red!70!black] at ($(v10)+(.1,.3)$) {\tiny{$0$}};
		\node [scale=.9,red!70!black] at ($(v6)+(.25,.25)$) {\tiny{$j$}};
		\node [scale=.9,red!70!black] at ($(v2)+(.6,0)$) {\tiny{$j+1$}};
		\node [scale=.9,red!70!black] at ($(v4)+(.2,.1)$) {\tiny{$q$}};
		\node [scale=.9,red!70!black] at ($(v34)+(.5,0)$) {\tiny{$i-1$}};
	\end{tikzpicture} 
	\caption{}
	\label{fig:332C}
	\end{subfigure}
	\caption{The proof of Lemma~\ref{lem:short}.}
	\label{fig:332}
	\end{figure} 						
	
	Next we consider the case that $e$ connects a \gr{green} vertex with a \bl{blue} 
	vertex and write $e=\gr{(j+i)}\bl{(j+2i)}$, where $\re{j}\in \re{[0,i-2]}$. 
	By Lemma~\ref{l:39} applied to $\grot(\pi_{\re{j}})$ there is a common 
	neighbour $q$ of $(j+i)'$, $(j+2i)'$, $\re{a}$, $\re{u}$ 
	(see Figures~\ref{fig:1244c} and~\ref{il:332A}). 
	We colour \gr{$(j+i)'$ green}, \bl{$(j+2i)'$ blue}, \re{$q$ red} and construct 
	a copy of $\grot_{i+1}^{\mu\nu}$ whose Andr\'asfai part is shown in 
	Figure~\ref{fig:332C}. The required edges from $\re{q}$ 
	to $[\gr{j+i+1}, \bl{j+2i-1}]$ are obtained from Lemma~\ref{l:321r}. 
	So as in the previous case we reach a
	contradiction to $G\in \fD_4(\grot_i^{\mu\nu})$. 
			
	Finally, the case that $e$ connects a \re{red} vertex to a \bl{blue} vertex 
	reduces to the previous ones by $\tau_\nu$-symmetry.		
\end{proof}

\begin{lemma}[Twin Lemma]\label{lem:vtwin}
	Every $G\in \fD_4(\grot_i^{\mu\nu})$ has the $\grot_i^{\mu\nu}$-twin property. 
\end{lemma}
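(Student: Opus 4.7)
My plan is to first discharge by Lemmas~\ref{l:327} and~\ref{lem:short} those edges of $\grot_i^{\mu\nu}$ that lie inside $\Gamma_i$ or (when $\mu=0$) coincide with $xy$, and then to treat the remaining edges using the Gr\"otzsch-subgraph twin lemmata from Subsection~\ref{subsec:43}. The remaining edges naturally split into three groups: the six edges of the outer hexagon~$\cC_6$, the edges connecting $x$ or (when $\mu=0$) $y$ to a hexagonal vertex, and the edges joining a hexagonal vertex to a vertex of its own colour class inside~$\Gamma_i$. Using the automorphisms $\sigma$ (available when $\mu=0$) and $\tau_\nu$, together with the exceptional isomorphism $\rho$ for $i=2$, the substantively distinct cases become manageable in number.

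Given an edge~$e$ from one of these groups, I would first attempt to exhibit an auxiliary path $\pi$ in $\grot_i^{\mu\nu}$ so that $e$ appears as an edge of type $ca_j$ (incident to the centre) in the induced Mycielski-Gr\"otzsch subgraph $\grot(\pi)$. A $\grot_i^{\mu\nu}$-twin of either endpoint of $e$ is automatically a $\grot(\pi)$-twin of that endpoint, so Lemma~\ref{l:37} then yields the desired $(\grot_i^{\mu\nu},e)$-twin property. This succeeds for the vast majority of edges. For instance, every hexagon edge incident to $\re{a}$ and every edge $\re{a}\re{j}$ with $\re{j}\in\Gr$ is a $ca_j$-edge in $\grot(\pi_{\re{j'}})$ for a suitable red auxiliary path~$\pi_{\re{j'}}$; the remaining hexagon edges and the edges $x\re{a}, x\gr{b}, x\bl{c}$ are handled through green and blue auxiliary paths combined with the automorphisms $\tau_\nu$, and the $y$-incident edges (for $\mu=0$) reduce to the $x$-case by $\sigma$-symmetry.

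The exceptional edges are those for which no $\grot(\pi)\subseteq\grot_i^{\mu\nu}$ realises $e$ as a $ca_j$-edge; these occur only when $\mu=1$ and $i$ is small, because then one of the hexagonal vertices $\re{u},\gr{v},\bl{w}$ has fewer than five neighbours in $\grot_i^{\mu\nu}$ and cannot play the r\^ole of the centre of a Gr\"otzsch subgraph. For these, one can still place~$e$ as an $a_j b_{j\pm 2}$- or $b_j b_{j+2}$-edge of some $\grot(\pi)$, and then invoke Lemma~\ref{l:38} or Lemma~\ref{l:39} to obtain a common neighbour~$q$ of the hypothetical non-adjacent twin pair along with two further hexagonal vertices. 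Following exactly the template from the proof of Lemma~\ref{lem:short}, the propagation Lemmas~\ref{l:319}, \ref{l:321}, and~\ref{l:321r} then promote the partial adjacency of $q$ to a full colour class inside $\Gamma_i$, so that $q$ may be added to obtain a copy of $\grot_{i+1}^{\mu'\nu'}$ in $G$, contradicting the assumption $G\in\fD_4(\grot_i^{\mu\nu})$.

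The principal obstacle I foresee is bookkeeping rather than conceptual difficulty: many auxiliary paths exist only when $i$ is large enough, the sporadic cases $i=2$ and sometimes $i=3$ have to be inspected individually or routed through~$\rho$, and whenever Lemma~\ref{l:38} or~\ref{l:39} is invoked one must carefully verify that the colour profile of the resulting common neighbour~$q$ is compatible with insertion into a Vega graph of the next level (the parameters $\mu'$ and $\nu'$ depend delicately on which hexagonal vertices~$q$ ends up meeting). Once these checks are made, the enlargement step carries over verbatim from Lemma~\ref{lem:short}, so no essentially new machinery beyond what has already been developed should be required.
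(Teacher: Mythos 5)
Your overall decomposition (discharge Andr\'asfai edges and $xy$ via Lemmas~\ref{l:327} and~\ref{lem:short}, then route the remaining edges through Gr\"otzsch subgraphs $\grot(\pi)$) does match the paper's outline, and your treatment of edges incident to $\re{a},\gr{b},\bl{c}$ via Lemma~\ref{l:37} applied to $\grot(\pi)$ is essentially the paper's second case. However, your handling of the edges $\re{u}j$ (and by symmetry $\gr{v}j$, $\bl{w}j$) joining a hexagonal vertex to $\Gamma_i$ has a genuine gap.

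First, you assert that these edges are ``exceptional'' only when $\mu=1$ and $i$ is small, on the grounds that $\re{u},\gr{v},\bl{w}$ might otherwise serve as the centre of a Gr\"otzsch subgraph. But in every graph $\grot(\pi)$ the centre is one of $\re{a},\gr{b},\bl{c}$ (see Figure~\ref{fig:331}); the vertices $\re{u},\gr{v},\bl{w}$ sit on the outer pentagon regardless of $i$, $\mu$, $\nu$. You never construct an alternative Gr\"otzsch subgraph with $\re{u}$ at the centre, and a degree count alone does not produce one. Hence all of the $\re{u}$-to-$\Gr$ edges need a separate treatment, not just a sporadic few.

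Second, and more seriously, your proposed endgame for these edges fails. Applying Lemma~\ref{l:38} to $\grot(\pi)$ for the non-adjacent twins $u',j'$ yields a common neighbour $q$ of $u'$, $j'$, $x$, and one inner vertex of $\pi$ (a green or blue vertex of $\Gamma_i$). That is \emph{not} ``the twin pair together with two further hexagonal vertices.'' In particular $q$ is not adjacent to both members of any of the opposite hexagonal pairs $\{\re{a},\re{u}\}$, $\{\gr{b},\gr{v}\}$, $\{\bl{c},\bl{w}\}$, so the propagation Lemmas~\ref{l:319}, \ref{l:321}, \ref{l:321r} have no purchase, and there is no way to carry out the enlargement to $\grot_{i+1}^{\mu'\nu'}$ along the lines of Lemma~\ref{lem:short}. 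What actually closes this case in the paper is entirely different: the adjacency pattern of $q$ (containing $x$, a red vertex, and a green or blue vertex) forces $\Nn(q)\cap V(\grot_i^{\mu\nu}(j'))$ to be a \emph{small} set, contradicting Lemma~\ref{lem:2157}. The small-set machinery and Lemma~\ref{lem:2157} never appear in your proposal, and without them this case remains open.
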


\begin{proof}
	Let $e$ be an edge of $\grot_i^{\mu\nu}\subseteq G\in \fD_4(\grot_i^{\mu\nu})$, 
	for which we want to confirm the twin property. For the sake of contradiction we 
	assume that there are non-adjacent twins of the end vertices of $e$, indicated 
	in the usual way by primes. Owing to the two foregoing 
	lemmata we have $e\cap \cC_6\ne\vn$. 
	Moreover, by $\sigma$-symmetry we can suppose~$y\not\in e$, which means that one 
	of the following two main cases occurs.  
	
	\smallskip 
	
	{\it \hskip 2em First case: $e$ connects one of $\re{u}$, $\gr{v}$, $\bl{w}$ 
	to $\Gamma$.}
	
	\smallskip
	
	By $\tau_\nu$-symmetry we may assume $\re{u}\in e$ or $\bl{w}\in e$.
	If $e=\re{uj}$, there exists a \re{red} auxiliary path~$\pi$ one of whose 
	end vertices is $\re{j}$. By Lemma~\ref{l:38} applied to~$\grot(\pi)$, 
	there exists a vertex~$q$ adjacent to $u'$, $j'$, $x$, and an inner vertex 
	of $\pi$ 
	(see Figures~\ref{fig:1244b} and~\ref{fig:331A}). 
	But this means that $q$ has a small neighbourhood 
	in $\grot_i^{\mu\nu}(j')$, contrary to Lemma~\ref{lem:2157}.
										
	If $i\ge 3$ the case $e=\bl{wj}$, where $\bl{j}\in \Gb$, is similar, because there 
	exists a \bl{blue} auxiliary path starting with $\bl{j}$. 
	Finally, $i=2$ implies $\bl{j}=\bl{4}$ and due to 
	\[
		(\grot_2^{\mu\nu},\bl{4}\bl{w})\cong (\grot_2^{\nu\mu},\bl{4}\re{1})
		\qquad 
		(\textrm{via } \rho)
	\]
	(see Figure~\ref{fig:1419}) this case reduces to Lemma~\ref{lem:short}. 
		
	\smallskip 
	
	{\it \hskip 2em Second case: $e\cap \{\re{a},\gr{b},\bl{c}\}\ne\vn$.}
	
	\smallskip
	
	If $\re{a}\in e$ we just need to apply Lemma~\ref{l:37} to $\grot(\pi)$ 
	for an appropriate \re{red} auxiliary path~$\pi$. Provided that $i\ge 3$ 
	there exist auxiliary paths ending in all vertices and this argument generalises. 
	Thus it remains to consider the case that $i=2$ and $\{\gr{b}, \bl{c}\}\cap e\ne\vn$. 
	Using our isomorphisms and automorphisms, every possibility can be shown to 
	be equivalent to a case that has already been covered. 
	Indeed, we have 
	\[						
		(\grot_2^{\mu\nu},\bl{c}x) \cong (\grot_2^{\nu\mu},\gr{2}\re{0}),
		\quad
		(\grot_2^{\mu\nu},\bl{c}\bl{4}) \cong (\grot_2^{\nu\mu},\gr{2}\bl{4}),
		\quad
		(\grot_2^{\mu\nu},\bl{c}\gr{v}) \cong (\grot_2^{\nu\mu},\gr{2}\gr{v})
		\qquad (\textrm{via } \rho)
	\]
	(see Figure~\ref{fig:1419}), so that only the cases $e=\bl{c}\re{u}$ 
	and $\gr{b}\in e$ remain. The edge $e=\gr{2b}$ can be taken care of 
	by $\tau_\nu$-symmetry, because 
	\[
		(\grot_2^{\mu 0},\gr{2}\gr{b})\cong(\grot_2^{\mu 0},\re{1a})
		\quad \text{ and } \quad 
		(\grot_2^{\mu 1},\gr{2}\gr{b})\cong(\grot_2^{\mu 1},\bl{4c})\,.
	\]
	Next, we have 
	$(\grot_2^{\mu\nu},\bl{c}\re{u}) \cong (\grot_2^{\nu\mu},\gr{2}\gr{b})$ (via $\rho$)
	and, finally, the other edges containing $\gr{b}$ 
	are $\tau_\nu$-equivalent to certain edges containing $\re{a}$ or $\bl{c}$.
\end{proof}

\subsection{The attachment lemma}
In this subsection we establish the attachment lemma for graphs 
in $\fD_4(\grot_i^{\mu\nu})$, which allows us to complete the proof 
of Theorem~\ref{thm:1.3}\ref{it:1.3b}.

\begin{lemma}[Attachment Lemma]\label{l:329}
	If $\grot_i^{\mu\nu}\subseteq G\in \fD_4(\grot_i^{\mu\nu})$, 
	then every vertex $q\in V(G)$ is an $\grot_i^{\mu\nu}$-twin of some 
	vertex belonging to $\grot_i^{\mu\nu}$.
\end{lemma}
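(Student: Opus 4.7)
The plan is to upgrade the inclusion $\Nn(q)\cap V(\Omega)\subseteq\Nn(z)\cap V(\Omega)$ supplied by Lemma~\ref{l:325} to an equality, where $\Omega=\grot_i^{\mu\nu}$. I would argue by strong induction on the \emph{deficiency} $d(q)$, defined as the minimum over all eligible $z$ of $|(\Nn(z)\cap V(\Omega))\setminus\Nn(q)|$. Vanishing deficiency is exactly the conclusion, so I may assume $d(q)\ge 1$ for the inductive step.

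Fix $z\in V(\Omega)$ realising $d(q)$ and pick $w\in\Nn(z)\cap V(\Omega)$ with $qw\notin E(G)$. Since $G$ is maximal triangle-free, $q$ and $w$ admit a common neighbour $r\in V(G)$. By Lemma~\ref{l:325} applied to $r$ there is a vertex $z_r\in V(\Omega)$ with $\Nn(r)\cap V(\Omega)\subseteq\Nn(z_r)\cap V(\Omega)$, and in particular $wz_r\in E(\Omega)$. The crucial step is to argue that $d(r)<d(q)$, so that the induction hypothesis makes $r$ a genuine $\Omega$-twin of $z_r$. The rotated copy $\Omega(r)=\Omega-z_r+r$ is then a fresh model of $\Omega$ in $G$; applying Lemma~\ref{l:325} to $q$ relative to $\Omega(r)$ and transferring the resulting near-twin back to $\Omega$ via Lemma~\ref{f:24} and the twin lemma (Lemma~\ref{lem:vtwin}) should yield a new witness $z^\star\in V(\Omega)$ for $q$ with $|(\Nn(z^\star)\cap V(\Omega))\setminus\Nn(q)|<d(q)$, because the missing vertex $w$ has been replaced by $r\in\Nn(q)$. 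This contradicts the minimality defining $d(q)$ and closes the induction.

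The main obstacle lies in justifying $d(r)<d(q)$, which is not automatic from the construction. I expect that the argument actually proceeds via a case analysis on the $\Aut(\Omega)$-orbit of $z_r$, using the forcing results of Subsection~\ref{subsec:43}---chiefly the cube lemma (Lemma~\ref{l:31}) together with Lemmas~\ref{l:318}, \ref{l:319}, \ref{l:321}, and~\ref{l:321r}---to pin down $\Nn(r)\cap V(\Omega)$ entirely in each configuration. The symmetries $\sigma$ and $\tau_\nu$, and for $i=2$ the exceptional isomorphism $\varrho$, should reduce the number of essentially distinct cases to a handful: a vertex of $\cC_6$, a vertex of $\Gamma_i$, the apex $x$, and (when $\nu=0$) the exceptional green vertex $\gr{2i-1}$. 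In each case one must verify that any strict containment $\Nn(r)\cap V(\Omega)\subsetneq\Nn(z_r)\cap V(\Omega)$ would either yield a larger Vega subgraph $\grot_{i+1}^{\mu'\nu'}$ of $G$ via Lemma~\ref{l:321} or~\ref{l:322}, contradicting the maximality of $\Omega$ in $\fD_4(\grot_i^{\mu\nu})$, or else contradict Lemma~\ref{lem:2157} ruling out small neighbourhoods.
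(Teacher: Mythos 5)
Your proposal correctly identifies the relevant toolkit---Lemma~\ref{l:325}, the twin lemma (Lemma~\ref{lem:vtwin}), Lemma~\ref{f:24}, the cube lemma, and the Gr\"otzsch-subgraph machinery of~\S\ref{subsec:43}---but the organising framework you put around it does not work, and you say as much yourself when you flag ``the main obstacle lies in justifying $d(r)<d(q)$.'' This is not a technical wrinkle; it is the entire proof. The vertex $r$ is merely \emph{some} common neighbour of $q$ and $w$. Nothing about the construction constrains $\Nn(r)\cap V(\Omega)$ beyond the single fact $w\in\Nn(r)$, so $r$ can have an arbitrarily poor trace on $\Omega$: the quantity $d(r)$ could well exceed $d(q)$. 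There is also no well-founded recursion here to drive an induction---$q$ and $r$ are on an equal footing, and passing from $q$ to $r$ is not a reduction to a ``simpler'' instance in any sense that the deficiency measure tracks. Without a genuine drop in some ordinal-valued rank, the strong induction never gets off the ground.

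The second half of the argument has a parallel gap. Even granting that $r$ is a genuine $\Omega$-twin of $z_r$, applying Lemma~\ref{l:325} to $q$ relative to $\Omega(r)$ yields a vertex $z^\star$ with $\Nn(q)\cap V(\Omega(r))\subseteq\Nn(z^\star)\cap V(\Omega(r))$, and transferring back via Lemma~\ref{f:24} produces a new witness $z^{\star\star}\in V(\Omega)$. But you only know $r\in\Nn(q)\cap\Nn(z^\star)$; the missing vertex $w\in\Nn(z)\setminus\Nn(q)$ may still lie in $\Nn(z^{\star\star})\setminus\Nn(q)$, and other vertices could have been lost in the swap, so there is no guarantee that $|(\Nn(z^{\star\star})\cap V(\Omega))\setminus\Nn(q)|<d(q)$. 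The paper does not run anything like this induction: it performs a direct structural case analysis (fourteen claims long), starting from a vertex known to be adjacent to $x$, or to a twin of $\re{a}$, $\gr{b}$, $\bl{c}$, or $y$, and working outwards with hexagon arguments, cube applications, and the auxiliary-path results to pin down $\Nn(q)\cap V(\Omega)$ completely. The ``case analysis on the $\Aut(\Omega)$-orbit of $z_r$'' you gesture towards at the end is precisely that work; once it is done, the deficiency-induction wrapper is superfluous, and until it is done, the proposal is an outline with the crucial step missing.
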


\begin{proof}
	Instead of ``$\grot_i^{\mu\nu}$-twin'' we will just write ``twin'' throughout 
	the argument. We proceed with fourteen claims of the form that if a given 
	vertex $q\in V(G)$ is adjacent to certain members of $V(\grot_i^{\mu\nu})$,
	then $q$ is indeed a twin of some vertex of $\grot_i^{\mu\nu}$.
		
	\begin{claim}\label{c:331}
		If $q$ is adjacent to at least two vertices among $\re{u}$, $\gr{v}$, $\bl{w}$, 
		then either 
		it is a twin of one of $\re{a}$, $\gr{b}$, $\bl{c}$ or~$\mu=0$ and $q$ is 
		a twin of $y$. 
	\end{claim}
		
	\begin{proof}
		Suppose first that $\gr{v}, \bl{w}\in \Nn(q)$. With an arbitrary red
		auxiliary path $\pi$ we form the graph $\grot(\pi)$ (see Figure~\ref{fig:331A}).
		The beautiful lemma yields $x\in \Nn(q)$; so if $\re{u}\in \Nn(q)$ holds as 
		well, then $q$ is a twin of $y$ (and $\mu=0$). Otherwise Lemma~\ref{l:312} 
		shows that $q$ is a $\grot(\pi)$-twin of $\re{a}$, which means that $q$ is 
		adjacent to the end vertices of $\pi$. As for every \re{red} 
		vertex $\re{j} \in \Gr$ there is an auxiliary \re{red} path starting in $\re{j}$,
		we conclude that if $\re{u}q\not\in E(G)$, then $\Gr\subseteq \Nn(q)$, 
		whence $q$ is a twin of $\re{a}$. 
		By $\tau_\nu$-symmetry the only other case we need to consider is 
		that $\re{u},\gr{v}\in \Nn(q)$ but $\bl{w}\not\in \Nn(q)$. If $i\ge 3$ 
		we can simply repeat the above argument with \bl{blue} auxiliary paths, 
		thereby learning that $q$ is a twin of $\bl{c}$. 
		
		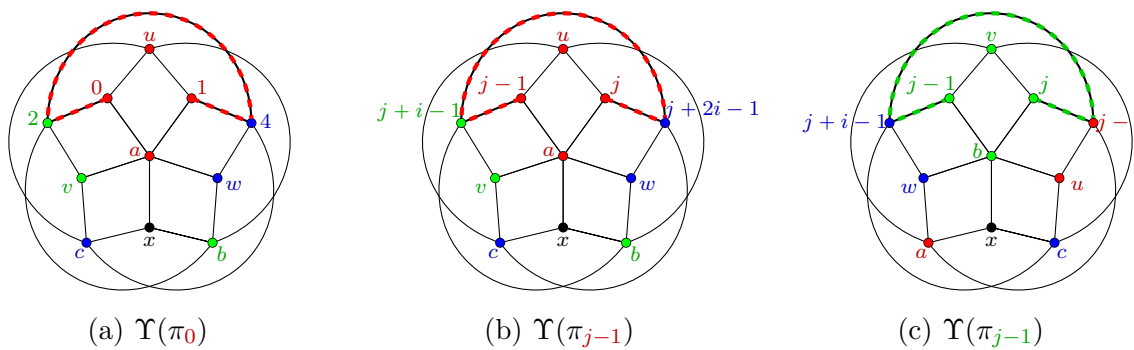
\begin{figure}[h!]
		\begin{subfigure}[b]{.32\textwidth}
		\centering
		\begin{tikzpicture}[scale=.95]
			\coordinate (c) at (0,0);
			\foreach \i in {1,...,5} {
				\coordinate (a\i) at (-18+\i*72:1);
				\coordinate (b\i) at (18+\i*72:1.5);	
				\coordinate (d\i) at (18+\i*72:2);
			}
			\fill (c) circle (2pt);
			\foreach \i in {1,...,5} {
				\draw (c)--(a\i);
				\fill (a\i) circle (2pt);		
				\fill (b\i) circle (2pt);
			}
			\draw (b1) [out=162, in=72] to (d2) [out=-108, in=162] to (b3);
			\draw (b2) [out=234, in=144] to (d3) [out=-36, in=234] to (b4);
			\draw (b3) [out=-54, in=216] to (d4) [out=36, in=-54] to (b5);
			\draw (b4) [out=18, in=-72] to (d5) [out=108, in=18] to (b1);
			\draw (b5) [thick, out=90, in=0] to (d1) [out=180, in=90] to (b2);
			\draw (b5) [ultra thick,red,dashed,out=90,in=0] to (d1) [out=180,in=90] to (b2);
			\draw [thick] (a2)--(b2);
			\draw [ultra thick, red, dashed] (a2)--(b2);
			\draw [thick] (a1)--(b5);
			\draw [ultra thick,red, dashed] (a1)--(b5);
			\draw (b3)--(a3)--(c)--(a5)--(b4);
			\draw (b3)--(a4)--(b4)--(a4)--(c);
			\draw (b1)--(a1)--(c)--(a2)--(b1);
			\draw (b2)--(a3);
			\draw (b5)--(a5);
			\foreach \i in {a1, a2, c, b1} \fill [red] (\i) circle (1.7pt);
			\foreach \i in {b5, a5, b3} \fill [blue] (\i) circle (1.7pt);
			\foreach \i in {b2, a3, b4} \fill [green] (\i) circle (1.7pt); 
			\node at ($(c)+(-.2,.07)$) {\tiny $\re{a}$};
			\node at ($(a3)+(-.2,-.15)$) {\tiny $\gr{v}$};
			\node at ($(a4)+(0,-.2)$) {\tiny $x$};
			\node at ($(a5)+(.23,-.1)$) {\tiny $\bl{w}$};
			\node at ($(b3)+(-.1,-.15)$) {\tiny $\bl{c}$};
			\node at ($(b4)+(.13,-.15)$) {\tiny $\gr{b}$};
			\node at ($(b1)+(0,.17)$) {\tiny $\re{u}$};
			\node at ($(b2)+(-.2,.07)$) {\tiny $\gr{2}$};
			\node at ($(b5)+(.2,.02)$) {\tiny $\bl{4}$};
			\node at ($(a1) +(.15,.15)$) {\tiny $\re{1}$};
			\node at ($(a2)+(-.15,.15)$){\tiny $\re{0}$};
		\end{tikzpicture}
		\vskip -.2cm
		\caption{$\grot(\pi_{\re{0}})$}
		\label{fig:1253a}
		\end{subfigure}
		\hfill
		\begin{subfigure}[b]{.32\textwidth}
		\centering
		\begin{tikzpicture}[scale=.95]
			\coordinate (c) at (0,0);
			\foreach \i in {1,...,5} {
				\coordinate (a\i) at (-18+\i*72:1);
				\coordinate (b\i) at (18+\i*72:1.5);	
				\coordinate (d\i) at (18+\i*72:2);
			}
			\fill (c) circle (2pt);
			\foreach \i in {1,...,5} {
				\draw (c)--(a\i);
				\fill (a\i) circle (2pt);		
				\fill (b\i) circle (2pt);
			}
			\draw (b1) [out=162, in=72] to (d2) [out=-108, in=162] to (b3);
			\draw (b2) [out=234, in=144] to (d3) [out=-36, in=234] to (b4);
			\draw (b3) [out=-54, in=216] to (d4) [out=36, in=-54] to (b5);
			\draw (b4) [out=18, in=-72] to (d5) [out=108, in=18] to (b1);
			\draw (b5) [thick, out=90, in=0] to (d1) [out=180, in=90] to (b2);
			\draw (b5) [ultra thick, red, dashed, out=90, in=0] to (d1) [out=180, in=90] to (b2);
			\draw [thick] (a2)--(b2);
			\draw [ultra thick, red, dashed] (a2)--(b2);
			\draw [thick] (a1)--(b5);
			\draw [ultra thick,red, dashed] (a1)--(b5);
			\draw (b3)--(a3)--(c)--(a5)--(b4);
			\draw (b3)--(a4)--(b4)--(a4)--(c);
			\draw (b1)--(a1)--(c)--(a2)--(b1);
			\draw (b2)--(a3);
			\draw (b5)--(a5);
			\foreach \i in {a1, a2, c, b1} \fill [red] (\i) circle (1.7pt);
			\foreach \i in {b5, a5, b3} \fill [blue] (\i) circle (1.7pt);
			\foreach \i in {b2, a3, b4} \fill [green] (\i) circle (1.7pt); 
			\node at ($(c)+(-.2,.07)$) {\tiny $\re{a}$};
			\node at ($(a3)+(-.2,-.15)$) {\tiny $\gr{v}$};
			\node at ($(a4)+(0,-.2)$) {\tiny $x$};
			\node at ($(a5)+(.23,-.1)$) {\tiny $\bl{w}$};
			\node at ($(b3)+(-.1,-.15)$) {\tiny $\bl{c}$};
			\node at ($(b4)+(.13,-.15)$) {\tiny $\gr{b}$};
			\node at ($(b1)+(0,.17)$) {\tiny $\re{u}$};
			\node at ($(b2)+(-.58,.15)$) {\tiny $\gr{j+i-1}$};
			\node at ($(b5)+(.66,.18)$) {\tiny $\bl{j+2i-1}$};
			\node at ($(a1) +(.15,.18)$) {\tiny $\re{j}$};
			\node at ($(a2)+(-.25,.18)$){\tiny $\re{j-1}$};
		\end{tikzpicture}
		\vskip -.2cm
		\caption{$\grot(\pi_{\re{j-1}})$}
		\label{fig:1253b}
		\end{subfigure}
		\hfill
		\begin{subfigure}[b]{.32\textwidth}
		\centering
		\begin{tikzpicture}[scale=.95]
			\coordinate (c) at (0,0);
			\foreach \i in {1,...,5} {
				\coordinate (a\i) at (-18+\i*72:1);
				\coordinate (b\i) at (18+\i*72:1.5);	
				\coordinate (d\i) at (18+\i*72:2);
			}
			\draw (b1) [out=162, in=72] to (d2) [out=-108, in=162] to (b3);
			\draw (b2) [out=234, in=144] to (d3) [out=-36, in=234] to (b4);
			\draw (b3) [out=-54, in=216] to (d4) [out=36, in=-54] to (b5);
			\draw (b4) [out=18, in=-72] to (d5) [out=108, in=18] to (b1);
			\draw (b5) [thick, out=90, in=0] to (d1) [out=180, in=90] to (b2);
			\draw (b5) [ultra thick, green!80!black, dashed, out=90, in=0] to (d1) [out=180, in=90] to (b2);
			\draw (b3)--(a3)--(c)--(a5)--(b4);
			\draw (b3)--(a4)--(b4)--(a4)--(c);
			\draw (b1)--(a1)--(c)--(a2)--(b1);
			\draw (b2)--(a3);
			\draw (b5)--(a5);
			\draw [thick] (a2)--(b2);
			\draw [ultra thick, green!80!black, dashed] (a2)--(b2);
			\draw [thick] (a1)--(b5);
			\draw [ultra thick,green!80!black, dashed] (a1)--(b5); 
			\fill (c) circle (2pt);
			\foreach \i in {1,...,5} {
				\draw (c)--(a\i);
				\fill (a\i) circle (2pt);		
				\fill (b\i) circle (2pt);
			}
			\foreach \i in {b5, a5, b3} \fill [red] (\i) circle (1.7pt);
			\foreach \i in {a1, a2, b1, c} \fill [green] (\i) circle (1.7pt);
			\foreach \i in {b2, a3, b4} \fill [blue] (\i) circle (1.7pt); 
			\node at ($(c)+(-.2,.08)$) {\tiny $\gr{b}$};
			\node at ($(a3)+(-.2,-.15)$) {\tiny $\bl{w}$};
			\node at ($(a4)+(0,-.2)$) {\tiny $x$};
			\node at ($(a5)+(.25,-.1)$) {\tiny $\re{u}$};
			\node at ($(b3)+(-.1,-.15)$) {\tiny $\re{a}$};
			\node at ($(b4)+(.1,-.15)$) {\tiny $\bl{c}$};
			\node at ($(b1)+(0,.17)$) {\tiny $\gr{v}$};
			\node at ($(b2)+(-.62,0)$) {\tiny $\bl{j+i-1}$};
			\node at ($(b5)+(.36,0)$) {\tiny $\re{j-i}$};
			\node at ($(a1) +(.15,.18)$) {\tiny $\gr{j}$};
			\node at ($(a2)+(-.25,.18)$){\tiny $\gr{j-1}$};
		\end{tikzpicture}
		\vskip -.2cm
		\caption{$\grot(\pi_{\gr{j-1}})$}\label{fig:1253c}
		\end{subfigure} 
		\vskip -.2cm
		\caption{The proof of Claim~\ref{c:331} and Claim~\ref{c:332}.}
		\label{il:331a}
		\end{figure} 		
		
		If $i=2$, however, only the \re{red} auxiliary path 
		$\pi_{\re{0}}=\re{0}-\gr{2}-\bl{4}-\re{1}$ might be available. 
		Corollary~\ref{cor:326} guarantees that~$\bl{w}$ is reliable 
		with respect to~$\grot(\pi_{\re{0}})$ (see Figure~\ref{fig:1253a}). 
		Thus Lemma~\ref{l:313} applied to this graph
		yields $x\in \Nn(q)$ (see Figure~\ref{fig:1138c}).  
		Next Lemma~\ref{l:314} and $q\not\in\Ext(\grot(\pi_0), x)$ imply 
		that some $\grot(\pi_{\re{0}})$-twin~$4'$ of~$\bl{4}$ is in $\Nn(q)$
		(see Figure~\ref{fig:1138d}). 
		Due to $i=2$ this vertex is actually a real twin of $\bl{4}$. 
		Now~$q$ is a $\grot_2^{\mu\nu}(4')$-twin of $\bl{c}$. By Lemma~\ref{f:24} 
		it follows that $q$ is a twin of $\bl{c}$.	 
	\end{proof}
	
	In view of $\sigma$-symmetry, the previous claim implies the following. 
	
	\begin{claim}\label{cor:430}
		If $\mu=0$ and~$q$ is adjacent to at least two among $\re{a}$, $\gr{b}$, $\bl{c}$, 
		then it is a twin of $\re{u}$, $\gr{v}$, $\bl{w}$, or $x$. \qed	
	\end{claim}
	
	Without the assumption on $\mu$ we still have the following weaker assertion. 
	
	\begin{claim}\label{c:330}
		If $q$ is adjacent to two vertices among $\re{a}$, $\gr{b}$, $\bl{c}$ and 
		to some vertex from $\G_i$, then $q$ is a twin of $\re{u}$, $\gr{v}$, or $\bl{w}$.
	\end{claim}
	
	\begin{proof}
		Arguing indirectly we assume that $q$ is a counterexample. 
		Let $\phi\in\{\re{\mathrm{red}}, \gr{\mathrm{green}}, \bl{\mathrm{blue}}\}$
		be a colour satisfying $\Nn(q)\cap\Gamma_\phi\ne\vn$.
		Claim~\ref{cor:430} tells us that $\mu=1$, 
		whence $\Gamma_\phi\not\subseteq \Nn(q)$. Consequently, there is a 
		pair of consecutive vertices $\{j, j+1\}\subseteq \Gamma_\phi$ such 
		that exactly one of $j$, $j+1$ is in $\Nn(q)$. 
		
		Recall that 
		\[
			\pi_j=j-(j+i)-(j+2i)-(j+1)
		\]
		is an auxiliary path of colour $\phi$. 
		By Lemma~\ref{l:313} applied to $\grot(\pi_j)$ there is a neighbour of $q$
		in $\Ext(\grot(\pi_j), s)$ for some $s\in\{\re{u}, \gr{v}, \bl{w}\}$ whose 
		colour is not $\phi$ (see Figure~\ref{fig:331}). But this contradicts the
		fact that $s$ is, by Corollary~\ref{cor:326}, reliable in $\grot(\pi_j)$.
	\end{proof}
	
	Here is a statement that can be viewed as an adaptation of Claim~\ref{l:28}
	to Vega graphs.
	
	\begin{claim}\label{c:332}
		If $j\ne \re{0}, \gr{i}$ and $j, j+i-1\in \Nn(q)$, then $q$ is a twin of $j-i$.
	\end{claim}
	
	\begin{proof}
		The condition $j\neq \re{0},\gr{i}$ just means that $j$, $j+i-1$ have distinct 
		colours. So by $\tau_\nu$-symmetry we may assume that $j$ is \re{red} 
		or \gr{green}. 
		If $\re{j}$ is \re{red}, we consider $\grot(\pi_{\re{j-1}})$, where
		\[
			\pi_{\re{j-1}} = \re{(j-1)}-\gr{(j+i-1)} -\bl{(j+2i-1)}-\re{j}
		\]
		is a \re{red} auxiliary path (see Figure~\ref{fig:1253b}). 
		Assume for the sake of contradiction 
		that $\bl{w}\not\in \Nn(q)$. In view of Lemma~\ref{l:313} there 
		exists some $y'\in \Nn(q)\cap \Ext(\grot(\pi_{\re{j-1}}), x)$.
		As~$y'$ is adjacent to $\re{u}$, $\gr{v}$, $\bl{w}$, and~$x$, we 
		have $\mu=0$ and $y'$ is a twin of $y$. But now $q$ has a small neighbourhood 
		in $\grot_i^{\mu\nu}(y')$, which is absurd. This proves $\bl{w}\in \Nn(q)$.
	
		Using the impossibility of small neighbourhoods again we obtain $x\not\in \Nn(q)$. 
		Therefore Lemma \ref{l:314} shows that some $\grot(\pi_{\re{j-1}})$-twin $c'$ 
		of $\bl{c}$ is in $\Nn(q)$. Claim~\ref{c:331} informs us that~$c'$ is, in fact, 
		a real twin of $\bl{c}$. So Lemma~\ref{l:321} applied to $\grot_i^{\mu\nu}(c')$ 
		yields $\re{[j,i-1]}\subseteq \Nn(q)$ and $\gr{[i,j+i-1]}\subseteq \Nn(q)$. 
		Altogether~$q$ is a $\grot_i^{\mu\nu}(c')$-twin of~$\bl{j-i}$ and 
		by Lemma~\ref{f:24} we are done. The case that~$\gr{j}$ is \gr{green} is 
		similar, the only difference being that now the path 
		$\pi_{\gr{j-1}}=\gr{(j-1)}-\bl{(j+i-1)} -\re{(j-i)}-\gr{j}$ 
		is \gr{green} (see Figure~\ref{fig:1253c}). 
	\end{proof}

	The next three claims analyse vertices adjacent to two opposite vertices 
	of the external hexagon.

	\begin{claim}\label{c:334}
		If $\re{a},\re{u}\in \Nn(q)$, then $q$ is a twin of some vertex belonging 
		to~$\Gr$. 
	\end{claim}
	
	\begin{proof}
		Due to Lemma \ref{l:321r} it suffices to show the following statements. 
		\begin{enumerate}[label=\nlabel]
			\item\label{it:4361} For every $\re{j}\in \re{[0,i-2]}$, either $\gr{j+i}$ 
				or $\bl{j+2i}$ is in $\Nn(q)$.
			\item\label{it:4362} If $\nu=0$, then $\gr{2i-1}\in \Nn(q)$.
		\end{enumerate}
		
		For the proof of~\ref{it:4361} we apply Corollary~\ref{cor:311} 
		to $\grot(\pi_{\re{j}})$, where, as usual, 
		\[
			\pi_{\re{j}}=\re{j}-\gr{(j+i)}-\bl{(j+2i)}-\re{(j+1)}
		\]
		(see Figures~\ref{fig:1138a} and~\ref{fig:2409a}).
		This shows that if $\gr{j+i}, \bl{j+2i} \not\in \Nn(q)$, then there 
		are $\grot(\pi_{\re{j}})$-twins of $\gr{j+i}$, $\bl{j+2i}$ adjacent to~$q$. 
		But by Claim~\ref{c:332} these twins had to be real twins and the twin lemma 
		would show that they are adjacent, thus creating a triangle with~$q$. 
	
		To verify~\ref{it:4362} we work with the \gr{green} auxiliary path 
		$\omega=\gr{i}-\bl{2i}-\re{(i-1)}-\gr{(2i-1)}$. In view of Lemma~\ref{l:325} 
		no vertex is adjacent to $\bl{c}$, $\bl{w}$, $\gr{2i-1}$ and thus~$\gr{i}$ 
		is reliable in $\grot(\omega)$ (see Figure~\ref{fig:2409b}). 
		So Lemma~\ref{l:313} discloses 
		that $\gr{(2i-1)}\not\in \Nn(q)$ is impossible. 
	\end{proof}

	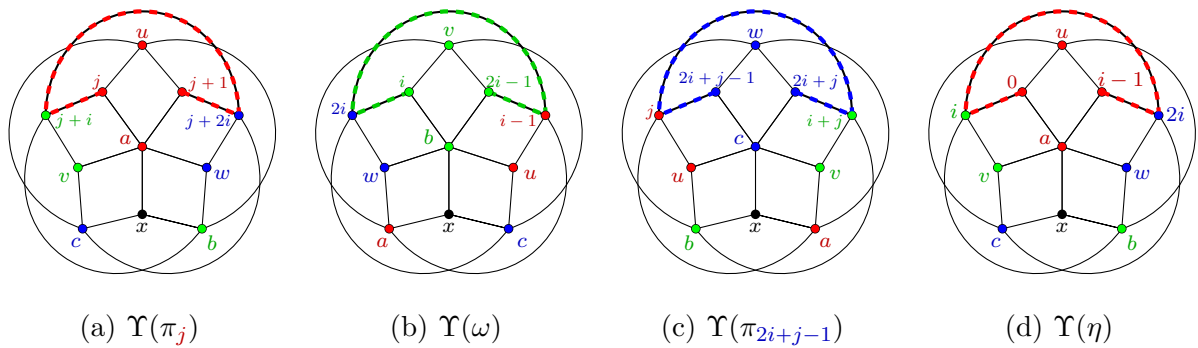
\begin{figure}[h!]
	\begin{subfigure}[b]{.24\textwidth}
	\centering
	\begin{tikzpicture}[scale=.9]
		\coordinate (c) at (0,0);
		\foreach \i in {1,...,5} {
			\coordinate (a\i) at (-18+\i*72:1);
			\coordinate (b\i) at (18+\i*72:1.5);	
			\coordinate (d\i) at (18+\i*72:2);
		}
		\fill (c) circle (2pt);
		\foreach \i in {1,...,5} {
			\draw (c)--(a\i);
			\fill (a\i) circle (2pt);		
			\fill (b\i) circle (2pt);
		}
		\draw (b1) [out=162, in=72] to (d2) [out=-108, in=162] to (b3);
		\draw (b2) [ out=234, in=144] to (d3) [out=-36, in=234] to (b4);
		\draw (b3) [ out=-54, in=216] to (d4) [out=36, in=-54] to (b5);
		\draw (b4) [out=18, in=-72] to (d5) [out=108, in=18] to (b1);
		\draw (b5) [thick, out=90, in=0] to (d1) [out=180, in=90] to (b2);
		\draw (b5) [ultra thick, red, dashed, out=90, in=0] to (d1) [out=180, in=90] to (b2);
		\draw [thick] (a2)--(b2);
		\draw [ultra thick, red, dashed] (a2)--(b2);
		\draw [thick] (a1)--(b5);
		\draw [ultra thick,red, dashed] (a1)--(b5);
		\draw (b3)--(a3)--(c)--(a5)--(b4);
		\draw (b3)--(a4)--(b4)--(a4)--(c);
		\draw  (b1)--(a1)--(c)--(a2)--(b1);
		\draw  (b2)--(a3);
		\draw  (b5)--(a5);
		\foreach \i in {a1, a2, c, b1} \fill [red] (\i) circle (1.7pt);
		\foreach \i in {b5, a5, b3} \fill [blue] (\i) circle (1.7pt);
		\foreach \i in {b2, a3, b4} \fill [green] (\i) circle (1.7pt); 
		\node at ($(c)+(-.25,.1)$) {\tiny{$\re{a}$}};
		\node at ($(a3)+(-.2,-.15)$) {\tiny{$\gr{v}$}};
		\node at ($(a4)+(0,-.2)$) {\tiny{$x$}};
		\node at ($(a5)+(.23,-.1)$) {\tiny{$\bl{w}$}};
		\node at ($(b3)+(-.1,-.2)$) {\tiny{$\bl{c}$}};
		\node at ($(b4)+(.15,-.2)$) {\tiny{$\gr{b}$}};
		\node at ($(b1)+(0,.18)$) {\tiny{$\re{u}$}};
		\node [scale=.8] at ($(b2)+(.42,-.08)$) {\tiny{$\gr{j+i}$}};
		\node[scale=.8] at ($(b5)+(-.45,-.1)$) {\tiny{$\bl{j+2i}$}};
		\node[scale=.8] at ($(a1) +(.39,.12)$) {\tiny{$\re{j+1}$}};
		\node[scale=.8] at ($(a2)+(-.08,.15)$){\tiny{$\re{j}$}};
	\end{tikzpicture}
	\caption{$\grot(\pi_{\re{j}})$}
	\label{fig:2409a}
	\end{subfigure}
	\hfill
	\begin{subfigure}[b]{.24\textwidth}
	\centering
	\begin{tikzpicture}[scale=.9]
		\coordinate (c) at (0,0);
		\foreach \i in {1,...,5} {
			\coordinate (a\i) at (-18+\i*72:1);
			\coordinate (b\i) at (18+\i*72:1.5);	
			\coordinate (d\i) at (18+\i*72:2);
		}
		\draw (b1) [out=162, in=72] to (d2) [out=-108, in=162] to (b3);
		\draw (b2) [out=234, in=144] to (d3) [out=-36, in=234] to (b4);
		\draw (b3) [out=-54, in=216] to (d4) [out=36, in=-54] to (b5);
		\draw (b4) [out=18, in=-72] to (d5) [out=108, in=18] to (b1);
		\draw (b5) [thick, out=90, in=0] to (d1) [out=180, in=90] to (b2);
		\draw (b5) [ultra thick, green!80!black, dashed, out=90, in=0] to (d1) [out=180, 	in=90] to (b2);
		\draw (b3)--(a3)--(c)--(a5)--(b4);
		\draw (b3)--(a4)--(b4)--(a4)--(c);
		\draw (b1)--(a1)--(c)--(a2)--(b1);
		\draw (b2)--(a3);
		\draw (b5)--(a5);
		\draw [thick] (a2)--(b2);
		\draw [ultra thick, green!80!black, dashed] (a2)--(b2);
		\draw [thick] (a1)--(b5);
		\draw [ultra thick,green!80!black, dashed] (a1)--(b5); 
		\fill (c) circle (2pt);
		\foreach \i in {1,...,5} {
			\draw (c)--(a\i);
			\fill (a\i) circle (2pt);		
			\fill (b\i) circle (2pt);
		}
		\foreach \i in {b5, a5, b3} \fill [red] (\i) circle (1.7pt);
		\foreach \i in {a1, a2, b1, c} \fill [green] (\i) circle (1.7pt);
		\foreach \i in {b2, a3, b4} \fill [blue] (\i) circle (1.7pt); 
		\node at ($(c)+(-.3,.15)$) {\tiny{$\gr{b}$}};
		\node at ($(a3)+(-.2,-.15)$) {\tiny{$\bl{w}$}};
		\node at ($(a4)+(0,-.2)$) {\tiny{$x$}};
		\node at ($(a5)+(.25,-.1)$) {\tiny{$\re{u}$}};
		\node at ($(b3)+(-.1,-.2)$) {\tiny{$\re{a}$}};
		\node at ($(b4)+(.2,-.2)$) {\tiny{$\bl{c}$}};
		\node at ($(b1)+(0,.2)$) {\tiny{$\gr{v}$}};
		\node [scale=.8] at ($(b2)+(-.2,.1)$) {\tiny{$\bl{2i}$}};
		\node [scale=.8] at ($(b5)+(-.4,-.09)$) {\tiny{$\re{i-1}$}};
		\node[scale=.8] at ($(a1) +(.3,.15)$) {\tiny{$\gr{2i-1}$}};
		\node[scale=.8] at ($(a2)+(-.1,.15)$){\tiny{$\gr{i}$}};
	\end{tikzpicture}
	\caption{$\grot(\omega)$}
	\label{fig:2409b}
	\end{subfigure}
	\hfill
	\begin{subfigure}[b]{.24\textwidth}
	\centering
	\begin{tikzpicture}[scale=.9]
		\coordinate (c) at (0,0);
		\foreach \i in {1,...,5} {
			\coordinate (a\i) at (-18+\i*72:1);
			\coordinate (b\i) at (18+\i*72:1.5);	
			\coordinate (d\i) at (18+\i*72:2);
		}
		\fill (c) circle (2pt);
		\foreach \i in {1,...,5} {
			\draw (c)--(a\i);
			\fill (a\i) circle (2pt);		
			\fill (b\i) circle (2pt);
		}
		\draw (b1) [out=162, in=72] to (d2) [out=-108, in=162] to (b3);
		\draw (b2) [out=234, in=144] to (d3) [out=-36, in=234] to (b4);
		\draw (b3) [out=-54, in=216] to (d4) [out=36, in=-54] to (b5);
		\draw (b4) [out=18, in=-72] to (d5) [out=108, in=18] to (b1);
		\draw (b5) [thick, out=90, in=0] to (d1) [out=180, in=90] to (b2);
		\draw (b5) [ultra thick, blue,dashed,out=90,in=0] to (d1) [out=180, in=90] to (b2);
		\draw [thick] (a2)--(b2);
		\draw [ultra thick, blue, dashed] (a2)--(b2);
		\draw [thick] (a1)--(b5);
		\draw [ultra thick, blue, dashed] (a1)--(b5);
		\draw (b3)--(a3)--(c)--(a5)--(b4);
		\draw (b3)--(a4)--(b4)--(a4)--(c);
		\draw (b1)--(a1)--(c)--(a2)--(b1);
		\draw (b2)--(a3);
		\draw (b5)--(a5);
		\foreach \i in {a1, a2, c, b1} \fill [blue] (\i) circle (1.7pt);
		\foreach \i in {b5, a5, b3} \fill [green] (\i) circle (1.7pt);
		\foreach \i in {b2, a3, b4} \fill [red] (\i) circle (1.7pt); 
		\node at ($(c)+(-.25,.1)$) {\tiny{$\bl{c}$}};
		\node at ($(a3)+(-.2,-.15)$) {\tiny{$\re{u}$}};
		\node at ($(a4)+(0,-.2)$) {\tiny{$x$}};
		\node at ($(a5)+(.23,-.1)$) {\tiny{$\gr{v}$}};
		\node at ($(b3)+(-.1,-.2)$) {\tiny{$\gr{b}$}};
		\node at ($(b4)+(.15,-.2)$) {\tiny{$\re{a}$}};
		\node at ($(b1)+(0,.18)$) {\tiny{$\bl{w}$}};
		\node [scale=.8]at ($(b2)+(-.13,.05)$) {\tiny{$\re{j}$}};
		\node[scale =.8] at ($(b5)+(-.4,-.1)$) {\tiny{$\gr{i+j}$}};
		\node[scale=.8] at ($(a1) +(.3,.15)$) {\tiny{$\bl{2i+j}$}};
		\node [scale = .8] at ($(a2)+(.02,.19)$){\tiny{$\bl{2i+j-1}$}};
	\end{tikzpicture}
	\caption{$\grot(\pi_{\bl{2i+j-1}})$}
	\label{fig:2409c}
	\end{subfigure}
	\hfill
	\begin{subfigure}[b]{.24\textwidth}
	\centering
	\begin{tikzpicture}[scale=.9]
	\coordinate (c) at (0,0);
	\foreach \i in {1,...,5} {
		\coordinate (a\i) at (-18+\i*72:1);
		\coordinate (b\i) at (18+\i*72:1.5);	
		\coordinate (d\i) at (18+\i*72:2);
	}
	\fill (c) circle (2pt);
	\foreach \i in {1,...,5} {
		\draw (c)--(a\i);
		\fill (a\i) circle (2pt);		
		\fill (b\i) circle (2pt);
	}
	\draw (b1) [out=162, in=72] to (d2) [out=-108, in=162] to (b3);
	\draw (b2) [out=234, in=144] to (d3) [out=-36, in=234] to (b4);
	\draw (b3) [out=-54, in=216] to (d4) [out=36, in=-54] to (b5);
	\draw (b4) [out=18, in=-72] to (d5) [out=108, in=18] to (b1);
	\draw (b5) [thick, out=90, in=0] to (d1) [out=180, in=90] to (b2);
	\draw (b5) [ultra thick, red, dashed, out=90, in=0] to (d1) [out=180, in=90] to (b2);
	\draw [thick] (a2)--(b2);
	\draw [ultra thick, red, dashed] (a2)--(b2);
	\draw [thick] (a1)--(b5);
	\draw [ultra thick,red, dashed] (a1)--(b5);
	\draw (b3)--(a3)--(c)--(a5)--(b4);
	\draw (b3)--(a4)--(b4)--(a4)--(c);
	\draw (b1)--(a1)--(c)--(a2)--(b1);
	\draw (b2)--(a3);
	\draw (b5)--(a5);
	\foreach \i in {a1, a2, c, b1} \fill [red] (\i) circle (1.7pt);
	\foreach \i in {b5, a5, b3} \fill [blue] (\i) circle (1.7pt);
	\foreach \i in {b2, a3, b4} \fill [green] (\i) circle (1.7pt); 
	\node at ($(c)+(-.25,.1)$) {\tiny{$\re{a}$}};
	\node at ($(a3)+(-.2,-.15)$) {\tiny{$\gr{v}$}};
	\node at ($(a4)+(0,-.2)$) {\tiny{$x$}};
	\node at ($(a5)+(.23,-.1)$) {\tiny{$\bl{w}$}};
	\node at ($(b3)+(-.1,-.2)$) {\tiny{$\bl{c}$}};
	\node at ($(b4)+(.15,-.2)$) {\tiny{$\gr{b}$}};
	\node at ($(b1)+(0,.18)$) {\tiny{$\re{u}$}};
	\node at ($(b2)+(-.17,.07)$) {\tiny{$\gr{i}$}};
	\node at ($(b5)+(.25,.02)$) {\tiny{$\bl{2i}$}};
	\node at ($(a1) +(.28,.17)$) {\tiny{$\re{i-1}$}};
	\node [scale=.9] at ($(a2)+(-.15,.15)$){\tiny{$\re{0}$}};
	\end{tikzpicture}
	\caption{$\grot(\eta)$}
	\label{fig:2409d}
	\end{subfigure}
	\caption{The proofs of Claim~\ref{c:334} and Claim~\ref{l:335}.}
	\label{il:334}
	\end{figure} 	
	
	\begin{claim}\label{l:335}
		If $\bl{c},\bl{w}\in \Nn(q)$, then $q$ is a twin of some vertex in $\Gb$.
	\end{claim}
	
	\begin{proof}
		Because of Lemma~\ref{l:321} it suffices to show that
		\begin{enumerate}[label=\nlabel]
			\item\label{it:335a} for every $\re{j}\in \re{[1,i-2]}$, either $\re{j}$ 
				or $\gr{j+i}$ is in $\Nn(q)$;
				\item\label{it:335b} $\re{i-1}\in \Nn(q)$;
				\item\label{it:335c} and $\gr{i}\in \Nn(q)$. 
		\end{enumerate}
		
		The argument establishing~\ref{it:335a} is very similar to the previous proof 
		but uses the \bl{blue} auxiliary path
		\[
			\pi_{\bl{2i+j-1}}=\bl{(2i+j-1)}-\re{j}-\gr{(i+j)}-\bl{(2i+j)}
		\]
		instead (see Figure~\ref{fig:2409c}); we omit the details. 
					
		For the remaining two statements we work with the \re{red} auxiliary path 
		\[
			\eta = \re{0}-\gr{i}-\bl{2i}-\re{(i-1)}
		\]
		(see Figure~\ref{fig:2409d}). 				
		Assume first that contrary to~\ref{it:335b} we have $\re{i-1}\not\in \Nn(q)$. 
		By Lemma~\ref{l:313} there exists 
		some $z\in \Nn(q)\cap \Ext(\grot(\omega), \re{0})$. Due to 
		$\gr{b},\gr{v},\re{i-1}\in \Nn(z)$ and Lemma \ref{l:319}\ref{it:319a} we have 
		$\Gr\subseteq \Nn(z)$, wherefore $z$ is a twin of $\gr{2i-1}$ (and $\nu=0$). 
		But this means that $q$ violates Lemma~\ref{l:325} with respect 
		to $\grot_i^{\mu\nu}(z)$. Thereby~\ref{it:335b} is proved.
		
		Suppose next that $\gr{i}\not\in \Nn(q)$ and observe that due 
		to Lemma~\ref{l:325} the vertex~$\re{i-1}$ is reliable with respect 
		to $\grot(\eta)$. So Lemma \ref{l:314} tells us that $\Nn(q)$ contains 
		some $\grot(\eta)$-twins~$0'$,~$i'$ of $\re{0}$, $\gr{i}$, respectively. 
		By Lemma~\ref{l:321r}\ref{it:321ra} $0'$ is actually a real twin of $\re{0}$
		and by the case~$j=\bl{2i}$ of Claim~\ref{c:332} $i'$ is also real twin 
		of $\gr{i}$. Now the twin lemma discloses $0'i'\in E(G)$ and together with $q$
		this edge closes a triangle, which is absurd. This concludes the proof 
		of~\ref{it:335c}.
	\end{proof}

	\begin{claim}\label{c:336}
		If $\gr{b},\gr{v}\in \Nn(q)$, then $q$ is a twin of some vertex in $\Gg$.
	\end{claim}
	
	\begin{proof}
		By $\tau_\nu$-symmetry this follows from the two previous claims. 
	\end{proof}
	
	So far all our claims assume that at least two neighbours of $q$ 
	in $V(\grot^{\mu\nu}_i)$ are given. This is not surprising, because all 
	the results in~\S\ref{subsec:43} are of this form, and up to this point 
	no other arguments have been utilised. Eventually we need to cover less 
	restrictive cases of the attachment lemma as well. Accordingly, we shall use 
	a hexagon argument in the claim after the next one, where we show that 
	neighbours of $x$ are twins of $\re{a}$, $\gr{b}$, $\bl{c}$, or $y$. 
	Preparing ourselves for this task we 
	establish an important special case first. 
	  
	\begin{claim}\label{c:333}
		If $q$ is adjacent to $x$ and a vertex belonging to $\Gamma_i$, then it is a 
		twin of $\re{a}$, $\gr{b}$, or~$\bl{c}$. 
	\end{claim}
		
	\begin{proof}
		If, for instance, $x,\re{j}\in \Nn(q)$, where $\re{j}\in \Gr$, we take 
		a \re{red} auxiliary path $\pi$ one of whose end vertices is $j$ and apply 
		Lemma~\ref{l:312} to~$\grot(\pi)$ (see Figure~\ref{fig:331A}). 
		As~$q$ cannot have a small neighbourhood 
		in $\grot_i^{\mu\nu}$, this yields $\gr{v},\bl{w}\in \Nn(q)$ and due to 
		Claim~\ref{c:331} $q$ is a twin of~$\re{a}$. By $\tau_\nu$-symmetry the only 
		other possibility we need to consider is that 
		$\bl{j}, x\in \Nn(q)$ holds for some $\bl{j}\in \bl{\Gb}$. 
		The case $i\ge 3$ is similar, because then there is a \bl{blue} auxiliary 
		path starting in $\bl{j}$.
		Suppose, finally, that~$i=2$ and $\bl{4}, x\in \Nn(q)$. 
		Since $\rho(\bl{4})=\bl{4}$ and $\rho(x)=\re{0}$, the case $j=\bl{4}$ of 
		Claim~\ref{c:332} now shows that $q$ is a twin of $\rho^{-1}(\gr{2})$, i.e., 
		of~$\bl{c}$. 
	\end{proof}
	
	\begin{claim}\label{c:337}
		If $x\in \Nn(q)$, then $q$ is a twin of $\re{a}$, $\bl{b}$, $\gr{c}$, or $y$. 
	\end{claim}

	\begin{proof}
		We first show that if $\re{u}\not\in \Nn(q)$, then $q$ is a twin of $\re{a}$. 
		To this end we take an arbitrary common neighbour $z$ of $q$, $\re{u}$
		and form the hexagon shown in Figure~\ref{il:337A}.
		Since~$G$ satisfies~\Dp{2}, there is a common neighbour $t$ 
		of $\{q,\re{a},\re{u}\}$ or $\{x,\re{0},z\}$.
		
		If $q,\re{a},\re{u}\in\Nn(t)$, then 
		Claim~\ref{c:334} informs us that $t$ is a twin of some $\re{j}\in \Gr$.
		Among the neighbours of $q$ in $\grot_i^{\mu\nu}(t)$ there are $x$ 
		and the \re{red} vertex $\re{t}\in \Gr$. 
		Therefore, Claim~\ref{c:333} implies that $q$ is a $\grot_i^{\mu\nu}(t)$-twin 
		of~$\re{a}$ and Lemma~\ref{f:24} reveals that $q$ is a real twin of $\re{a}$ 
		as well.
		
		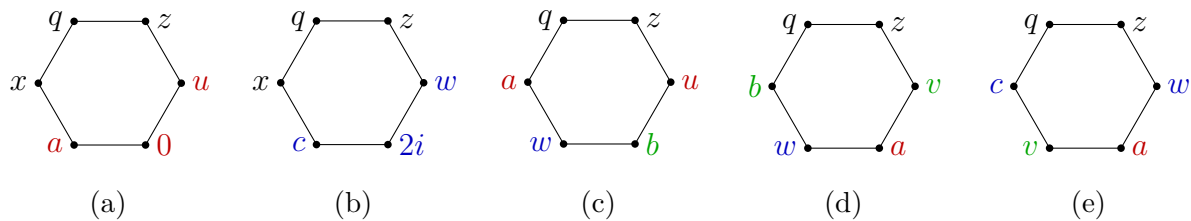
\begin{figure}[h!]
\begin{subfigure}[b]{.19\textwidth}
\centering
\begin{tikzpicture}[scale=.95]
\foreach \i in {1,...,7} {
	\coordinate (a\i) at (\i*60:1);
}
\foreach \i in {1,...,6} {
	\pgfmathsetmacro\j{\i+1};
	\draw (a\i)--(a\j);
	\fill (a\i) circle (1.5pt);		
}
\node [left] at (a2) {$q$};
\node [left] at (a3) {$x$};
\node [left] at (a4) {$\re{a}$};
\node [right] at (a5) {$\re{0}$};
\node [right] at (a6) {$\re{u}$};
\node [right] at (a1) {$z$};
\end{tikzpicture}
\caption{}
\label{il:337A}
\end{subfigure}
\hfill
\begin{subfigure}[b]{.19\textwidth}
\centering
\begin{tikzpicture}[scale=.95]
\foreach \i in {1,...,7} {
	\coordinate (a\i) at (\i*60:1);
}
\foreach \i in {1,...,6} {
	\pgfmathsetmacro\j{\i+1};
	\draw (a\i)--(a\j);
	\fill (a\i) circle (1.5pt);		
}
\node [left] at (a2) {$q$};
\node [left] at (a3) {$x$};
\node [left] at (a4) {$\bl{c}$};
\node [right] at (a5) {$\bl{2i}$};
\node [right] at (a6) {$\bl{w}$};
\node [right] at (a1) {$z$};
\end{tikzpicture}
\caption{}
\label{il:337B}
\end{subfigure}
\hfill
\begin{subfigure}[b]{.19\textwidth}
\centering
\begin{tikzpicture}[scale=.95]
\foreach \i in {1,...,7} {
	\coordinate (a\i) at (\i*60:1);
}
\foreach \i in {1,...,6} {
	\pgfmathsetmacro\j{\i+1};
	\draw (a\i)--(a\j);
	\fill (a\i) circle (1.5pt);		
}
\node [left] at (a2) {$q$};
\node [left] at (a3) {$\re{a}$};
\node [left] at (a4) {$\bl{w}$};
\node [right] at (a5) {$\gr{b}$};
\node [right] at (a6) {$\re{u}$};
\node [right] at (a1) {$z$};
\end{tikzpicture}
\caption{}
\label{il:340A}
\end{subfigure}
\hfill
\begin{subfigure}[b]{.19\textwidth}
\centering
\begin{tikzpicture}[scale=.95]
\foreach \i in {1,...,7} {
	\coordinate (a\i) at (\i*60:1);
}
\foreach \i in {1,...,6} {
	\pgfmathsetmacro\j{\i+1};
	\draw (a\i)--(a\j);
	\fill (a\i) circle (1.5pt);		
}
\node [left] at (a2) { $q$};
\node [left] at (a3) {$\gr{b}$};
\node [left] at (a4) {$\bl{w}$};
\node [right] at (a5) {$\re{a}$};
\node [right] at (a6) {$\gr{v}$};
\node [right] at (a1) {$z$};
\end{tikzpicture}
\caption{}
\label{il:340B}
\end{subfigure}
\hfill
\begin{subfigure}[b]{.19\textwidth}
\centering
\begin{tikzpicture}[scale=.95]
\foreach \i in {1,...,7} {
	\coordinate (a\i) at (\i*60:1);
}
\foreach \i in {1,...,6} {
	\pgfmathsetmacro\j{\i+1};
	\draw (a\i)--(a\j);
	\fill (a\i) circle (1.5pt);		
}
\node [left] at (a2) {$q$};
\node [left] at (a3) {$\bl{c}$};
\node [left] at (a4) {$\gr{v}$};
\node [right] at (a5) {$\re{a}$};
\node [right] at (a6) {$\bl{w}$};
\node [right] at (a1) {$z$};
\end{tikzpicture}
\caption{}\label{il:340C}
\end{subfigure}
\vskip -.2cm
\caption{Hexagon arguments.}
\label{fig:hex}
\end{figure} 
		
		Suppose next that $x,\re{0},z\in \Nn(t)$. By Claim~\ref{c:333}, $t$ is a twin 
		of $\re{a}$. For simplicity we may assume $t=\re{a}$. Now Claim~\ref{c:334} 
		shows that $z$ is a twin of some $\re{j}\in \Gr$ and a final application of 
		Claim~\ref{c:333} reveals that~$q$ is again a twin of $\re{a}$. 
		Summarising the discussion so far, we have thereby completed the 
		case $\re{u}\not\in \Nn(q)$. 
		
		A similar argument based on the hexagon in Figure~\ref{il:337B}
		shows that if $\bl{w}\not\in \Nn(q)$, then $q$ is a twin of $\bl{c}$. 
		In the only remaining case, $\re{u},\bl{w}\in \Nn(q)$, Claim~\ref{c:331}
		implies that~$q$ is a twin of $\gr{b}$ or~$y$.
	\end{proof}
	
	This has the following consequence. 
	
	\begin{claim}\label{c:y}
		If $\mu=0$ and some twin $y'$ of $y$ is adjacent to $q$, then~$q$ is 
		a twin of $\re{u}$, $\bl{v}$, $\gr{w}$, or $x$. 
	\end{claim}
		
	\begin{proof}
		For $y=y'$ this can be seen by applying the automorphism $\sigma$ to 
		the previous claim.
		By Lemma~\ref{f:24} the general case follows.	
	\end{proof}

	Our next major goal is an attachment lemma for neighbours of~$\re{a}$ 
	(cf. Claim~\ref{c:3400}) and again we commence with a special case. 
	
	\begin{claim}\label{c:abc}
		If $\re{a}\in \Nn(q)$ and, moreover, $\{\gr{b}, \bl{c}\}\cap \Nn(q)\ne\vn$,  
		then~$q$ is a twin of $\gr{v}$, $\bl{w}$, or $x$. 
	\end{claim}
		
	\begin{proof}
		We will only display the argument for the case $\re{a}, \gr{b}\in \Nn(q)$,
		the other case being similar. Using the \re{red} auxiliary path 
		$\pi_{\re{0}}=\re{0}-\gr{i}-\bl{2i}-\re{1}$ we form the graph 
		$\grot(\pi_{\re{0}})$. 
		By Corollary~\ref{cor:311} there is a $\grot(\pi_{\re{0}})$-twin of $c$
		or $\bl{2i}$ adjacent to $q$.
		
		Suppose first that $c'\in\Nn(q)$ holds for some $\grot(\pi_{\re{0}})$-twin~$c'$ 
		of~$c$. Claim~\ref{c:331} tells us that $c'$ is a real twin of $c$ and due to 
		Lemma~\ref{l:318}\ref{it:318b} $q$ is a $\grot^{\mu\nu}_i(c')$-twin of $x$.
		So by Lemma~\ref{f:24} $q$ is a twin of $x$.
		
		It remains to consider the case that some $\grot(\pi_{\re{0}})$-twin $(2i)'$
		of $2i$ is adjacent to $q$. For~${j=\re{1}}$ Claim~\ref{c:332}
		shows that $(2i)'$ is actually a real twin of $2i$ and, therefore, 
		Claim~\ref{c:330} is applicable to $\grot^{\mu\nu}_i((2i)')$.
		Together with Lemma~\ref{f:24} we conclude that $q$ is a twin of $\bl{w}$.
	\end{proof}
			
	\begin{claim}\label{c:3400}
		If $q$ is adjacent to $\re{a}$, then it is a twin of some vertex 
		of $\grot_i^{\mu\nu}$.
	\end{claim}	

	\begin{proof}
		If $\re{u}\in\Nn(q)$ the desired conclusion can be drawn from Claim~\ref{c:334}.
		Assuming $\re{u}\not\in\Nn(q)$ from now on we take a common neighbour $z$ 
		of $q$, $\re{u}$ and form the hexagon shown in Figure~\ref{il:340A}.
		
		If there is a common neighbour~$t$ of $q$, $\re{u}$, $\bl{w}$, then 
		Claim~\ref{c:331} shows that $t$ is a twin of $\gr{b}$, or $\mu=0$ 
		and $t$ is a twin of $y$. In the latter case we use Claim~\ref{c:y}
		and in the former case we appeal to Claim~\ref{c:abc} and Lemma~\ref{f:24}.
		
		Since $G$ satisfies~\Dp{2}, it only remains to consider the case 
		that some common neighbour~$t$ of $\re{a}$, $\gr{b}$, $z$ exists. 
		By Claim~\ref{c:abc}, $t$ is a twin of $\bl{w}$ or $x$. Due to 
		Lemma~\ref{f:24} we can assume, for simplicity, that $t\in\{\bl{w}, x\}$. 
		Now the Claims~\ref{c:331} and~\ref{c:337} tell us that $z$ is a twin 
		of~$\gr{b}$,~$\bl{c}$ or~$y$. Finally, Claim~\ref{c:y} or 
		Claim~\ref{c:abc} and Lemma~\ref{f:24} complete the proof. 
	\end{proof}
	
	We also need a version of this claim with~$\gr{b}$ or~$\bl{c}$ instead of~$\re{a}$,
	which we prepare as follows. 
	
	\begin{claim}\label{clm:bc}
		If $q$ is adjacent to $\gr{b}$ and $\bl{c}$, then it is a twin of $\re{u}$ or $x$.
	\end{claim}
	
	\begin{proof}
		If $\re{a}\in\Nn(q)$, then Claim~\ref{c:3400} shows that $q$ is a twin of $x$.
		Otherwise, we take a common neighbour $z$ of $\re{a}$, $q$ and deduce from 
		Claim~\ref{c:3400} that $z$ is a twin of some vertex $t\in V(\grot^{\mu\nu}_i)$
		adjacent to $\re{a}$. Since neither $q\gr{b}z$ nor $q\bl{c}z$ is a triangle 
		in $G$, we have $z\not\in (\Nn(b)\cup\Nn(c))$. Consequently, $t$ is non-adjacent 
		to $\gr{b}$ and $\bl{c}$ and, altogether, only the possibility $t\in \Gr$ remains. 
		Now Claim~\ref{c:330} shows that $q$ is a $\grot^{\mu\nu}_i(t)$-twin of~$\re{u}$ 
		and another application of Lemma~\ref{f:24} concludes the argument. 
	\end{proof}
	
	Next we can repeat the proof of Claim~\ref{c:3400} with the hexagon in 
	Figure~\ref{il:340B} or~\ref{il:340C}, thus obtaining the following statement. 
	
	\begin{claim}\label{c:ende}
		If $\gr{b}$ or $\bl{c}$ is in $\Nn(q)$, then $q$ is a twin of some vertex 
		of $\grot_i^{\mu\nu}$. \qed
	\end{claim}
		
	Now the attachment lemma is clear. Given any $q\in V(G)$ we can either apply 
	Claim~\ref{c:337} directly (if $qx\in E(G)$), or there is a common neighbour $z$ 
	of $q$ and $x$, which then has to be a twin of $\re{a}$, $\gr{b}$, $\bl{c}$, or $y$.
	As usual, Lemma~\ref{f:24} allows us to assume that, actually, $z$ is one of those 
	four vertices. Depending on $z$ we now use Claim~\ref{c:3400},~\ref{c:ende}, 
	or~\ref{c:y}. 
\end{proof}

Let us end this section by pointing out that owing to the twin lemma 
(cf.\ Lemma~\ref{lem:vtwin})	and the attachment lemma (cf.\ Lemma~\ref{l:329})
Lemma~\ref{lem:2407} implies Theorem~\ref{thm:1.3}\ref{it:1.3b}.

\section{Concluding remarks}\label{sec:final}

\subsection{Finite graphs}
Whenever we appealed to the property~\Dp{4} in the proof of Theorem~\ref{thm:D},
the list of $3m$ vertices $x_1, \dots, x_{3m}$ we specified had no independent 
subset $\{x_i\colon i\in I\}$ such that $|I|\ge m+2$. Originally we thought that our 
intended application to Ramsey-Tur\'an theory, i.e., the proof of~\eqref{eq:LPR},
required that we establish Theorem~\ref{thm:D} with such `restricted applications' 
of~\Dp{4} only. While it turned out later that this extra caution could be avoided,
we would still like to record the stronger statement for potential future references. 

Let us say for a positive integer~$k$ that a graph $G$ has the property~\Qp{k} 
if for every $m\in [k]$ and every sequence $x_1, \dots, x_{3m}$ of vertices of~$G$ 
there is an index set $I\subseteq [3m]$ such that $U=\{x_i\colon i\in I\}$ 
is independent and either $|I|\ge m+2$, or $|I|=m+1$ and $U$ has a common neighbour. 

\begin{thm}\label{thm:Q}
	A maximal triangle-free graph satisfies \Qp{4} if and only if 
	it is a blow-up of either an Andr\'asfai or a Vega graph. \qed
\end{thm}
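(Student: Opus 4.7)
The proof of Theorem~\ref{thm:Q} consists of two observations. First, in any triangle-free graph the property \Dp{k} implies \Qp{k}: given a sequence $x_1,\ldots,x_{3m}$, the vertex $y$ supplied by \Dp{k} has the feature that the index set $I = \{i\in[3m] : x_iy\in E(G)\}$ satisfies $|I|\ge m+1$, and $\{x_i : i\in I\}$ is automatically independent (being a subset of $\Nn(y)$). Thus $I$ witnesses \Qp{k}, either because $|I|\ge m+2$, or because $|I|=m+1$ and $y$ serves as the required common neighbour. Combined with the forward implication of Theorem~\ref{thm:D}, this yields the forward direction of Theorem~\ref{thm:Q}.

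For the converse I plan to revisit each invocation of \Dp{k} in the proof of Theorem~\ref{thm:1.3} and observe that the list $x_1,\ldots,x_{3m}$ supplied to \Dp{k} is always drawn from a concrete small subgraph $H$ of $G$ whose independence number is at most $m+1$. Under this bound, the first option in the definition of \Qp{k} (namely, an independent subset of size $\ge m+2$) is vacuous, so \Qp{k} forces the existence of an independent $(m+1)$-set in $H$ with a common neighbour — precisely the conclusion of \Dp{k}. Hence the entire proof of Theorem~\ref{thm:1.3} transfers verbatim to the hypothesis \Qp{4}.

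The verification is a case-by-case inspection through Sections~\ref{sec:A} and~\ref{sec:vega}. Representative cases are: Lemma~\ref{l:21} applied to an induced hexagon (where $\alpha = 3$) and to the nine-vertex graph $N$ (where $\alpha = 4$ by Lemma~\ref{l:35}); the cube lemma, where \Dp{4} is applied to the twelve vertices $\{a_i,b_i,c_i\}_{i\in[4]}$ and the triangle-free constraint together with the explicit edges $a_ib_j$, $a_ic_i$, $b_ic_i$ forces every independent subset to have size at most $4 < 5$; Lemma~\ref{l:35}, whose proof itself enumerates every independent $4$-subset of $N$; and the configurations built from $\grot$ with a handful of additional vertices occurring in Lemmas~\ref{l:beauti}, \ref{l:315}, and~\ref{l:316}, each of which is displayed in its own figure and has only $O(1)$ vertices, so the independence bound can be read off directly. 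In every instance the argument in the excerpt already names the unique independent subset of maximum size and either derives a contradiction from it or invokes its common neighbour; no new graph-theoretic content is needed.

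The main obstacle is therefore purely clerical: confirming the bound $\alpha(H)\le m+1$ at each of a finite list of invocation points. There is no risk of a genuinely new case arising, because every application of \Dp{k} in the paper is made to a list of pairwise distinct vertices drawn from an explicit, uniformly bounded fragment of $G$. Once this bookkeeping is complete, Theorem~\ref{thm:Q} follows without any further argument.
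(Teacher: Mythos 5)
Your proposal matches the paper's approach exactly: the paper justifies Theorem~\ref{thm:Q} by the one-line remark preceding it, namely that every invocation of $\cD_4$ in the proof of Theorem~\ref{thm:D} feeds in a list with no independent $(m+2)$-subset, so that $\cQ_4$ yields the same conclusion as $\cD_4$ at each such point; your observation that $\cD_k\Rightarrow\cQ_k$ in triangle-free graphs (via the independence of $\Nn(y)$) supplies the other implication, which the paper leaves implicit. The clerical case-checking you outline is precisely what the paper treats as routine, and your spot-checks (cube lemma, $N$, Lemma~\ref{l:35}, the Gr\"otzsch-based configurations) are the right ones to worry about.
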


Recall that by Lemma~\ref{l:31} the members of $\fD_4$ contain no induced cubes.
We also found several other forbidden induced subgraphs for the class~$\fD_4$,
such as the graph $N$ depicted in Figure~\ref{fig:NN},
but so far we did not complete our analysis of the situation. 

\begin{conj}
	There exists a finite family $\ccF$ of graphs such that $\fD_4$ is the class of 
	maximal triangle-free graphs with at least two vertices not possessing induced 
	subgraphs in~$\ccF$. 
\end{conj}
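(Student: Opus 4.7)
The plan is to extract a finite list $\ccF$ of forbidden induced subgraphs by a careful bookkeeping of the proof of Theorem~\ref{thm:D}. Obvious candidates for $\ccF$ include the cube (forbidden by Lemma~\ref{l:31}), configurations built from the graph $N$ of Figure~\ref{fig:NN} whose existence in a $\ccF$-avoiding host is excluded by Lemma~\ref{l:35}, together with the collection of all minimal induced subgraphs that arise as explicit obstructions in the twin and attachment lemmata of Sections~\ref{sec:A} and~\ref{sec:vega}.

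For the forward direction, note that every $G \in \fD_4$ is, by Theorem~\ref{thm:D}, a blow-up of an Andr\'asfai or Vega graph, and hence satisfies \Dp{4}. Thus if $\ccF$ is chosen so that each $F \in \ccF$ either directly violates \Dp{4} (in the sense that containing $F$ as an induced subgraph is incompatible with \Dp{4} in any maximal triangle-free host), or contradicts one of the intermediate structural conclusions of the proof, then no member of $\fD_4$ can contain an induced copy of any $F \in \ccF$.

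The harder direction is the reverse. Let $G$ be a maximal triangle-free graph on at least two vertices that avoids every induced subgraph in $\ccF$. The plan is to rerun the proof of Theorem~\ref{thm:D}, replacing each invocation of \Dp{4} or \Dp{3} by a structural argument based on the avoidance of a suitable member of~$\ccF$. Each such invocation in the proof operates on a bounded set of vertices --- at most twelve input vertices, plus a bounded number of auxiliary witnesses for non-edges produced by maximal triangle-freeness. Whenever the required common neighbour is claimed to exist, its failure, combined with those witnesses, would yield an induced subgraph of bounded size, which we add to $\ccF$.

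The main obstacle is verifying that this enumeration actually terminates with finitely many graphs. One would need a systematic audit of every place in the proof of Theorem~\ref{thm:D} where \Dp{4} or \Dp{3} is used --- most critically inside the cube lemma (Lemma~\ref{l:31}), Lemma~\ref{l:35}, the twin lemma for Andr\'asfai graphs (Lemma~\ref{l:twin}), its Vega counterpart (Lemma~\ref{lem:vtwin}), and the attachment lemmata (Lemmata~\ref{l:attachment} and~\ref{l:329}) --- and confirm that the obstructions derived from each step involve only boundedly many vertices. The natural strategy is to observe that the proof proceeds by case analysis over finitely many local configurations, and that the set of minimal obstructions coming from these configurations, closed under the handful of derivations that the proof iterates, is itself finite. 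Making this closure argument precise, and ruling out the possibility that iterating the argument produces an unbounded family of distinct obstructions, is the crux.
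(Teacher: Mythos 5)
This statement is stated and labelled as a \emph{conjecture} in the paper; the authors themselves write ``so far we did not complete our analysis of the situation.'' There is therefore no proof in the paper for you to match, and the task here is really to assess whether your sketch settles an open problem. It does not, and you say as much yourself in the last paragraph: the finiteness of the closure procedure is exactly ``the crux,'' and you leave it open. So what you have is a programme, not a proof.

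Beyond the admitted gap, there is a more specific difficulty that the sketch glosses over. Property \Dp{4} has the shape ``for every sequence of at most twelve vertices there \emph{exists} a vertex $y$ in $V(G)$ adjacent to at least $m+1$ of them.'' The existential quantifier on $y$ ranges over the whole host $G$, not over a bounded configuration. Consequently, the \emph{failure} of \Dp{4} at a given $12$-tuple is not a priori witnessed by a bounded-size induced subgraph: one would have to certify that \emph{no} vertex of $G$ has the required adjacency, and that is a global statement. Your claim that ``each such invocation in the proof operates on a bounded set of vertices'' is true of the $12$ input vertices and the finitely many auxiliary witnesses that maximal triangle-freeness forces, but it does not account for the unbounded range of the existential quantifier. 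Any serious attempt on this conjecture has to confront precisely this mismatch between a first-order, local forbidden-subgraph condition and the semi-global nature of \Dp{4}. A further caveat: the proof of Theorem~\ref{thm:D} reasons about a \emph{maximal} Andr\'asfai or Vega subgraph, whose size is unbounded, so ``rerunning the proof'' is not a purely local audit either; one would need to decouple the bounded applications of \Dp{4} from the growing ambient $\Gamma_k$ or $\grot_i^{\mu\nu}$. None of this says the conjecture is false, only that your proposal does not yet contain the idea needed to close it.
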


Let us point out that a somewhat similar result for the class $\fA$ of maximal 
triangle-free, $\grot$-free graphs and the forbidden family $\{C_6\}$ is established 
in Section~\ref{sec:A}. Thus a solution of the above problem might lead to a 
different (albeit longer) proof of Theorem~\ref{thm:D}. 

Another problem suggested by Theorem~\ref{thm:D} is whether the assumption \Dp{4} 
is really necessary or whether a more elaborate argument would show that \Dp{3}
suffices. It is probably not very difficult to rule this out, but we lacked the 
energy for doing so. 

\begin{conj}\label{c:34}
	There is a finite maximal triangle-free graph $G$ satisfying \Dp{3} but not \Dp{4}.
\end{conj}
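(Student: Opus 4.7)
The plan is to exhibit an explicit finite counterexample by constructing a maximal triangle-free graph whose structure forces \Dp{3} to hold while containing a certified obstruction to \Dp{4}. The natural source of such an obstruction is the cube lemma (Lemma~\ref{l:31}), which identifies $Q=K_{4,4}$ minus a perfect matching as a forbidden induced subgraph under \Dp{4}. Accordingly, I would search for a minimal maximal triangle-free graph containing $Q$ as an induced subgraph.

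First I would produce a candidate. Starting from the cube on vertices $a_1,\dots,a_4,b_1,\dots,b_4$ with non-edges $a_ib_i$, one iteratively adjoins a new vertex for every non-edge whose endpoints do not yet share a common neighbour, always respecting the triangle-free constraint and introducing only forced adjacencies. Because the initial cube carries a transitive $S_4$-action on the index set $[4]$, the closure can be carried out symmetrically; one expects it to terminate in a graph $G$ with roughly $12$ to $20$ vertices. A small computer search (for instance within the database of maximal triangle-free graphs on $\le 20$ vertices) should quickly either confirm such a $G$ or point to a close variant containing an induced copy of the graph $N$ of Figure~\ref{fig:NN}, which is likewise ruled out by \Dp{4} through Lemma~\ref{l:35}.

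Next I would verify \Dp{3} by a case analysis on sequences of length $3$, $6$, and $9$ in $V(G)$. The built-in symmetry of the construction reduces this to a moderate number of orbit representatives; for each orbit, one either exhibits an explicit common neighbour of the required multiplicity or locates an appropriate subset whose members share an auxiliary common neighbour introduced during the closure. I expect this step to go through smoothly, because the auxiliary vertices added during the closure are precisely those forced by the maximality, and they automatically serve as common neighbours for many small independent sets. Finally, I would exhibit a $12$-element sequence violating \Dp{4}: the natural candidate is the multiset consisting of three copies of each of $a_1,a_2,a_3,a_4$, together with its sibling obtained from Lemma~\ref{l:31}'s proof, namely $(a_1,a_2,a_3,a_4,b_1,b_2,b_3,b_4,c_1,c_2,c_3,c_4)$ once the $c_i$ are pinned down. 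By the very structure that the cube lemma exploits, any vertex adjacent to five entries of this tuple would force the triangle-creating configuration that terminates Brandt's argument.

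The main obstacle will be the simultaneous balancing of the last two steps: ensuring that the closure process introduces enough auxiliary vertices to guarantee \Dp{3} without inadvertently producing a vertex adjacent to five of the bad $12$-tuple, which would silently restore \Dp{4}. This delicate calibration is presumably what the authors had in mind when remarking that ruling out \Dp{3}-but-not-\Dp{4} graphs should be `not very difficult'. If a direct symmetric construction resists the calibration, a fallback is to enumerate small maximal triangle-free graphs by computer: by Theorem~\ref{thm:D} the class of graphs satisfying \Dp{4} admits a simple description, so any maximal triangle-free graph on $\le 30$ vertices outside that list can be tested for \Dp{3} algorithmically, and a positive instance then settles the conjecture.
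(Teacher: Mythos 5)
This statement is labelled a conjecture in the paper, and the authors explicitly note that they ``lacked the energy'' to settle it; there is no proof in the paper to compare against. Your submission is a research plan, not a proof: no concrete graph is exhibited, \Dp{3} is never verified, and the failure of \Dp{4} is asserted only hypothetically (``I expect this step to go through smoothly'', ``a small computer search \emph{should} quickly either confirm\ldots''). The core of the conjecture is precisely the existence claim, and an existence claim is not proved by describing a search procedure and predicting that it will succeed.

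There is also a substantive worry with the proposed construction, and it is the one you half-acknowledge yourself. Closing the cube $Q$ under ``add a common neighbour for every non-adjacent pair'' does not obviously terminate in a graph satisfying \Dp{3}: the obstacle to \Dp{3} is not that some two vertices lack a common neighbour (maximality handles that) but that some sequence of nine vertices could have no vertex adjacent to four of its entries. Nothing in the naive closure process guarantees that this is avoided. Conversely, the auxiliary vertices you introduce to repair \Dp{3} can themselves become common neighbours of five of the cube vertices, which would destroy the witness for $\neg\Dp{4}$. The paper is quite careful about this tension: the cube lemma shows $Q$ is forbidden under \Dp{4}, but exhibiting a maximal triangle-free host for $Q$ that nonetheless has \Dp{3} is exactly where the difficulty lies. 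A useful next step would be to check whether the $12$-vertex, independence-number-$4$ graph of Figure~\ref{fig:41} (which the paper uses to show that Henson's $U_3$ fails \Dp{4}) can be extended to a finite maximal triangle-free graph preserving \Dp{3}; that is closer to a concrete programme than the symmetric-closure heuristic, but it would still need an actual construction and a finite verification to constitute a proof.
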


The last finitary problem we would like to mention is the well-known question to 
characterise the maximal triangle-free graphs $G$ on $n$ vertices 
satisfying $\delta(G)\ge n/3$. 
This family does not consist exclusively of blow-ups of Andr\'asfai and Vega graphs, 
since it contains, for instance, the Cayley graphs associated to 
\[
	\ZZ/6k\ZZ \qand \{\pm k, \pm (k+1), \dots, \pm (2k-1)\}.
\]
This graph contains the induced hexagon $0-2k-3k-4k-5k-0$ but no vertex with three 
neighbours on this hexagon and, therefore, it violates~\Dp{2}. Another graph interesting 
in this context is shown in Figure~\ref{fig:41} (see also Figure~\ref{fig:lem314C} for 
a less symmetric drawing of the same graph). As all these additional examples
are $(n/3)$-regular, one may wonder whether the following is true. 

\begin{quest}
	Let $G$ be a maximal triangle-free graph on $n$ vertices such 
	that $\delta(G)\ge n/3$,
	but at least one vertex of $G$ has degree larger than $n/3$. 
	Does it follow that~$G$ is a blow-up of either an Andr\'asfai or a Vega graph? 
\end{quest}

\begin{figure}[ht]
\centering
\begin{tikzpicture}[scale=.7]
	\foreach \i in {1,...,8} {
		\coordinate (a\i) at (22.5+\i*45:1);
	}
	\coordinate (b1) at (-2.2,1.8);
	\coordinate (b2) at (2.2,1.8);
	\coordinate (b3) at (2.2,-1.8);
	\coordinate(b4) at (-2.2,-1.8);
	\foreach \i in {1,...,8} {
		\fill (a\i) circle (1.5pt);		
	}
	\foreach \i in {1,...,4} {
		\fill (b\i) circle (1.5pt);		
	}
	\draw (b1) -- (b2);
	\draw (b4) -- (b3);
	\draw (a2)--(b1)--(b4)--(a5);
	\draw (a1) -- (a4) -- (a7) -- (a2) -- (a5) -- (a8) -- (a3) -- (a6) -- (a1);
	\draw (b1) -- (a3) -- (a7) -- (b3);
	\draw (b4) -- (a4) -- (a8) -- (b2);
	\draw (a1)--(b2)--(b3)--(a6);
	\draw (a1) -- (a5);
	\draw (a2) -- (a6);
\end{tikzpicture}
\caption{A triangle-free graph $G$ with $v(G)= 12$ and $\alpha(G)=4$.}
\label{fig:41}
\end{figure}
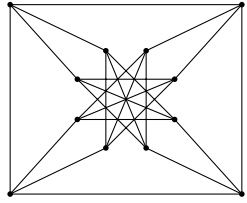

\subsection{Infinite graphs}
One may wonder whether results analogous to Theorem~\ref{thm:1.3} hold for infinite 
graphs as well. Before we discuss this matter we present three examples of infinite 
maximal triangle-free graphs, which have the property~\Dp{k} for every $k\ge 1$. 

\medskip

{\bf I. The circular Andr\'asfai graph $G_\omega$.}

\smallskip

Let $\xi\in S^1\subseteq \CC$ be a complex number of modulus $1$ which is not a root
of unity. Consider the graph $G_\omega$ with vertex set $V=\{\xi^n\colon n\in \NN\}$ in 
which two vertices are adjacent if their distance exceeds $\sqrt 3$. Since~$V$ 
is dense in $S^1$, the graph $G_\omega$ is maximal triangle-free. Moreover, for every 
sequence of $3m$ vertices from $V$ there is an open arc~$I\subseteq S^1$ of 
length~$2\pi/3$ containing~$m+1$ of them. Consequently,~$G_\omega$ satisfies~\Dp{k} 
for every $k\ge 1$.  

\medskip

{\bf II. Generalised Andr\'asfai graphs $\G_X$.}

\smallskip

Given a dense linear order $(X, \le)$ without minimal or maximal elements 
we define the generalised Andr\'asfai graph~$\G_X$ to be the graph with vertex set
\[
	V(\G_X)
	=
	\{\re{r_x}\colon x\in X\}
	\cup 
	\{\gr{g_x}\colon x\in X\}
	\cup 
	\{\bl{b_x}\colon x\in X\}
\] 
and all edges
\begin{enumerate}
	\item[$\bullet$] $\re{r_x}\gr{g_x}$, $\gr{g_x}\bl{b_x}$, where $x\in X$,
	\item[$\bullet$] as well as $\re{r_x}\gr{g_y}$, $\gr{g_x}\bl{b_y}$, 
		$\bl{b_x}\re{r_y}$, where $x<y$.
\end{enumerate}
It can easily be checked that $\G_X$ is maximal triangle-free and that 
if $x_1<x_2<\dots<x_k$ are in~$X$, then the vertices
\[
	\{\re{r_{x_1}},\re{r_{x_2}},\dots,\re{r_{x_k}}\}
	\cup  
	\{\gr{g_{x_1}},\gr{g_{x_2}},\dots,\gr{g_{x_k}}\}
	\cup  
	\{\bl{b_{x_1}},\dots,\bl{b_{x_{k-1}}}\}
\] 
span a copy of $\Gamma_k$ in $\G_X$. 
This fact immediately implies that $G_X$ has the property~\Dp{k} for 
every~$k\ge 1$.  

\medskip\goodbreak

{\bf III. Generalised Vega graphs $\grot^{\mu 0}_X$.}

\smallskip

Starting from the graph $\G_X$ defined in the previous example we can construct 
two generalised Vega graphs $\grot^{00}_X$ and $\grot^{10}_X$. To this end we 
take an external hexagon $\re{a}\gr{v}\bl{c}\re{u}\gr{b}\bl{w}$ and, as usual, 
we connect 
\begin{enumerate}
	\item[$\bullet$] $\re{a}$, $\re{u}$ to $\{\re{r_x}\colon x\in X\}$; 
	\item[$\bullet$] $\gr{b}$, $\gr{v}$ to $\{\gr{g_x}\colon x\in X\}$; 
	\item[$\bullet$] and $\bl{c}$, $\bl{w}$ to $\{\bl{b_x}\colon x\in X\}$. 
\end{enumerate}
Next, we join another vertex $x$ to $\re{a}$, $\gr{b}$, $\bl{c}$,
thereby obtaining~$\grot^{10}_X$. Finally~$\grot^{00}_X$ has a further vertex $y$ 
adjacent to $\re{u}$, $\gr{v}$, $\bl{w}$, and $x$.
   
As none of these graphs is a blow-up of a finite Andr\'asfai of Vega graph, the most 
na\"ive extension of Theorem~\ref{thm:1.3} to infinite graphs is false. 

\begin{thm}\label{thm:inf}
	There exists an infinite maximal triangle-free graph satisfying~\Dp{k} for 
	every~$k\ge 1$ that fails to be a blow-up of any finite graph. \qed
\end{thm}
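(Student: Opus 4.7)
The plan is to take one of the three example graphs described after the statement, specifically the circular Andr\'asfai graph $G_\omega$ of Example~I, and verify that it has every required property. Recall that $V(G_\omega)=\{\xi^n:n\in\NN\}$ is dense in $S^1$, and two vertices are adjacent iff their chord distance exceeds $\sqrt 3$, equivalently iff their angular distance exceeds $2\pi/3$.

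First I would check that $G_\omega$ is an infinite maximal triangle-free graph. Triangle-freeness follows since three pairwise angular distances exceeding $2\pi/3$ would sum to more than $2\pi$. For maximality, given two distinct non-adjacent vertices $u,v$, their angular distance is at most $2\pi/3$, and is in fact strictly less (equality would force $u/v=\xi^{m-n}=e^{\pm 2\pi i/3}$, making $\xi$ a root of unity). Hence the set of $z\in S^1$ at angular distance exceeding $2\pi/3$ from both $u$ and $v$ is a non-empty open arc, and density of $V$ provides a common neighbour in it. The same density argument shows that distinct vertices have distinct neighbourhoods, so $G_\omega$ is twin-free; since it is infinite, it has infinitely many twin-equivalence classes and hence cannot be a blow-up of any finite graph, which must have only finitely many such classes.

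The remaining task is to verify \Dp{k} for every $k\ge 1$. Given a sequence $x_1,\dots,x_{3m}$ of vertices, I would let $f(a)$ denote the number of indices $i$ with $x_i$ lying in the open arc $(a,a+2\pi/3)\subseteq S^1$, and use Fubini to compute that the average value of $f$ equals $m$. The integer-valued step function $f$ has downward unit jumps at each $x_i$ and upward unit jumps at each $x_i-2\pi/3$, counted with multiplicities, so the equality $f\equiv m$ would force the multiset $\{x_1,\dots,x_{3m}\}$ to be invariant under rotation by $2\pi/3$. But an identity of the form $\xi^n\cdot e^{2\pi i/3}=\xi^{n'}$ would make $\xi$ a root of unity, so $V$ is disjoint from its rotate by $2\pi/3$ and the required invariance is impossible. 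Consequently $f(a)\ge m+1$ for some $a$, and the $m+1$ indicated points span a closed arc of length strictly less than $2\pi/3$; their common neighbourhood on $S^1$ is then a non-empty open arc meeting $V$ by density.

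The main obstacle in this plan is precisely the upgrade from the average value $m$ to the pointwise lower bound $m+1$, which rests entirely on the non-rotational-invariance coming from $\xi$ not being a root of unity. Without this ingredient, a finite cyclic analogue with $3k$ equally spaced points would give $f\equiv k$, and no common neighbour of $m+1$ vertices would exist.
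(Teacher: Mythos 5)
Your proposal is correct and takes essentially the same route as the paper: the paper records the same three examples immediately before the theorem and observes (without detail) that for any sequence of~$3m$ vertices of~$G_\omega$ some open arc of length~$2\pi/3$ contains~$m+1$ of them, which is exactly what your averaging-plus-rotational-invariance argument establishes. Your additional observations — that maximality fails only at angular distance exactly~$2\pi/3$ (excluded since~$\xi$ is not a root of unity) and that~$G_\omega$ is twin-free, hence has infinitely many twin-classes and so cannot be a blow-up of any finite graph — fill in precisely the details the paper leaves implicit under its~$\qed$.
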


We are optimistic, however, that the following `local version' of Theorem~\ref{thm:1.3} 
holds for infinite graphs.

\begin{conj}\label{c:strong}
	For every maximal triangle-free graph $G$ with property~\Dp{4} and every finite set 
	of vertices $W\subseteq V(G)$ there exists a finite set $U$ with 
	$W\subseteq U\subseteq V(G)$ which spans a blow-up of either an Andr\'asfai 
	or a Vega graph in $G$.
\end{conj}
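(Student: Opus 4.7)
The plan is to localise the proof of Theorem~\ref{thm:1.3}. Observe first that each structural result in Sections~\ref{sec:prelim}, \ref{sec:A}, and~\ref{sec:vega}---the cube lemma, beautiful lemma, twin lemma, attachment lemma, and all auxiliary statements---has a hypothesis involving only finitely many vertices and finitely many applications of~\Dp{4}. Consequently every such result remains valid for an infinite maximal triangle-free graph satisfying~\Dp{4}. This suggests that I should seek a finite subgraph $\Omega\subseteq G$ isomorphic to an Andr\'asfai graph (when $\grot\not\subseteq G$) or a Vega graph (when $\grot\subseteq G$) with the property that every $v\in W$ is an $\Omega$-twin of some vertex of $\Omega$. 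Once such an $\Omega$ is in hand, the set $U=V(\Omega)\cup W$ will span a blow-up of~$\Omega$ by Lemma~\ref{lem:2407}: hypothesis~\ref{it:twin} is supplied by the twin lemma (Lemma~\ref{l:twin} or Lemma~\ref{lem:vtwin}), while hypothesis~\ref{it:attach}, restricted to $U$, is precisely the condition we arranged.

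To construct $\Omega$ I would proceed by iterative enlargement. Begin with any Andr\'asfai (or Vega) subgraph $\Omega_0\subseteq G$ containing a neighbour of each $v\in W$. If every $v\in W$ is already an $\Omega_n$-twin, we are done; otherwise, select some $v\in W$ that fails to be a twin and, following the arguments of Lemma~\ref{l:attachment} or Lemma~\ref{l:329}, produce a strictly larger Andr\'asfai or Vega subgraph $\Omega_{n+1}\subseteq G$. In the Andr\'asfai case this is the contrapositive of Lemma~\ref{l:attachment} yielding a copy of $\Gamma_{k+1}$ inside~$G$; in the Vega case the analogous step uses Corollary~\ref{cor:320} or Lemma~\ref{l:322} to extract a larger Vega subgraph from a problematic neighbourhood configuration.

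The main obstacle will be proving that this iteration terminates in finitely many steps, since---as witnessed by the examples $\G_X$ and $G_\omega$---an infinite $G$ may contain arbitrarily large Andr\'asfai subgraphs. A natural route is to introduce a potential function measuring the number of $W$-vertices that are currently twins of some vertex in~$\Omega_n$, and to verify that $\Omega_{n+1}$ can be chosen so this potential strictly increases and is never undone by subsequent enlargements; such a bound would cap the number of iterations at~$|W|$. Carrying this out will require revisiting the attachment arguments to check that the newly constructed supergraph $\Omega_{n+1}$ can be arranged to preserve the twin status of those $v\in W$ already accounted for---a step that seems plausible because adjoining a single new vertex $r$ to $\Gamma_k$ to form $\Gamma_{k+1}$ merely refines, rather than disturbs, the twin classes. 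A complementary approach would be to first characterise the infinite maximal triangle-free graphs with~\Dp{4} as blow-ups of $G_\omega$, of some $\G_X$, or of some $\grot_X^{\mu 0}$, and then deduce the conjecture by exhibiting, in each of these three families, a finite sub-blow-up containing any prescribed finite~$W$; in each case, selecting a sufficiently fine finite subset of the indexing parameter ($\{\xi^n\}$, $X$, or the position set) yields the desired finite $U$.
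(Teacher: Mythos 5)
This is stated as an open \emph{conjecture} in the paper (Conjecture~\ref{c:strong}); the authors give no proof, only the remark that they ``are optimistic'' and that it holds for the three families $G_\omega$, $\G_X$, $\grot_X^{\mu 0}$ they construct. So there is no proof in the paper to compare against, and your proposal should be judged on its own merits.

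Your plan is a reasonable one, and your framing of the crux is accurate: the structural lemmata of Sections~\ref{sec:prelim}--\ref{sec:vega} do indeed carry over to infinite $G$ with~\Dp{4}, since each has a finitary hypothesis and only finitely many invocations of~\Dp{4}; so the task really does reduce to capping the enlargement process. But that is exactly where the argument is incomplete, and you acknowledge as much without closing the gap. Two concrete difficulties remain. First, the potential you propose---the count of $W$-vertices that are currently $\Omega_n$-twins---is not visibly monotone. When Lemma~\ref{l:attachment} fails for some $v\in W$ you manufacture a copy $\Omega_{n+1}\supseteq\Omega_n$ of $\G_{k+1}$ (or a larger Vega graph) by adjoining $0'$, $k'$, $r$, but a vertex $w\in W$ that was an $\Omega_n$-twin of, say, $q\in V(\Omega_n)$ need not be an $\Omega_{n+1}$-twin of anything: it must also agree with $q$ on the three new vertices, and nothing forces that. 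The heuristic ``adjoining $r$ merely refines the twin classes'' does not hold as stated, because the neighbourhood pattern of $q$ in $\Gamma_{k+1}$ changes under the relabelling, and the copy of $\Gamma_{k+1}$ that the attachment proof produces is one among several possible; choosing the right one for one $w$ may conflict with the right one for another. Second, your fallback route---first classify all infinite maximal triangle-free graphs with~\Dp{4} as blow-ups of $G_\omega$, some $\G_X$, or some $\grot_X^{\mu 0}$---is, if anything, a strictly stronger statement than the conjecture and is not claimed by the paper; the paper only exhibits these as examples, not as a complete list. So this alternative cannot be invoked as a complementary line of attack without itself being proved.

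In short: your proposal correctly identifies the strategy any proof would likely pursue and correctly isolates the termination issue as the obstacle, but it does not supply the argument that would make the iteration terminate, and that missing piece is precisely what keeps the statement a conjecture.
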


Notice that this is true for finite graphs, where one just needs to take $U=V(G)$, 
and for the infinite graphs $G_\omega$, $\G_X$, $\grot_X^{\mu0}$ constructed above. 
Moreover, Conjecture~\ref{c:strong} implies that the following statement holds 
for $\ell=4$. 

\begin{conj}
	There exists a natural number $\ell$ such that each maximal triangle-free graph 
	satisfying~\Dp{\ell} has the property~\Dp{k} for every~$k\ge 1$.
\end{conj}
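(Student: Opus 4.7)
The plan is to derive the conjecture with $\ell=4$ from Conjecture~\ref{c:strong}. First I would confirm that every blow-up of an Andr\'asfai or Vega graph enjoys \Dp{k} for every $k\ge 1$. For Andr\'asfai graphs this is immediate from the observation after Definition~\ref{df:hDk}: since $\delta(\Gamma_k)=k$ and $|V(\Gamma_k)|=3k-1$ we have $\delta/|V|>1/3$, so the counting argument gives \Dp{k} for every $k$. For Vega graphs one exploits the fact recalled in the introduction that suitable (regular) blow-ups of each $\grot_i^{\mu\nu}$ satisfy $\delta>|V|/3$ and hence \Dp{k} for all $k$. The preservation of \Dp{k} under blow-ups is two-sided (a bad sequence in $H$ lifts to a bad sequence in every blow-up via arbitrary representatives of the relevant classes, while a witness in $H$ descends to witnesses in every blow-up): consequently $\grot_i^{\mu\nu}$ itself, and therefore every one of its blow-ups, satisfies \Dp{k} for every $k\ge 1$.

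With this in hand, let $G$ be any maximal triangle-free graph satisfying \Dp{4}, fix $k\ge 1$, $m\in[k]$, and a sequence $x_1,\dots,x_{3m}\in V(G)$. Set $W=\{x_1,\dots,x_{3m}\}$, which is finite, and apply Conjecture~\ref{c:strong} to obtain a finite set $U$ with $W\subseteq U\subseteq V(G)$ such that $G[U]$ is a blow-up of an Andr\'asfai or a Vega graph. By the previous paragraph $G[U]$ satisfies \Dp{k}, so some $y\in U$ is adjacent in $G[U]$ to at least $m+1$ of the $x_i$. Since $G[U]$ is induced in $G$, the same vertex $y\in V(G)$ witnesses the required property for the given sequence. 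As the sequence was arbitrary, $G$ satisfies \Dp{k}, and $\ell=4$ works.

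The main obstacle is, of course, Conjecture~\ref{c:strong} itself, which is the entire content of the desired `local' refinement of Theorem~\ref{thm:1.3}; once this local structural statement is granted, the implication above is essentially a formality. One small point worth noting is that because Conjecture~\ref{c:strong} only requires the underlying set $W$ of the sequence $(x_i)$ to be absorbed into $U$, the possibility of repetitions in the sequence causes no trouble. It is also worth mentioning that if a quantitative version of Conjecture~\ref{c:strong} could be proved with $|U|$ bounded solely in terms of $|W|$, the argument would in fact show the stronger statement that the property \Dp{k} transfers to $G$ with a uniform dependence on $k$, but we do not need this here.
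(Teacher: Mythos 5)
Your proposal does exactly what the paper intends: the statement in question is presented in the paper as a \emph{conjecture}, and the only thing the paper says about it is the one-line remark immediately before it, namely that Conjecture~\ref{c:strong} implies it with $\ell=4$. You have fleshed out precisely that implication, and the argument is correct. The ingredients you use are sound: Andr\'asfai graphs $\Gamma_k$ have $\delta=k>(3k-1)/3$, so they satisfy \Dp{k} for every $k$ by the counting observation after Definition~\ref{df:hDk}; Vega graphs admit blow-ups with $\delta>|V|/3$ (Brandt--Pisanski, as recalled in the introduction), and since \Dp{k} passes both up and down blow-ups, the Vega graphs and all their blow-ups also satisfy \Dp{k} for every $k$. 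Given a sequence $x_1,\dots,x_{3m}$ in a graph $G\in\fD_4$, applying Conjecture~\ref{c:strong} to $W=\{x_1,\dots,x_{3m}\}$ yields a finite $U\supseteq W$ spanning such a blow-up, and a witness $y\in U$ for \Dp{k} in $G[U]$ remains a witness in $G$ because $G[U]$ is induced. Your remark that repetitions in the sequence are harmless because only the underlying set needs to be absorbed into $U$ is a correct and useful observation.

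To be clear about status: since Conjecture~\ref{c:strong} is itself open, what you have produced is a conditional derivation, not an unconditional proof, and you correctly flag this. This is in line with the paper, which does not claim a proof of the statement either; it is left as an open problem, with the implication from Conjecture~\ref{c:strong} noted in passing. So there is no gap in your reasoning, but there is also no claim of having resolved the conjecture.
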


Next we would like to address the infinitary version of Conjecture~\ref{c:34}.
Let us recall that Henson~\cite{He} constructed a countable homogeneous triangle-free 
graph, which he denoted by $U_3$. Its main property is that for all disjoint finite 
sets $A, B\subseteq V(U_3)$ such that $A$ is independent in $U_3$ there is a vertex $v$
adjacent to the vertices in $A$ and non-adjacent to the vertices in $B$. 
In particular, $U_3$ is a maximal triangle-free graph containing all 
finite or countably infinite triangle-free graphs as induced subgraphs. 

\begin{thm}
	Henson's graph $U_3$ satisfies \Dp{3} but not \Dp{4}.
\end{thm}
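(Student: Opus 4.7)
The plan is to establish the two halves separately. The positive direction, that $U_3$ satisfies \Dp{3}, will follow by combining Henson's extension property with the small Ramsey numbers $R(3,2) = 3$, $R(3,3) = 6$, and $R(3,4) = 9$; the failure of \Dp{4} will be witnessed by embedding a suitable finite triangle-free graph into $U_3$.

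For \Dp{3}, I fix $m \in \{1, 2, 3\}$ and a sequence $x_1, \dots, x_{3m}$ of vertices of $U_3$, and then introduce the \emph{position graph} $P$ on vertex set $[3m]$, declaring $ij$ to be an edge precisely when $x_ix_j \in E(U_3)$. Because $U_3$ is triangle-free, so is $P$ -- any triangle in $P$ would correspond to three pairwise distinct mutually adjacent vertices of $U_3$. Repetitions among the $x_i$ cause no difficulty, since $x_i = x_j$ forces $ij \notin E(P)$ automatically. The bounds $R(3, m+1) \leq 3m$ for $m \in \{1, 2, 3\}$ supply an independent set $I \subseteq [3m]$ of size $m+1$, whence $A = \{x_i \colon i \in I\}$ is independent in $U_3$. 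An application of Henson's extension property with this $A$ and $B = \varnothing$ produces a vertex $y$ adjacent to every member of $A$, giving $|\{i \in [3m] \colon yx_i \in E(U_3)\}| \geq |I| = m+1$, as required.

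For the failure of \Dp{4}, I will use the Ramsey bound $R(3, 5) = 14$ to fix a triangle-free graph $H$ on twelve vertices with $\alpha(H) \leq 4$; a concrete such example is already drawn in Figure~\ref{fig:41}. Since $U_3$ is universal for finite triangle-free graphs, $H$ can be realised as the subgraph induced by twelve distinct vertices $x_1, \dots, x_{12} \in V(U_3)$. For any $y \in V(U_3)$, the set $\Nn(y) \cap \{x_1, \dots, x_{12}\}$ is independent in $U_3$ (because $U_3$ is triangle-free), hence independent in $H$, and therefore of size at most $\alpha(H) \leq 4$. The sequence $(x_1, \dots, x_{12})$ thus witnesses that \Dp{4} fails.

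I do not anticipate any serious obstacle: the argument is in essence a direct transcription of small Ramsey numbers on one side and of Henson's defining extension property on the other. The only subtlety worth flagging is that the position graph construction absorbs repetitions in the input sequence uniformly, so no separate case analysis for non-distinct vertices is required.
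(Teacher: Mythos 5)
Your proof is correct and follows essentially the same route as the paper: both halves rest on the Ramsey fact $R(3,m+1)\le 3m$ for $m\le 3$ (stated in the paper as the partition relation $3m\longrightarrow(3,m+1)$) combined with Henson's extension property, and on embedding the $12$-vertex triangle-free graph of Figure~\ref{fig:41} with independence number $4$ for the negative half. Your explicit ``position graph'' device is just a formalisation of how the paper implicitly handles repeated entries in the sequence, so the two arguments coincide in substance.
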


\begin{proof}
	It is a well known elementary Ramsey theoretic fact that for all $m\in\{1, 2, 3\}$ 
	the partition relation $3m\longrightarrow (3, m+1)$ asserting that 
	triangle-free graphs on $3m$ vertices contain independent sets of size $m+1$ holds. 
	Consequently, for every sequence 
	$x_1, \dots, x_{3m}$ of vertices of $U_3$ there is a set $I\subseteq [3m]$ of 
	size $m+1$ such that $A=\{x_i\colon i\in I\}$ is independent. As all finite 
	independent subsets of $V(U_3)$ have common neighbours, this proves that $U_3$ 
	satisfies~\Dp{3}.
	
	On the other hand, as $U_3$ contains the graph depicted in Figure~\ref{fig:41}, 
	it has twelve vertices no five of which possess a common neighbour. Thus $U_3$
	does not have the property~\Dp{4}.
\end{proof} 
	 
Finally, we would like to point out that there is an infinite minimum-degree
version of the Brandt-Thomass\'e theorem. The result that follows concerns graphs
on $\NN$. Given such a graph $G$ and a vertex $v\in V(G)$ we write 
\[
	\underline{\deg}(v)=\liminf_{n\to \infty} \frac{|\{1,2,\dots,n\}\cap\Nn(v)|}n\,.
\]

\begin{thm}\label{thm:ramsey}
	Let $G$ be a maximal triangle-free graph on $\NN$. 
	If $\inf\{\underline{\deg}(v)\colon v\in \NN\}>1/3$,
	then $G$ is a blow-up of either an Andr\'asfai or a Vega graph.
\end{thm}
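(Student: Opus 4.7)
The plan is to reduce Theorem~\ref{thm:ramsey} to Theorem~\ref{thm:D} by showing that the twin-free quotient $G/\sim$ is finite; this finiteness will be established through a compactness argument on finite subgraphs using the finite Brandt-Thomass\'e theorem. Fix $\delta \in (1/3, 1]$ with $\underline{\deg}(v) \ge \delta$ for every $v \in \NN$, and set $\eta := (\delta - 1/3)/2$.

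First I would observe that $G$ satisfies property~\Dp{k} for every $k \ge 1$: given $x_1, \dots, x_{3m} \in V(G)$, pick $n$ large enough that $|\Nn(x_i) \cap [n]|/n > 1/3 + \eta$ for every $i$, so that $\sum_i |\Nn(x_i) \cap [n]| > (1 + 3\eta)mn$, and averaging over $y \in [n]$ yields some $y$ adjacent to at least $m+1$ of the $x_i$. I would also bound the possible quotient graphs: any blow-up of $\Gamma_k$ with parts $a_0, \dots, a_{3k-2}$ and minimum-degree ratio at least $\delta$ satisfies $\delta \le k/(3k-1)$, because the identity $\sum_i \sum_{j \sim i} a_j = k\sum_j a_j$ makes the minimum at most the average $k/(3k-1)$; rearranging gives $k \le \delta/(3\delta - 1)$, and an analogous bound limits $i$ in a Vega blow-up $\grot_i^{\mu\nu}$. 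Let $\mathcal{H}$ be the resulting finite family of admissible Andr\'asfai and Vega quotient graphs.

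For each $n$ with $\delta(G[[n]])/n > 1/3 + \eta/2$ I would extend $G[[n]]$ greedily to a maximal triangle-free graph $G_n^*$ on $[n]$; the finite Brandt-Thomass\'e theorem then represents $G_n^*$ as a blow-up of some $H_n \in \mathcal{H}$, yielding a partition $[n] = \bigsqcup_{q \in V(H_n)} A_q^{(n)}$. Pigeonholing on $\mathcal{H}$ and diagonalising over $\NN$ and $V(H)$, I would extract an infinite subsequence along which $H_n \equiv H$ is fixed and, for each $v \in \NN$, the class containing $v$ eventually stabilises to some $\pi(v) \in V(H)$. The resulting map $\pi \colon \NN \to V(H)$ then realises $G$ as the blow-up of the induced subgraph $H[\pi(\NN)]$: the implication $uv \in E(G) \Rightarrow \pi(u)\pi(v) \in E(H)$ follows from $E(G) \subseteq E(G_n^*)$, while the converse is forced by maximality---if $\pi(u)\pi(v) \in E(H)$ but $uv \notin E(G)$, any common neighbour $w$ of $u,v$ in $G$ would produce a forbidden triangle $\pi(u)\pi(v)\pi(w)$ in the triangle-free graph $H$. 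Since $H[\pi(\NN)]$ is finite, so is $G/\sim$; being maximal triangle-free, twin-free, and of property~\Dp{4}, Theorem~\ref{thm:D} identifies it as an Andr\'asfai or Vega graph, exhibiting $G$ as its blow-up.

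The principal obstacle is securing infinitely many $n$ with $\delta(G[[n]])/n > 1/3 + \eta/2$. The density hypothesis only guarantees that for each fixed $v$ the waiting time $N(v) := \min\{m : |\Nn(v) \cap [n]|/n > 1/3 + \eta \text{ for every } n \ge m\}$ is finite, not that $\sup_v N(v) < \infty$, so the set $B_n := \{v \in [n] : N(v) > n\}$ of ``late bloomers'' may have density bounded away from zero. Overcoming this requires combining the density condition with the maximal triangle-free structure of $G$ and with property~\Dp{k} for every $k$---for instance, by showing along some subsequence $n_1 < n_2 < \cdots$ that $|B_{n_k}|/n_k \to 0$ via the abundance of common neighbours between good and bad vertices supplied by~\Dp{k}, or by abandoning initial segments in favour of greedily constructed finite sets $W_k \supseteq [k]$ with $\delta(G[W_k])/|W_k| > 1/3 + \eta/2$, whose existence should follow from the density hypothesis via iterative adjunction of neighbours of currently low-degree vertices.
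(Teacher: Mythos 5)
Your approach is genuinely different from the paper's, and it contains a real gap that you yourself flag as the ``principal obstacle'' and do not resolve. The issue is not a delicacy of the diagonalisation; it is that the hypothesis gives only a pointwise $\liminf$ condition on degrees, and this simply cannot be converted into a uniform minimum-degree bound on arbitrarily large finite vertex sets. For each $v$ there is an $N(v)$ past which $|\Nn(v)\cap[n]|/n$ exceeds $1/3+\eta$, but nothing constrains how $N(v)$ grows, and one can arrange that for every $n$ some $v\in[n]$ has essentially no neighbours in $[n]$ at all. Your two proposed fixes are speculative and face the same difficulty: adjoining neighbours of low-degree vertices to repair $W_k$ can introduce new low-degree vertices without termination in bounded size, and the claim that $|B_{n_k}|/n_k\to 0$ along some subsequence does not follow from the hypothesis. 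Without such finite witnesses, the entire compactness step never gets off the ground, so the proposal as written does not constitute a proof.

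The paper avoids the problem entirely, and it is worth seeing how, because the idea is short. As in your first paragraph, the density hypothesis yields \Dp{k} for all $k$, and the counting is in fact quantitative: for $3m$ vertices one finds $y$ adjacent to at least $m+3m\eps$ of them, where $\eps>0$ satisfies $\underline{\deg}(v)>1/3+\eps$ for all $v$. The only place the proof of Theorem~\ref{thm:1.3} uses finiteness is in choosing a \emph{maximal} Andr\'asfai or Vega subgraph of $G$; the twin and attachment lemmata then finish the job for any ambient graph. To get such a maximal subgraph it suffices to bound the sizes of Andr\'asfai and Vega subgraphs of $G$. And this is exactly what the quantitative form of the counting delivers: if $\G_{m+1}\subseteq G$, take $3m$ distinct vertices $x_1,\dots,x_{3m}$ of this copy; any common neighbour $y$ is adjacent to an independent subset of $V(\G_{m+1})$, which has size at most $\alpha(\G_{m+1})=m+1$, so $m+3m\eps\le m+1$, i.e.\ $m\le(3\eps)^{-1}$. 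A parallel computation for Vega graphs (using a suitable blow-up with controlled independence number) gives the same conclusion. So there is a global bound $C$ and the original proof of Theorem~\ref{thm:1.3} goes through unchanged. This route never touches finite induced subgraphs of $G$ and therefore never needs any uniformity of the degree condition.

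There is also a secondary issue in your proposal even granting the finite witnesses: the partitions of $[n]$ coming from the finite Brandt--Thomass\'e theorem need not be nested or even compatible across $n$, so ``stabilising'' the class of a fixed vertex $v$ needs more care than a pigeonhole on $V(H)$. Two vertices that are $H_n$-twins for infinitely many $n$ need not be $G$-twins, and different maximal extensions $G_n^*$ can assign $v$ to genuinely different parts. This is repairable, but it would have to be argued; the paper's argument again makes it moot.
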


\begin{proof}
	Choose $\eps>0$ such that $\underline{\deg}(v)>1/3+\eps$ holds for every $v\in \NN$.
	Given any sequence $x_1, \dots, x_{3m}$ of vertices of $G$, there is some positive 
	integer $n$ such that for every $i\in [3m]$ 
	we have $|\Nn(x_i)\cap [n]|\ge (1/3+\eps)n$. Now a counting argument leads to 
	some $y\in [n]$ for which the
	set $I=\{i\in [3m]\colon x_iy\in E(G)\}$ has at least the size $m+3m\eps$. 
	Consequently, $G$ satisfies~\Dp{k} for every $k\ge 1$ and, in particular,~$G$ 
	satisfies~\Dp{4}.
	 	
	The only moment in the proof of Theorem~\ref{thm:1.3} employing the finiteness 
	of~$G$ is that at the very end we choose a maximal Andr\'asfai or Vega 
	graph contained in it. Thus it remains to show that there exists some constant $C$
	such that every Andr\'asfai or Vega subgraph of~$G$ has at most~$C$ vertices. 
	
	If for some $m\in\NN$ there is a copy of $\G_{m+1}$ in $G$ 
	and $U=\{x_1, \dots, x_{3m}\}$ contains $3m$ distinct vertices of this copy, 
	then every independent subset of $I\subseteq U$ 
	satisfies $|I|\le \alpha(\G_{m+1})=m+1$ and, therefore, the conclusion of our 
	first paragraph yields $m\le (3\eps)^{-1}$. This shows that the Andr\'asfai 
	subgraphs of $G$ are indeed bounded. For Vega graphs the argument is similar, 
	using in addition that each $\grot^{\mu\nu}_i$ has for $\kappa=9i-(6+\mu+\nu)$ 
	a blow-up on $3\kappa-1$ vertices with independence number $\kappa$ (see~\cite{BP}
	or~\cite{Vega}). 
\end{proof}

\begin{bibdiv}
\begin{biblist}
\bib{A}{article}{
	author={Andr\'asfai, B.},
	title={\"{U}ber ein Extremalproblem der Graphentheorie},
	journal={Acta Math. Acad. Sci. Hungar.},
	date={1962},
	number={13},
	pages={443--455},
}

\bib{AES}{article}{
   author={Andr\'{a}sfai, B.},
   author={Erd\H{o}s, P.},
   author={S\'{o}s, V. T.},
   title={On the connection between chromatic number, maximal clique and
   minimal degree of a graph},
   journal={Discrete Math.},
   volume={8},
   date={1974},
   pages={205--218},
}	
	
\bib{B}{article}{
	title={A 4-colour problem for dense triangle-free graphs},
	author={Brandt, Stephan},
	journal={Discrete Math.},
	volume={251},
	number={1-3},
	pages={33--46},
	year={2002},
	publisher={Elsevier}
}
	
\bib{BP}{article}{
	title={Another infinite sequence of dense triangle-free graphs},
	author={Brandt, Stephan},
	author={Pisanski, Toma{\v{z}}},
	journal={Electronic Journal of Combinatorics},
	pages={\# R43},
	year={1998}
}	

\bib{BT}{article}{
	author={Brandt, Stephan},
	author={Thomass\'e, St\'ephan},
	title={Dense triangle-free graphs are four-colorable: A solution to the 
		Erd\H{o}s-Simonovits problem},
	note={Available from Thomass\'e's webpage at 
		\url{http://perso.ens-lyon.fr/stephan.thomasse/}}	
}
	
\bib{CJK}{article}{
   author={Chen, C. C.},
   author={Jin, G. P.},
   author={Koh, K. M.},
   title={Triangle-free graphs with large degree},
   journal={Combin. Probab. Comput.},
   volume={6},
   date={1997},
   number={4},
   pages={381--396},
}		

\bib{ES73}{article}{
   author={Erd\H{o}s, P.},
   author={Simonovits, M.},
   title={On a valence problem in extremal graph theory},
   journal={Discrete Math.},
   volume={5},
   date={1973},
   pages={323--334},
   issn={0012-365X},
}	
	
\bib{H}{article}{
   author={H\"{a}ggkvist, Roland},
   title={Odd cycles of specified length in nonbipartite graphs.},
   conference={
      title={Graph theory},
      address={Cambridge},
      date={1981},
   },
   book={
      series={North-Holland Math. Stud., 62},
         },
   date={1982},
   pages={89--99},
}	

\bib{He}{article}{
   author={Henson, C. Ward},
   title={A family of countable homogeneous graphs},
   journal={Pacific J. Math.},
   volume={38},
   date={1971},
   pages={69--83},
   issn={0030-8730},
}
   	
\bib{L}{article}{
   author={\L uczak, Tomasz},
   title={On the structure of triangle-free graphs of large minimum degree},
   journal={Combinatorica},
   volume={26},
   date={2006},
   number={4},
   pages={489--493},
}
	
\bib{Vega}{article}{
	author={\L uczak, Tomasz},
	author={Polcyn, Joanna},
	author={Reiher, Chr.},
	title={Andr\'{a}sfai and Vega graphs in Ramsey-Tur\'{a}n theory},
	journal={Journal of Graph Theory},
	volume={98},
	date={2021},
	number={1},
	pages={57--80},
	issn={0364-9024},
}
	
\bib{LPR2}{article}{
   author={\L uczak, Tomasz},
   author={Polcyn, Joanna},
   author={Reiher, Chr.},
   title={On the Ramsey-Tur\'{a}n density of triangles},
   journal={Combinatorica},
   volume={42},
   date={2022},
   number={1},
   pages={115--136},
}

\bib{LPR3}{article}{
   author={\L uczak, Tomasz},
   author={Polcyn, Joanna},
   author={Reiher, Chr.},
   title={The next case of Andr\'asfai's conjecture},
   eprint={2308.06070},
	journal={Submitted},
}

\bib{M}{article}{
	author={Mantel, W.},
	title={Problem 28 (Solution by H. Gouwentak, W. Mantel, J. Teixeira de Mattes, 
		F. Schuh and W. A. Wythoff)},
	journal={Wiskundige Opgaven},
	date={1907},
	number={10},
	pages={60--61}
}

\bib{Myc}{article}{
   author={Mycielski, J.},
   title={Sur le coloriage des graphs},
   language={French},
   journal={Colloq. Math.},
   volume={3},
   date={1955},
   pages={161--162},
   issn={0010-1354},
}
			
\bib{SS}{article}{
   author={Simonovits, Mikl{\'o}s},
   author={S{\'o}s, Vera T.},
   title={Ramsey-Tur\'an theory},
   note={Combinatorics, graph theory, algorithms and applications},
   journal={Discrete Math.},
   volume={229},
   date={2001},
   number={1-3},
   pages={293--340},
   issn={0012-365X},
}		
	
\bib{T}{article}{
   author={Thomassen, Carsten},
   title={On the chromatic number of triangle-free graphs of large minimum degree},
   journal={Combinatorica},
   volume={22},
   date={2002},
   number={4},
   pages={591--596},
}			
\end{biblist}
\end{bibdiv}
\end{document}